
\documentclass[oneside]{amsart}
\newcommand{\formatswitch}{preprint}

\usepackage{amssymb}
\usepackage{ifthen}
\usepackage{verbatim}
\usepackage{epsfig}
\usepackage[all]{xy}

\newcommand{\tref}[1]{(\ref{#1})}

\ifthenelse{\equal{\formatswitch}{big}}{
\setlength{\textwidth}{432pt}
\setlength{\oddsidemargin}{18.8775pt}

\setcounter{tocdepth}{1}
}{
\ifthenelse{\equal{\formatswitch}{paper}}{

\setcounter{tocdepth}{1}
}{
\setcounter{tocdepth}{2}
}
}
\DeclareMathAlphabet\EuScript{U}{eus}{m}{n}
\DeclareMathAlphabet\EuScriptb{U}{eus}{b}{n}

\newcommand{\claimenum}{\renewcommand{\theenumi}{\alph{enumi}}
 \renewcommand{\labelenumi}{\textit{(\theenumi)}}
 \renewcommand{\theenumii}{\roman{enumii}}
 \renewcommand{\labelenumii}{\textit{(\theenumii)}}
 \begin{enumerate}}
\newcommand{\claimenumend}{\end{enumerate}}

\newcommand{\romanenum}{\renewcommand{\theenumi}{\roman{enumi}}
 \renewcommand{\labelenumi}{\textit{(\theenumi)}}
 \renewcommand{\theenumii}{\alph{enumii}}
 \renewcommand{\labelenumii}{\textit{(\theenumii)}}
 \begin{enumerate}}
\newcommand{\romanenumend}{\end{enumerate}}

\newtheorem{dummy}{realdumb}[section]
\newtheorem{thm}[dummy]{Theorem}

\newtheorem{lemma}[dummy]{Lemma}
\newtheorem{prop}[dummy]{Proposition}
{\theoremstyle{definition} }

{\theoremstyle{definition} }
\newtheorem{cor}{Corollary}[dummy]

\renewcommand{\text}{\mathrm}

\newcommand{\strutdepth}{\dp\strutbox}
\newcommand{\marginalnote}[1]
   {\strut\vadjust{\kern-\strutdepth\domarginalnote{#1}}}
\newcommand{\domarginalnote}[1]{\vtop to \strutdepth{
  \baselineskip\strutdepth
   \vss\llap{ #1\ \ }\null}}  
\newcounter{showlabelflag}
\newcounter{makelabelflag}
\newcommand{\showlabels}{\setcounter{showlabelflag}{1}}

\newcommand{\makelabels}{\setcounter{makelabelflag}{1}}
\newcommand{\hidelabels}{\setcounter{showlabelflag}{2}}
\newcommand{\mylabel}[1]{
  \ifthenelse{\value{makelabelflag}=1}
    {\label{#1}}{}
  \ifthenelse{\value{showlabelflag}=1}
    {\marginpar{#1}}{}\relax}

\newcommand{\R}{{\mathbf R}}

\newcommand{\Z}{{\mathbf Z}}

\ifthenelse{\equal{\formatswitch}{draft}}{
\showlabels
}{\relax}

\newtheorem{question}[dummy]{Question}
\newtheorem{conj}[dummy]{Conjecture}

\newcommand{\Saaty}{\cite{MR863420}}
\newcommand{\Gravier}{\cite{MR1932681}}
\newcommand{\Carpentier}{\cite{MR2764810}}
\newcommand{\Kryu}{\cite{Kryuchkov:rotations}}
\newcommand{\EliaOne}{\cite{MR1721923}}
\newcommand{\Zeilberger}{\cite{MR2873886}}
\newcommand{\txst}{\textstyle}

\newcommand{\scs}{\scriptstyle}

\newcommand{\mymargin}[1]{
  \ifthenelse{\value{showlabelflag}=1}
    {\marginpar{#1}}{}\relax}
\newcommand{\cprime}{{\mathsurround=0pt\({}'\)}}

\setcounter{tocdepth}{1}


\begin{document}

\bibliographystyle{amsplain}
\title[Coloring planar graphs]{Coloring Planar graphs \\ via \\
Colored Paths in the Associahedra}
\thanks{AMS
Classification (2010): primary 05C15, secondary 20F05, 05C10, 05C30,
05C20.}
\author{GARRY BOWLIN}
\author{MATTHEW G. BRIN}
\date{January 12, 2013}


\makelabels
\hidelabels
\maketitle


{\itshape Abstract.}  Hassler Whitney's theorem of 1931 reduces the
task of finding proper, vertex 4-colorings of triangulations of the
2-sphere to finding such colorings for the class \(\mathfrak H\) of
triangulations of the 2-sphere that have a Hamiltonian circuit.
This has been used by Whitney and others from 1936 to the present to
find equivalent reformulations of the 4 Color Theorem (4CT).
Recently there has been activity to try to use some of these
reformuations to find a shorter proof of the 4CT.  Every
triangulation in \(\mathfrak H\) has a dual graph that is a union of
two binary trees with the same number of leaves.  Elements of a
group known as Thompson's group \(F\) are equivalence classes of
pairs of binary trees with the same number of leaves.  This paper
explores this resemblance and finds that some recent reformulations
of the 4CT are essentially attempting to color elements of
\(\mathfrak H\) using expressions of elements of \(F\) as words in a
certain generating set for \(F\).  From this, we derive information
about not just the colorability of certain elements of \(\mathfrak
H\), but also about all possible ways to color these elements.
Because of this we raise (and answer some) questions about
enumeration.  We also bring in an extension \(E\) of the group \(F\)
and ask whether certain elements ``parametrize'' the set of all
colorings of the elements of \(\mathfrak H\) that use all four
colors.


\contentsline {section}{\tocsection {}{1}{Introduction}}{2}
\contentsline {subsection}{\tocsubsection {}{1.1}{Background}}{2}
\contentsline {subsubsection}{\tocsubsubsection {}{1.1.1}{Two reductions}}{2}
\contentsline {subsubsection}{\tocsubsubsection {}{1.1.2}{Whitney's theorem}}{3}
\contentsline {subsubsection}{\tocsubsubsection {}{1.1.3}{The setting and the dual views}}{3}
\contentsline {subsubsection}{\tocsubsubsection {}{1.1.4}{Previous work}}{4}
\contentsline {subsection}{\tocsubsection {}{1.2}{On the current paper}}{5}
\contentsline {subsubsection}{\tocsubsubsection {}{1.2.1}{Subgroups and monoids}}{6}
\contentsline {subsubsection}{\tocsubsubsection {}{1.2.2}{Associahedra and color graphs}}{6}
\contentsline {subsubsection}{\tocsubsubsection {}{1.2.3}{Enumeration}}{7}
\contentsline {subsubsection}{\tocsubsubsection {}{1.2.4}{Other}}{7}
\contentsline {subsection}{\tocsubsection {}{1.3}{In the paper}}{7}
\contentsline {subsection}{\tocsubsection {}{1.4}{Thanks}}{9}
\contentsline {section}{\tocsection {}{2}{Starting from Whitney's theorem}}{9}
\contentsline {subsection}{\tocsubsection {}{2.1}{The equator}}{9}
\contentsline {subsection}{\tocsubsection {}{2.2}{Rooting and orienting the trees}}{10}
\contentsline {subsection}{\tocsubsection {}{2.3}{Reversing the process}}{11}
\contentsline {subsection}{\tocsubsection {}{2.4}{Triangulations}}{11}
\contentsline {section}{\tocsection {}{3}{Trees}}{12}
\contentsline {subsection}{\tocsubsection {}{3.1}{General trees}}{12}
\contentsline {subsection}{\tocsubsection {}{3.2}{Orders in trees}}{12}
\contentsline {subsection}{\tocsubsection {}{3.3}{Standing assumptions on trees}}{13}
\contentsline {subsection}{\tocsubsection {}{3.4}{Subtrees}}{13}
\contentsline {subsection}{\tocsubsection {}{3.5}{Standard model for binary trees}}{13}
\contentsline {subsection}{\tocsubsection {}{3.6}{Inducting with trees}}{14}
\contentsline {subsection}{\tocsubsection {}{3.7}{Projections}}{15}
\contentsline {subsection}{\tocsubsection {}{3.8}{Root shifts and reflections}}{15}
\contentsline {section}{\tocsection {}{4}{The associahedra}}{16}
\contentsline {subsection}{\tocsubsection {}{4.1}{The faces}}{17}
\contentsline {subsection}{\tocsubsection {}{4.2}{Edges and rotations}}{17}
\contentsline {subsection}{\tocsubsection {}{4.3}{Examples}}{18}
\contentsline {subsection}{\tocsubsection {}{4.4}{Symmetries}}{20}
\contentsline {section}{\tocsection {}{5}{Color}}{20}
\contentsline {subsection}{\tocsubsection {}{5.1}{The colors}}{20}
\contentsline {subsection}{\tocsubsection {}{5.2}{Face and edge colorings}}{20}
\contentsline {subsection}{\tocsubsection {}{5.3}{Coloring binary trees}}{20}
\contentsline {subsection}{\tocsubsection {}{5.4}{Coloring binary tree pairs}}{21}
\contentsline {subsection}{\tocsubsection {}{5.5}{Signs}}{22}
\contentsline {subsection}{\tocsubsection {}{5.6}{Small examples}}{23}
\contentsline {subsection}{\tocsubsection {}{5.7}{Permuting colors}}{23}
\contentsline {subsection}{\tocsubsection {}{5.8}{The dual view}}{24}
\contentsline {section}{\tocsection {}{6}{Colored rotations and colored paths}}{24}
\contentsline {subsection}{\tocsubsection {}{6.1}{Colored rotations}}{24}
\contentsline {subsection}{\tocsubsection {}{6.2}{A converse to Proposition 6.2\hbox {}}}{26}
\contentsline {subsection}{\tocsubsection {}{6.3}{The first signed path conjecture}}{26}
\contentsline {subsection}{\tocsubsection {}{6.4}{Rigidity basics}}{26}
\contentsline {subsection}{\tocsubsection {}{6.5}{Colored associahedra and color graphs}}{27}
\contentsline {section}{\tocsection {}{7}{Groups}}{27}
\contentsline {subsection}{\tocsubsection {}{7.1}{Thompson's group \(F\)}}{27}
\contentsline {subsection}{\tocsubsection {}{7.2}{Carets}}{29}
\contentsline {subsection}{\tocsubsection {}{7.3}{Reduced pairs}}{29}
\contentsline {subsection}{\tocsubsection {}{7.4}{The multiplication}}{29}
\contentsline {subsection}{\tocsubsection {}{7.5}{Unions, intersections and differences of trees}}{30}
\contentsline {subsection}{\tocsubsection {}{7.6}{Another representation}}{31}
\contentsline {subsection}{\tocsubsection {}{7.7}{Actions on finite trees}}{31}
\contentsline {subsection}{\tocsubsection {}{7.8}{Rotation as action}}{32}
\contentsline {subsection}{\tocsubsection {}{7.9}{Edge paths in the associahedra}}{32}
\contentsline {subsection}{\tocsubsection {}{7.10}{Presentations}}{32}
\contentsline {subsection}{\tocsubsection {}{7.11}{Sizes of trees in tree pairs}}{33}
\contentsline {subsection}{\tocsubsection {}{7.12}{The group \(E\)}}{34}
\contentsline {section}{\tocsection {}{8}{Colors and the group operations}}{34}
\contentsline {subsection}{\tocsubsection {}{8.1}{Coloring representatives of a single element of \(F\)}}{34}
\contentsline {subsection}{\tocsubsection {}{8.2}{Extending the set of colored elements}}{35}
\contentsline {subsection}{\tocsubsection {}{8.3}{The compatibility lemma}}{35}
\contentsline {subsection}{\tocsubsection {}{8.4}{Vines}}{36}
\contentsline {subsection}{\tocsubsection {}{8.5}{Action of rotations on colored pairs}}{36}
\contentsline {subsection}{\tocsubsection {}{8.6}{Application to specific subsets of \(F\)}}{38}
\contentsline {subsection}{\tocsubsection {}{8.7}{Further considerations}}{38}
\contentsline {section}{\tocsection {}{9}{Rigid colorings}}{39}
\contentsline {subsection}{\tocsubsection {}{9.1}{Depth condition}}{39}
\contentsline {subsection}{\tocsubsection {}{9.2}{The positive rigid pattern}}{39}
\contentsline {subsection}{\tocsubsection {}{9.3}{The group \(F_4\)}}{40}
\contentsline {subsection}{\tocsubsection {}{9.4}{Characterizing positive rigid color vectors}}{40}
\contentsline {section}{\tocsection {}{10}{Color graphs, zero sets, shadow patterns, and long paths}}{40}
\contentsline {subsection}{\tocsubsection {}{10.1}{Zero sets}}{40}
\contentsline {subsection}{\tocsubsection {}{10.2}{A separating example}}{41}
\contentsline {subsection}{\tocsubsection {}{10.3}{Long paths}}{43}
\contentsline {section}{\tocsection {}{11}{Sign structures}}{44}
\contentsline {subsection}{\tocsubsection {}{11.1}{The signed graph}}{44}
\contentsline {subsection}{\tocsubsection {}{11.2}{The second signed path conjecture}}{46}
\contentsline {subsection}{\tocsubsection {}{11.3}{The vertex set of a sign structure}}{46}
\contentsline {subsection}{\tocsubsection {}{11.4}{Relations among the paths}}{47}
\contentsline {subsubsection}{\tocsubsubsection {}{11.4.1}{Moving paths across faces}}{48}
\contentsline {subsubsection}{\tocsubsubsection {}{11.4.2}{Moving paths across pentagons}}{48}
\contentsline {subsubsection}{\tocsubsubsection {}{11.4.3}{Adding or removing canceling pairs}}{48}
\contentsline {subsubsection}{\tocsubsubsection {}{11.4.4}{Moving paths across squares}}{48}
\contentsline {subsubsection}{\tocsubsubsection {}{11.4.5}{Shortest paths are not always the best}}{49}
\contentsline {subsubsection}{\tocsubsubsection {}{11.4.6}{Parallel edges in the sign structure}}{49}
\contentsline {subsubsection}{\tocsubsubsection {}{11.4.7}{Paths with identical sign structures}}{49}
\contentsline {subsection}{\tocsubsection {}{11.5}{On the relevance of the group \(E\)}}{50}
\contentsline {subsection}{\tocsubsection {}{11.6}{On a converse to Theorem 11.5\hbox {}}}{51}
\contentsline {subsection}{\tocsubsection {}{11.7}{Non-prime maps}}{52}
\contentsline {section}{\tocsection {}{12}{Acceptable color vectors}}{52}
\contentsline {section}{\tocsection {}{13}{Patterns}}{53}
\contentsline {subsection}{\tocsubsection {}{13.1}{Patterns and multiplication}}{53}
\contentsline {subsection}{\tocsubsection {}{13.2}{The positive pattern and group}}{54}
\contentsline {subsection}{\tocsubsection {}{13.3}{Neighborhoods of positive colorings}}{55}
\contentsline {section}{\tocsection {}{14}{Higher genera and Thompson's group \(V\)}}{56}
\contentsline {subsection}{\tocsubsection {}{14.1}{Tree pairs on the torus}}{56}
\contentsline {subsection}{\tocsubsection {}{14.2}{Thompson's group \(V\)}}{57}
\contentsline {subsection}{\tocsubsection {}{14.3}{Tree pairs on the projective plane}}{58}
\contentsline {section}{\tocsection {}{15}{Enumeration}}{59}
\contentsline {subsection}{\tocsubsection {}{15.1}{Trees}}{59}
\contentsline {subsection}{\tocsubsection {}{15.2}{Colorings of a single tree}}{59}
\contentsline {subsection}{\tocsubsection {}{15.3}{Recursive sequences}}{59}
\contentsline {subsection}{\tocsubsection {}{15.4}{Acceptable colorings}}{60}
\contentsline {subsection}{\tocsubsection {}{15.5}{Rigid colorings}}{61}
\contentsline {subsection}{\tocsubsection {}{15.6}{Flexible colorings}}{62}
\contentsline {subsection}{\tocsubsection {}{15.7}{Highly colorable maps}}{62}
\contentsline {subsubsection}{\tocsubsubsection {}{15.7.1}{The estimates}}{62}
\contentsline {subsubsection}{\tocsubsubsection {}{15.7.2}{The maps}}{63}
\contentsline {subsubsection}{\tocsubsubsection {}{15.7.3}{The counts}}{65}
\contentsline {subsubsection}{\tocsubsubsection {}{15.7.4}{Questions}}{66}
\contentsline {subsection}{\tocsubsection {}{15.8}{Zero sets and color graphs}}{66}
\contentsline {section}{\tocsection {}{16}{The end}}{68}
\contentsline {section}{\tocsection {}{}{References}}{69}

\section{Introduction}\mylabel{IntroSec}

This paper combines algebraic structures (that of a specific
finitely presented group) with the problem of coloring planar maps.
This paper can be viewed either as a search for a shorter proof of
the Four Color Theorem or as an exploration of the colorings that
must exist given the truth of the theorem.  We present a mechanism
that finds all colorings.  We describe the mechanism, present some
of its properties, and discuss questions of enumeration that it
leads to.

In this introduction, Section \ref{BackgroundSec} gives enough
background to tie the items in the title of the paper together,
Section \ref{ResultSumSec} describes some of our results, and
Section \ref{SectionListSec} lists the topics in the order that they
are developed in the paper.

\subsection{Background}\mylabel{BackgroundSec}

A map \(M\) is an embedding of a finite graph (whose image we will
call the underlying graph of the map and which we will still denote
by \(M\)) into the 2-sphere \(S^2\).  A proper, face 4-coloring of
\(M\) is a function from the faces of \(M\) to a set of four colors
so that any two different faces that share an edge are assigned
different colors.  Finding a proper, face 4-coloring for \(M\) is
equivalent to finding a proper, vertex 4-coloring (no two adjacent
vertices receive the same color) of the graph dual to \(M\) in
\(S^2\).  The 4 Color Theorem (4CT) says that such a 4-coloring can
always be found.

The 4CT has been proven in \cite{MR0543792}, \cite{MR0543793} and
\cite{MR1441258} and the proof has been verified by
\cite{MR2463991}.  Since these proofs and verifications use large
amounts of computer calculations, shorter proofs have been sought.

It is standard to start a discussion of the 4CT by reducing the
set of graphs to be considered.  One quick reduction is to the
class of cubic graphs.  A further reduction using the shrinking of
edges eliminates loops in the dual graph in \(S^2\).  See Section
3-2 of \Saaty.  Shrinking edges also can eliminate parallel edges in
the dual graph, but we want to do this elimination using a different
technique since the different technique is more relevant to us.

\subsubsection{Two reductions}\mylabel{TwoRedSec}

For the rest of this introduction we will assume that all graphs
have no loops.

Let  \(T\) be a finite graph on \(S^2\) all of whose
faces are triangles, but that fails to be a triangulation because it
has parallel edges.  Let \(C\) be a circuit consisting of two
parallel edges.  Each complementary domain of \(C\) in \(S^2\)
contains a vertex of \(T\) since every face of \(T\) is a triangle.
There are two graphs we can get by deleting all vertices and edges
of \(T\) that are in one complementary domain of \(C\).  Call these
graphs the result of ``cutting \(T\) along \(C\).''  Each of these
can be properly vertex 4-colored by an inductive hypothesis.  In
each of these graphs, the two colors given to the vertices of \(C\)
are different.  Thus there are exactly two permutations of the
colors of one of the graphs that allow us to put the two colorings
together to color all of \(T\).

The phrase ``exactly two permutations'' is relevant in the following
way.  We can clearly reduce \(T\) by a finite number of such cuts to
a finite set of pieces that are true triangulations in that they
have no parallel edges by identifying the two edges of a cut each
time we make a cut.  We will later call such pieces ``prime.''  The
original \(T\) can be recovered from such pieces if it is remembered
where the cuts were and how the pieces fit together.  It is also
clear that if there are \(k\) pieces and the various pieces have,
respectively, \(n_1, n_2, \ldots, n_k\) different proper, vertex
4-colorings up to permutations of the colors, then \(T\) has
\mymargin{PrimeProdFormula}\begin{equation}\label{PrimeProdFormula}
2^{k-1} \prod_{i=1}^k n_i \end{equation} different proper, vertex
4-colorings up to permutations of the colors.

Just as a circuit of length 2 must use two different colors for its
vertices, a circuit of length 3 must use three different colors for
its vertices.  We base a second reduction on this.

If we ignore parallel edges and turn our attention to separating
circuits of length 3, then a similar analysis takes place and we
reduce \(T\) to a finite number ``irreducble'' pieces that have no
separating circuits of length 3.  No identification of edges is
needed since the edges in the cut will form a triangular face in
each of the two subgraphs created by a cut.  If there are \(j\) such
pieces and they have (up to permutations of the colors),
respectively, \(m_1, m_2, \ldots, m_j\) different proper, vertex
4-colorings, then \(T\) has \(m_1m_2\cdots m_j\) different proper
vertex 4-colorings up to permutations of the colors.

It follows that if we completely understand proper, vertex
4-colorings of graphs in \(S^2\) whose faces are triangles and which
are both irreducible and prime, then we understand all proper, vertex
4-colorings of all all graphs in \(S^2\) whose faces are triangles.

\subsubsection{Whitney's theorem}\mylabel{IntroWhitSec}

In the previous setting, a piece that is both ``prime'' and
``irreducible'' is a finite graph in \(S^2\) with triangular faces,
with no loops, no parallel edges and no separating circuits of
length 3.  Let \(P\) be such a piece.  By the main theorem of
\cite{MR1503003}\index{Whitney!theorem}, \(P\) must have a
Hamiltonian circuit \(C\).  A restatement of Whitney's theorem is
given as Theorem 4-7 in \Saaty.  If \(P\) has \(n\) vertices, then
cutting \(P\) along \(C\) will give two triangulated polygons
of \(n\) sides and edges.

We take this conclusion to be the starting point of the paper.  We
consider maps \(M\) in \(S^2\) whose underlying graph is cubic (so
that the dual has only triangular faces) and whose dual has a
Hamiltonian circuit and no loops.  Since we base our definition on
the conclusion of Whitney's theorem, we lose the parts of the
hypotheses that are not mentioned.  In particular, the dual may have
parallel edges and separating circuits of length 3.  It turns out
that the parallel edges will concern us in what follows, but we will
not worry at all about separating circuits of length 3.  Perhaps
further studies taking these circuits into account could be
fruitful.

\subsubsection{The setting and the dual
views}\mylabel{IntroSettingSec}

In the following, ``triangulated polygon'' will always assume that
all the vertices are on the boundary of the polygon.  That is, no
extra vertices were introduced by the triangulation.

We study those maps \(M\) in \(S^2\) that are in a certain class
\(\mathfrak W\)\index{\protect\(\protect\mathfrak W\protect\)}.  For
\(M\) to be in \(\mathfrak W\), its dual graph in \(S^2\) must have
no loops, must have triangles for faces and must have a Hamiltonian
circuit.  The graph \(M\) is thus cubic and connected.

The Hamiltonian circuit in the dual of \(M\) will be called the
equator.  It cuts through each face of \(M\) in a single arc and we
require that it passes through no vertex of \(M\) and be transverse
to the edges of \(M\).  The intersection \(D\) of \(M\) with a
complementary domain of the equator will be a tree since if \(D\) is
not connected, then some face of \(M\) will hit the equator twice
and if \(D\) is not simply connected, then some face of \(M\) will
not hit the equator.  The resulting tree is binary since all
vertices other than the leaves have degree three.  Thus \(M\)
consists of two binary trees that meet each other in their leaves
along the equator.

The dual graph \(T\) of \(M\) will fall into two triangulated
polygons \(C_1\) and \(C_2\) bounded by the equator.  While \(T\)
may have parallel edges, no two parallel edges will reside in one of
the \(C_i\) since there would then have to be a face that is not a
triangle.  Thus two parallel edges must have one edge in \(C_1\) and
the other in \(C_2\).  The restriction of \(T\) to each polygon will
then be a true triangulation of that polygon.

We make the setting more specific.  If we make \(S^2\) the set of
points in \(\R^3\) of distance one from the origin, we can insist
that the Hamiltonian circuit lie on the circle in \(S^2\) on the
\(xy\)-plane.  Projecting to the \(xy\)-plane lets us talk of two
different triangulations of the same polygon.  In the dual view, we
have two binary trees in the polygon whose leaves are the same set
of points, one leaf in the center of each edge of the Hamiltonian
circuit.

Thus our setting is either two triangulations of the same polygon,
or two binary trees in a disk with the leaves in the boundary of
disk and having the same leaf set.

This paper was motivated by the resemblance of the conclusions of
Whitney's theorem to a group known as Thompson's group \(F\).
Elements of \(F\) are built from pairs of binary trees having the
same number of leaves.  The group \(F\) has been studied extensively
since its introduction by R.~J.~Thompson around 1970.

The obvious resemblance was first noticed on the appearance of the
expository article \cite{MR1633714} by Robin Thomas.  It was felt at
the time that the proof of the 4CT, based as it was on hundreds of
cases that needed to be checked by computer, could benefit from a
little extra organization, and that this organization might just be
supplied by the group structure of \(F\).  Unfortunately, first
looks by several people found that the multiplication on the group
and the colorings of the maps seemed to have absolutely nothing to
do with one another.

In June of 2011, the authors of the current paper decided to take
another look and found (with the help of the computer) that while
the multiplication and colorings did not cooperate, there is a
strong relationship between a presentation of \(F\) by generators
and relations on the one hand, and machinery that attempts to create
4-colorings of maps in \(\mathfrak W\) on the other hand.

Because of our desire to introduce the group \(F\), and since \(F\)
is typically defined using pairs of binary trees, we will
concentrate on pairs of trees in this paper.  However, the
triangulated polygons point of view has its uses and will be used
when it is advantageous to do so.

\subsubsection{Previous work}\mylabel{IntroPrevWkSec}

Starting with Whitney \cite{MR1550643} in 1936, Whitney's theorem
\cite{MR1503003} has been used to formulate statements equivalent to
the 4CT.  Two from \cite{MR1550643} are repeated as \(\mathbf
C_{19}\) and \(\mathbf C_{20}\) in Section 5-5 of \Saaty, along with
statements due to several others.  A later formulation by Kauffman
\cite{kauffman:four-color} was discussed by R.~Thomas in
\cite{MR1633714}.  A formulation \cite{Loday:YY} by Loday is more
recent.  Most work relevant to the current article started to appear
after \cite{kauffman:four-color} and \cite{MR1633714}.

Given two triangulations of the same polygon, it has been known that
there is a sequence of ``diagonal flips'' (terminology of
\EliaOne) that will sequentially modify one triangulation
until it is identical to the other.  Dually, given two finite binary
trees with the same leaf set, there is a sequence of ``rotations''
(called transplantations in \Kryu) that will
sequentially modify one tree until it is isomorphic to the other.
Neither of these obsevations has the power to give a proof of the
4CT.

For trees, Kryuchkov \Kryu{} in 1992, and for
triangulated polygons, Eliahou \EliaOne{} in 1999 showed that
if a sequence of modifications which preserves some extra structure
could always be found, then the 4CT would follow.  (Structure
preserving moves for trees are called admissible transplantations in
\Kryu.  For polygons they are called signed
diagonal flips in \EliaOne.  Details about the extra
structures that must be preserved by the sequences of moves will be
given later.)  Specifically, if a there is an appropriate structure
that a sequence of moves preserves, then the sequence of moves leads
to a desired coloring.  This gives a pair of statements that imply
the 4CT but that might be false.  In \Gravier, Gravier and
Payan give an elegant proof that this is not the case and that these
statements follow from the 4CT.  There is actually more detail in
the result of \Gravier{} and this will be discussed shortly.

Since the trees and polygons are finite, and the extra structures
demanded by the results of \Kryu{} and
\EliaOne{} come from a finite set of possible structures, an
exhaustive search will show if a given sequence of modifications
satifies the hypotheses of these papers.  In \Carpentier,
Carpentier gives a practical algorithm to decide if a given sequence
of modifications has a corresponding structure that it preserves.
The paper \Carpentier{} uses triangulated polygons, but is easily
recast to use trees.

Many of the reformulations of the 4CT using Whitney's theorem have
an algebraic flavor.  Eliahou and Lecouvey in \cite{MR2567026}
introduce the symmetric group and Loday in \cite{Loday:YY}
introduces Lie algebras into discussion of coloring the maps in
\(\mathfrak W\).

The work of the papers discussed to this point is to establish
equivalences with the 4CT.  In \Zeilberger, Cooper, Rowland and
Zeilberger calculate proper, vertex 4-colorings for certain infinite
sets of maps represented as pairs of trees.  Decomposition into
primes is also discussed.  Their results are extended by Csar,
Sengupta and Suksompong in \cite{csar:tamari} to pairs of trees that
lie in an interval in a certain lattice.

\subsection{On the current paper}\mylabel{ResultSumSec}

We consider maps in the class \(\mathfrak W\) and we translate the
sequences of structure preserving moves of Eliahou and Kryuchkov
into words in the generators of elements of Thompson's group \(F\).
We also derive a criterion that is structurally different from (but
one assumes equivalent to) the criterion of Carpentier for a word in
the generators to have an associated structure that is preserved by
the word and thus lead to a coloring.  

Implicit in this description is the possibility that there are words
that cooperate with no structure and that lead to no coloring.  This
turns out to be the case and we call words that cooperate with some
structure ``successful.''  It is also possible that for a given map,
different words that successfully lead to colorings lead to
different colorings of the same map.  Further, it is possible that a
given word does not uniquely determine a coloring for a map.

We show that for prime maps, a successful word determines a unique
coloring of the map (modulo permutations of the colors).  We also
show that modifying a word by certain of the relations of \(F\) does
not change the coloring determined by the word.  This leads to a
presentation of an extension \(E\) of \(F\) that only uses the
relations of \(F\) that do not result in a change of coloring when
used to modify a word.  We ask whether there is a well defined
function from certain elements of \(E\) to the set of all colorings
of planar maps.

We note that it is possible to give a well defined notion of a
coloring of an element of \(F\) and it is elementary that the 4CT is
equivalent to the statement that every element of \(F\) has a
coloring.  This ignores the specific representation of the elements
of \(F\) as words in a certain generating set, and only asks that
there be some word that succeeds.  The results of \Zeilberger{}
essentially discuss colorings of elements of \(F\) without reference
to their representation as specific words in the generators.

\subsubsection{Subgroups and monoids}\mylabel{IntroMonoidSec}

There is a positive monoid \(P\) of \(F\) that is so named because
\(F\) is the group of fractions of \(P\) in that every element of
\(F\) is of the form \(pn^{-1}\) with \(p\) and \(n\) in \(P\).  We
show that all elements in \(P\) have colorings and that the prime
elements of \(P\) have unique colorings.  This duplicates, possibly
extends, and to some extent unifies several of the results in
\Zeilberger{} and \cite{csar:tamari}.  See Section \ref{RotAppSec}.

An extra detail about the results of Kryuchkov and Eliahou is that
structure preserving sequences of moves lead to 4-colorings that use
all four colors.  The converse by Gravier and Payan \Gravier{}
assumes a coloring using all four colors and finds for that coloring
a structure preserving sequence leading to the coloring.

Colorings using only three colors have no corresponding
sequences.  We refer to such colorings as rigid.  (Non-rigid
colorings are called flexible.)  There is an easy characterization
of rigid colorings in \Gravier{} and the well known fact that
a map in \(S^2\) can have a proper, face 3-coloring if and only if
each face has an even number of edges is presented as Theorem 2-5 of
\Saaty.  The possibility of a 3-coloring is not a large problem
since every map with more than 3 faces having a proper, face
3-coloring also has a proper, face 4-coloring that uses all four
colors.

We show that the maps having 3-colorings correspond to a pleasantly
described subgroup \(F_4\) of \(F\).  

The subgroup \(F_4\) corresponds to a certain ``pattern'' that
assigns a fixed coloring to all finite binary trees.  We call it the
rigid pattern since it allows for no moves of the type considered by
Kryuchkov.  We briefly discuss what it takes for a pattern to lead
to a subgroup of \(F\) and briefly consider the pattern that is the
``least rigid.''  This leads to a subgroup of \(F\) that seems not
to have been considered before.

\subsubsection{Associahedra and color graphs}\mylabel{IntroAssocSec}

Associahedra are convex polytopes whose vertices are finite binary
trees or (dually) triangulated polygons.  There is one associahedron
in each dimension and we denote the associahedron of dimension \(d\)
by \(A_d\).  The vertices of \(A_d\) are triangulated polygons
having \(d+3\) edges or binary trees with \(d+3\) leaves (or \(d+2\)
leaves if one leaf has been promoted to the title of ``root'' of the
tree).  Sequences of moves considered by Kryuchkov or Eliahou are
walks along the edges of \(A_d\) with each move corresponding to a
traversal of one edge.

Let us denote by \(\mathbf v\) a 4-coloring (with no requirement
that it be proper) of a polygon with \(d+3\) edges.  We can refer to
\(\mathbf v\) as a ``color cycle.''  (When working with trees, the
``color cycle'' will be replaced by a ``color vector.''  See Section
\ref{ColBinTreeSec}.)  Then \(\mathbf v\) is either a proper, vertex
4-coloring or not for a given triangulation of the polygon depending
on the triangulation.  Thus certain vertices of the associhedron
\(A_d\) can be said to be valid for \(\mathbf v\) and certain
vertices are not.  The structure preserving moves are the ones
corresponding to edges of \(A_d\) in the subgraph of the 1-skeleton
of \(A_d\) spanned by the vertices that are valid for \(\mathbf v\).

This makes the subgraph spanned by the vertices that are valid for
\(\mathbf v\) an interesting subgraph to look at.  We refer to it as
the color graph of \(\mathbf v\).  It follows from the results and
discussion in \Gravier{} that a color graph is either
connected or has no edges.

The diameter of \(A_d\) is well understood.  From
\cite{pournin:assocdiam}, we know that the
diameter\index{associahedron!diameter}\index{diameter!of
assciahedron} is \(2d-4\) for \(d\ge 10\).  The diameter of the
color graphs of the various \(\mathbf v\) can be considerably
higher, and we show examples where the diameter is \(\lfloor
d^2/4\rfloor\).  Interestingly, the examples involve pairs of
vertices of the associahedron considered in \Zeilberger.

We call the complement of the color graph of \(\mathbf v\) the zero
set of \(\mathbf v\) for reasons that we will explain later.  We
raise questions about the relative sizes and structures of a color
graph and its complementary zero set.

We present examples that show that the structure of the color graphs
can be rather complicated, independent of the examples of large
diameter.  A prime map with a rigid coloring represents two isolated
vertices in a color graph.  However, if a rigidly colored prime map
is combined with a flexibly colored prime map, then the coloring of
the resulting non-prime map is flexible.  The ``disolving'' of the
rigidity of the rigid factor by the presence of the flexible factor
can be very inefficient and we give an example of how this takes
place.

\subsubsection{Enumeration}\mylabel{IntroEnumSec}

The fact that we consider all proper, 4-colorings gives us an
opportunity to try to count them.  It is known that counting the
colorings of a map is \(\#P\)-complete \cite{MR2201454}, but that
does not make counting them impossible.  We give examples of maps
with very large numbers of colorings and ask whether these maps have
the largest numbers of colorings for maps of a given size.  One
question has been since answered by P.~D.~Seymour
\cite{pdseymour:biwheel}.  See the note after Question
\ref{BiwheelQuestion}.

We also characterize certain structures and this allows us to count
them as well.  

A proposition in \Zeilberger{} gives properties of any color
cycle \(\mathbf v\) that has a non-empty color graph.  We supply a
converse to this proposition and use it to count those color cycles
with non-empty color graphs.

We can also characterize those color cycles that have non-empty
color graphs but which are rigid.  This lets us count such color
cycles and thus the complementary set of color cycles that are
flexible.

Many questions about enumeration are raised.

\subsubsection{Other}\mylabel{IntroOtherSec}

It is well known that coloring behavior changes as the genus of the
surface containing a map goes up.  The effects of this on move
sequences is discussed in \EliaOne, and we give a brief
discussion on its effects on the current paper's point of view.  In
particular, there is a larger group of Thompson's known as \(V\) and
we show that there is no counterpart for \(V\) of the statement that
``all elements of \(F\) have a coloring.''  Interestingly, the
standard example on the torus (an embedding of \(K_7\)) does not
supply an example, but the standard example on the projective plane
(an embedding of \(K_6\)) does.

\subsection{In the paper}\mylabel{SectionListSec}

We describe the flow of the paper and add a bit of detail to the
above list.

Section \ref{WhitneySec} establishes definitions and notation.

Trees are the central objects of the paper and all definitions of
trees and related structures are given in Section \ref{TreeSec}.
The machinery needed to define the faces of the associahedra are
introduced.  The action of the dihedral groups on trees is defined.

The associahedra are introduced in Section \ref{AssocSec}.  Vertices
are binary trees, and a rotation is defined as a move along one edge
and thought of as a modification of the tree at one end of the edge
to make it look like the tree at the other end.

Color (not mentioned in the sections before) is added in Section
\ref{ColorSec}.  Since we view maps as made of pairs of trees, there
is extensive discussion of colorings of trees and tree pairs.  The
easy consequence of Whitney's theorem that the 4CT is equivalent to
the colorability of pairs of trees is pointed out.  The orientation
of colors used around a single vertex (called its sign) is
introduced and discussed.

Section \ref{MainColRotSec} combines color with rotations.  The use
of paths along edges to find colorings is introduced and we restate
the conjectures of \Kryu{} and \EliaOne{} to read that every pair of
vertices in an associahedron has an edge path between them that
succeeds in carrying a coloring from the first vertex to the second.
This conjecture implies the 4CT, and the converse follows from the
main result of \Gravier.  We introduce the rigid colorings that are
carried by no edge path.  These correspond to triangulations of the
2-sphere that have proper, vertex 3-colorings.  We also introduce
the notion of a color graph on an associahedron.  Such a graph has
as vertices all trees colored by a single ``vector'' of colors and
the edges are all the rotations between the vertices that stay
within the colorings given by the vector.  By the result of
\Gravier, this graph is either connected or has no edges.

Thompson's group \(F\) and its extension \(E\) are described in
Section \ref{GroupSec}.  Algebra is combined with rotations to
represent elements of \(F\) as edge paths in an associahedron.

Section \ref{GpOpSec} goes as far as it can combining color with the
multiplication on Thompson's group.  The equivalence of the 4CT to
the colorability of tree pairs is given the immediate translation
into the equivalence of the 4CT to the colorability of elements of
\(F\).  Since \(F\) is finitely generated, this can be reformulated
as an equivalence, Theorem \ref{FGenEquivThm}, that sounds simpler
but that adds little true knowledge.  A compatibility lemma is given
that finds cooperation between the multiplication and colorings in
certain situations and is exploited to show that all elements in a
certain submonoid of \(F\) have colorings and that some of them have
unique colorings.  These results have other proofs.  They also unify
a number of the results in \Zeilberger.

Section \ref{RigidSec} covers thoroughly the uncooperative
3-colorings.  These correspond to a nice subgroup of \(F\), are easy
to characterize, and thus, in Section \ref{EnumSec}, easy to count.

Section \ref{ZeroSetSec} further considers color graphs and also
looks at their complements.  We show that color graphs (which live
in the graph of edges of associahedra) can be more contorted in
appearance than the full edge graph of the associahedron that they
live in.

Section \ref{SignStructSec} develops the criterion that detects the
ability of an edge path to successfully drag a coloring from one
tree to another.  It is similar in spirit, but different in detail
from the criterion in \Carpentier.  This test for success builds,
from the edge path, a signed graph which will be balanced if and
only the edge path gives a coloring.  Prime maps have connected
signed graphs, and (up to permuting the colors) a successfull edge
path leads to a unique coloring of the prime map.  Colorings of
non-prime graphs are combinations of colorings of prime graphs.  The
combination of a face 4-coloring using four colors on one prime
factor with a face 3-coloring on another prime factor results in a
coloring that can still be created by an edge path in spite of the
fact that the face 3-coloring could not have been so created on its
own.  When this happens, the signed graph constructed from the path
has components that correspond to more than one prime factor.
Restatements of earlier conjectures are given and the discussion is
given that brings the group \(E\) into the picture.  Conjectures
about the relevance of the group \(E\) are given.

The remaining sections cover material that is not part of the main
flow of the paper.

Section \ref{AcceptColSec} separates those color arrangements that
color no trees from those that color some trees by characterizing
the latter.  (Except for miniscule trees, there are no arrangements
that color all trees.)  This supplies a converse to a proposition in
\Zeilberger.

In the discussion about face 3-colorings, a pattern for coloring
all trees comes up.  This does not automatically color tree pairs,
since a coloring of a tree pair has a condition to be met (colors
must agree on the leaves of the two trees).  The pattern considered
in Section \ref{RigidSec} produces face 3-colorings when a match
occurs.  This pattern also leads to the subgroup \(F_4\) of \(F\)
that we mentioned above.  Section \ref{PatternSec} briefly looks, in
some generality, at conditions that make a pattern lead to subgroups
of \(F\).  Section \ref{PosPatSec} looks at a particularly simple
pattern that seems to lead to a not so simple subgroup of \(F\).
Section \ref{PosNbhdSec} demonstrates the limit of usefulness of the
pattern of Section \ref{PosPatSec} to the 4CT.

Section \ref{HGV} takes a brief look at standard examples of maps on
the torus and projective plane from the point of view of the current
paper and relates the examples to another of Thompson's groups known
as \(V\).  As expected, the situation is quite different.  In
particular, the obvious generalization to \(V\) of the statement
that all elements of \(F\) have a coloring is false.

Section \ref{EnumSec} gathers all our results and questions about
counts.  Formulas are given that count the colorings that are valid
for at least one tree, that count colorings that lead to face
3-colorings of maps, and that count colorings that lead to ``true''
face 4-colorings of maps (all four colors are used).  From our
computer calculations, we also noticed that certain maps have many
more colorings than others.  One map in particular (the biwheel from
the dual point of view) seemed to have approximately twice as many
as any other.  We raise the question of which maps of a given size
have the most colorings and have candidates for the top four counts
of colorings for a given size.  Our candidate for the top count and
the fact that it corresponds precisely to the biwheel has been since
proven correct by P.~D.~Seymour \cite{pdseymour:biwheel}.  The
section ends with other questions of enumeration motivated by other
parts of the paper.

Section \ref{EndSec} raises questions that fit nowhere else.

\subsection{Thanks} The authors extend their thanks to Jeff Nye for
all of his help and ideas with the programming.

\section{Starting from Whitney's theorem}\mylabel{WhitneySec}

We establish some notation to accompany the setting described in
Section \ref{IntroSettingSec}.  Certain terms are defined carefully.
However, others are to be interpreted intuitively by the reader
during this section and they will be defined more carefully in the
remaining sections of the paper.

\subsection{The equator}

For a map \(M\) in \(\mathfrak W\) we know the dual graph of \(M\)
in \(S^2\) has a Hamiltonian circuit \(E\) that we will call the
equator.  We can take this circuit to be the circle \(z=0\) in the
2-sphere \(\{(x,y,z)\mid x^2+y^2+z^2=1\}\).  We can also insist that
some point of the intersection of \(M\) with \(E\) is the point
\((0,1,0)\).  From now on we will insist that an \(M\in \mathfrak
W\) is arranged in this way on the 2-sphere.

We call the points in \(S^2\) with \(z\ge0\), the northern
hemisphere, and the points with \(z\le0\) the southern hemisphere.
From Section \ref{IntroSettingSec}, we know that the intersection of
\(M\) with each hemisphere is a binary tree.

For an \(M\in \mathfrak W\), we will usually use \(D\) to denote the
intersection of \(M\) with the northern hemisphere, and \(R\) to
denote the intersection of \(M\) with the southern hemisphere.  The
reason for these letters will be clarified in Section
\ref{FDefsSec}.

If the dual graph of \(M\) in \(S^2\) has no parallel edges, then we
follow Loday in \cite{Loday:YY} and call \(M\) {\itshape
prime}\index{prime!map}\index{map!prime}.  Such maps are called
irreducible in \Zeilberger.

\subsection{Rooting and orienting the trees}\mylabel{WhitMotivSec}

Given \(M\in \mathfrak W\) with \(D\) and \(R\) as above, we declare
that the common root of \(D\) and \(R\) is the point \((0,1,0)\).
The leaves, which are common to both \(D\) and \(R\), are all other
points in \(M\) on the equator.  Note that in each of \(D\) and
\(R\), the root has degree 1 and the root and leaves together
account for all the vertices of degree 1.

We then embed each of \(D\) and \(R\) (separately) in the
\(xy\)-plane by vertical projection.  If the fact that the two
images intersect is bothersome, then \(D\) can be moved two units in
the negative \(x\) direction, and \(R\) can be moved two units in
the positive \(x\) direction.  The purpose of the projection is to
discuss orientation in a well defined way.

With \(D\) and \(R\) projected to the \(xy\)-plane, we associate to
each vertex \(v\) of degree 3 a cyclic order on the edges that
impinge on \(v\) by using the counterclockwise order on the edges as
viewed from above the plane.  We also get a linear order on the
leaves.  The order is obtained from a counterclockwise walk (as viewed
from above) that starts and ends at the root around the circle of
radius 1 that contains the root and leaves.

Since the vertices that are neither root nor leaf are all of degree
3, we can call each of \(D\) and \(R\) a binary tree.  Thus we have
a process that takes each \(M\in \mathfrak W\) to a pair \((D,R)\)
of finite, rooted, oriented, binary trees that have the same number
of leaves.  We will later be very careful with definitions, but the
reader can infer enough definitions at this point for us to
continue.

Below left is a map \(M\) with equator (dotted line) labeled \(E\)
and chosen root labeled \(*\).  To the right are the two trees \(D\)
and \(R\) obtained from \(M\).
\[
\xy
(10,20)*{M};
(0,0); (12,12)**@{-}; (24,0)**@{-}; (12,-12)**@{-}; (0,0)**@{-};
(8,8); (20,-4)**@{-}; (12,4); (8,0)**@{-}; (16,-8)**@{-};
(-12,0); (28,0)**@{.};
(12,12); (12,16)**@{-}; (-4,16)**@{-}; (-4,-16)**@{-}; 
(12,-16)**@{-}; (12,-12)**@{-};
(-16,0)*{E}; (32,0)*{E}; (-4,0)*{\bullet}; (-6,2)*{*};
(10,-20)*{};
\endxy
\qquad\qquad
\xy
(12,20)*{D}; (12,12); (12,16)**@{-}; (11,16)*{*};
(0,0); (12,12)**@{-}; (24,0)**@{-};
(8,8); (16,0)**@{-}; (12,4); (8,0)**@{-}; 
\endxy
\qquad\quad
\xy
(4,20)*{R}; (4,12); (4,16)**@{-}; (3,16)*{*};
(-8,0); (4,12)**@{-}; (8,8)**@{-};
(0,0); (8,8)**@{-}; (12,4)**@{-};
(8,0); (12,4)**@{-}; (16,0)**@{-};
\endxy
\]

Note that a pair of trees \((D,R)\) that comes from a map \(M\in
\mathfrak W\) comes with a one-to-one correspondence from the leaves
of \(D\) to the leaves of \(R\) that respects the linear orders on
the leaves.  If \(D\) and \(R\) are regarded as subsets of \(M\) (or
the embedding of \(M\) in \(S^2\)), then this one-to-one
correspondence is just the identity map.  If this correspondence is
extended to the roots, then we also have a one-to-one correspondence
from the vertices of degree 1 in \(D\) to the corresponding set in
\(R\).  We will refer to these correspondences often.

From now on a pair\index{pair!of finite trees}\index{tree!finite
pair} of finite trees \((D,R)\) will always carry the assumption
that the two trees have the same number of leaves.  We will remind
the reader of this from time to time.

\subsection{Reversing the process}\mylabel{TreePairToMapSec}

Conversely, we can take a pair \((D,R)\) of finite, rooted,
oriented, binary trees with the same number of leaves and create a
map in \(\mathfrak W\).  The method of creating a map should be
clear.  We will sketch the steps, and definitions made later will
fill in details.

The orientations of the vertices induce a cyclic order of the
vertices of degree 1 (all leaves and the root), which becomes a
linear order if the root is declared to come first.  

We map \(D\) to the northern hemisphere of \(S^2\) and \(R\) to the
southern hemisphere so that only the leaves and roots end up on the
equator, so that the two roots go to the point \((0,1,0)\), so that
the leaves of \(D\) occupy the same points as the leaves of \(R\),
and so that a counterclockwise walk (viewed from above) around the
equator starting at the root visits the leaves of the two trees in
an order that agrees with the order induced from the orientations.

Note that the number of faces is equal to the number of vertices
on either \(D\) or \(R\) of degree 1.

The failure to always get a dual with no parallel edges is more
easily illustrated than discussed.  The two trees shown below will
produce a map that fails this property.  Recall that \(D\) will live
on the northern hemisphere so that its vertical projection to the
\(xy\)-plane looks as shown below and that \(R\) will live in the
southern hemisphere so that its vertical projection to the
\(xy\)-plane looks as shown below.

\[
\xy
(-1,5)*{D};
(-4,-4); (0,0)**@{-}; (4,-4)**@{-};
(0,-8); (4,-4)**@{-}; (8,-8)**@{-};
(0,0); (4,4)**@{-}; (8,0)**@{-};
(4,4); (4,8)**@{-};
\endxy
\qquad\qquad
\xy
(-1,5)*{R};
(4,-4); (0,0)**@{-}; (-4,-4)**@{-};
(0,-8); (-4,-4)**@{-}; (-8,-8)**@{-};
(0,0); (4,4)**@{-}; (8,0)**@{-};
(4,4); (4,8)**@{-};
\endxy
\]

We follow \cite{Loday:YY} again and say that a pair of trees is
{\itshape prime}\index{prime!finite tree pair}\index{tree!finite
pair!prime} if the process above produces a map whose dual has no
parallel edges.

It is just as easy to build a pair of trees that creates a map that
has a separating circuit of length 3.  We leave this for the reader.

The working class of maps for the rest of this paper are the maps in
the class \(\mathfrak W\).

\subsection{Triangulations}\mylabel{DualTriSec}

Dual to the pairs of trees are the triangulated polygons described
in Section \ref{IntroSettingSec}.  The equator \(E\) breaks the
triangulation dual to a map \(M\in \mathfrak W\) into two
triangulated
polygons\index{triangulated!polygon!pair}\index{polygon!triangulated!pair}\index{pair!triangulated!polygon}
whose common boundary is \(E\).

This dual view works with pairs \((P_1, P_2)\) for which the \(P_i\)
are triangulated polygons with the same number of boundary edges.
Each \(P_i\) is triangulated using no vertices other than the
vertices of the polygon, and there is a cyclic order preserving
bijection between the vertices of the two polygons.  The number of
edges in the boundary of each \(P_i\) is the number of faces of
the map \(M\).  This view makes it clear that the dihedral group of
order \(2n\) acts on the set of maps in \(\mathfrak W\) with \(n\)
faces.  See Section \ref{RootShiftSec}.

\section{Trees}\mylabel{TreeSec}

We do a lot with trees, and we use this section to define the terms
and structures that we need.  Some of the oddness in our definitions
is motivated by the discussion in Section \ref{WhitneySec}.

\subsection{General trees}

A {\itshape tree}\index{tree!definition} is a connected graph with
no circuits.  A finite tree\index{tree!finite} is a tree with
finitely many vertices.  A locally finite tree\index{tree!locally
finite} is a tree in which the degree of every vertex is finite.

Every tree must have a root\index{root!of tree}\index{tree!root},
and we will be very restrictive about where the root can be.  For
us, the root of a tree must have degree 1.  We will emphasize this
by saying that such a tree is {\itshape
rooted}\index{rooted!tree}\index{tree!rooted}.

Each vertex in a tree has a distance to the root defined as the
length (in edges) of the unique simple walk from the vertex to the
root.  If \(v\) is a vertex in a tree with a root, then the
{\itshape children}\index{children!in tree}\index{tree!children} of
\(v\) are those vertices adjacent to \(v\) that are farther from the
root than \(v\) is.  The vertex adjacent to the root is the only
child of the root, and we insist that the root have a child.  The
{\itshape leaves}\index{leaves!tree}\index{tree!leaves} of a tree
are the vertices with no children.  The {\itshape trivial
tree}\index{trivial!tree}\index{tree!trivial} is the unique tree with
one leaf.

The inverse of ``child'' is ``parent.''\index{parent!in
tree}\index{tree!parent} Every vertex except the root has a unique
parent, and the root has no parent.  The transitive closure of
``child'' is ``descendant''\index{descendant!in
tree}\index{tree!descendant} and the transitive closure of
``parent'' is ``ancestor.''\index{ancestor!in
tree}\index{tree!ancestor} The root is an ancestor of all other
vertices in a tree.

In a {\itshape binary tree}\index{binary!tree}\index{tree!binary}
every vertex that is neither root nor leaf has degree 3.

If \(T\) is a tree, then the {\itshape internal
vertices}\index{tree!internal vertex}\index{vertex!of tree!internal}
of \(T\) are the vertices of degree greater than 1.  The {\itshape
internal edges}\index{tree!internal edge}\index{edge!internal in
tree} are the edges where both endpoints are internal vertices.
While not standard terminology, we need to call the edges that are
not internal edges the {\itshape external edges}\index{edge!external
in tree}\index{tree!external edge}.  It will be convenient to
separate the external edges into the {\itshape root
edge}\index{tree!root edge}\index{edge!root in tree}, the edge that
impinges on the root, and the {\itshape leaf
edges}\index{tree!leaf!edge}\index{edge!leaf in tree}, those edges
that impinge on leaves.

The root and the root edge are motivated by Section
\ref{WhitMotivSec}, and are there for sometimes technical, sometimes
formal, and sometimes practical reasons.  Since the root and root
edge must always be there, it adds no information in a drawing to
include them.  So the occasional drawing of a tree with no root and
root edge shown will be assumed to be ``augmented'' by these items.

The root edge will be inconvenient at times.  However it would be
inconvenient more often if it were omitted.

\subsection{Orders in trees}

A tree is {\itshape locally ordered}\index{order!local in
tree}\index{tree!locally ordered}\index{local order!in tree} if for
every vertex \(v\) there is a cyclic order defined on the set of
edges that impinge on \(v\).  The cyclic order on the edges gives a
cyclic order on the vertices adjacent to \(v\).  For any vertex
\(v\) other than the root, this leads to a linear order on the
children of \(v\) by starting the linear order with the vertex that
follows the parent in the cyclic order.  The children of the root
have a linear order since the number of children of the root is 1.

If a tree \(T\) is embedded in the plane, then the embedding induces
a local order\index{local order!from planar embedding} by using the
counterclockwise cyclic order of the edges that impinge on a vertex
\(v\) as described in Section \ref{WhitMotivSec}.

Conversely, every finite, ordered tree can be embedded in the plane
so that the local order induced from the embedding agrees with the
given order.

An embedding can be referred to as a drawing\index{drawing!of
tree}\index{tree!drawing}.  We will draw trees with the roots at the
top.  See the paragraphs at the bottom of Page 8 and top of Page 9
in \cite{MR1898414} for a discussion of this choice.

An isomorphism of locally ordered trees is required to respect both
root and local order.  It follows that the only automorphism of a
locally ordered tree is the identity.

A locally ordered tree can be given the prefix\index{prefix order!on
tree}\index{tree!prefix order} total order on all the vertices.  This
is the unique linear order that restricts to the local linear order
on the children of each vertex and that for two children \(v\) and
\(w\) of a single vertex, if \(v<w\), then the descendants of \(v\)
precede \(w\) which in turn precedes the descendants of \(w\).  The
restriction of this total order to the leaves gives a total order on
the leaves.

In a locally ordered binary tree, every vertex with two children has
the first child designated as the left child and the other child is
the right child.  A locally ordered binary tree can also be given
the infix\index{infix order!on tree}\index{tree!infix order} total
order.  This is the unique linear order that restricts to the local
linear on the children and that places each vertex after its left
child and the descendants of the left child and before its right
child and the descendants of the right child.  The restriction of
the infix order to the leaves is the same as the restriction of the
prefix order to the leaves.

The induced order on the leaves (from prefix or infix order) will
usually be referred to as the left-right\index{left-right order!on
leaves}\index{leaves!left-right order} order on the leaves since,
with the tree drawn with the root at the top, visiting the leaves
moving left to right in the drawing follows the order.

We will often have need to refer to the \(i\)-th leaf of a finite,
locally ordered tree, and this will refer to the \(i\)-th leaf in
the left-right order of the leaves, with the count starting from 1.

\subsection{Standing assumptions on trees}

In the rest of this paper, all trees will be locally finite, rooted,
locally ordered, and no vertex will have degree 2.  These
assumptions will not be repeated in statements.

\subsection{Subtrees}

A subtree\index{subtree} \(S\) of a tree \(T\) is a subgraph that is
also a tree and that satisfies the extra condition that if \(v\) is
a vertex of \(S\) other than the root of \(S\), then either all the
children of \(v\) in \(T\) are in \(S\), or none of the children of
\(v\) in \(T\) are in \(S\).  The root of \(S\) must have only one
child.

A local order on a tree is inherited by a subtree.

Since we assume \(T\) has a root, there is a unique vertex in \(S\)
that is closest to the root of \(T\) and we will insist that this
vertex be the root of \(S\).

\subsection{Standard model for binary trees}\mylabel{MathcalTSec}

Binary trees can be given a very standard\index{binary!tree!standard
model}\index{standard model!binary tree} structure that makes them
all subtrees of a common structure.  This will be convenient in many
places.

The vertex set of the infinite, locally ordered, binary tree
\(\mathcal T\)\index{\protect\(\protect\mathcal T\protect\)}
consists of a single special element that we will denote \(*\),
together with the set \(\{0,1\}^*\) of all finite words (including
the empty word \(\emptyset\)) in the alphabet 0 and 1.  In
\(\{0,1\}^*\), we take {\itshape concatenation, prefix, suffix} to
have their usual meanings.  Concatenation will be denoted by
juxtaposition.  The left child of a vertex \(u\) in \(\{0,1\}^*\)
will be \(u0\) and the right child of \(u\) will be \(u1\).  The
only child of \(*\) will be \(\emptyset\).  Words like {\itshape
parent, ancestor, child, descendant} will have their usual meanings.
We order the children of \(u\in \{0,1\}^*\) by \(u0<u1\).  The edges
of \(\mathcal T\) are all connections from parent to child.
Declaring \(*\) to be the root makes \(\mathcal T\) a rooted,
locally ordered, binary tree.  The infix order will be used on
\(\mathcal T\) at times.

Throughout the paper we will need to discuss words.  We will use
notation similar to regular expressions\index{regular expressions}
and write \(0^n\) for a string of \(n\) zeros, \(0^+\) for a
non-empty finite string of zeros and \(0^*\) for a possibly empty
finite string of zeros.  Thus \(1^+0^+1^*\) represents at least one
1, followed by at least one 0, and then possibly followed (or not)
by more ones.  The expression \((abc)\) refers to the string
\(abc\), and \((abc)^n\), \((abc)^*\), \((abc)^+\) refer,
respectively, to \(n\) appearances, zero or more appearances, and
one or more appearances of the string \(abc\).  Lastly \([xyz]\)
refers to a choice of one of \(x\), or \(y\), or \(z\), so that
\([xyz]^n\), \([xyz]^*\), \([xyz]^+\) refer, respectively, to \(n\),
zero or more, one or more appearances of independent choices from
\(x\), or \(y\), or \(z\).

Recall that all trees are are locally ordered.  It is standard that
any finite, binary tree can be realized uniquely as a finite subtree
of \(\mathcal T\) so that its root is \(*\), so that its vertex set
is closed under the taking of prefixes and for which \(u0\) is in
\(T\) if and only if \(u1\) is in \(T\).  It will be convenient to
view all finite, binary trees this way.

Each \(v\in \{0,1\}^*\) is a vertex of \(\mathcal T\) and of any
tree in \(\mathcal T\) that contains \(v\).  As a word, it also
contains the information on how to get to \(v\) as a walk from
\(\emptyset\).  As such it can be thought of as an
``address.''\index{address!of vertex in tree}
This makes for the odd statement that the word \(v\) is the address
for the vertex \(v\), but at times this will be convenient.

Subtrees are defined as for subtrees of general trees.  There are
some special notations for certain subtrees.

If \(T\) is a binary tree and \(v\) is a vertex in \(T\) with parent
\(w\), then \(T_v\)\index{\protect\(T_v\protect\)} will denote the
subtree \(\{vu\mid vu\in T\} \cup \{w\}\).  The root of \(T_v\) is
\(w\) and every leaf of \(T_v\) is a leaf of \(T\).  If \(T\) is a
binary tree, then \(T_0\) will be referred to as the {\itshape left
subtree}\index{subtree!left}\index{left!subtree} of \(T\) and
\(T_1\) will be referred to as the {\itshape right
subtree}\index{subtree!right}\index{right!subtree} of \(T\).  We will
be consistent with the words ``tree'' and ``subtree'' and insist
that a finite, binary tree in \(\mathcal T\) have \(*\) as its root,
while subtrees can have any vertex in \(\mathcal T\) as its root.

The advantage of putting finite binary trees in \(\mathcal T\) is
that if \(T\) and \(S\) are binary trees, then so are \(S\cap T\)
and \(S\cup T\).  Recall that trees are rooted at \(*\).

\subsection{Inducting with trees}\mylabel{TreeIndSec}

We will frequently appeal to either of two inductive arguments
without giving details.

Finite trees are well founded\index{well
founded!order}\index{order!well founded} (every set has a minimal
element where vertices of a tree are partially ordered by declaring
descendants less than ancestors) and as such can have constructions
or statements about them argued by well founded\index{well
founded!induction}\index{Noetherian!induction}\index{induction!well
founded}\index{induction!Noetherian}  (Noetherian)
induction which lets a statement or construction be applied to a
vertex if it applies to all of its descendants.  We will often
simply say that something can be defined or argued inductively.  For
example, {\itshape height} can be defined inductively by declaring
the leaves of a finite tree to have height zero, and any vertex
whose descendants all have defined heights is defined to have height
one greater than the largest height of its children.

\newcommand{\caret}{\hbox to .06 in{}
\widehat{\hbox to .01 in{}}
\hbox to .05 in{}
}

Another argument is based on a binary operation on binary trees.
Given two binary trees, \(S\) and \(T\), there is a unique binary
tree \(S\caret T\)\index{\protect\(S\caret T\protect\)} whose left
subtree is isomorphic to \(S\) and whose right subtree is isomorphic
to \(T\).  Every binary tree that has more than one leaf is uniquely
of the form \(S\caret T\).  Since \(S\caret T\) has more vertices
than either of \(S\) or \(T\), we can induct on the size of a binary
tree.

\subsection{Projections}\mylabel{ContrSec}

Since we will study pairs of binary trees, we will be comparing
structures of trees.  The following will help organize trees into
related groups of various sizes.

If \(T\) is a finite, binary tree then a
projection\index{projection!of tree}\index{tree!projection} of \(T\)
can be defined based on certain data.  We say that two subtrees
\(S_1\) and \(S_2\) of \(T\) are {\itshape edge disjoint}\index{edge
disjoint!subtrees} if removing the root and root edge from each of
\(S_1\) and \(S_2\) leaves them with no edge in common.

The data for the projection consists of a finite set \(T_1, \ldots,
T_n\) of subtrees in \(T\), which are pairwise edge disjoint.  To
avoid trivialities, we require that each \(T_i\) have at least three
leaves.  

We form the {\itshape projection determined by} \(T_1, \ldots, T_n\)
as follows.  For each \(i\) with \(1\le i\le n\), let \(w_i\) be the
child of the root of \(T_i\).  We remove from \(T\) all the
non-root edges of \(T_i\) and all the internal vertices of \(T_i\)
except for \(w_i\).  Then for each leaf of \(T_i\), we add an edge
that connects that leaf to \(w_i\).

We show an example below.

\[
\xy
(24.5,16); (24.5,24)**@{-};
(0,-8); (4,0)**@{-}; (11,8)**@{-}; (24.5,16)**@{-}; 
(38,8)**@{-}; (44,0)**@{-}; (48,-8)**@{-};
(1,-3)*{a}; (6,5)*{a}; (16,5)*{a}; (7,-3)*{a};
(8,-8); (4,0)**@{-}; 
(8,-16); (12,-8)**@{-}; (18,0)**@{-}; (24,-8)**@{-}; (28,-16)**@{-};
(20.5,-11)*{b}; (27.5,-11)*{b};
(14,-3)*{b}; (22,-3)*{b};
(18,0); (11,8)**@{-}; (16,-16); (12,-8)**@{-}; (20,-16); (24,-8)**@{-};
(28,-8); (32,0)**@{-}; (38,8)**@{-};
(36,-16); (40,-8)**@{-}; (44,0)**@{-};
(29,-3)*{c}; (34,5)*{c}; (42,5)*{c}; (35,-3)*{c};
(41,-3)*{c}; (47,-3)*{c};
(36,-8); (32,0)**@{-}; (44,-16); (40,-8)**@{-};
\endxy
\qquad \longrightarrow \qquad
\xy
(24.5,16); (24.5,24)**@{-};
(4,0); (11,8)**@{-}; (24.5,16)**@{-}; 
(38,8)**@{-}; (48,-8)**@{-};
(11,0); (11,8)**@{-}; 
(8,-16); (12,-8)**@{-}; (18,0)**@{-}; (24,-8)**@{-};
(18,0); (11,8)**@{-}; (16,-16); (12,-8)**@{-}; (18,-8); (18,0)**@{-};
(28,-8); (38,8)**@{-};
(37,-16); (41,-8)**@{-}; (38,8)**@{-};
(35,-8); (38,8)**@{-}; (45,-16); (41,-8)**@{-};
\endxy
\]

In the illustration above, three subtrees are used.  Each of the
three subtrees is pictured in the tree on the left by having all of
its non-root edges given a common label.  Edges labeled \(a\) are of
a subtree with one internal edge and three leaves.  Edges labeled
\(b\) are of a similar subtree (but not isomorphic as an ordered
subtree).  The edges labeled \(c\) are of a subtree with two
internal edges and four leaves.  The subtree with edges labeled
\(a\) is edge disjoint from the subtree with edges labeled \(b\)
because of the technicalities in the definition of edge disjoint.

Note that a projection determined by subtrees \(T_1, \ldots, T_n\)
can be thought of as the result of shrinking all the internal edges
of the \(T_i\) to have length zero.

A projection inherits a local order from the infix order of the
original tree.

Note that if \(T\) is any finite tree, then there is a finite set of
binary trees that have \(T\) as a projection.  This will be used in
Section \ref{AssocSec}.

\subsection{Root shifts and reflections}\mylabel{RootShiftSec}

An operation on trees is easiest to describe using the dual view.  A
rooted tree embedded in the plane is naturally dual to a
triangulated polygon.  A finite binary tree \(T\) with \(n\)
external vertices (\(n-1\) leaves and one root) can be embedded in a
regular, triangulated \(n\)-gon \(P\) in the plane so that it is
dual to the triangulation, so that the local order is preserved, and
with one external vertex of \(T\) in the interior of each edge of
\(P\).  We also insist that a horizontal edge in the polygon is at
the top and that the root of \(T\) is in this top edge.  Given \(P\)
and \(T\) as described, we say that \(P\) is the triangulated
\(n\)-gon dual\index{dual!polygon to tree}\index{polygon!dual to
tree} to \(T\) and that \(T\) is the tree dual\index{dual!tree to
polygon}\index{tree!dual to polygon} to the triangulated \(n\)-gon
\(P\).

Below are two examples.  Each line gives a tree in a hexagon, then
the dual triangulation, and finally the tree and triangulation
together.
\mymargin{TriPolyExmpls}\begin{equation}\label{TriPolyExmpls}
\begin{split}
&\xy
(-10,0); (-5,8.66)**@{-}; (5, 8.66)**@{-}; (10,0)**@{-};
(5,-8.66)**@{-}; (-5,-8.66)**@{-}; (-10,0)**@{-};
(0,0); (-3.33, 5.77)**@{-};
(0,0); (6.66,0)**@{-};
(0,0); (-3.33, -5.77)**@{-};
(-7.5,4.33); (-3.33, 5.77)**@{-}; (0,8.66)**@{-};
(-7.5,-4.33); (-3.33, -5.77)**@{-}; (0,-8.66)**@{-};
(7.5,4.33); (6.66,0)**@{-}; (7.5,-4.33)**@{-};;
\endxy
\qquad\qquad
\xy
(-10,0); (-5,8.66)**@{-}; (5, 8.66)**@{-}; (10,0)**@{-};
(5,-8.66)**@{-}; (-5,-8.66)**@{-}; (-10,0)**@{-};
(5,8.66)**@{-}; (5,-8.66)**@{-}; (-10,0)**@{-};
\endxy
\qquad\qquad
\xy
(-10,0); (-5,8.66)**@{-}; (5, 8.66)**@{-}; (10,0)**@{-};
(5,-8.66)**@{-}; (-5,-8.66)**@{-}; (-10,0)**@{-};
(5,8.66)**@{-}; (5,-8.66)**@{-}; (-10,0)**@{-};
(0,0); (-3.33, 5.77)**@{-};
(0,0); (6.66,0)**@{-};
(0,0); (-3.33, -5.77)**@{-};
(-7.5,4.33); (-3.33, 5.77)**@{-}; (0,8.66)**@{-};
(-7.5,-4.33); (-3.33, -5.77)**@{-}; (0,-8.66)**@{-};
(7.5,4.33); (6.66,0)**@{-}; (7.5,-4.33)**@{-};;
\endxy
\\ \\
&\xy
(-10,0); (-5,8.66)**@{-}; (5, 8.66)**@{-}; (10,0)**@{-};
(5,-8.66)**@{-}; (-5,-8.66)**@{-}; (-10,0)**@{-};
(-7.5,4.33); (-3.33, 5.77)**@{-}; (0,8.66)**@{-};
(7.5,4.33); (6.66,0)**@{-}; (7.5,-4.33)**@{-};;
(-3.33, 5.77); (-3.33,0)**@{-}; (-7.5,-4.33)**@{-};
(-3.33,0); (1.66, -2.88)**@{-}; (6.66,0)**@{-};
(0,-8.66); (1.66, -2.88)**@{-};
\endxy
\qquad\qquad
\xy
(-10,0); (-5,8.66)**@{-}; (5, 8.66)**@{-}; (10,0)**@{-};
(5,-8.66)**@{-}; (-5,-8.66)**@{-}; (-10,0)**@{-};
(5,8.66)**@{-}; (5,-8.66)**@{-};
(5,8.66); (-5,-8.66)**@{-};
\endxy
\qquad\qquad
\xy
(-10,0); (-5,8.66)**@{-}; (5, 8.66)**@{-}; (10,0)**@{-};
(5,-8.66)**@{-}; (-5,-8.66)**@{-}; (-10,0)**@{-};
(5,8.66)**@{-}; (5,-8.66)**@{-};
(5,8.66); (-5,-8.66)**@{-};
(-7.5,4.33); (-3.33, 5.77)**@{-}; (0,8.66)**@{-};
(7.5,4.33); (6.66,0)**@{-}; (7.5,-4.33)**@{-};;
(-3.33, 5.77); (-3.33,0)**@{-}; (-7.5,-4.33)**@{-};
(-3.33,0); (1.66, -2.88)**@{-}; (6.66,0)**@{-};
(0,-8.66); (1.66, -2.88)**@{-};
\endxy
\end{split}
\end{equation}
The trees corresponding to these examples are drawn below in the
usual way.
\[
\xy
(0,0); (2,2)**@{-}; (5,5)**@{-}; (8,2)**@{-}; (10,0)**@{-};
(2,2); (4,0)**@{-};
(8,2); (6,0)**@{-};
(5,5);(2,8)**@{-};(-1,5)**@{-}; (2,8); (2,11)**@{-};
\endxy
\qquad\qquad
\xy
(6,0);(8,2)**@{-};(10,0)**@{-};
(4,2);(6,4)**@{-};(8,2)**@{-};
(2,4);(4,6)**@{-};(6,4)**@{-};
(0,6);(2,8)**@{-};(4,6)**@{-}; (2,8); (2,10)**@{-};
\endxy
\]

If symmetries of the \(n\)-gon \(P\) are now applied and we adhere
to the convention that the top edge contains the root of the tree,
then we get new trees that are isomorphic as unrooted, unordered
trees to the original tree \(T\) but which might or might not be
isomorphic as rooted, ordered trees.  If all possible symmetries are
applied to the top figure in \tref{TriPolyExmpls}, then the set of
trees obtained this way contains only two trees.  These are shown
below.
\mymargin{SmallSymExmpl}\begin{equation}\label{SmallSymExmpl}
\begin{split}
\xy
(0,0); (2,2)**@{-}; (5,5)**@{-}; (8,2)**@{-}; (10,0)**@{-};
(2,2); (4,0)**@{-};
(8,2); (6,0)**@{-};
(5,5);(2,8)**@{-};(-1,5)**@{-}; (2,8); (2,11)**@{-};
\endxy
\qquad\qquad
\xy
(0,0); (-2,2)**@{-}; (-5,5)**@{-}; (-8,2)**@{-}; (-10,0)**@{-};
(-2,2); (-4,0)**@{-};
(-8,2); (-6,0)**@{-};
(-5,5);(-2,8)**@{-};(1,5)**@{-}; (-2,8); (-2,11)**@{-};
\endxy
\end{split}
\end{equation}

However if all possible symmetries are applied to the bottom figure
in \tref{TriPolyExmpls}, then the set of tree obtained is larger and
is shown below.
\[
\xy
(6,0);(8,2)**@{-};(10,0)**@{-};
(4,2);(6,4)**@{-};(8,2)**@{-};
(2,4);(4,6)**@{-};(6,4)**@{-};
(0,6);(2,8)**@{-};(4,6)**@{-}; (2,8); (2,10)**@{-};
\endxy
\qquad
\xy
(6,0);(8,2)**@{-};(10,0)**@{-};
(4,2);(6,4)**@{-};(8,2)**@{-};
(2,4);(4,6)**@{-};(6,4)**@{-};
(8,6);(6,8)**@{-};(4,6)**@{-}; (6,8); (6,10)**@{-};
\endxy
\qquad
\xy
(6,1);(8,3)**@{-};(10,1)**@{-};
(4,3);(6,5)**@{-};(8,3)**@{-};
(12,5);(9,7)**@{-};(6,5)**@{-}; (9,7); (9,9)**@{-};
(10,3); (12,5)**@{-}; (14,3)**@{-};
\endxy
\qquad
\xy
(-6,1);(-8,3)**@{-};(-10,1)**@{-};
(-4,3);(-6,5)**@{-};(-8,3)**@{-};
(-12,5);(-9,7)**@{-};(-6,5)**@{-}; (-9,7); (-9,9)**@{-};
(-10,3); (-12,5)**@{-}; (-14,3)**@{-};
\endxy
\qquad
\xy
(-6,0);(-8,2)**@{-};(-10,0)**@{-};
(-4,2);(-6,4)**@{-};(-8,2)**@{-};
(-2,4);(-4,6)**@{-};(-6,4)**@{-};
(-8,6);(-6,8)**@{-};(-4,6)**@{-}; (-6,8); (-6,10)**@{-};
\endxy
\qquad
\xy
(-6,0);(-8,2)**@{-};(-10,0)**@{-};
(-4,2);(-6,4)**@{-};(-8,2)**@{-};
(-2,4);(-4,6)**@{-};(-6,4)**@{-};
(-0,6);(-2,8)**@{-};(-4,6)**@{-}; (-2,8); (-2,10)**@{-};
\endxy
\]

Note that because of the symmetries present in the examples chosen,
all of the above modifications could have been accomplished by a
rotation of the \(n\)-gon \(P\).  This is not always the case.  

We use the term {\itshape root shift}\index{root shift!of
tree}\index{tree!root shift} to refer to the modification of a
finite, binary tree \(T\) obtained by a rotation of its dual
triangulated \(n\)-gon.  We do not use the term rotation since the
word rotation will be used for completely different modification of
a tree.  An alteration of a finite, binary tree obtained by an
orientation reversing symmetry of its dual \(n\)-gon will be called
a {\itshape
reflection}.\index{reflection!tree}\index{tree!reflection}.

\section{The associahedra}\mylabel{AssocSec}

The associahedra\index{associahedron} are cellular complexes whose
vertices are finite, binary trees.  There is one associahedron in
each dimension.  The vertices of the \(d\)-dimensional
associahedron\index{associahedron!dimension}\index{dimension!associahedron}
are all the trees with \(d\) internal edges, \(d+1\) internal nodes,
\(d+2\) leaves and pairs of such vertices (trees) create maps with
\(d+3\) faces.  Thus there are several integer values associated
to one associahedron and will require care to keep track of.  The
books \cite{sternberg:quantum} and \cite{MR1898414} refer to the
\(d\)-dimensional associahedron by \(K_{d+2}\) so their notational
parameter (subscript) is the number of leaves.

We will denote the \(d\)-dimensional associahedron by
\(A_d\).\index{\protect\(A_d\protect\)} 

The number of vertices in \(A_d\) is number of trees with \(n=d+1\)
internal nodes which is
\[
C(n) = \frac{1}{n+1}\binom{2n}{n} = \frac{(2n)!}{n!(n+1)!}
\]
the \(n\)-th Catalan number\index{Catalan number}.  This is sequence
A000108 in the Online Encyclopedia of Integer Sequences.  It will be
relevant that the order of growth of this sequence is \(4^n\).  This
is revisited in Section \ref{TreeCountSec}.

The associahedra are also convex polytopes.  Their role as polytopes
is not needed here.  However the notions of faces, especially edges
(faces of dimension one), will be particularly important.  We will
define faces formally without justifying the use of the word face.
That the \(A_d\) are also cubical complexes does not concern us.

\subsection{The faces}  

A face\index{face!of associahedron}\index{associahedron!face} in
\(A_d\) is determined by a projection (Section \ref{ContrSec}) of a
vertex of \(A_d\) (finite binary tree with \(d+2\) leaves) and each
isomorphism type of a projection gives a face.  Isomorphisms of
projections are required to preserve local order.  The face
determined by a projection \(S\) is the set of all vertices of
\(A_d\) that have a projection isomorphic (as ordered trees) to
\(S\).

The dimension of the face\index{face!of
associahedron!dimension}\index{dimension!associahedron!face} is the
total number of internal edges in the selected subtrees of the
projection.

The structure of a face\index{face!of associahedron!structure} is a
product of various \(A_i\).  If \(T\) is a vertex in a face, then
there are selected subtrees \(T_1, T_2, \ldots, T_n\) of \(T\) that
specify the projection that defines the face.  Any other vertex
\(T'\) in the same face can be obtained from \(T\) by replacing each
\(T_i\) by a binary subtree with the same number \(k_i\) of leaves
as \(T_i\).  For each \(i\), this ``space of replacements'' gives an
isomorphic copy of \(A_{k_i-2}\).  The structure of the face is the
product \[ A_{k_1-2} \times A_{k_2-2} \times \cdots \times
A_{k_n-2}.  \] Note that if \(S\) is a projection determining a face
(namely \(S\) is a locally odered, rooted, finite tree), then the
dimension of the face determined by \(S\) is the sum over the
internal vertices \(v\) of \(S\) of the quantity \(d(v)-3\) where
\(d(v)\) is the degree of \(v\).

In the projection illustrated in Section \ref{ContrSec}, the tree
on the left is a vertex in \(A_9\), and the face determined by the
projection shown has the structure \(A_1\times A_1\times A_2\).

\subsection{Edges and rotations}\mylabel{PlainRotSec}

The simplest non-trivial faces of \(A_d\) are the edges.  An
edge\index{edge!associahedron}\index{associahedron!edge} with vertex
\(T\) at one end is determined by a subtree \(T_1\) of \(T\) having
one internal edge and three leaves.  There are exactly two binary
trees having three leaves and the other vertex \(T'\) of the edge is
obtained from \(T\) by removing \(T_1\) and replacing it by ``the
other'' tree with three leaves.

\newcommand{\rot}[1]{\lceil {#1} \rfloor}

Two trees that form the vertices of an edge are shown below.  The
arrow labeled \(\rot{u}\) will be explained in the paragraphs below.
Recall that we view all finite binary trees as living in \(\mathcal
T\).
\mymargin{ARot}\begin{equation}\label{ARot}
\xy
(0,-6); (12,6)**@{-}; (18,0)**@{-};
(6,0); (12,-6)**@{-};
(-2,-8)*{A}*\cir<8pt>{}; (-1,-3)*{\scs u00};
(14,-8)*{B}*\cir<8pt>{}; (13,-3)*{\scs u01};
(20,-2)*{C}*\cir<8pt>{}; (19,3)*{\scs u1}; (4,1)*{\scs u0};
(12,8.5)*{D}*\cir<8pt>{}; (9,5)*{\scs u};
\endxy
\qquad 
\xymatrix{
{}\ar[rr]^{\rot{u}}&&{}
}
\qquad
\xy
(0,-6); (-12,6)**@{-}; (-18,0)**@{-};
(-6,0); (-12,-6)**@{-};
(2,-8)*{C'}*\cir<8pt>{}; (1,-3)*{\scs u11};
(-14,-8)*{B'}*\cir<8pt>{}; (-13,-3)*{\scs u10};
(-20,-2)*{A'}*\cir<8pt>{}; (-19,3)*{\scs u0};  (-4.5,1)*{\scs u1};
(-12,8.5)*{D}*\cir<8pt>{}; (-15,5)*{\scs u};
\endxy
\end{equation}

In each figure, the edges shown are the all the edges in the
selected subtree of three leaves that specify the relevant
projection.  The circles shown contain the rest of the tree.  The
vertices are labeled with their addresses.

The letters in the circles indicate corresponding subtrees in the
two trees.  For example, the circle containing \(A\) indicates a
subtree whose root is \(u0\).  However, it is more convenient to
leave the root edge outside the circle \(A\) in the figure.  The
subtree \(D\) is literally the same subtree in both trees and
contains the root.  The subtrees \(A\) and \(A'\) are isomorphic but
not identical since they do not use exactly the same vertex set.
Similar comments apply to \(B\) and \(B'\) as well as \(C\) and
\(C'\).

We refer to the transition from one of the trees shown to the other
as a {\itshape
rotation}\index{rotation!tree}\index{tree!rotation}\index{edge!rotation
along}\index{rotation!along edge}.  Thus traveling along an edge in
an associahedron is performing a rotation of one tree into another.
Our rotations are the transplantations of \Kryu{}
and are dual to the (unsigned) diagonal flips of \EliaOne.

The particular rotation illustrated above will be referred to as the
rotation of the left tree (or from the left tree to the right tree)
at \(u\) and will be denoted
\(\rot{u}\)\index{\protect\(\rot{u}\protect\)}.  The fact that trees
with many different shapes might use the vertex with address \(u\)
will not be a problem, and in fact the common notation for all
rotations using a subtree with root at \(u\) will be a benefit.

We invent the notation \(\overline
u\)\index{\protect\(\overline{u}\protect\)} formally so that
\(\rot{\overline u}=\rot{u}^{-1}\) is the rotation from the right
tree to the left tree.  We can also abuse the notation a bit and
write \(\rot{v}\) to mean a rotation in either direction depending
on whether \(v\) is of the ``form'' \(u\) or \(\overline u\), so
that \(\rot{v}=\rot{u}^{-1}\) if \(v=\overline u\).

The vertices \(u\) and \(u0\) in the left tree in \tref{ARot} will
be called the {\itshape pivot vertices}\index{pivot vertex!of
rotation}\index{rotation!pivot vertex} of the rotation \(\rot u\)
shown there.  The vertices \(u\) and \(u1\) in the right tree in
\tref{ARot} will be the pivot vertices of the rotation
\(\rot{\overline u}\).

\subsection{Examples}\mylabel{AssocExmplSec}

The simplest non-trivial associahedron is \(A_1\) and is a single
edge shown below.
\[
\xy
(-2,-2); (2,2)**@{-}; (4,0)**@{-}; (0,0); (2,-2)**@{-};
(6,0); (26,0)**@{-};  (2,2); (2,4)**@{-};
(34,-2); (30,2)**@{-}; (28,0)**@{-}; (32,0); (30,-2)**@{-};
(30,2); (30,4)**@{-};
\endxy
\]

We can embed the trees in the figure above into larger trees so as
to recover the figure \tref{ARot}.  This would map the vertex
\(\emptyset\) in the trees above to the vertex \(u\) in the trees in
\tref{ARot}.  If we think of the figure above as a simplification of
\tref{ARot} by forgetting to put in the trees \(A, A', B, B', C,
C'\) and \(D\), then the figure above can represent an arbitrary
edge (1-dimensional face) in some \(A_d\).  With \(\emptyset\)
mapped to a vertex \(u\), in \tref{ARot}, we can redraw the figure
above as follows (with the root edge omitted for simplicity) to show
such an edge.
\[
\xymatrix
{
{\xy
(6,2); (4,4)**@{-}; (6,6)**@{-}; (8,4)**@{-};
(4,4); (2,2)**@{-};
\endxy}
\ar[rr]^{\rot{u}} &&
{\xy
(-6,2); (-4,4)**@{-}; (-6,6)**@{-}; (-8,4)**@{-};
(-4,4); (-2,2)**@{-};
\endxy}
}
\]
This does not give a unique edge in a higher dimensional
associahedron since the rotation \(\rot{u}\) might apply to many
vertices in an associahedron.  For example, the rotation
\(\rot{\emptyset}\) applies to any tree (vertex of an associahedron)
whose left subtree is not trivial.

In a similar spirit, we show a typical 2-dimensional face that is
isomorphic to \(A_2\) below.
\mymargin{ThePentagon}\begin{equation}\label{ThePentagon}
\xymatrix
{
&
{\xy
(8,0); (4,4)**@{-}; (6,6)**@{-}; (8,4)**@{-};
(6,2); (4,0)**@{-};
(4,4); (2,2)**@{-};
\endxy}
\ar[rr]^{\rot{u}} &&
{\xy
(-8,0); (-4,4)**@{-}; (-6,6)**@{-}; (-8,4)**@{-};
(-6,2); (-4,0)**@{-};
(-4,4); (-2,2)**@{-};
\endxy}
\ar[dr]^{\rot{u1}} \\
{\xy
(0,0); (6,6)**@{-}; (8,4)**@{-};
(2,2); (4,0)**@{-};
(4,4); (6,2)**@{-};
\endxy}
\ar[ur]^{\rot{u0}} \ar[drr]_{\rot{u}} &&&& 
{\xy
(0,0); (-6,6)**@{-}; (-8,4)**@{-};
(-2,2); (-4,0)**@{-};
(-4,4); (-6,2)**@{-};
\endxy} 
\\
&&
{\xy
(0,0); (2,2)**@{-}; (5,4)**@{-}; (8,2)**@{-}; (10,0)**@{-};
(2,2); (4,0)**@{-};
(8,2); (6,0)**@{-};
\endxy}
\ar[urr]_{\rot{u}}
}
\end{equation}
In the figure above, \(u\) is the address of the 
top vertex drawn in each of the subtrees shown.

There can also be 2-dimensional faces of the form \(A_1\times
A_1\).  There are three of these in the figure below which shows a
typical 3-dimensional face of the form \(A_3\).
\mymargin{TheAssoc}\begin{align}\label{TheAssoc}
\xymatrix@C=11.5pt
{
&&&&&{\xy
(0,0); (2,2)**@{-}; (5,5)**@{-}; (8,2)**@{-}; (10,0)**@{-};
(2,2); (4,0)**@{-};
(8,2); (6,0)**@{-};
(5,5);(8,8)**@{-};(11,5)**@{-};
\endxy}
\ar[dlll]_{\rot{u0}} \ar[drrr]^{\rot{u}}&&&&& \\
&&{\xy
(4,6);(6,8)**@{-};(8,6)**@{-};
(2,4);(4,6)**@{-};(6,4)**@{-};
(4,2);(6,4)**@{-};(8,2)**@{-};
(6,0);(8,2)**@{-};(10,0)**@{-};
\endxy} \ar[ddrr]_(.3){\rot{u}}&&&{\xy
(6,6);(8,8)**@{-};(10,6)**@{-};
(4,4);(6,6)**@{-};(8,4)**@{-};
(2,2);(4,4)**@{-};(6,2)**@{-};
(0,0);(2,2)**@{-};(4,0)**@{-};
\endxy} \ar[ddlll]_(.3){\rot{u00}} \ar[u]^{\rot{u0}}
\ar[ddrrr]^(.3){\rot{u}}&&&{\xy
(4,4);(7,7)**@{-};(10,4)**@{-};
(2,2);(4,4)**@{-};(6,2)**@{-};
(6,0);(8,2)**@{-};(10,0)**@{-};
(8,2);(10,4)**@{-};(12,2)**@{-};
\endxy} \ar[ddll]^(.3){\rot{u}}
\ar[ddrr]^{\rot{u1}}&& \\ \\
{\xy
(2,6);(4,8)**@{-};(6,6)**@{-};
(0,4);(2,6)**@{-};(4,4)**@{-};
(6,2);(4,4)**@{-};(2,2)**@{-};
(0,0);(2,2)**@{-};(4,0)**@{-};
\endxy}\ar[uurr]^{\rot{u01}} \ar[ddrr]^{\rot{u}}&&{\xy
(4,6);(6,8)**@{-};(8,6)**@{-};
(2,4);(4,6)**@{-};(6,4)**@{-};
(4,2);(2,4)**@{-};(0,2)**@{-};
(2,0);(4,2)**@{-};(6,0)**@{-};
\endxy} \ar[ddrrr]^(.7){\rot{u}}
\ar[ll]_{\rot{u0}}&&{\xy
(0,6);(2,8)**@{-};(4,6)**@{-};
(2,4);(4,6)**@{-};(6,4)**@{-};
(4,2);(2,4)**@{-};(0,2)**@{-};
(2,0);(4,2)**@{-};(6,0)**@{-};
\endxy}\ar[rr]^{\rot{u1}}&&{\xy
(0,6);(2,8)**@{-};(4,6)**@{-};
(2,4);(4,6)**@{-};(6,4)**@{-};
(4,2);(6,4)**@{-};(8,2)**@{-};
(2,0);(4,2)**@{-};(6,0)**@{-};
\endxy}  \ar[ddrr]^(.6){\rot{u11}}&&{\xy
(4,4);(7,7)**@{-};(10,4)**@{-};
(2,2);(4,4)**@{-};(6,2)**@{-};
(0,0);(2,2)**@{-};(4,0)**@{-};
(8,2);(10,4)**@{-};(12,2)**@{-};
\endxy} \ar[ddlll]_(.7){\rot{u0}}
\ar[rr]^{\rot{u}}&&{\xy
(4,4);(7,7)**@{-};(10,4)**@{-};
(2,2);(4,4)**@{-};(6,2)**@{-};
(10,0);(12,2)**@{-};(14,0)**@{-};
(8,2);(10,4)**@{-};(12,2)**@{-};
\endxy} \ar[ddll]_{\rot{u}} \\ \\
&&{\xy
(2,6);(4,8)**@{-};(6,6)**@{-};
(4,4);(6,6)**@{-};(8,4)**@{-};
(2,2);(4,4)**@{-};(6,2)**@{-};
(0,0);(2,2)**@{-};(4,0)**@{-};
\endxy}
\ar[drrr]^{\rot{u1}} \ar[uurr]^(.35){\rot{u10}} &&&
{\xy
(4,4);(7,7)**@{-};(10,4)**@{-};
(2,2);(4,4)**@{-};(6,2)**@{-};
(4,0);(6,2)**@{-};(8,0)**@{-};
(8,2);(10,4)**@{-};(12,2)**@{-};
\endxy}
\ar[d]_{\rot{u}}&&&{\xy
(6,0);(8,2)**@{-};(10,0)**@{-};
(4,2);(6,4)**@{-};(8,2)**@{-};
(2,4);(4,6)**@{-};(6,4)**@{-};
(0,6);(2,8)**@{-};(4,6)**@{-};
\endxy}&& \\
&&&&&{\xy
(0,0); (2,2)**@{-}; (5,5)**@{-}; (8,2)**@{-}; (10,0)**@{-};
(2,2); (4,0)**@{-};
(8,2); (6,0)**@{-};
(5,5);(2,8)**@{-};(-1,5)**@{-};
\endxy}\ar[urrr]^{\rot{u1}}&&&&& \\
}
\end{align}
In the figure above, there are six 2-dimensional faces of the form
\(A_2\).

Recall that each face corresponds to a projection.  The central
square face corresponds to the tree
\(
\xy
(-3,-3); (3,3)**@{-}; (6,0)**@{-}; 
(3,3); (3,0)**@{-};
(0,-3); (0,0)**@{-}; (3,-3)**@{-};
\endxy
\) and
the tree
\(
\xy
(-4.5,0); (0,3)**@{-}; (4.5,0)**@{-};
(-1.5,0); (0,3)**@{-}; (1.5,0)**@{-};
(0,-2); (1.5,0)**@{-}; (3,-2)**@{-};
\endxy
\)
corresponds to the top central pentagonal face.

\subsection{Symmetries}

Note that the dihedral group\index{dihedral group!action on
associahedron}\index{associahedron!dihedral group action} of order
\(2(d+3)\) acts on the associahedron \(A_d\) by root shifts and
reflections on the individual trees.  See Section
\ref{RootShiftSec}.  That the action preserves the edges of \(A_d\)
is easy to see from the dual view of triagulated polygons.  In that
setting an edge in \(A_d\) corresponds to an unsigned diagonal flip
of \EliaOne.  This replaces two triangles in a triangulated
\(n\)-gon that share an edged by ``the other two triangles'' with
the same union as the original two triangles.  The reader can check
that the two examples in \tref{TriPolyExmpls} are connected by an
edge in \(A_3\).

\section{Color}\mylabel{ColorSec}

\subsection{The colors} 

All our face and edge colors will come from \(\Z_2\times
\Z_2\).\index{color!element of group}\index{group!of colors} We use
\(\Z_2\times \Z_2\) since addition and subtraction are identical in
this group.  Notations such as \((0,0)\) and \((0,1)\) and so forth
are cumbersome, and we will ``code''\index{color!code
for}\index{code!for colors} the elements of \(\Z_2\times \Z_2\) by
the mapping \((0,0)\rightarrow 0\), \((0,1)\rightarrow 1\),
\((1,0)\rightarrow 2\) and \((1,1)\rightarrow 3\).  In other words,
the 0 and 1 in each \((x,y)\) is thought of as a binary digit.  In
spite of this coding, the view that the colors reside in
\(\Z_2\times \Z_2\) will be retained since we will use the
arithmetic in \(\Z_2\times \Z_2\) to add colors.  Thus 0 is the
identity, we have \(1+2=3\), \(1+3=2\), \(2+3=1\), and we have
\(n+n=0\) for any \(n\in \{0,1,2,3\}\).

\subsection{Face and edge colorings}

It is standard\index{face-edge!coloring
correspondence}\index{edge-face!coloring correspondence} that for
planar cubic maps, a proper, face 4-coloring exists if and only if
there is a corresponding proper, edge 3-coloring.  This observation
goes back to Tait \cite{12.0409.01}.  One uses colors from a group
of four elements (such as \(\Z_2\times \Z_2\)).  Given a proper,
face 4-coloring, one colors the edges with the difference of the
colors of the two faces that impinge on the edge.  The identity is
never the color of an edge.  That the result is a proper, edge
3-coloring is a trivial exercise.  The reverse direction depends on
the planarity of the map.  See Theorem 4-3 of \Saaty.  The planarity
is essential.  This is discussed in Section \ref{TreeTorusSec}.

Now let \(M\) be in \(\mathfrak W\) with \((D,R)\) the associated
pair of trees.  Recall the one-to-one correspondence from the
vertices of degree 1 in \(D\) to the similar set in \(R\).  Since we
are going to discuss edges, we can also think of this correspondence
as between the set of external edges in \(D\) to the set of external
edges in \(R\).

If \(M\) has a proper, edge 3-coloring, then the coloring restricts
to a proper, edge 3-coloring of both \(D\) and \(R\).  The important
feature of these colorings of \(D\) and \(R\) is that they give the
same color to corresponding external edges of \(D\) and \(R\).  This
leads to the next discussion.

\subsection{Coloring binary trees}\mylabel{ColBinTreeSec}

Let \(T\) be a finite, binary tree with \(n\) leaves.  A {\itshape
color vector}\index{color!vector}\index{vector!color} for \(T\) is
an \(n\)-tuple \(\mathbf c=(c_1, \ldots, c_n)\) with values in
\(\Z_2\times \Z_2\).  The vector \(\mathbf c\) is thought of as an
assignment of colors to the edges impinging on the leaves of \(T\).
The left-right order of the leaves of \(T\) is used to number the
leaves from left to right starting at 1, and the color \(c_i\) is
assigned to the edge impinging on the \(i\)-th leaf of \(T\).

We also think of the color \(c_i\) as being assigned to the \(i\)-th
leaf\index{color!of leaf in
tree}\index{leaves!tree!color}\index{tree!leaf!color} itself as well
as the edge impinging on it.  In fact, it will often be convenient
to think of a color assigned to an edge in a tree as also being
assigned to the vertex\index{color!of vertex in
tree}\index{vertex!tree!color}\index{tree!vertex!color} at the lower
end of the edge.  As a special case, a color assigned to the root
edge is not only thought of as assigned to \(\emptyset\) but also to
\(*\).  We call this color the root
color.\index{root!color}\index{color!root!of tree}

One inductively argues the following.

\begin{lemma}\mylabel{ColorComputLem} Given a finite, binary tree
\(T\) and color vector \(\mathbf c\) for \(T\), there is a unique
coloring of the edges of \(T\) from \(\Z_2\times \Z_2\) so that at
every vertex \(v\) of degree 3 in \(T\) the sum of the colors of the
edges impinging on \(v\) is zero.  \end{lemma}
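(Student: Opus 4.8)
The plan is to prove existence and uniqueness together by induction on the size of $T$, using the caret decomposition $S\caret U$ from Section~\ref{TreeIndSec}. The one local fact that drives everything is that $\Z_2\times\Z_2$ is a $2$-torsion group in which addition and subtraction agree, so at a degree-$3$ vertex the condition $x+y+z=0$ determines any one of the three edge colors uniquely from the other two (namely $z=x+y$). In a rooted binary tree each degree-$3$ vertex has exactly one edge leading toward the root and two leading to its children, so this fact lets us propagate colors unambiguously from the leaves upward.

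I would strengthen the statement slightly to carry a convenient invariant: that the unique valid coloring exists and that the color it assigns to the \emph{root edge} equals $c_1+c_2+\cdots+c_n$, the sum of all the leaf colors. Carrying this invariant keeps the bookkeeping at the inductive step transparent. For the base case, the trivial tree has $n=1$, its single edge is simultaneously the root edge and the leaf edge and receives color $c_1$, there are no degree-$3$ vertices so the condition is vacuous, and the invariant holds.

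For the inductive step with $n\ge 2$, write $T=S\caret U$, so that the left subtree $T_0$ is isomorphic to $S$ and carries the leaf colors $c_1,\dots,c_k$, while the right subtree $T_1$ is isomorphic to $U$ and carries $c_{k+1},\dots,c_n$ (here $k$ is the number of leaves of $S$). Every degree-$3$ vertex of $T$ other than $\emptyset$ lies in exactly one of $T_0,T_1$, and all three edges impinging on such a vertex lie in that same subtree; the shared vertex $\emptyset$ has degree $1$ as the root of each $T_i$ and so imposes no condition there. Consequently a valid coloring of $T$ restricts to valid colorings of $T_0$ and of $T_1$, with the zero-sum conditions matching exactly. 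By the inductive hypothesis each $T_i$ has a unique valid coloring; these use disjoint edge sets and color the edge from $\emptyset$ to $0$ with $a=c_1+\cdots+c_k$ and the edge from $\emptyset$ to $1$ with $b=c_{k+1}+\cdots+c_n$. Together they color every edge of $T$ except the root edge. The sole remaining condition is the one at $\emptyset$, which has degree $3$ in $T$ with impinging edges the root edge, the edge to $0$, and the edge to $1$; this forces the root-edge color to equal $a+b$ and permits no other value. This yields both existence and uniqueness and confirms the invariant, since $a+b=c_1+\cdots+c_n$.

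I do not expect a genuine obstacle here: the assertion amounts to a square, triangular linear system over $\Z_2\times\Z_2$ with one unknown non-leaf edge color and one zero-sum equation per degree-$3$ vertex (there are $n-1$ of each), and the triangularity with respect to distance from the root guarantees a unique solution. The only point needing care is the bookkeeping in the inductive step: one must check that passing from $T$ to $T_0$ and $T_1$ neither drops nor double-counts any zero-sum condition, that the two subtree edge sets are disjoint and exhaust $T$ apart from the root edge, and that the single shared vertex $\emptyset$ contributes exactly one equation that pins down the one remaining free edge.
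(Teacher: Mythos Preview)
Your proof is correct and follows exactly the approach the paper indicates: the paper gives no explicit proof but simply prefaces the lemma with ``One inductively argues the following,'' pointing to the two inductive schemes of Section~\ref{TreeIndSec}, and your argument via the caret decomposition $T=S\caret U$ is precisely that. As a bonus, the strengthened invariant you carry (that the root-edge color equals $c_1+\cdots+c_n$) is the content of the paper's subsequent Lemma~\ref{RootColorLem}, so you have in effect proved both lemmas at once.
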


We refer to the coloring of \(T\) given in Lemma
\ref{ColorComputLem} as the coloring determined\index{tree!coloring
from vector}\index{coloring!of tree from vector} by the color vector
\(\mathbf c\).

Note that the coloring in Lemma \ref{ColorComputLem} is not
guaranteed to be a proper, edge 3-coloring for several reasons.
First, we did not restrict the colors in \(\mathbf c\) to the non
identity elements of \(\Z_2\times \Z_2\).  Second, even if we made
such a restriction, there is nothing to prevent the identity from
showing up as the computed color of an edge not ending at a leaf.
Of course if this happens, then the colors leading to the computed
identity will also be identical.

If the identity is used in \(\mathbf c\), and the tree has more than
one leaf, then at least one vertex will not have three different
colors for the edges ending at that vertex.  A last comment is that
if no edge impinging on a given vertex \(v\) of degree 3 has color
0, then because of the arithmetic in \(\Z_2\times \Z_2\), the three
edges impinging on \(v\) have different colors.  We have argued the
following.

\begin{lemma}\mylabel{WhenVecValidLem} If \(T\) is a finite, binary
tree with at least two leaves and \(\mathbf c\) is a color vector
for \(T\), then the coloring of \(T\) determined by \(\mathbf c\)
will be a proper, edge 3-coloring of \(T\) if and only if the
identity in \(\Z_2\times \Z_2\) is not used in the coloring of \(T\)
determined by \(\mathbf c\).  \end{lemma}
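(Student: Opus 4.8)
The plan is to reduce the global notion of a proper edge 3-coloring to a purely local condition at the internal (degree 3) vertices, and then to settle that local condition using the arithmetic of $\Z_2\times\Z_2$ highlighted in the remarks preceding the lemma. I would take the coloring referred to in the statement to be the one supplied by Lemma \ref{ColorComputLem}, so that the sum-zero condition at every degree 3 vertex is available throughout.

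First I would observe that the only vertices of $T$ other than the degree 3 vertices are the leaves and the root, each of which is incident to a single edge, so no properness constraint arises at a degree 1 vertex. Hence the coloring determined by $\mathbf c$ is a proper edge 3-coloring exactly when, at every degree 3 vertex $v$, the three edges impinging on $v$ receive distinct colors lying in the three nonzero elements of $\Z_2\times\Z_2$. I would also record the bookkeeping fact that, since $T$ has at least two leaves, the vertex $\emptyset$ has degree 3; consequently every edge of $T$ (leaf edge, root edge, or internal edge) is incident to at least one degree 3 vertex. This is the only place the hypothesis ``at least two leaves'' enters, and it is what lets me pass between the local condition and the global statement that $0$ is used nowhere.

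The heart of the argument is the elementary fact in $\Z_2\times\Z_2$ that if $a+b+c=0$, then $a,b,c$ are pairwise distinct if and only if all three are nonzero. Indeed, in characteristic $2$, two of them being equal, say $a=b$, forces $c=a+b=0$; conversely, if one of them, say $c$, equals $0$, then $a+b=0$ gives $a=b$. By Lemma \ref{ColorComputLem} the three colors at any degree 3 vertex sum to $0$, so this dichotomy applies verbatim at each such vertex. (It also shows that three distinct colors summing to $0$ must be exactly the three nonzero colors, so distinctness at a vertex automatically yields a genuine $3$-coloring there.)

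With these pieces in hand both implications are short. For the forward direction, a proper edge 3-coloring has three distinct colors at each degree 3 vertex, so by the dichotomy all three are nonzero there; since every edge touches some degree 3 vertex, $0$ is used on no edge. For the converse, if $0$ is not used then at each degree 3 vertex the three incident colors are nonzero, hence distinct and equal to the full set of nonzero colors, so the coloring is proper and uses only those three colors. I expect no genuine obstacle here: the single point requiring care is the reduction of properness to the degree 3 vertices, together with the remark that, thanks to the two-leaf hypothesis, every edge is adjacent to such a vertex, so that ``no $0$ anywhere'' and ``no $0$ at the internal vertices'' coincide.
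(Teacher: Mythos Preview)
Your proof is correct and follows essentially the same approach as the paper's argument, which appears in the paragraphs immediately preceding the lemma: both rest on the observation that in \(\Z_2\times\Z_2\), three colors summing to zero are pairwise distinct if and only if none of them is the identity. You are somewhat more explicit than the paper in isolating the role of the ``at least two leaves'' hypothesis (namely, that it makes \(\emptyset\) a degree~3 vertex so that every edge, including the root edge, is incident to some degree~3 vertex), but the underlying idea is identical.
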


We say that a color vector for a binary tree is {\itshape
valid}\index{valid!vector!for tree}\index{color!vector!valid for
tree}\index{tree!valid color vector}\index{vector!valid for tree} if
the coloring determined by the vector is a proper, edge 3-coloring.
There is no color vector valid for all trees.  In particular, we
have the following which is only a check of small number of cases.

\begin{lemma}\mylabel{NoFiveColLem} There are five finite, binary
trees with four leaves and there is no color vector that is valid
for all five.  \end{lemma}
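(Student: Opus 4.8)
The plan is to translate ``valid for all five trees'' into a short list of nonzero-sum conditions on the color vector and then show these conditions are jointly unsatisfiable using the arithmetic of $\Z_2\times\Z_2$. By Lemma~\ref{WhenVecValidLem}, a color vector $\mathbf c=(c_1,c_2,c_3,c_4)$ is valid for a given four-leaf tree precisely when none of the edge colors produced by Lemma~\ref{ColorComputLem} equals $0$. By the sum-zero rule at each internal vertex (and since $-x=x$ in $\Z_2\times\Z_2$), the color on the edge just above an internal vertex is the sum of the leaf colors lying below that vertex, and in an ordered four-leaf binary tree every internal vertex subtends a \emph{contiguous} block of leaves. Hence the edge colors occurring across the five trees are exactly the leaf colors $c_i$, the two-block sums $c_1+c_2,\ c_2+c_3,\ c_3+c_4$, the three-block sums $c_1+c_2+c_3$ and $c_2+c_3+c_4$, and the root color $c_1+c_2+c_3+c_4$.

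First I would record what validity for all five trees forces. The leaf edges give $c_i\neq 0$ for each $i$, so every $c_i$ lies in $\{1,2,3\}$. The two-leaf blocks give $c_1+c_2\neq 0$, $c_2+c_3\neq 0$, $c_3+c_4\neq 0$, i.e.\ $c_1\neq c_2$, $c_2\neq c_3$, $c_3\neq c_4$ (since $x+y=0$ iff $x=y$). The three-leaf blocks give $c_1+c_2+c_3\neq 0$ and $c_2+c_3+c_4\neq 0$, and the root edge gives $c_1+c_2+c_3+c_4\neq 0$. The block $\{1,2,3\}$ is subtended in the two trees whose first split isolates the last leaf, and $\{2,3,4\}$ in the two trees whose first split isolates the first leaf, so all of these conditions genuinely arise among the five trees.

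Next I would exploit the key feature of the three nonzero elements of $\Z_2\times\Z_2$: any two distinct ones sum to the third. Thus, with $c_1,c_2,c_3$ nonzero and $c_1\neq c_2$, the element $c_1+c_2$ is the remaining nonzero color, so $c_1+c_2+c_3=0$ exactly when $c_3=c_1+c_2$. Ruling this out forces $c_3\in\{c_1,c_2\}$, and then $c_2\neq c_3$ yields $c_3=c_1$. The symmetric argument applied to $c_2,c_3,c_4$ (using $c_2+c_3+c_4\neq 0$ and $c_4\neq c_3$) yields $c_4=c_2$. Substituting gives $c_1+c_2+c_3+c_4=c_1+c_2+c_1+c_2=0$, contradicting the root condition. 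Therefore no color vector is valid for all five trees.

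The one point requiring care, and the place I expect the only real friction, is the bookkeeping in the second paragraph: I must confirm that each nonzero condition I invoke actually appears in at least one of the five trees, so that validity for all five truly implies all of them. Once that is verified the $\Z_2\times\Z_2$ arithmetic is immediate. If the structural deduction reads too tersely for the intended ``check of a small number of cases,'' a mechanical fallback is to enumerate the $3^4$ vectors with nonzero entries and exhibit, for each, a tree on which Lemma~\ref{WhenVecValidLem} fails; but the Klein-four-group argument above makes this brute force unnecessary.
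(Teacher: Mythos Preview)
Your argument is correct. The enumeration of contiguous-block sums is accurate (each of $c_1+c_2$, $c_2+c_3$, $c_3+c_4$, $c_1+c_2+c_3$, $c_2+c_3+c_4$, and the full sum does occur as an internal-edge color in at least one of the five trees), and the Klein-four deduction forcing $c_3=c_1$ and $c_4=c_2$, hence root color zero, is clean and complete.

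The paper itself offers no argument beyond declaring the lemma ``only a check of a small number of cases,'' so your route is not so much different as simply more informative. What you gain is an explanation of \emph{why} the obstruction exists---it is the $\Z_2\times\Z_2$ identity that three consecutive distinct nonzero colors must sum to zero---rather than a bare verification. Your fallback brute-force enumeration is exactly what the paper gestures at; your structural argument is a strict improvement and needs no apology. The one stylistic suggestion: your third paragraph already acknowledges the bookkeeping concern, but since you have in fact verified that every listed condition arises (the two three-leaf blocks come from the trees with first split $\{1,2,3\}\mid\{4\}$ and $\{1\}\mid\{2,3,4\}$ respectively, and the pairwise sums from the remaining shapes), you could simply state this and drop the hedging.
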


There are color vectors that are valid for no trees.  We say that a
color vector is {\itshape acceptable}\index{acceptable!color
vector}\index{color!vector!acceptable}\index{vector!color!acceptable}
if there is a finite, binary tree for which it is valid.  In Section
\ref{AcceptColSec} we will prove that a color vector is acceptable
if and only if the vector is not a constant vector and does not sum
to zero.

A finite, binary tree \(T\) with \(n\) leaves gives a way to
parenthesize a sum of \(n\) variables.  A leaf corresponds to a
single (unparenthesized) variable.  If \(T=A\caret B\) where \(A\)
and \(B\) have \(j\) and \(k\) leaves respectively, then \(j+k=n\),
the tree \(A\) parenthesizes the sum of the first \(j\) variables to
give an expression \(E_A\), the tree \(B\) parenthesizes the sum of
the last \(k\) variables to give an expression \(E_B\) and we define
\(T\) to correspond to the expression \((E_A+E_B)\).  The first
sentence of the following is argued inductively and the second
follows from the associativity of the addition in \(\Z_2\times
\Z_2\).

\begin{lemma}\mylabel{RootColorLem} If \(\mathbf c\) is a color
vector for finite, binary tree \(T\), then the root color determined
by \(\mathbf c\) is simply the sum of the colors in \(\mathbf c\).
In particular this color depends only on \(\mathbf c\) and not on
the structure of \(T\).  \end{lemma}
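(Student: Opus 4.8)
The plan is to induct on the size of $T$ via the caret decomposition of Section \ref{TreeIndSec}, which is precisely the inductive argument the statement alludes to. For the base case, take $T$ to be the trivial tree with one leaf: its root edge impinges on that single leaf, so the coloring determined by $\mathbf c=(c_1)$ assigns $c_1$ to the root edge, and the root color is $c_1$, which is the (one-term) sum of the entries of $\mathbf c$. There are no degree-3 vertices here, so nothing further is needed.

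For the inductive step, write $T=A\caret T'$ where $A$ and $T'$ are the left and right subtrees, with $j$ and $k=n-j$ leaves respectively, and split the color vector accordingly as $\mathbf c=(\mathbf c_A,\mathbf c_{T'})$ with $\mathbf c_A=(c_1,\dots,c_j)$ and $\mathbf c_{T'}=(c_{j+1},\dots,c_n)$. The one point that needs care is the following: I will verify that the restriction to the left subtree $T_0\cong A$ of the coloring of $T$ determined by $\mathbf c$ is exactly the coloring of $A$ determined by $\mathbf c_A$. This holds because the restricted coloring still sums to zero at every degree-3 vertex lying inside $A$ and assigns the colors $\mathbf c_A$ to the leaves of $A$; by the uniqueness clause of Lemma \ref{ColorComputLem}, it must therefore coincide with the coloring of $A$ determined by $\mathbf c_A$. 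Consequently the root color of $A$, which is the color carried by the edge from $\emptyset$ to its left child in $T$, equals $s_A:=c_1+\cdots+c_j$ by the inductive hypothesis; the same argument applied to $T_1\cong T'$ shows the edge from $\emptyset$ to its right child carries $s_B:=c_{j+1}+\cdots+c_n$.

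To finish, examine the degree-3 vertex $\emptyset$ of $T$. The three edges impinging on it are the root edge of $T$ (whose color $r$ is the root color we want) together with the two edges to the children carrying $s_A$ and $s_B$. The defining condition of Lemma \ref{ColorComputLem} gives $r+s_A+s_B=0$, and since every element of $\Z_2\times\Z_2$ is its own inverse this yields $r=s_A+s_B=c_1+\cdots+c_n$, completing the induction. The second sentence of the lemma is then immediate: the expression $c_1+\cdots+c_n$ refers only to the entries of $\mathbf c$ and not to the shape of $T$, and the associativity and commutativity of addition in $\Z_2\times\Z_2$ guarantee that this sum is well defined independently of the parenthesization induced by any particular tree. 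The main (and only) obstacle is thus the bookkeeping around the root edge in the standard model together with the subtree-restriction claim, both of which reduce to invoking uniqueness in Lemma \ref{ColorComputLem}; everything else is forced by the degree-3 sum relation and the self-inverse property of $\Z_2\times\Z_2$.
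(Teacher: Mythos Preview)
Your proof is correct and follows exactly the inductive route the paper intends: the paper merely says the first sentence ``is argued inductively'' (via the \(\caret\) decomposition of Section~\ref{TreeIndSec}) and that the second follows from associativity in \(\Z_2\times\Z_2\), and you have supplied precisely that argument in full detail. The only addition you make beyond what the paper hints at is the explicit appeal to uniqueness in Lemma~\ref{ColorComputLem} to identify the restricted coloring on a subtree with the coloring determined by the restricted color vector, which is the right way to make the induction airtight.
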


We now turn to pairs of trees.

\subsection{Coloring binary tree pairs}\mylabel{TreePairColorSec}

Recall that given a pair \((D,R)\) of finite trees it is always
assumed that the trees have the same number of leaves.

If \((D,R)\) is a pair of finite, binary trees with \(n\) leaves
each, and \(\mathbf c\) is a color vector for both \(D\) and \(R\),
then we say that \(\mathbf c\) is {\itshape valid
for}\index{valid!vector!for pair}\index{color!vector!valid for
pair}\index{tree!finite pair!valid color vector}\index{vector!valid
for pair} or {\itshape is a coloring of} \((D,R)\) if it is valid
for both \(D\) and \(R\).  If there is a color vector valid for
\((D,R)\), we say that \((D,R)\) {\itshape has a coloring}.

We now can get a statement that is equivalent to the 4CT.  This
observation is just a variation on statements whose equivalence to
the 4CT follows from Whitney's theorem.

\begin{prop}\mylabel{TreePairEquivThm} The four color theorem is
equivalent to the statement that every pair of finite, binary trees
with the same number of leaves has a coloring.  \end{prop}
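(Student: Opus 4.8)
The plan is to prove the two implications separately, using the dictionary between pairs of trees and maps in \(\mathfrak W\) developed in Sections \ref{WhitMotivSec} and \ref{TreePairToMapSec}, together with Tait's correspondence between proper face \(4\)-colorings and proper edge \(3\)-colorings. The single observation that drives everything is this: for a map \(M\in\mathfrak W\) with associated pair \((D,R)\), an edge \(3\)-coloring of \(M\) valued in the three nonzero elements of \(\Z_2\times\Z_2\) is exactly a pair of colorings of \(D\) and \(R\), each of the type produced in Lemma \ref{ColorComputLem} (the three edge colors meeting at an internal vertex are the three distinct nonzero elements, which sum to zero); moreover these two colorings agree on the leaf edges, since corresponding leaf edges of \(D\) and \(R\) are the two halves of a single edge of \(M\) crossing the equator.

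For the direction ``4CT implies every tree pair has a coloring,'' I would start from an arbitrary pair \((D,R)\) of finite binary trees with the same number of leaves and apply the reverse construction of Section \ref{TreePairToMapSec} to obtain a map \(M\in\mathfrak W\). Being a map on \(S^2\), \(M\) has a proper face \(4\)-coloring by the 4CT, and since \(M\) is cubic and bridgeless (its dual has no loops) Tait's correspondence converts this into a proper edge \(3\)-coloring with colors among the nonzero elements of \(\Z_2\times\Z_2\). Restricting to \(D\) and to \(R\) and reading off the colors on the leaf edges yields a single color vector \(\mathbf c\): it is the \emph{same} vector for both trees by the leaf-agreement observation above, and by Lemma \ref{WhenVecValidLem} it is valid for each of \(D\) and \(R\) because no zero color occurs. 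Thus \(\mathbf c\) is a coloring of \((D,R)\). The one-leaf case is handled directly, as a single-leaf tree has no internal vertex and any color vector is vacuously valid.

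For the converse, ``every tree pair has a coloring implies the 4CT,'' I would invoke the reductions of Section \ref{TwoRedSec}. Eliminating loops and then cutting along separating circuits of length two and three assembles the coloring of an arbitrary triangulation of \(S^2\) from colorings of its prime, irreducible pieces, via the product formula \tref{PrimeProdFormula} and its length-three analogue; so it suffices to color a triangulation \(T\) that is prime and irreducible. By Whitney's theorem such a \(T\) has a Hamiltonian circuit, which serves as the equator and cuts the dual map \(M\) into a pair \((D,R)\) of binary trees with the same number of leaves (Sections \ref{WhitMotivSec} and \ref{DualTriSec}). By hypothesis \((D,R)\) has a valid color vector \(\mathbf c\); the colorings of \(D\) and \(R\) it determines agree on the leaf edges and, by Lemma \ref{RootColorLem}, on the root edge, so they glue to a proper edge \(3\)-coloring of \(M\). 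Since \(M\) is cubic and planar, Tait's correspondence (this direction uses planarity) returns a proper face \(4\)-coloring of \(M\), equivalently a proper vertex \(4\)-coloring of \(T\), and reassembling the pieces yields a \(4\)-coloring of the original triangulation.

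The main point requiring care, rather than a deep obstacle, is the bookkeeping that keeps every constructed object inside \(\mathfrak W\): one must confirm that the reverse construction produces a map whose dual is loopless, so that \(M\) is bridgeless and Tait's theorem applies in both directions, and that the leaf and root colors genuinely match across the equator, since it is exactly this matching that lets a single color vector serve both trees at once. Everything else is the routine verification that a proper edge \(3\)-coloring and a sum-to-zero coloring in the sense of Lemma \ref{ColorComputLem} are the same object.
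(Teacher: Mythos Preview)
Your proposal is correct and follows essentially the same approach as the paper. The paper's proof is extremely terse: it invokes Lemma \ref{RootColorLem} to ensure the root colors match so that the two tree colorings glue to an edge \(3\)-coloring of \(M\), and declares the other direction ``immediate from the construction in Section \ref{TreePairToMapSec}.'' You have simply unpacked what the paper leaves implicit---the reductions of Section \ref{TwoRedSec}, Whitney's theorem, and Tait's correspondence---which were all set up earlier in the paper and taken for granted in its two-sentence proof.
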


\begin{proof} If \((D,R)\) is the pair of trees associated to \(M\in
\mathfrak W\), then a color vector for the pair determines the same
root color for both \(D\) and \(R\) by Lemma \ref{RootColorLem}.
Thus the edge colorings of the two trees will match when the map is
reconstructed.  The other direction is immediate from the
construction in Section \ref{TreePairToMapSec}.  \end{proof}

Using arguments from Section \ref{TwoRedSec}, we can introduce the
word prime.

\begin{prop} The four color theorem is equivalent to the statement
that every prime pair of finite, binary trees with the same number
of leaves has a coloring.  \end{prop}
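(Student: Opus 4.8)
The plan is to deduce this from Proposition \ref{TreePairEquivThm}, which already identifies the 4CT with the colorability of \emph{all} tree pairs; it then suffices to prove that colorability of every prime pair forces colorability of every pair. One direction is free: a prime pair is in particular a pair, so if every pair has a coloring then so does every prime pair. For the converse I would argue by induction on the number \(n\) of leaves that colorability of every prime pair implies colorability of every pair \((D,R)\) with \(n\) leaves. When \(n\) is small enough that no chord is available to be shared by both trees, \((D,R)\) is automatically prime and the hypothesis applies (and these base cases can in any event be checked directly).

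For the inductive step I would pass to the dual polygon picture of Section \ref{DualTriSec}, viewing \((D,R)\) as a pair \((P_1,P_2)\) of triangulations of a common polygon. If \((D,R)\) is prime we are done by hypothesis. Otherwise the map associated to \((D,R)\) fails to be prime, so its dual has a pair of parallel edges; as noted in Section \ref{IntroSettingSec}, such a pair has one edge in each \(P_i\) with the same endpoints, which means \(P_1\) and \(P_2\) share a common diagonal \(\delta\). Since \(\delta\) is present in both triangulations, it cuts the polygon into two smaller polygons and splits each \(P_i\) accordingly, producing two pairs \((P_1',P_2')\) and \((P_1'',P_2'')\) of triangulations of common polygons, hence two genuine tree pairs, each with strictly fewer leaves. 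By the inductive hypothesis each of these has a coloring.

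It remains to glue, and this is where the real content lies. Translating back to color vectors, the coloring of the piece lying below \(\delta\) assigns to \(\delta\) the sum of its leaf colors by Lemma \ref{RootColorLem}, a nonzero element \(a\) of \(\Z_2\times\Z_2\); the piece above \(\delta\) assigns to the leaf representing \(\delta\) a nonzero color \(b\). If \(a=b\) the two vectors already agree along \(\delta\), and their concatenation is valid for both \(D\) and \(R\): validity is checked vertex by vertex (Lemma \ref{WhenVecValidLem}), and every internal vertex of \(D\) or of \(R\) lies wholly in one of the two pieces. If \(a\ne b\) I would recolor one piece by an additive automorphism of \(\Z_2\times\Z_2\) carrying \(a\) to \(b\); such an automorphism preserves validity because it preserves the relation that the three edges at a vertex sum to \(0\), and there are, as observed in Section \ref{TwoRedSec}, exactly two such permutations. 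After this adjustment the colors match along \(\delta\), and the concatenated vector is a single color vector valid simultaneously for \(D\) and for \(R\), completing the induction.

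The main obstacle is this gluing step: one must be certain that the two independently produced colorings can be forced to coincide on the shared diagonal by a legitimate color permutation, and that the concatenation then yields one vector valid for \(D\) and for \(R\) at once. The two facts that make it work are that the color on \(\delta\) is forced to be the sum of the leaf colors below it by Lemma \ref{RootColorLem}, so it is the same value seen from \(D\) and from \(R\), and that the admissible color permutations act as all of \(\mathrm{Aut}(\Z_2\times\Z_2)\cong\mathrm{Sym}\{1,2,3\}\) on the three nonzero colors, so any nonzero \(a\) can be moved to any nonzero \(b\).
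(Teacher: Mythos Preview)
Your argument is correct and is exactly the approach the paper intends: the paper does not write out a proof but simply refers back to Section~\ref{TwoRedSec}, whose cut-and-glue reduction along a pair of parallel edges is precisely your induction step, translated from the triangulation/vertex-coloring picture into the dual tree-pair/edge-coloring picture. Your identification of the shared diagonal with the parallel-edge pair, the use of Lemma~\ref{RootColorLem} to see that the color on that edge depends only on the leaf colors beneath it, and the use of an element of \(\mathrm{Aut}(\Z_2\times\Z_2)\cong S_3\) to match the two pieces are all the right ingredients and fill in faithfully what the paper leaves implicit.
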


\subsection{Signs}

Let \(T\) be a finite, binary tree.  It will be convenient to view
\(T\) as drawn in the plane in a way that agrees with the ordering.
Assume that \(T\) is given a proper, edge 3-coloring from the
non-zero elements of \(\Z_2\times \Z_2\).  Taking into account the
cyclic ordering of the edges around a vertex \(v\) of valence three
and the fact that one edge (the edge to the parent) is
distinguished, there are six possible valid colorings of the edges
that meet at \(v\).  If the colors 1, 2 and 3 are assigned so that
the numbers increase by one as the intervals are visited in the
cyclic order, then \(v\) is said to have {\itshape positive} sign
(or color)\index{sign!at
vertex}\index{tree!vertex!sign}\index{vertex!of tree!sign}.
Otherwise it is said to have {\itshape negative} sign (or color).

In case our definition of positive and negative was not clear, we
show the positive arrangements
\[
\xy
(-6.8,-4); (0,0)**@{-};
(6.8,-4); (0,0)**@{-};
(0,8); (0,0)**@{-};
(-1.5,6)*{\scs1}; (-5.8,-1)*{\scs2}; (5.8,-1)*{\scs3}; (0,-3)*{+};
\endxy \qquad\qquad
\xy
(-6.8,-4); (0,0)**@{-};
(6.8,-4); (0,0)**@{-};
(0,8); (0,0)**@{-};
(-1.5,6)*{\scs2}; (-5.8,-1)*{\scs3}; (5.8,-1)*{\scs1}; (0,-3)*{+};
\endxy \qquad \qquad
\xy
(-6.8,-4); (0,0)**@{-};
(6.8,-4); (0,0)**@{-};
(0,8); (0,0)**@{-};
(-1.5,6)*{\scs3}; (-5.8,-1)*{\scs1}; (5.8,-1)*{\scs2}; (0,-3)*{+};
\endxy 
\]
and the negative arrangements.
\[
\xy
(-6.8,-4); (0,0)**@{-};
(6.8,-4); (0,0)**@{-};
(0,8); (0,0)**@{-};
(-1.5,6)*{\scs1}; (-5.8,-1)*{\scs3}; (5.8,-1)*{\scs2}; (0,-3)*{-};
\endxy \qquad\qquad
\xy
(-6.8,-4); (0,0)**@{-};
(6.8,-4); (0,0)**@{-};
(0,8); (0,0)**@{-};
(-1.5,6)*{\scs2}; (-5.8,-1)*{\scs1}; (5.8,-1)*{\scs3}; (0,-3)*{-};
\endxy \qquad \qquad
\xy
(-6.8,-4); (0,0)**@{-};
(6.8,-4); (0,0)**@{-};
(0,8); (0,0)**@{-};
(-1.5,6)*{\scs3}; (-5.8,-1)*{\scs2}; (5.8,-1)*{\scs1}; (0,-3)*{-};
\endxy 
\]

Thinking of the plus and minus signs as colors gives us a vertex
2-coloring of the vertices of valence 3 in \(T\).  To keep the
number of references to colorings down, we refer to this kind of
coloring as a {\itshape sign assignment}\index{sign assignment!of
tree}\index{tree!sign assignment} on \(T\).  It is convenient to
have a symbol for a sign assignment, so saying that \(\sigma\) is a
sign assignment means \(\sigma\) is a function from the internal
vertices of \(T\) to the set of two symbols \(\{+,-\}\).  We will
use \(T^\sigma\)\index{\protect\(T^\sigma\protect\)} to denote a
tree with sign assignment \(\sigma\).

There are no restrictions on a sign assignment.  In particular two
vertices joined by an edge can be given the same sign, and assigning
the same sign to all vertices of valence 3 is an acceptable example
of an assignment.

We immediately have the following.

\begin{lemma}\mylabel{SignAnd1ColorLem} Given a finite, binary tree
\(T\), a sign assignment on \(T\), and an assignment of a non-zero
color from \(\Z_2\times \Z_2\) to a single edge of \(T\), then there
is a unique, proper, edge 3-coloring for \(T\) that agrees with the
given information.  \end{lemma}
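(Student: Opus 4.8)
The plan is to isolate a single local rule at one internal vertex and then propagate it through \(T\), using only that \(T\) is a tree. The local rule, which I will call \emph{Local Determination}, is a restatement of the six arrangements displayed above: at an internal vertex \(v\), the three incident edges carry, in their counterclockwise cyclic order (parent edge, left-child edge, right-child edge), a cyclic arrangement of the colors \(1,2,3\) whose orientation is prescribed by \(\sigma(v)\) --- increasing for \(\sigma(v)=+\) and decreasing for \(\sigma(v)=-\). Since each of the three arrangements for a fixed sign has a distinct color in each of the three positions, specifying the (non-zero) color of any one of the three edges at \(v\) determines the colors of the other two; moreover all three colors are then non-zero and distinct. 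This single fact drives both halves of the proof.

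For uniqueness I would first note that, since a leaf and the root each have degree \(1\), two distinct edges of \(T\) can meet only at an internal vertex. Because \(T\) is connected, any edge \(e\) can be joined to the distinguished edge \(e_0\) by a sequence \(e_0=f_0,f_1,\dots,f_m=e\) of edges in which \(f_i\) and \(f_{i+1}\) share an internal vertex \(v_i\). If \(\chi\) is any proper edge \(3\)-coloring that is consistent with \(\sigma\) and has \(\chi(e_0)=c_0\), then walking along this sequence and applying Local Determination at each \(v_i\) forces \(\chi(f_{i+1})\) from \(\chi(f_i)\); by induction on \(i\) the color \(\chi(e)\) is completely determined. Hence at most one such coloring can exist.

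For existence I would define \(\chi\) by propagating from \(e_0\) and then check that the result is well defined and valid. The base case is the trivial tree, which has a single edge and no internal vertices; there we simply color that edge \(c_0\) and properness is vacuous. In general the cleanest rigorous route is induction on the number of internal vertices: choose a \emph{cherry}, i.e.\ an internal vertex \(v\) whose two children are leaves (one exists whenever \(T\) has an internal vertex, namely a deepest one), and let \(T'\) be the binary tree obtained by deleting the two leaf edges at \(v\), with \(\sigma\) restricted by forgetting \(\sigma(v)\). If \(e_0\) is not one of the two deleted edges it survives in \(T'\); otherwise Local Determination at \(v\) first converts the datum \(\chi(e_0)=c_0\) into a forced non-zero color on the surviving parent edge \(e_v\), which becomes the new datum on \(T'\). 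Either way the inductive hypothesis yields a unique valid coloring of \(T'\), and one recolors the two cherry edges from \(\sigma(v)\) and the now-known color of \(e_v\); Local Determination guarantees this extension is valid, restores \(\chi(e_0)=c_0\), and matches the restriction argument used for uniqueness.

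The main obstacle is precisely this existence half: one must be sure that propagation never tries to assign two different colors to the same edge. The induction above sidesteps the issue by always reducing to a strictly smaller tree, so no conflict can arise. (Equivalently, one can propagate directly and observe that the only closed walks in the edge-adjacency structure of a tree live inside the triangle of three edges at a single internal vertex, and that Local Determination is self-consistent around such a triangle because \(1,2,3\) occupy a fixed cyclic order; deriving any edge back from either of the other two reproduces its color.) With well-definedness settled, the propagated \(\chi\) uses only colors in \(\{1,2,3\}\) and realizes the \(\sigma(v)\)-arrangement at every internal vertex, so it is a proper edge \(3\)-coloring with the prescribed signs and the prescribed value \(c_0\) on the given edge, completing both existence and uniqueness.
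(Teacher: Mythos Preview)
Your argument is correct. The paper itself gives no proof of this lemma at all --- it simply writes ``We immediately have the following'' after displaying the six sign arrangements --- so your Local Determination principle is exactly the observation the authors regard as self-evident, and your careful propagation and cherry-induction merely make explicit the tree-walk that the paper leaves to the reader.
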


Thus given a proper, edge 3-coloring of a \(T\) as above, we get a
sign assignment for \(T\), and given a sign assignment for such a
\(T\), we get three possible proper, edge 3-colorings that are
consistent with the sign assignment.  The three different colorings
are determined by starting with the three possible different colors
for one chosen edge.

\subsection{Small examples}

The trees shown below are the only finite, binary trees with three
leaves.

\[
\xy
(0,0); (-4,4)**@{-}; (-12,-4)**@{-}; 
(-8,0); (-4,-4)**@{-}; (-4,4); (-4,8)**@{-}; (-5,8)*{*};
(-5.5,4.5)*{v}; (-9.5,1)*{u};
\endxy
\qquad\qquad\qquad
\xy
(0,0); (4,4)**@{-}; (12,-4)**@{-}; 
(8,0); (4,-4)**@{-}; (4,4); (4,8)**@{-}; (3,8)*{*};
(6,4)*{w}; (9.5,1)*{x};
(0,-8)*{};
\endxy
\]

The following is the result of checking a small number of cases.

\begin{lemma}\mylabel{ColorX0Lem} The only color vectors that are
simultaneously valid for the pair of trees shown above have the form
\((a,b,a)\) where \(a\ne b\).  In all valid cases, the signs on
vertices \(u\) and \(v\) will be equal, and will be the negatives of
the signs on vertices \(w\) and \(x\).

Conversely, if a color vector is valid for the left tree shown above
and makes the signs of vertices \(u\) and \(v\) the same, then the
vector is of the form \((a,b,a)\) with \(a\ne b\) and the paragraph
above applies.

Lastly, the color vector \((a,b,a)\) with \(a\ne b\) produces the
color \(b\) at the root edge of either tree.  \end{lemma}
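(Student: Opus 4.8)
The plan is to compute the induced edge colorings on both trees directly from a color vector and then read off the validity constraints and the signs. Write $\mathbf c = (c_1,c_2,c_3)$. The left tree has shape $((1,2),3)$, with $u$ the parent of the first two leaves and $v$ the parent of $u$ and the third leaf; the right tree has shape $(1,(2,3))$, with $x$ the parent of the last two leaves and $w$ the parent of the first leaf and $x$. Using Lemma \ref{ColorComputLem} (the three edge colors at each degree-$3$ vertex sum to $0$), the edge above $u$ receives $c_1+c_2$ and the root of the left tree receives $c_1+c_2+c_3$; symmetrically the edge above $x$ receives $c_2+c_3$ and the root of the right tree again receives $c_1+c_2+c_3$.

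First I would pin down the valid vectors. By Lemma \ref{WhenVecValidLem} validity is equivalent to the non-appearance of $0$, so the left tree requires $c_1,c_2,c_3\neq 0$, $c_1\neq c_2$, and $c_1+c_2\neq c_3$, while the right tree requires $c_1,c_2,c_3\neq 0$, $c_2\neq c_3$, and $c_2+c_3\neq c_1$. The only arithmetic input is the elementary fact that in $\Z_2\times\Z_2$ three nonzero elements sum to $0$ exactly when they are the three \emph{distinct} nonzero elements, and that nonzero elements summing to $0$ are automatically distinct. The simultaneous conditions then collapse to: $c_1,c_2,c_3$ nonzero, $c_1\neq c_2$, $c_2\neq c_3$, and $c_1+c_2+c_3\neq 0$. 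Were $c_1,c_2,c_3$ pairwise distinct they would exhaust the nonzero elements and sum to $0$, which is excluded; since the only repetition compatible with $c_1\neq c_2\neq c_3$ is $c_1=c_3$, every simultaneously valid vector has the form $(a,b,a)$ with $a\neq b$, and conversely each such vector is immediately seen to meet all the inequalities.

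For the signs I would record at each vertex the triple $(p,\ell,r)$ of (parent, left-child, right-child) colors and use that, since $\{p,\ell,r\}=\{1,2,3\}$ at a valid vertex, the sign is positive exactly when $(p,\ell,r)$ is a cyclic rotation of $(1,2,3)$, i.e.\ an even permutation; hence a cyclic shift of the triple preserves the sign and a transposition of two of its entries reverses it. For $\mathbf c=(a,b,a)$ the triples are $u\colon (a+b,a,b)$ and $v\colon (b,a+b,a)$, which are cyclic shifts of one another, so $u$ and $v$ carry equal signs; likewise $x\colon (a+b,b,a)$ and $w\colon (b,a,a+b)$ are cyclic shifts, so $x$ and $w$ agree; and the triple at $x$ is obtained from the triple at $u$ by swapping the last two entries, so the $x,w$ sign is the negative of the $u,v$ sign. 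This gives the first paragraph of the statement.

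For the converse I would keep the left tree and exploit the dichotomy already established: a vector valid for the left tree alone has $c_3\in\{c_1,c_2\}$. If $c_3=c_1$ the vector is $(a,b,a)$ and the signs at $u,v$ agree by the previous paragraph; if $c_3=c_2$ the triple at $v$ becomes $(a,a+b,b)$, which differs from the triple $(a+b,a,b)$ at $u$ by a single transposition, forcing opposite signs. Thus equal signs at $u$ and $v$ single out the form $(a,b,a)$. The final sentence is immediate from Lemma \ref{RootColorLem}: the root color equals the sum of the vector, and $a+b+a=b$ in $\Z_2\times\Z_2$. The main obstacle is purely bookkeeping — fixing the counterclockwise (parent, left, right) ordering at each of $u,v,w,x$ and verifying the cyclic-shift/transposition behavior of the sign — after which every remaining step is routine $\Z_2\times\Z_2$ arithmetic.
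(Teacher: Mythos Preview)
Your proposal is correct. The paper gives no detailed proof here, stating only that the lemma ``is the result of checking a small number of cases''; your write-up is precisely that case check carried out explicitly, with the pleasant device of tracking the (parent, left, right) triples and using that cyclic shifts preserve the sign while transpositions reverse it, which spares you from enumerating all six arrangements at each vertex.
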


\subsection{Permuting colors}\mylabel{PermColorSec}

Two 3-colorings (of anything, a tree, an \(M\in \mathfrak M\)), that
differ but can be made the same by permuting the colors of one of
the colorings are not different to us in an interesting way.  If a
finite, binary tree \(T\) has a proper, edge 3-coloring and its
derived sign assignment, then permuting the
colors\index{permuting!colors}\index{color!permutation} will either
preserve the sign assignment (if the permutation is even) or negate
it (if the permutation is odd)\index{sign assignment!effect of
permutation}\index{color!permutation!effect on sign assignment}.
Conversely a sign assignment on \(T\) together with the negative of
the sign assignment will produce all six permutations of an edge
3-coloring of \(T\) by running through the three starting colors for
a particular edge for both the sign assignment and its negative.

We can try to ``normalize''
colorings\index{coloring!normal}\index{normal!coloring}
on a finite, binary tree \(T\) by insisting that the color of the
root edge be 1, and that the sign of the unique child of the root be
positive.  However, Lemma \ref{ColorX0Lem} shows that we might need
to consider colorings that violate this normalization if we are to
look for color vectors that are simultaneously valid for pairs of
trees.

Thus if the coloring of a finite, binary tree assigns the color 1 to
the root edge, we say it is {\itshape positive
normal}\index{normal!coloring!positive}\index{positive!normal
coloring} if the sign of the child of the root is positive and
{\itshape negative
normal}\index{normal!coloring!negative}\index{negative!normal
coloring} if the sign of the child of the root is negative.

Note that there is a one-to-one correspondence between sign
assignments of a finite, binary tree and proper, edge 3-colorings of
that tree that use the color 1 for the root edge.  The normalization
and this one-to-one correspondence will allow us to discuss sign
assignments as if they are edge 3-colorings and vice versa.

We apply this to counting colorings of pairs of finite, binary
trees.  Given a coloring of a pair \((D,R)\) of finite, binary
trees, there is a unique way to permute the colors so that the
coloring of \(D\) is positive normal.  Thus the number of colorings
of \((D,R)\) modulo the action of \(S_3\) on the colors is the
number of colorings in which the coloring of \(D\) is positive
normal.  When we count colorings of a pair of finite, binary trees,
we will always be counting modulo the action of \(S_3\) on the
colors.  Statements that colorings are unique will be made with this
convention.

\subsection{The dual view}\mylabel{DualColSec}

The material above applied to triangulated polygons is presented in
\Gravier.  In the setting of trees, the driving data is an edge
coloring, but for triangulated polygons it is easier to start with
vertex colorings.  Let \(T\) be a triangulation of a polygon.  Then
\((T,c)\) is a coloring of the vertices.  This corresponds to the
``color cycle'' of Section \ref{IntroAssocSec}.  

From \((T,c)\), we get \((T,d)\) where \(d\) is a coloring of the
internal edges of \(T\) where each edge is colored by the difference
of the colors of its endpoints.  Given an edge coloring \((T,d)\)
and a color of a single vertex, we recover the vertex coloring
\((T,c)\) from which \((T,d)\) is derived.

From \((T,d)\), we get \((T,\sigma)\) where \(\sigma\) is a sign
assignment or signature giving a plus or minus sign to each
triangular face depending on the cyclic order of the colors of the
edges around the face.  Given a signature \((T,\sigma)\) and a color
of a single edge, we recover the edge coloring \((T,d)\) from which
\((T,\sigma)\) is derived.  If instead of one edge color, we are
told the colors of two adjacent vertices, we recover not only
\((T,d)\) but also the vertex coloring \((T,c)\).

From \((T,\sigma)\), we get \((T,v)\) where \(v\) is a ``valuation''
on the edges taking values in \(\{0,1\}\) and an edge is assigned 0
if the signs of the two triangles that share the edge are the same,
and 1 otherwise.  We build back to \((T,\sigma)\), \((T,d)\) and
\((T,c)\) given a valuation \((T,v)\) and, respectively, the sign of
one triangle, the colors of two edges that share a vertex, or the
colors of three vertices of a triangle.

A remark from \Gravier{} is worth isolating as a separate statement.

\begin{lemma}\mylabel{DualRigidLem} If \(T\) is a triangulation of a
polygon and \((T,c)\) and \((T,v)\) are a vertex coloring and its
derived edge valuation, then \(c\) uses only three colors if and
only if all values of \(v\) equal 1.  \end{lemma}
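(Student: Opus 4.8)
The plan is to reduce the whole statement to a single local computation at each internal edge and then propagate the conclusion across the triangulation. First I would record the standing consequences of properness (which is implicit in the very definition of $\sigma$ and $v$): the three vertices of each triangle receive distinct colors, so the three edge colors of a triangle, being the pairwise sums of its vertex colors, are pairwise distinct, nonzero, and sum to $0$ in $\Z_2\times\Z_2$; hence they are exactly the three nonzero colors $1,2,3$, and the sign of each face is well defined. I would also note two symmetries: the edge coloring $d$ — and therefore $\sigma$ and $v$ — is unchanged if all vertex colors are shifted by a fixed element of $\Z_2\times\Z_2$ (since $a+b$ is invariant under $a\mapsto a+t$, $b\mapsto b+t$), and applying a permutation of the color set to $c$ flips the sign of every face simultaneously, hence preserves whether two given faces have equal or opposite signs.

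The crux is the following local claim: if two triangles share an internal edge whose endpoints are colored $a,b$, and their remaining third vertices are colored $x$ and $y$, then the two faces carry opposite signs, equivalently $v=1$ on the shared edge, precisely when $x=y$. To prove it I would use the two symmetries above to normalize $a=0$, $b=1$, which forces $x,y\in\{2,3\}$. When $x=y$, the counterclockwise sequences of the three edge colors of the two faces turn out to be reverses of one another, so the faces have opposite signs; when $x\ne y$, i.e.\ $\{x,y\}=\{2,3\}$, direct inspection of the two orderings shows the faces have the same sign. I expect this to be the main obstacle, because it requires keeping the counterclockwise conventions for the two faces consistent even though they lie on opposite sides of the shared edge; once the conventions are pinned down the verification is a short finite check.

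With the local claim in hand both implications follow quickly. If $c$ omits a color $s$, then at any internal edge with endpoints $a,b$ both third vertices avoid $a$, $b$, and $s$; since these are three distinct colors, the only remaining color is forced, so $x=y$ and $v=1$, and as the edge was arbitrary every value of $v$ is $1$. Conversely, if every $v=1$, the claim shows that across each internal edge the two third vertices share a color, so the two incident triangles use the same three colors and omit the same fourth color. Since the dual graph of a triangulation of a polygon is a tree, and in particular connected, all triangles omit one common color $s$; and since every vertex lies in some triangle, $s$ is used by no vertex, so $c$ uses only three colors. This establishes the equivalence.
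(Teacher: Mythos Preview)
Your proposal is correct and follows essentially the same strategy as the paper: the paper's proof says ``this follows inductively from the fact that it holds for squares'' and displays the two square cases (third vertices equal versus distinct), which is exactly your local claim, and your propagation via connectedness of the dual tree is the explicit form of the paper's ``inductively.'' One small imprecision: a color permutation does not always flip every sign (even permutations preserve signs), but the conclusion you actually use---that the equal/opposite-sign relation is preserved---is correct, so the argument stands.
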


\begin{proof} This follows inductively from the fact that it holds
for squares.  The figure below holds all the relevant information.
In it \(\alpha=x-y\), \(\beta=y-z\), \(\gamma=x-z\) are all in
\(\Z_2\times \Z_2\) and \(x\), \(y\), \(z\) and \(w\) are all
different.  \end{proof}

\[
\xy
(0,0); (15,15)**@{-}; (15,0)**@{-}; (0,0)**@{-}; (0,15)**@{-};
(15,15)**@{-}; (-1,-1)*{x};  (-1,16)*{y}; (16,16)*{z}; (16,-1)*{y};
(-2,7.5)*{\alpha}; (7.5,-2)*{\alpha}; (7.5,17)*{\beta}; (17,7.5)*{\beta};
(6.5,8.5)*{\gamma}; (11,9)*{1}; (11,4)*{+}; (4,11)*{-};
\endxy
\qquad\qquad\qquad
\xy
(0,0); (15,15)**@{-}; (15,0)**@{-}; (0,0)**@{-}; (0,15)**@{-};
(15,15)**@{-}; (-1,-1)*{x};  (-1,16)*{y}; (16,16)*{z}; (16,-1)*{w};
(-2,7.5)*{\alpha}; (7.5,-2)*{\beta}; (7.5,17)*{\beta}; (17,7.5)*{\alpha};
(6.5,8.5)*{\gamma}; (11,9)*{0}; (11,4)*{-}; (4,11)*{-};
\endxy
\]

\section{Colored rotations and colored paths}\mylabel{MainColRotSec}

From Proposition \ref{TreePairEquivThm}, we know that we want to color
pairs of trees.  It should be easier to color pairs of trees if the
structures of the two trees are closely related.  In our setting,
the most closely related trees pairs are those that are the
endpoints of an edge in an associahedron.  Thus we start with such
pairs and build from there.

\subsection{Colored rotations}\mylabel{ColRotSec}

We assume a color vector that is valid for both trees shown in
\tref{CRot}.  Note that these will be two trees that are connected
by an edge in some associahedron.  The left-right order on the
leaves of the two trees matches, each in left-right order, leaves of
\(D\) to the left of \(u\) in the two trees, leaves of \(A\) to
those of \(A'\), leaves of \(B\) to those of \(B'\), leaves of \(C\)
to those of \(C'\) and leaves of \(D\) to the right of \(u\) in the
two trees.  It follows that the three colors at \(u00\), \(u01\) and
\(u1\) in the left tree, equal in that order the colors at \(u0\),
\(u10\) and \(u11\) in the right tree.  From Lemma \ref{ColorX0Lem},
the signs at \(u0\) and \(u\), the pivot vertices of \(\rot u\), in
the left tree are equal.  Let \(\delta\) be this common sign.  Lemma
\ref{ColorX0Lem} also says that the signs at \(u\) and \(u1\), the
pivot vertices of \(\rot{\overline u}\), in the right tree are also
equal, and if \(\epsilon\) is this common sign, then
\(\epsilon=-\delta\).  This is illustrated below.
\mymargin{CRot}\begin{equation}\label{CRot}
\xy
(0,-6); (12,6)**@{-}; (18,0)**@{-};
(6,0); (12,-6)**@{-};
(-2,-8)*{A}*\cir<8pt>{}; (-1,-3)*{\scs u00};
(14,-8)*{B}*\cir<8pt>{}; (13,-3)*{\scs u01};
(20,-2)*{C}*\cir<8pt>{}; (19,3)*{\scs u1}; (4,1)*{\scs u0};
(12,8.5)*{D}*\cir<8pt>{}; (9,5)*{\scs u};
(12,3)*{\delta};
(6,-3)*{\delta};
\endxy
\qquad 
\xymatrix{
{}\ar[rr]^{\rot{u}}&&{}
}
\qquad
\xy
(0,-6); (-12,6)**@{-}; (-18,0)**@{-};
(-6,0); (-12,-6)**@{-};
(2,-8)*{C'}*\cir<8pt>{}; (1,-3)*{\scs u11};
(-14,-8)*{B'}*\cir<8pt>{}; (-13,-3)*{\scs u10};
(-20,-2)*{A'}*\cir<8pt>{}; (-19,3)*{\scs u0};  (-4.5,1)*{\scs u1};
(-12,8.5)*{D}*\cir<8pt>{}; (-15,5)*{\scs u};
(-12,3)*{\epsilon};
(-6,-3)*{\epsilon};
\endxy
\end{equation}

We call the rotation illustrated in \tref{CRot} a {\itshape signed
rotation}\index{rotation!signed}\index{signed!rotation}.  This is
the admissible transplantation of \Kryu{} and is
dual to the signed diagonal flip of \EliaOne.  The data for
such a rotation can be confined to the sign assignment of the two
trees and does not need to include either an edge coloring or a
color vector.  The requirement on the sign assignments is that the
signs of the relevant pivot vertices in the two trees are as shown
above with \(\epsilon=-\delta\), that the isomorphisms from \(A\),
\(B\) and \(C\) to, respectively, \(A'\), \(B'\) and \(C'\) preserve
signs on all non-root vertices, and that the signs in the common
subtree \(D\) in the two trees are identical.  It follows from the
remaining provisions of Lemma \ref{ColorX0Lem} that if there is a
signed rotation between two trees, then there is a color vector
valid for the two trees.

As before, we use \(\rot{u}\)\index{\protect\(\rot{u}\protect\)} to
denote the signed rotation illustrated above.  We think of
\(\rot{u}\) as a function whose value on the left tree in
\tref{CRot} is the right tree.

Extracting more from Lemma \ref{ColorX0Lem}, if there is a tree
\(T\) with sign assignment as illustrated by the left tree in
\tref{CRot} with signs on \(u\) and \(u0\) equal, then the rotation
\(\rot{u}\) can be applied to \(T\) to give the tree illustrated by
the right tree in \tref{CRot}.  In such case, we say that
\(\rot{u}\) is a {\itshape
valid}\index{valid!rotation}\index{rotation!valid} rotation for
\(T\).  Similarly, we can say that \(\rot{\overline u}\) is valid
for a tree as illustrated by the right tree in \tref{CRot}.

The following is contained in the discussion above.

\begin{lemma}\mylabel{ColoredEdgeLem}  Let \(T\) and \(S\) be
vertices in some associahedron \(A_d\) connected by an edge.  Let
\(\rot u\) be the rotation (unsigned) taking \(T\) to \(S\).  Let
\(\mathbf c\) be a color vector valid for \(T\) and give \(T\) the
sign assignment derived from \(\mathbf c\).  Then
the following are equivalent.
\begin{enumerate}
\item The rotation \(\rot u\) is valid for \(T\) as a signed
rotation.
\item The color vector \(\mathbf c\) is also valid for \(S\).
\end{enumerate}
\end{lemma}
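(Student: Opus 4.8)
The plan is to reduce everything to the three-leaf analysis of Lemma \ref{ColorX0Lem} by isolating the part of the coloring that the rotation actually disturbs. First I would record the locality of $\rot u$: passing from $T$ to $S$ leaves the subtrees $A$, $B$, $C$ and the root-containing subtree $D$ of \tref{CRot} intact, and the left-right leaf correspondence matches each entry of $\mathbf c$ on the leaves of $A$, $B$, $C$ to the same entry on $A'$, $B'$, $C'$. By Lemma \ref{RootColorLem} the color of the edge above any vertex is the sum of the leaf colors beneath it, so the induced colorings of $A$, $B$, $C$, $D$ are identical in $T$ and in $S$; in particular the three incoming colors $c(u00)$, $c(u01)$, $c(u1)$ (the root colors of $A$, $B$, $C$) are the same triple whether we read them in $T$ or in $S$. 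Consequently the only edge whose color can differ between the two trees is the internal edge at the lower pivot: it carries $c(u00)+c(u01)$ in $T$ and $c(u01)+c(u1)$ in $S$, while the edge above $u$ carries the common total $c(u00)+c(u01)+c(u1)$ in both.

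This identifies a single three-leaf subproblem. The triple $\bigl(c(u00),c(u01),c(u1)\bigr)$ is exactly a color vector for the three-leaf tree sitting at $u$ in $T$, and the same triple is a color vector for the three-leaf tree at $u$ in $S$; these are precisely the two trees appearing in Lemma \ref{ColorX0Lem}. Since $\mathbf c$ is valid for $T$, every edge inside $A$, $B$, $C$, $D$ is nonzero, so validity of $\mathbf c$ for $T$ reduces to validity of this triple for the left three-leaf tree, and validity of $\mathbf c$ for $S$ reduces to validity of the same triple for the right three-leaf tree. The signs derived at the pivots $u$ and $u0$ of $T$ are likewise computed entirely from this triple.

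With the reduction in hand, both implications are immediate from Lemma \ref{ColorX0Lem}. For $(1)\Rightarrow(2)$: if $\rot u$ is a valid signed rotation then by definition the signs at the pivots $u$ and $u0$ agree, so the converse clause of Lemma \ref{ColorX0Lem} forces the triple to have the form $(a,b,a)$ with $a\ne b$; such a triple is valid for both three-leaf trees, so the one changed edge of $S$ is nonzero, and since all other edges of $S$ already agree with the nonzero edges of $T$, the vector $\mathbf c$ is valid for $S$. For $(2)\Rightarrow(1)$: if $\mathbf c$ is valid for $S$ then the triple is valid for both three-leaf trees, so the first clause of Lemma \ref{ColorX0Lem} gives the form $(a,b,a)$ and equal signs on the pivots $u$ and $u0$, which is exactly the condition that $\rot u$ is valid for $T$ as a signed rotation. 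The only step needing genuine care is the locality claim of the first paragraph, namely pinning down that validity for $T$ and for $S$ differ in exactly one edge and that everything else is transported unchanged by the leaf correspondence; once that is secured, Lemma \ref{ColorX0Lem} does the rest.
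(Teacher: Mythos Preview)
Your argument is correct and follows the same route as the paper: the paper's proof is simply the discussion preceding the lemma, which establishes the locality of the rotation (the leaf correspondence carries the colors of $A$, $B$, $C$, $D$ unchanged, so only the internal edge at the pivot can change) and then invokes both clauses of Lemma~\ref{ColorX0Lem} on the resulting three-leaf triple. Your write-up is somewhat more explicit about the reduction step, but the content and the key lemma are identical.
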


If a finite, binary tree \(T\) with a sign assignment is given and
\(\rot{v}\) is a rotation valid for \(T\), then \(T\rot{v}\) will
denote the result of the rotation.

If \(w\) is a word in rotation symbols and if \(T\) is a finite,
binary tree with a sign assignment, then \(Tw\) will be defined
inductively as \((Tw')\rot{v}\) if \(w=w'\rot{v}\) , if \(Tw'\) is
defined, and if \(\rot{v}\) is valid for \(Tw'\).  In such case we
say that \(w\) is a {\itshape valid path of signed
rotations}\index{valid!path of rotations}\index{rotation!valid
path}\index{path!valid!of rotations} for \(T\).  Note that for \(w\)
to be be valid for \(T\), every prefix of \(w\) must be valid for
\(T\).  We use the word {\itshape path} since the movement from tree
to tree using the symbols that make up \(w\) travels along a path of
edges in an associahedron.

We have the following.

\begin{prop}\mylabel{RotChainProp} If \(T\) is a finite, binary tree
with a sign assignment, and \(w\) is a path of signed rotations that
is valid for \(T\), then there is a color vector that is valid for
the pair \((T,Tw)\).  \end{prop}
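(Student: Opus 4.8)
The plan is to induct on the length of the path $w$, carrying along a single color vector that simultaneously realizes the given sign assignment and remains valid at every intermediate tree. The engine driving each step will be Lemma~\ref{ColoredEdgeLem}, which converts the signed-rotation condition on one tree into the validity of a color vector on the neighboring tree.

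First I would use Lemma~\ref{SignAnd1ColorLem} to fix a color vector $\mathbf c$ realizing the given sign assignment $\sigma$ on $T$: assigning, say, the color $1$ to the root edge produces a unique proper edge 3-coloring whose derived sign assignment is $\sigma$, and reading off the leaf colors gives such a $\mathbf c$. This $\mathbf c$ is valid for $T$ and induces $\sigma$ on $T$.

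Next, write $w=\rot{v_1}\rot{v_2}\cdots\rot{v_k}$, let $w_i$ be its prefix of length $i$, and set $T_i:=Tw_i$ with the sign assignment $\sigma_i$ obtained by transporting $\sigma$ along $w_i$ (so $T_0=T$ and $\sigma_0=\sigma$). I would prove by induction on $i$ the strengthened statement that the \emph{fixed} vector $\mathbf c$ is valid for $T_i$ and that the sign assignment it induces on $T_i$ is exactly $\sigma_i$. The base case $i=0$ is the previous paragraph. For the inductive step, the hypothesis that $w$ is a valid path means $\rot{v_{i+1}}$ is a signed rotation valid for $T_i^{\sigma_i}$; since by the inductive hypothesis $\sigma_i$ is precisely the sign assignment derived from $\mathbf c$ on $T_i$, Lemma~\ref{ColoredEdgeLem} (with $T_i$ in the role of $T$, $T_{i+1}$ in the role of $S$, and $\rot{v_{i+1}}$ in the role of $\rot u$) yields that $\mathbf c$ is valid for $T_{i+1}$ as well. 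Taking $i=k$ then produces a vector valid for both $T$ and $Tw$, hence valid for the pair $(T,Tw)$, which is the assertion.

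The main obstacle is the second clause of the inductive step: verifying that the sign assignment $\mathbf c$ induces on $T_{i+1}$ is the transported assignment $\sigma_{i+1}$, and not merely \emph{some} valid assignment. This is exactly where I would lean on Lemma~\ref{ColorX0Lem} together with the locality of the sign computation. Away from the two pivot vertices the coloring is unchanged under the rotation: the blocks of leaves feeding the subtrees $A$, $B$, $C$ in \tref{CRot} match in left-right order and $D$ is literally common to both trees, so every internal edge color --- and hence every sign --- inside $A,B,C,D$ is preserved. At the pivots, Lemma~\ref{ColorX0Lem} forces the common sign $\delta$ of the left configuration to become $-\delta$ on the right, which is precisely the flip built into the definition of the signed rotation. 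Thus the coloring-induced signs on $T_{i+1}$ agree with $\sigma_{i+1}$ everywhere, closing the induction and keeping the combinatorial notion of ``valid signed rotation'' synchronized with the coloring-derived one throughout the entire path.
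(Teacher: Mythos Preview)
Your proof is correct and follows essentially the same approach as the paper, which treats the proposition as an immediate consequence of the preceding discussion: fix a color vector realizing the given sign assignment (via Lemma~\ref{SignAnd1ColorLem}) and then apply Lemma~\ref{ColoredEdgeLem} inductively along the path. Your explicit tracking of the induced sign assignment at each stage is more detailed than the paper's treatment, but it is exactly the content implicit in the paper's remark that the signed rotation data ``can be confined to the sign assignment'' and in the sign-flipping behavior recorded in Lemma~\ref{ColorX0Lem}.
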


We can define the term {\itshape valid} to be applied to a path of
rotation symbols if it is valid for some finite, binary tree with a
sign assignment.  We will show in Section \ref{SignStructSec} that
the validity of a path can be detected directly from the path
without bringing in a tree with a sign assignment to test it on.

\subsection{A converse to Proposition
\ref{RotChainProp}}\mylabel{RotChainConvSec} 

A converse to Proposition \ref{RotChainProp} must be stated
carefully.  A path of signed rotations that is valid for a tree
\(T\) must start with a sign assignment for \(T\) that has at least
two adjacent vertices with the same sign.  A sign assignment for a
tree \(T\) that is proper in the sense that every pair of adjacent
vertices has opposite sign will be called {\itshape
rigid}\index{rigid!sign assignment}\index{sign assignment!rigid}
since it allows no signed rotations.  Rigidity will be discussed
further in Section \ref{RigidSec}.  A sign assignment that is not
rigid will be called {\itshape flexible}\index{flexible!sign
assignment}\index{sign assignment!flexible}.

The following is a rephrasing of the main result of \Gravier.

\begin{thm}\mylabel{GravPayanThm} If \(\mathbf c\) is a color vector
valid for the tree pair \((D,R)\) and the sign assignment derived
from \(\mathbf c\) on either \(D\) or \(R\) is flexible, then there
is a path \(w\) of signed rotations valid for \(D\) so that
\(Dw=R\).  \end{thm}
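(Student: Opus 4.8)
The plan is to prove this in the dual language of triangulated polygons, where it is cleanest, and then translate back. Recall from Section~\ref{DualColSec} that a color vector $\mathbf c$ on a tree corresponds to a coloring of the boundary edges of the dual polygon; since the root-edge color is the sum of the remaining colors (Lemma~\ref{RootColorLem}), the full cycle of boundary colors sums to zero, and their partial sums around the boundary define a vertex coloring $c$ of the polygon that is determined \emph{up to a global translation, independently of the triangulation}. Consequently, whether the coloring uses three or four colors is a property of $\mathbf c$ alone, so by Lemma~\ref{DualRigidLem} rigidity is a property of $\mathbf c$ and not of the particular tree: $\mathbf c$ is flexible on $D$ if and only if it is flexible on $R$, which disposes of the apparent asymmetry in the hypothesis. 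In this dual picture an unsigned rotation is a diagonal flip, and by Lemma~\ref{ColoredEdgeLem} such a flip is a \emph{valid} (signed) rotation exactly when the diagonal it produces joins two vertices of different color. Thus a tree is valid for $\mathbf c$ precisely when the corresponding triangulation has no monochromatic diagonal, and the theorem becomes: any two triangulations of the polygon with no monochromatic diagonal are joined by a sequence of flips passing only through triangulations with no monochromatic diagonal, provided $c$ uses all four colors.

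First I would record the base cases. For polygons with three or four boundary edges the statement is a finite check; the three-leaf case is exactly Lemma~\ref{ColorX0Lem}, which already exhibits the single valid rotation connecting the two trees. Since $\rot{\overline u}=\rot{u}^{-1}$, a valid path may be reversed, so it suffices to connect each of $D$ and $R$ to one common valid triangulation.

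The inductive step proceeds by cutting a common ear. Every triangulation has an ear, i.e.\ a vertex $v$ whose two neighbors are joined by a diagonal; dually, every binary tree has a cherry of two sibling leaves. I would choose a vertex $v$ whose two neighbors carry different colors and whose own color still occurs elsewhere on the boundary, so that the diagonal across $v$ is bichromatic and cutting the ear at $v$ leaves a smaller polygon on which $c$ still uses all four colors; a counting argument produces such a $v$ once the polygon is large enough, the remaining cases being absorbed into the finite base cases. I would then move $D$, and separately $R$, by valid flips until the ear at $v$ is present in each, cut that ear from both, and apply the inductive hypothesis on the resulting smaller polygon with the color vector $\mathbf c'$ obtained by replacing the two colors flanking $v$ by their sum. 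Validity and flexibility of $\mathbf c'$ follow from Lemma~\ref{WhenVecValidLem} together with the choice of $v$, and by Proposition~\ref{RotChainProp} the lifted flips assemble into the desired valid path $w$ with $Dw=R$.

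The hard part is the step ``move $D$ by valid flips until the ear at $v$ is present.'' Creating a prescribed ear is trivial with unsigned flips, but here every intermediate flip must keep its own new diagonal bichromatic, and a greedy flip toward the target can be blocked by a forbidden monochromatic diagonal. This is exactly where flexibility is indispensable: the three-color (rigid) triangulations are isolated vertices of the color graph and are genuinely joined to nothing, so the argument cannot be purely combinatorial in the triangulation and must use the presence of the fourth color to reroute around each blocked flip. Concretely, I expect to need a lemma that in a four-colored triangulation any monochromatic diagonal obstructing the reduction of $v$'s degree can be cleared by a short detour of valid flips --- the polygon analogue of a Kempe-type exchange --- together with a check that the detour strictly decreases a complexity measure (such as the degree of $v$, or the number of diagonals separating its two neighbors) so that the process terminates and never undoes previously matched structure. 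Verifying that such detours always exist under the four-color hypothesis is the crux on which the whole proof rests, and it is precisely the content supplied by \Gravier.
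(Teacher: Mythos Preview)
The paper does not supply its own proof of this theorem: it is introduced with the sentence ``The following is a rephrasing of the main result of \Gravier'' and no argument is given. So there is nothing in the paper to compare your approach against; the paper simply imports the result wholesale.

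Your proposal goes further than the paper by sketching an inductive strategy in the dual polygon language, and the outline is reasonable as far as it goes. In particular, your observation that flexibility depends only on $\mathbf c$ and not on the tree anticipates what the paper later proves as Proposition~\ref{DisjColorsProp} (Trichotomy). But you should be clear that what you have written is not a proof either: you explicitly identify the crux---rerouting around a monochromatic diagonal when trying to force a prescribed ear---and then say this ``is precisely the content supplied by \Gravier.'' That step is not a lemma auxiliary to the argument; it \emph{is} the argument. The ``Kempe-type exchange'' you posit, together with the termination measure guaranteeing it makes progress, is the entire substance of Gravier and Payan's paper, and you have not supplied it. So in the end you and the paper are in the same position: both defer the real work to \Gravier, you after a longer (and accurate) preamble about where the difficulty lies.
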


\subsection{The first signed path conjecture}

The following appears in \Kryu{} and
\EliaOne.  It will be restated in Section
\ref{SecondSPathConjSec} after we verify the claim made above that
the validity of a path can be detected directly from the path.

\begin{conj}\mylabel{SPathConjOne}\index{conjecture!signed
path!first} For every pair of finite, binary trees \((D, R)\) with
the same number of leaves, there is a sign assignment of \(D\) and a
word \(w\) of rotation symbols valid for \(D\) so that \(Dw=R\).
\end{conj}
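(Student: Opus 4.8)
The plan is to deduce the conjecture from the Four Color Theorem together with Theorem \ref{GravPayanThm} (the Gravier--Payan result), the two being joined by an elementary recoloring argument that disposes of the three-colorable case. I should say at the outset that this is the only direction available: since the conjecture implies the 4CT, any argument that genuinely avoided invoking the 4CT would amount to a new proof of it, and that is the real difficulty. So what follows is in truth a reduction showing that the conjecture is no harder than the 4CT, with the honest content of the conjecture being exactly what this reduction sidesteps.

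First I would dispose of the degenerate case. If \(D=R\), then the empty word \(w\) is valid for \(D\) under any sign assignment and satisfies \(Dw=D=R\). This already covers every pair with at most two leaves, since there is a unique binary tree on one leaf and a unique one on two leaves; consequently, whenever \(D\neq R\) the common number of leaves \(n\) satisfies \(n\ge 3\), and so the map \(M\) associated to \((D,R)\) by the construction of Section \ref{TreePairToMapSec} has \(n+1\ge 4\) faces.

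Now assume \(D\neq R\). By the 4CT the map \(M\) has a proper face 4-coloring, which, by the correspondence exploited in the proof of Proposition \ref{TreePairEquivThm}, is precisely the data of a color vector \(\mathbf c\) valid for the pair \((D,R)\). If \(\mathbf c\) already uses all four colors, I pass directly to the final step. If \(\mathbf c\) uses only three colors, then, since the dual of \(M\) is a triangulation and hence contains a triangle, all three colors genuinely occur; and since \(M\) has more than three faces, some color is repeated. Recoloring one face that carries a repeated color with the single unused color keeps the coloring proper (the affected face met only the other three colors) and now uses all four colors. This yields a new valid color vector \(\mathbf c'\) for \((D,R)\). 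By Lemma \ref{DualRigidLem}, read through the dual correspondence of Section \ref{DualColSec}, a color vector whose face coloring uses all four colors cannot have every valuation equal to \(1\), so the sign assignment it induces is flexible on \(D\) or on \(R\) --- exactly the hypothesis of Theorem \ref{GravPayanThm}.

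Finally, with a flexible valid color vector in hand (either \(\mathbf c\) or \(\mathbf c'\)), Theorem \ref{GravPayanThm} furnishes a path \(w\) of signed rotations valid for \(D\) with \(Dw=R\), and the sign assignment of \(D\) derived from that color vector is the required one. The single piece of genuine maneuvering is the recoloring that turns a three-coloring into a four-coloring, and I expect the main obstacle to be conceptual rather than computational: every step is downstream of the 4CT, so a self-contained, 4CT-free construction of the path --- the thing this reduction quietly assumes away --- is where the true weight of the conjecture resides.
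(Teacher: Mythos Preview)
Your argument is correct and follows exactly the route the paper indicates immediately after stating the conjecture: the 4CT gives a valid coloring, and Theorem~\ref{GravPayanThm} converts a flexible coloring into the desired signed path. The paper does not spell out the recoloring step that upgrades a 3-coloring to one using all four colors; it only remarks in the introduction that ``every map with more than 3 faces having a proper, face 3-coloring also has a proper, face 4-coloring that uses all four colors,'' and you have made that step explicit and handled the trivial case \(D=R\) separately, which is a reasonable bit of housekeeping the paper leaves implicit.
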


It follows from Propositions \ref{RotChainProp} and
\ref{TreePairEquivThm} and Theorem \ref{GravPayanThm} that
Conjecture \ref{SPathConjOne} is equivalent to the 4CT.

\subsection{Rigidity basics}\mylabel{BasicRigidSec}

In a rigid sign assignment on a tree, the sign of the vertex
\(\emptyset\) determines the entire sign assignment.  Thus there are
two possible rigid sign assignments for a given tree and we refer to
a rigid sign assignment on a tree as its {\itshape positive rigid}
sign assignment if the vertex \(\emptyset\) is positive, and
{\itshape negative rigid} sign assignment otherwise.  A color
vector will be called positive rigid for a tree \(T\) if it leads
to the positive rigid sign assignment on \(T\), and negative rigid
for \(T\) if it leads to the negative rigid sign assignment on
\(T\).  A color vector is called flexible for \(T\) if it is
neither positive rigid, nor negative rigid for \(T\).

The following shows that the words ``for \(T\)'' are superfluous.

\begin{prop}[Trichotomy]\mylabel{DisjColorsProp} Let \(\mathbf c\)
be an acceptable color vector of length \(n\) and let \(A\) be the
set of all trees with \(n\) leaves for which \(\mathbf c\) is valid.
Then exactly one of the following holds.\index{Trichotomy
Theorem}\index{theorem!Trichotomy}
\begin{enumerate}
\item The vector \(\mathbf c\) is positive rigid for every \(T\) in
\(A\). 
\item The vector \(\mathbf c\) is negative rigid for every \(T\) in
\(A\). 
\item The vector \(\mathbf c\) is flexible for every \(T\) in \(A\).
\end{enumerate}
\end{prop}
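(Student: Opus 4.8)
The plan is to reduce the entire statement to a single observation about the dual view of Section~\ref{DualColSec}: although we present the data as a color vector on a \emph{varying} tree, the object that actually controls rigidity is the vertex coloring $c$ of the \emph{fixed} polygon, and this coloring does not depend on the triangulation at all. Indeed, under the duality each $T\in A$ is a triangulation of one and the same $(n+1)$-gon $P$, the external edges of $T$ cross the boundary edges of $P$, and the colors that $\mathbf c$ (together with the root color $\sum\mathbf c$ supplied by Lemma~\ref{RootColorLem}) assigns to those external edges are exactly the consecutive differences of $c$ around $\partial P$. Hence $c$ is determined by $\mathbf c$ up to a global translation of the colors, \emph{independently of which $T\in A$ was chosen}. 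Consequently any property of $c$ that can be read off without reference to the triangulation is automatically constant over all of $A$, and this is the engine of the proof.

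First I would dispose of the flexible-versus-rigid split. For each $T\in A$ the pair $(T,c)$ is a proper coloring, and by Lemma~\ref{DualRigidLem} its derived signature is rigid precisely when $c$ uses only three colors (equivalently, all valuations equal $1$); under the dictionary of Section~\ref{DualColSec} the signs on the triangles are the signs that $\mathbf c$ assigns to the internal vertices of $T$, with the triangle on the root edge corresponding to the vertex $\emptyset$ and hence governing the positive/negative label. Since $\mathbf c$ is acceptable, $A\neq\varnothing$, so $c$ uses either three or four colors, and by the previous paragraph this count is tree-independent. If $c$ uses four colors then no $(T,c)$ is a three-coloring, so every $T\in A$ is flexible, which is alternative~(c). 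If $c$ uses three colors then every $(T,c)$ is a three-coloring, so every $T\in A$ is rigid, leaving only alternatives~(a) and~(b).

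The hard part will be separating (a) from (b), that is, showing that the \emph{chirality} of a three-coloring cannot change as $T$ ranges over $A$. Here the naive worry is genuine: different $T\in A$ split $\mathbf c$ at different places, so the two child colors of $\emptyset$ — the partial sums $c_1+\cdots+c_m$ and $c_{m+1}+\cdots+c_n$ — appear free to be either of the two non-root colors, which would flip the sign of $\emptyset$. The resolution is that the split point is exactly the apex $P_m$ of the triangle on the root edge, and in a three-coloring this apex is forced: it is joined across triangle edges to both endpoints of the root edge, so it must receive the unique color distinct from those two endpoint colors. Thus $c(P_m)$, and with it both child colors of $\emptyset$, are pinned down by $c$ alone; the orientation of the root triangle, which is precisely whether $\mathbf c$ is positive rigid or negative rigid, is therefore the same for every $T\in A$. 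Since the color count of $c$ is $3$ or $4$, and when it is $3$ the root-triangle orientation is fixed, exactly one of (a), (b), (c) holds and the trichotomy follows. As a cross-check, the uniformity in the flexible case can be re-derived from the fact that a flexible sign assignment always admits a valid signed rotation (Lemma~\ref{ColoredEdgeLem}) together with the result of \Gravier{} that the color graph of $\mathbf c$ is connected whenever it has an edge.
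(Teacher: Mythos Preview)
Your proof is correct and follows essentially the same route as the paper: both pass to the dual polygon, observe that the vertex coloring of the polygon (up to a global translation) is determined by $\mathbf c$ independently of the triangulation, invoke Lemma~\ref{DualRigidLem} to split rigid from flexible via the three-versus-four color count, and then pin down the chirality in the rigid case by noting that the apex of the root-edge triangle is forced to take the unique third color. Your write-up is a bit more explicit about why the polygon coloring is tree-independent and adds the \Gravier{} cross-check, but the substance is the same.
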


\begin{proof}  This is easier to argue from the dual view as laid
out in Section \ref{DualColSec}.

A color vector gives an edge coloring to the tree and thus an edge
coloring to the dual triangulated polygon \(P\).  Note that a rigid
coloring (either positive or negative) corresponds to an edge
coloring on \(P\) that leads to a derived edge valuation that
assigns 1 to all edges.

Given a color \(c\) of a single vertex \(x\), this edge coloring of
\(P\) determines a vertex coloring of \(P\).  Changing the given
color on \(x\) can be viewed as adding an element \(s\) of
\(\Z_2\times \Z_2\) to \(c\) to give the color \(c+s\) on \(x\).
The coloring determined by the edge coloring and the color \(c+s\)
on \(x\) is obtained by adding \(s\) to all the colors obtained from
the color \(c\) on \(x\) and the edge coloring on \(P\).  In
particular, either all possible vertex colorings use three colors or
all possible vertex colorings use 4 colors.  By Lemma
\ref{DualRigidLem}, we see that either \(\mathbf c\) is rigid for
every \(T\) in \(A\) or \(\mathbf c\) is flexible for every \(T\) in
\(A\).

Now assume that \(\mathbf c\) is rigid for \(T\).  The sign of the
triangle containing the top edge of its dual triangulated polygon
\(P\) determines whether \(\mathbf c\) is positive rigid or
negative rigid.  By the arguments of the previous paragraph, we can
fix the color of one vertex on the top edge.  The edge coloring
determines the color of the other vertex of the top edge.  But this
determines the color of the third vertex of the triangle containing
the top edge since the coloring is rigid and only three colors are
used in the vertex coloring.  Thus the sign of that triangle does
not depend on the location of the third vertex and does not depend
on the particular triangulation of \(P\).
\end{proof}

\subsection{Colored associahedra and color
graphs}\mylabel{ColorGraphSec}

If \(A_d\) is the \(d\)-dimensional associahedron, then its vertices
are trees with \(d+2\) leaves.  A color vector \(\mathbf c\) with
\(d+2\) entries induces a coloring on all the vertices of \(A_d\),
although for some of the trees the induced coloring from \(\mathbf
c\) may have at least one zero on an edge and be invalid.  

We will call the pair \((A_d, \mathbf c)\) a colored
associahedron.\index{colored!associahedron}\index{associahedron!colored}
The subset of vertices of \(A_d\) consisting of trees for which
\(\mathbf c\) is valid will be called the colored
vertices\index{colored!vertices}\index{vertices!colored} of \((A_d,
\mathbf c)\).  The set of colored vertices will be empty if the
vector is not acceptable, so we restrict this discussion to
acceptable color vectors.

The subgraph of the 1-skeleton of \(A_d\) spanned by
the colored vertices will be called the {\itshape color
graph}\index{color graph!of color vector}\index{graph!color!of color
vector}\index{color!vector!color graph of}\index{vector!color!color
graph of} of \((A_d, \mathbf c)\).  Since the number of entries in
\(\mathbf c\) specifies the dimension of the associahedron that it
colors, it makes sense to refer to the color graph of \((A_d,\mathbf
c)\) as the color graph of \(\mathbf c\).  Note that from Lemma
\ref{ColoredEdgeLem}, the edges of the color graph of \(\mathbf c\)
correspond exactly to the rotations that are valid for trees validly
colored by \(\mathbf c\).

The following is an immediate consequence of Theorem
\ref{GravPayanThm} and Proposition \ref{DisjColorsProp}.

\begin{thm}\mylabel{GraphAlternateThm} Let \(\mathbf c\) be an
acceptable color vector of length \(d+2\).  Then its color graph in
\(A_d\) is either connected or has no edges.  \end{thm}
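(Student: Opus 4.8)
The plan is to read the dichotomy off directly from the Trichotomy (Proposition \ref{DisjColorsProp}) together with the rephrased Gravier--Payan result (Theorem \ref{GravPayanThm}), using Lemma \ref{ColoredEdgeLem} as the bridge between edges of the color graph and valid signed rotations. Since $\mathbf c$ is acceptable, the set $A$ of colored vertices of $(A_d,\mathbf c)$ is nonempty, and Proposition \ref{DisjColorsProp} tells us that exactly one of three mutually exclusive situations holds: $\mathbf c$ is positive rigid for every tree in $A$, negative rigid for every tree in $A$, or flexible for every tree in $A$. I would split the argument along these cases.

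First I would dispose of the two rigid cases together. If $\mathbf c$ is rigid (positive or negative) for every colored vertex, then the sign assignment derived from $\mathbf c$ on each such tree is by definition rigid, meaning no two adjacent internal vertices share a sign; hence no signed rotation $\rot u$ is valid for any colored tree. By Lemma \ref{ColoredEdgeLem}, the edges of the color graph correspond exactly to the valid signed rotations available at colored vertices, so in this case the color graph has no edges. This is the ``no edges'' alternative of the conclusion.

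The remaining case is that $\mathbf c$ is flexible for every colored vertex, and here I would prove connectedness. Let $D$ and $R$ be any two colored vertices. Because the sign assignment derived from $\mathbf c$ on $D$ is flexible, Theorem \ref{GravPayanThm} produces a path $w$ of signed rotations valid for $D$ with $Dw=R$. It remains to observe that $w$ is literally a walk in the color graph from $D$ to $R$: traversing each signed rotation in $w$ replaces the current tree by one joined to it by an edge of $A_d$, and by Lemma \ref{ColoredEdgeLem} the new tree is again validly colored by $\mathbf c$, hence is again a colored vertex. By induction along $w$, every intermediate tree is a colored vertex and consecutive trees are joined by edges of the color graph, so $D$ and $R$ lie in a common component. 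As $D$ and $R$ were arbitrary, the color graph is connected.

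The argument is essentially a bookkeeping assembly of earlier results, so I expect no serious obstacle; the one point requiring care is confirming that the Gravier--Payan path never leaves the set of colored vertices, which is precisely what the equivalence in Lemma \ref{ColoredEdgeLem} guarantees at each step. A minor remark worth including is that a single colored vertex with no incident edges satisfies both alternatives trivially, so the two cases of the conclusion are not being asserted to be exclusive.
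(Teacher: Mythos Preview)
Your proof is correct and follows essentially the same approach as the paper, which simply states that the theorem is an immediate consequence of Theorem~\ref{GravPayanThm} and Proposition~\ref{DisjColorsProp}. You have spelled out the details more fully, in particular invoking Lemma~\ref{ColoredEdgeLem} explicitly to verify that the Gravier--Payan path stays within the color graph, but this is exactly the intended argument.
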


\section{Groups}\mylabel{GroupSec}

We introduce two groups.  One is widely known as Thompson's group
\(F\), and the other, while known and related to \(F\), is less well
known.  Their relevance is that one way to define their elements is
as equivalence classes of pairs of binary trees.

\subsection{Thompson's group
\protect\(F\protect\)}\mylabel{FDefsSec}

Thompson's group \(F\)\index{Thompson's group
\protect\(F\protect\)}\index{\protect\(F\protect\)}
is a finitely presented, infinite group with many interesting
properties.  It has many closely related faithful representations
and we will add to that list here.  While coming up with a new
representation of \(F\) is almost never interesting or deep, it is
often useful.  We will give what information we need to proceed and
will give references where necessary.

A standard reference for the group is \cite{CFP}.

Recall that given a pair \((D,R)\) of finite trees it is always
assumed that the trees have the same number of leaves.

We start with one of the most common representations.  The group
\(F\) can be defined as the group of self homeomorphisms of the unit
interval \([0,1]\) that are defined by pairs \((D,R)\) of finite,
binary trees.  We will show how to do this.

We associate to each word \(w\) in \(\{0,1\}^*\) an interval
\(i(w)\) in \([0,1]\).  We let \(i(\emptyset)=[0,1]\) and for any
\(w\) with \(i(w)\) defined, we let \(i(0w)\) be the image of
\(i(w)\) under the map \(x\mapsto x/2\) and \(i(1w)\) be the image
of \(i(w)\) under the map \(x\mapsto (x+1)/2\).  Note that \(i(0w)\)
and \(i(1w)\) will each be half as long as \(i(w)\) and their left
endpoints will be \(1/2\) apart in \([0,1]\).

Since \(i(0)\) is the left half of \(i(\emptyset)\) and \(i(1)\) is
the right half of \(i(\emptyset)\), it follows inductively that
\(i(w0)\) is the left half of \(i(w)\) and \(i(w1)\) is the right
half of \(i(w)\).  It will be convenient to let \(m(w)\) be the
midpoint of \(i(w)\).

We apply this by regarding words in \(\{0,1\}^*\) as vertices in
\(\mathcal T\).

Let \(T\) be a finite binary tree.  It follows inductively that if
\(L\) is the set of leaves of \(T\), then \(\{i(v)\mid v\in L\}\) is
a partition of \([0,1]\) into subintervals with disjoint interiors
so that if \(v\) is to the left of \(w\) in \(L\), then \(i(v)\) is
to the left of \(i(w)\) in \([0,1]\).  We denote this partition by
\(P(T)\).  Below is a small illustration.
\[
\xy
(15,15); (15,20)**@{-};
(0,0); (15,15)**@{-}; (30,0)**@{-};
(10,10); (20,0)**@{-}; (15,5); (10,0)**@{-}; 
(0,-3)*{[0,\frac14]};
(10,-3)*{[\frac14,\frac38]};
(20,-3)*{[\frac38,\frac12]};
(30,-3)*{[\frac12,1]};
\endxy
\]

It follows that if \(v\) is any vertex in \(T\), then \(i(v)\) is
the union of all \(i(w)\) where \(w\) is a leaf of \(T_v\).  If this
is applied to \(v0\) and \(v1\), then we have that \(m(v)\) is the
right endpoint of the union of the \(i(w)\) where \(w\) is a leaf of
\(T_{w0}\) and the left endpoint of the union of the \(i(w)\) where
\(w\) is a leaf of \(T_{w1}\).

Let \((D,R)\) be a pair of finite, binary trees having \(n\) leaves
each and let \(I_1, \ldots, I_n\) be the intervals in \(P(D)\) in
left-to-right order in \([0,1]\), and let \(J_1, \ldots, J_n\) be
the intervals in \(P(R)\) in left-to-right order.  The homeomorhpism
\(h(D,R)\) defined by \((D,R)\) will take \(I_i\) affinely onto
\(J_i\), preserving orientation, for each \(i\).  The use of \(D\)
(for domain) and \(R\) (for range) is now explained.

Note that \(h(D,R)\) is piecewise linear, has only slopes that are
integral powers of 2, and that has discontinuities of slope confined
to the dyadic
rationals\index{dyadic!rational}\index{rational!dyadic}
\(\Z[\frac12]\), those rationals of the form \(m/2^n\) for integers
\(m\) and \(n\).  The group \(F\) is the set of all \(h(D,R)\) under
composition.  It is shown in Lemma 2.2 of \cite{CFP} that \(F\)
contains all piecewise linear self homeomorphisms of \([0,1]\) that
have only slopes that are integral powers of 2 and for which the
discontinuities of slope are confined to \(\Z[\frac12]\).

Several pairs of trees can give the same element of \(F\).  We can
declare two pairs to be \(h\)-equivalent if they do.  We multiply by
composing the homeomorphism and we write the composition from left
to right.

We define another relation.

Let \(A\) be a finite, binary tree with \(n\ge i\) leaves and let
\(u\) be the address of the \(i\)-th leaf in the left-right order.
We let \(A\caret^i\) denote the finite, binary tree consisting of
\(A\) and the two extra vertices \(u0\) and \(u1\).  

If \((A,B)\) and \((A',B')\) are two pairs of finite, binary trees
then we write \((A,B)\rightarrow (A',B')\) if there is an \(i\) so
that \(A'=A\caret^i\) and \(B'=B\caret^i\).  We let \(\sim\) be the
equivalence relation generated by \(\rightarrow\).  The equivalence
classes are the elements of \(F\).  It is shown in \cite{CFP}
(immediately after the proof of Lemma 2.2 of \cite{CFP})  that
\(\sim\) is identical to \(h\)-equivalence.  The group \(F\) is
commonly defined as the set of equivalence classes of pairs of
finite, binary trees under the relations just defined.

\subsection{Carets}

It is convenient to call a graph looking like \(\xy(0,-1);
(2,1)**@{-}; (4,-1)**@{-};\endxy\) a {\itshape caret}\index{caret}.
To make it a formal tree, a root and root edge must be added so that
it looks like \(\xy(0,-2); (2,0)**@{-}; (4,-2)**@{-}; (2,0);
(2,2)**@{-};\endxy\).  We will do so because then binary trees
become unions of carets.

In a binary tree containing vertices \(w\), \(u\), \(u0\) and \(u1\)
with \(w\) the parent of \(u\), the four element set \(\{w, u, u0,
u1\}\) forms a caret.  We will sometimes refer to \(u\) as the
{\itshape center vertex}\index{caret!center vertex}\index{center
vertex!caret}\index{vertex!center of caret} of the caret and say
that the caret is {\itshape centered} at \(u\).  In a finite binary
tree, the number of carets is the number of internal vertices.
Every finite, binary tree is now a union of a finite number of
carets if we agree that the trivial tree is a union of zero
carets.\index{tree!binary!as union of carets}\index{binary!tree!as
union of carets}\index{carets!tree union of}

We can give words to the construction \(A\caret^i\) and say that
\(A\caret^i\) is obtained from \(A\) by ``adding a caret to the
\(i\)-th leaf of \(A\).''  We can describe \((A,B)\rightarrow
(A',B')\) by saying that we have added a caret to the \(i\)-th
leaves of both \(A\) and \(B\).

\subsection{Reduced pairs}\mylabel{RedPairsSec}

A pair \((D,R)\) of finite, binary trees representing an element of
\(F\) is said to be {\itshape reduced}\index{reduced!tree
pair}\index{pair!of finite trees!reduced} if there is no pair
\((A,B)\) with \((A,B)\rightarrow (D,R)\).  Should such a pair
\((A,B)\) exist, then passing from \((D,R)\) to \((A,B)\) is called
a {\itshape reduction}\index{reduction!of tree
pair}\index{pair!of finite trees!reduction}.

Reductions are findable.  In a finite, binary tree, let us call a
caret whose two leaves are both leaves of the tree an {\itshape
exposed caret}\index{exposed caret}\index{caret!exposed}.  If
\((A,B)\rightarrow (D,R)\), then there will be an exposed caret in
\(D\) whose leaves occupy the same positions in the left-right order
of the leaves of \(D\) as the leaves of an exposed caret in \(R\).
Conversely, if such ``matching'' exposed carets exist in a pair of
trees \((D,R)\), then there is a pair \((A,B)\) such that
\((A,B)\rightarrow (D,R)\).

Each element of \(F\) is represented by a unique reduced pair (shown
following Lemma 2.2 in \cite{CFP}) and any pair will be reduced to a
reduced pair in the same equivalence class by any random sequence of
reductions taken as far as possible.

\subsection{The multiplication}

\index{Thompson's group \protect\(F\protect\)!multiplication}Since
we compose the homeomorphisms of \([0,1]\) that form the elements of
\(F\) from left to right, it is clear that if \((A,B)\) and
\((B,C)\) are two pairs of binary trees, then \((A,B)(B,C)=(A,C)\)
as elements of \(F\).  We need to deal with the more general
situation \((A,B)(C,D)\).  Using the equivalence relation \(\sim\)
derived in Section \ref{FDefsSec} from the relation \(\rightarrow\),
we replace \((A,B)(C,D)\) by \((A',B')(C',D')\) where
\((A,B)\sim(A',B')\), where \((C,D)\sim(C',D')\), and where
\(B'=C'\).  Now of course the product is \((A',D')\).

The required equality \(B'=C'\) can be achieved because the relation
\(\rightarrow\) allows us to add a caret at any leaf we please and
then, if desired, add more.  In particular, we can get \(B'=C'=B
\cup C\).  (See the remarks at the end of Section
\ref{MathcalTSec}.)

The multiplication is algorithmic (and the word problem is solvable)
in the sense that if \(f\), \(g\) and \(h\) are given, then it is
possible to tell if \(fg=h\) is true.  One computes a pair for
\(fg\) as just described, and then reduces the result and reduces
any pair for \(h\) using the comments in Section \ref{RedPairsSec}.
The equality \(fg=h\) holds if and only if the resulting reduced
pairs are identical.

An illustration is essential here.  Below we show the calculation of
the square of the smallest tree pair that represents the element
\(\rot{\emptyset}\).  We omit the root \(*\) and the root edge to
simplify the figures.
\mymargin{ExamplMult}\begin{equation}\label{ExamplMult}
\begin{split}
&\left(
\xy
(-3,-3); (3,3)**@{-}; (6,0)**@{-}; (0,0); (3,-3)**@{-};
\endxy
\ \ ,\ \ 
\xy
(3,-3); (-3,3)**@{-}; (-6,0)**@{-}; (0,0); (-3,-3)**@{-};
\endxy
\right)
\left(
\xy
(-3,-3); (3,3)**@{-}; (6,0)**@{-}; (0,0); (3,-3)**@{-};
\endxy
\ \ ,\ \ 
\xy
(3,-3); (-3,3)**@{-}; (-6,0)**@{-}; (0,0); (-3,-3)**@{-};
\endxy
\right) \\
= &
\left(
\xy
(-3,-4.5); (6,4.5)**@{-}; (9,1.5)**@{-};
(0,-1.5); (3,-4.5)**@{-};
(3,1.5); (6,-1.5)**@{-};
\endxy
\ \ ,\ \ 
\xy
(0,-1.5); (6,4.5)**@{-}; (12,-1.5)**@{-};
(3,1.5); (5,-1.5)**@{-}; (7,-1.5); (9,1.5)**@{-};
\endxy
\right)
\left(
\xy
(0,-1.5); (6,4.5)**@{-}; (12,-1.5)**@{-};
(3,1.5); (5,-1.5)**@{-}; (7,-1.5); (9,1.5)**@{-};
\endxy
\ \ ,\ \ 
\xy
(3,-4.5); (-6,4.5)**@{-}; (-9,1.5)**@{-};
(0,-1.5); (-3,-4.5)**@{-};
(-3,1.5); (-6,-1.5)**@{-};
\endxy
\right) \\
= &
\left(
\xy
(-3,-4.5); (6,4.5)**@{-}; (9,1.5)**@{-};
(0,-1.5); (3,-4.5)**@{-};
(3,1.5); (6,-1.5)**@{-};
\endxy
\ \ ,\ \ 
\xy
(3,-4.5); (-6,4.5)**@{-}; (-9,1.5)**@{-};
(0,-1.5); (-3,-4.5)**@{-};
(-3,1.5); (-6,-1.5)**@{-};
\endxy
\right)
\end{split}
\end{equation}
The first pair of trees on the first line is related by
\(\rightarrow\) to the first pair of trees on the second line.  The
second pairs of trees on the two lines are similarly related.  See
the end of Section \ref{FDefsSec}.

Regarding trees as unions of carets lets us give more detail to this
calculation.  We pause to discuss set operations on finite binary
trees before returning to the example.

\subsection{Unions, intersections and differences of
trees}\mylabel{UnionIntSec}

We elaborate on remarks made at the end of Section
\ref{MathcalTSec}.

If \(B\) and \(C\) are finite, binary trees in \(\mathcal T\), then
it is easier to work with expressions such as \(B-C\) and \(C-B\) if
we think of \(B\) and \(C\) as unions of carets.  The unions of
carets point of view makes no change when discussing \(B\cup C\) and
\(B\cap C\)\index{trees!binary!union, intersection,
difference}\index{binary trees!union, intersection, difference}, so
there is no disadvantage to this view.

We say that two carets are {\itshape adjacent} if they share an
edge.  If two carets are adjacent, there will be one shared edge and
it will be a root edge of one caret and a leaf edge of the other.
This allows us to talk about a component of a set of carets as the
union of a set of carets whose adjacency graph is maximally
connected.

If we define \(C-B\) to be the set of carets in \(C\) that are not
in \(B\), then \(C-B\) breaks into components with one component for
each leaf of \(B\cap C\) that is not a leaf of \(C\).  If \(v\) is
such a leaf, it is a leaf of \(B\) and the corresponding component
of \(C-B\) will be the subtree \(C_v\).  The subtree \(C_v\) will
share its root edge with the leaf edge of \(B\) (and of \(B\cap C\))
that impinges on \(v\).

Similar remarks apply to components of \(B-C\) and leaves of \(B\cap
C\) that are not leaves of \(B\).

We now have that \(B\cup C\) breaks into three parts, the tree
\(B\cap C\), a finite collection of trees making up \(C-B\) with one
tree for each leaf of \(B\cap C\) that is not a leaf of \(C\), and a
finite collection of trees making up \(B-C\) with one tree for each
leaf of \(B\cap C\) that is not a leaf of \(B\).  Any leaves of
\(B\cap C\) not yet accounted for are leaves of both \(B\) and
\(C\).

We can apply this to the calculation of the product \((A,B)(C,D)\).
This is equal to \((A',D')\) which we get from \((A', B \cup
C)(B\cup C, D')\) chosen so that \((A',B\cup C)\sim(A,B)\) and
\((B\cup C, D')\sim(C,D)\).  Now the components of \(A'-A\) form a
finite collection of trees that are in one-to-one correspondence
with the components of \(C-B\).  Let \(v\) be a leaf of \(B\) that
is not a leaf of \(C\) with component \(C_v\) attached to \(B\cap
C\) at \(v\) and assume \(v\) is the \(i\)-th leaf in the left-right
order of the leaves of \(B\).  Let \(v'\) be the \(i\)-th leaf in
the left-right order among the leaves of \(A\).  Then \(A'_{v'}\) is
the component of \(A'-A\) attached to \(A\) at \(v'\), is the
component of \(A'-A\) corresponding to \(C_v\), and is isomorphic to
\(C_v\).  Similar remarks apply to components of \(D'-D\).

In the calculation illustrated in \tref{ExamplMult}, let \(A, B, C,
D\) be the trees in the first line, reading left to right.  In the
move to the second line, the lone caret in \(C-B\) was attached to
the leftmost leaf of \(A\) and of \(B\), and the lone caret of
\(B-C\) was attached to the rightmost leaf of \(C\) and of \(D\).

\subsection{Another representation}\mylabel{AltRepSec}

Let \(\mathbf D=\Z[\frac12]\cap (0,1)\), the set of dyadic rationals
between 0 and 1.  Each \(x\in \mathbf D\) has a last ``1'' in its
base 2 representation.  The word \(w\) in \(\{0,1\}^*\) in the base
2 representation of \(x\) from the binary point up to but not
including the last 1 has \(m(w)=x\).  Thus \(m:\{0,1\}^*\rightarrow
\mathbf D\) is a bijection.  If we regard \(\{0,1\}^*\) as the
non-root nodes of \(\mathcal T\), if \(\mathcal T\) is given the
infix order, and if \(\mathbf D\) is given the usual order in
\([0,1]\), then \(m\) preserves order.  For example, the rationals
\(n/32\) for \(16\le n\le 21\) are the images, respectively, of
\(\emptyset\), \(1000\), \(100\), \(1001\), \(10\), and \(1010\)
under \(m\).

Since \(\mathbf D\) is dense in \([0,1]\) and an element \(h(D,R)\)
in \(F\) preserves \(\mathbf D\), the element \(h(D,R)\) is
determined by its action on \(\mathbf D\) and thus is also
determined by the corresponding action on the non-root vertices of
\(\mathcal T\).  We can extend the action to be the identity on the
root of \(\mathcal T\) so that we can stop saying ``non-root
vertex.''  Note that these actions are not tree homomorphisms.  We
denote the action given by a pair \((D,R)\) by \(t(D,R)\).

The ``other'' representation of \(F\) is as the set of permutations
\(t(D,R)\) on the vertices of \(\mathcal T\) as \((D,R)\) runs over
all pairs of finite, binary trees.  This will be our working version
of \(F\).  

Since this is the version that we work with and since it is
traditional to write elements of \(F\) as pairs of binary trees, we
will drop the \(t\) and write \((D,R)\) instead of \(t(D,R)\).  This
starts immediately.

\subsection{Actions on finite trees}\index{Thompson's group
\protect\(F\protect\)!action on trees}

We write the action of \((D,R)\) to the right of its arguments.

Noting that both \(D\) and \(R\) are rooted subtrees of \(\mathcal
T\), we see that the statement of the next lemma makes sense.  We
will also argue that it is true.

\begin{lemma}\mylabel{FactsRightLem}  Let  \((D,R)\)  be a  pair  of
finite,  binary  trees  with   the  same  number  of  leaves.   Then
\((D)(D,R)=R\).  Further  if the vertices  in both \(D\)  and \(R\)
are given  the infix  order, then the  restriction of  \((D,R)\) to
\(D\) preserves this order.  Lastly, \((D,R)\) takes leaves to
leaves and internal vertices to internal vertices.  \end{lemma}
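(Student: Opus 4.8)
The plan is to transport everything through the bijection $m\colon\{0,1\}^*\to\mathbf D$ and the homeomorphism $h(D,R)$, using the identification (Section~\ref{AltRepSec}) of the action $(D,R)$ on $\mathcal T$ with the conjugate of $h(D,R)$ by $m$. Two features of these objects carry the whole argument. First, $m$ is an order isomorphism from the non-root vertices of $\mathcal T$, with the infix order, onto $\mathbf D$ with the usual order of $[0,1]$. Second, $h(D,R)$ is a self-homeomorphism of $[0,1]$ assembled from orientation-preserving (positive-slope) affine pieces, hence is strictly increasing. The three assertions of the lemma will all fall out of tracking how $h(D,R)$ moves the distinguished points $m(v)$ as $v$ ranges over the vertices of $D$.

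Granting these features, the order assertion is essentially immediate and in fact holds on all of $\mathcal T$: if a non-root vertex $v$ precedes $w$ in the infix order, then $m(v)<m(w)$, so $h(D,R)(m(v))<h(D,R)(m(w))$ since $h(D,R)$ is increasing, whence $(v)(D,R)$ precedes $(w)(D,R)$ in the infix order. Since the root $*$ is fixed by $(D,R)$ and lies in both $D$ and $R$, restricting to the vertices of $D$ gives the stated order preservation of the map onto the vertices of $R$.

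For the leaves, recall that the $i$-th leaf $v$ of $D$ (in left-right order) has $i(v)=I_i$, so $m(v)$ is the midpoint of $I_i$; because $h(D,R)$ carries $I_i$ affinely and orientation-preservingly onto $J_i$, it sends this midpoint to the midpoint of $J_i$, which is $m(w)$ for $w$ the $i$-th leaf of $R$. Thus $(v)(D,R)=w$, and leaves of $D$ go to leaves of $R$, matched in order. For an internal vertex $v$, the point $m(v)$ is the common endpoint of $i(v0)$ and $i(v1)$, i.e.\ the interior partition point of $P(D)$ shared by the rightmost leaf interval under $v0$ and the leftmost leaf interval under $v1$. As $v$ ranges over the $n-1$ internal vertices of $D$, the injection $v\mapsto m(v)$ lands among the $n-1$ interior partition points of $P(D)$, hence is a bijection onto them, and likewise for $R$. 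Since $h(D,R)$ is an increasing homeomorphism sending each $I_j$ onto $J_j$, it sends the interior point separating $I_j$ from $I_{j+1}$ to the interior point separating $J_j$ from $J_{j+1}$; reading this back through $m$ shows $(v)(D,R)$ is exactly the internal vertex of $R$ with the analogous separating property, so internal vertices go to internal vertices.

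Combining the two bijections with the convention that $(D,R)$ fixes the root $*$ (common to $D$ and $R$), the action carries the vertex set of $D$ bijectively onto that of $R$, which is the statement $(D)(D,R)=R$. I expect the one place needing genuine care, rather than a hard estimate, to be the bookkeeping in the internal-vertex step: verifying that $m(v)$ for internal $v$ is precisely an interior partition point of $P(D)$, and that $h(D,R)$ permutes these points in the expected combinatorial manner. Once that correspondence is pinned down, all three claims are read off simultaneously from the single fact that $h(D,R)$ is an increasing homeomorphism matching the partitions $P(D)$ and $P(R)$ block by block.
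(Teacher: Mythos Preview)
Your proof is correct and follows essentially the same route as the paper's: both identify leaves with midpoints of the partition intervals and internal vertices with the interior partition points, then use that $h(D,R)$ matches the partitions $P(D)$ and $P(R)$ block by block. The only organizational difference is that you derive order preservation directly from the global fact that $m$ is an infix-to-real order isomorphism and $h(D,R)$ is increasing, while the paper instead invokes the combinatorial observation (proven by $\caret$-induction) that in infix order exactly one internal vertex sits between any two consecutive leaves, and reads everything off from that; your counting argument for the internal-vertex bijection replaces this observation but amounts to the same thing.
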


\begin{proof} Since \(h(D,R)\) takes the \(j\)-th interval in
\(P(D)\) affinely onto the \(j\)-th interval in \(P(R)\), we have
that \((D,R)\) takes the \(j\) leaf of \(D\) to the \(j\)-th leaf of
\(R\).  Induction using \(\caret\) shows that, in the infix order of
any finite, binary tree, between any two consecutive leaves there is
a single internal vertex.  This immediately gives the last sentence
in the statement.  Further, let \(v_j\) and \(v_{j+1}\) be two
consecutive leaves of \(D\) in the infix order and let \(w\) be the
single internal vertex between them.  From the information about the
functions \(i\) and \(m\), we have that \(m(w)\) is the right
endpoint of \(i(v_j)\) and the left endpoint of \(i(v_{j+1})\).
Similar statements apply to the leaves of \(S\) that are the images
of \(v_j\) and \(v_{j+1}\), and so the image of \(w\) is the single
internal vertex of \(R\) between the images of \(v_j\) and
\(v_{j+1}\).  This completes all claims.  \end{proof}

\begin{cor}\mylabel{ActsLikeCatCor} For each \(d\ge1\), the vertices
of \(A_d\) form the objects of a category in which for each pair
\((D,R)\) of vertices, the only morphism is \((D,R)\).  \end{cor}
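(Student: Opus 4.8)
The plan is to write down the category explicitly and then check the axioms, all of which reduce to the multiplication rule for \(F\). I take the objects to be the vertices of \(A_d\), i.e. the finite, binary trees with \(d+2\) leaves. Any two such trees have the same number of leaves, so for an ordered pair of objects \((D,R)\) the pair \((D,R)\) is a bona fide element of \(F\); I decree it to be the unique morphism \(D\to R\), so that \(\mathrm{Hom}(D,R)=\{(D,R)\}\) by definition. Lemma \ref{FactsRightLem} makes this reasonable, since \((D)(D,R)=R\) shows that \((D,R)\) really does carry \(D\) to \(R\). The substance of the corollary is then merely that composition and identities can be defined consistently.

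For composition I would invoke the elementary identity, recalled in Section \ref{GroupSec}, that \((A,B)(B,C)=(A,C)\) as elements of \(F\). Given morphisms \((D,R)\colon D\to R\) and \((R,S)\colon R\to S\), their composite---written left to right, to match the convention that elements of \(F\) act on the right---is the product \((D,R)(R,S)=(D,S)\), which is exactly the designated unique morphism \(D\to S\). Thus composition is defined and closes up within the chosen morphism sets. The identity on an object \(D\) is \((D,D)\), which represents the identity of \(F\) because \(h(D,D)\) is the identity homeomorphism of \([0,1]\); the identity laws \((D,D)(D,R)=(D,R)\) and \((D,R)(R,R)=(D,R)\) are again instances of the same rule.

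Associativity of composition is inherited verbatim from the associativity of the group \(F\): for any composable chain the two bracketings are equal products in \(F\). There is essentially no obstacle here; the one place to be careful is matching the left-to-right composition convention so that the group product \((D,R)(R,S)\) represents categorical composition in the intended direction rather than its reverse. I would remark finally that the category so obtained is in fact the pair groupoid on the vertex set of \(A_d\), since each \((D,R)\) has the two-sided inverse \((R,D)\), with \((D,R)(R,D)=(D,D)\) and \((R,D)(D,R)=(R,R)\).
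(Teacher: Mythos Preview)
Your proof is correct. The paper's own proof is a single sentence: ``Since each \((D,R)\) preserves a linear order, all triangles commute.'' That is, having just shown in Lemma~\ref{FactsRightLem} that the restriction of \((D,R)\) to \(D\) is an infix-order-preserving bijection onto \(R\), the paper observes that between two finite linearly ordered sets of the same size there is exactly one order-preserving bijection, so the composite of \((D,R)\) with \((R,S)\) is forced to equal \((D,S)\).

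You reach the same conclusion by a different route: rather than appealing to uniqueness of order-preserving maps, you import the identity \((A,B)(B,C)=(A,C)\) directly from the group multiplication in \(F\) set up earlier in Section~\ref{GroupSec}. Both arguments establish the same composition law; the paper's version ties the corollary to the lemma immediately preceding it and emphasizes that the category structure is already visible in the action on vertices, while yours is a clean axiom-by-axiom verification that leans only on facts about \(F\) as a group. Your closing remark that the resulting category is the pair groupoid on the vertex set is accurate and a nice way to summarize the content.
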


\begin{proof} Since each \((D,R)\) preserves a linear order, all
triangles to commute.  \end{proof}

\subsection{Rotation as action}\index{rotation!action of}

We apply the above to rotations.

\begin{cor}\mylabel{WhatRotDoesCor} If the rotation \(\rot{u}\)
takes \(T\) to \(S\) as shown in \tref{ARot}, then \((T,S)\) takes
\(u0\) and \(u\) in \(T\), respectively, to \(u\) and \(u1\) in
\(S\).  That is, \(\rot u\) takes the pivot vertices of \(\rot u\) to
the pivot vertices of \(\rot {\overline u}\) so as to preserve the
infix order.  \end{cor}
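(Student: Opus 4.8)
The plan is to derive everything from Lemma \ref{FactsRightLem}, together with one structural observation already noted in its proof. Lemma \ref{FactsRightLem} tells us that, as a permutation of the vertices of $\mathcal T$, the element $(T,S)$ carries the vertices of $T$ bijectively onto those of $S$, sends the $j$-th leaf of $T$ to the $j$-th leaf of $S$, sends internal vertices to internal vertices, and preserves the infix order. The extra ingredient is the alternation fact from that proof: in the infix order on \emph{all} vertices of a finite, binary tree, leaves and internal vertices strictly alternate, so that between the $j$-th and $(j+1)$-st leaves there lies exactly one internal vertex. Combining these, $(T,S)$ must send ``the internal vertex between leaves $j$ and $j+1$ of $T$'' to ``the internal vertex between leaves $j$ and $j+1$ of $S$.'' The whole corollary then reduces to identifying which consecutive pair of leaves flanks each of $u0,u$ in $T$ and each of $u,u1$ in $S$.

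First I would read the leaf data off \tref{ARot} and the left-right matching recorded in Section \ref{ColRotSec}. Writing $A,B,C$ for the subtrees of $T$ rooted at $u00,u01,u1$ and $A',B',C'$ for the corresponding subtrees of $S$ rooted at $u0,u10,u11$, the matching makes the leaf block of $A$ occupy the same index range as that of $A'$, and similarly for $B,B'$ and $C,C'$ (these isomorphic subtrees have equal leaf counts). Unwinding the infix order in $T$: since $u0$ is the left child of $u$, with left subtree $A$ and right subtree $B$, the vertex $u0$ sits between the last leaf of $A$ and the first leaf of $B$, while $u$ itself sits between the last leaf of $B$ and the first leaf of $C$. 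In $S$, the vertex $u$ has left subtree $A'$ and now sits between the last leaf of $A'$ and the first leaf of $B'$, while $u1$, being the right child of $u$ with subtrees $B'$ and $C'$, sits between the last leaf of $B'$ and the first leaf of $C'$.

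Because the blocks of $A,B,C$ occupy the same index ranges as those of $A',B',C'$, the pair of leaf-indices flanking $u0$ in $T$ coincides with the pair flanking $u$ in $S$, and the pair flanking $u$ in $T$ coincides with the pair flanking $u1$ in $S$. The alternation fact then forces $(T,S)$ to send $u0 \mapsto u$ and $u \mapsto u1$, which is exactly the claim. Order preservation is then immediate and needs no separate argument: $u0 < u$ in the infix order on $T$ and $u < u1$ in the infix order on $S$, so the smaller pivot of $\rot u$ is carried to the smaller pivot of $\rot{\overline u}$, as asserted.

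I expect the only delicate point to be the bookkeeping of the index boundaries from the figure --- in particular, confirming that the labels $A,B,C$ of \tref{ARot} are matched to $A',B',C'$ according to the left-right leaf correspondence of Section \ref{ColRotSec} rather than by their drawn positions, and that each subtree contributes the \emph{same} number of leaves on both sides. Once these block boundaries are pinned down, the alternation of leaves and internal vertices does all the remaining work, so no genuine computation is involved.
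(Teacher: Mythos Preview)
Your proposal is correct and follows essentially the same approach as the paper: both rest on Lemma \ref{FactsRightLem} and the observation that $u0$ and $u$ occupy the same positions in the infix order on $T$ as $u$ and $u1$ do in the infix order on $S$. The paper's proof is a one-sentence appeal to this positional match, while you have spelled out the bookkeeping (flanking leaves, block boundaries of $A,B,C$ versus $A',B',C'$) that makes the match explicit; this extra detail is sound but not required.
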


\begin{proof} The positions of \(u0\) and \(u\) in the infix order on
\(T\) are the positions, respectively, of \(u\) and \(u1\) in the
infix order on \(S\).  \end{proof}

Because of Corollary \ref{WhatRotDoesCor}, we identify the rotation
\(\rot{u}\) from \(T\) to \(S\) with the permutation \((T,S)\).  In
the next proposition, we make use of the notation \(T^\sigma\) to
indicate a tree with sign assignment \(\sigma\).

\begin{prop}\mylabel{RotOnColorProp} Let \(\rot{u}\) be the positive
signed rotation pictured in \tref{CRot}, let \(T^\sigma\) denote the
left tree with its sign assignment, and let \(S^\rho\) denote the
right tree with its sign assignment.  Then for all internal
vertices \(v\) of \(T\), the rotation \(\rot u\) preserves all signs
except those of the pivot vertices of \(\rot u\) which it negates.
Specifically
\[
\rho(v\rot u) = 
\begin{cases}
\sigma(v), &v\notin \{u,u0\}, \\
-\sigma(v), &v\in \{u,u0\}.
\end{cases}
\]
Similarly, the rotation \(\rot{\overline u}\) preserves the signs of
all internal vertices of \(S\) except for negating the signs of the
pivot vertices of \(\rot {\overline u}\).  \end{prop}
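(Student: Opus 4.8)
The plan is to read off the permutation $\rot u=(T,S)$ vertex by vertex using the infix order, and then to feed the resulting vertex correspondence into the sign conditions that are built into the definition of a signed rotation in Section~\ref{ColRotSec}. First I would invoke Corollary~\ref{WhatRotDoesCor}, which already records the action on the two pivots of $\rot u$: it sends $u0\mapsto u$ and $u\mapsto u1$, where $u,u1$ are the pivots of $\rot{\overline u}$ in $S$. For every other internal vertex I would use Lemma~\ref{FactsRightLem}: since $(T,S)$ preserves the infix order and carries internal vertices to internal vertices, the $k$-th internal vertex of $T$ (infix) is carried to the $k$-th internal vertex of $S$ (infix). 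So it suffices to compare the two infix lists of internal vertices in the region of the rotation.

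Next I would compute those lists. In the left tree of \tref{CRot}, the subtree rooted at $u$ has internal vertices, in infix order, equal to the non-root internal vertices of $A$, then $u0$, then those of $B$, then $u$, then those of $C$; in the right tree the corresponding list is the non-root internal vertices of $A'$, then $u$, then those of $B'$, then $u1$, then those of $C'$. Because $A\cong A'$, $B\cong B'$ and $C\cong C'$ as ordered trees, these contribute blocks of equal length, so the position-by-position matching shows that $\rot u$ carries the non-root internal vertices of $A,B,C$ order-preservingly onto those of $A',B',C'$, i.e.\ by the given isomorphisms. Outside the subtree at $u$ the trees coincide with the common subtree $D$, and since the subtree at $u$ carries the same number of leaves in $T$ and in $S$, the internal vertices of $D$ keep their infix positions and are therefore fixed by $\rot u$.

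With the correspondence established, the sign formula is a direct substitution. If $v$ is an internal vertex of $D$, then $T$ and $S$ share $D$ verbatim, so $\rho(v\rot u)=\rho(v)=\sigma(v)$; if $v$ is a non-root internal vertex of $A$, $B$ or $C$, then the defining isomorphism to $A'$, $B'$ or $C'$ preserves signs, so again $\rho(v\rot u)=\sigma(v)$; and every such $v$ lies outside $\{u,u0\}$. For the two pivots, $u0\rot u=u$ and $u\rot u=u1$, while the signed-rotation data give $\sigma(u)=\sigma(u0)=\delta$ and $\rho(u)=\rho(u1)=\epsilon=-\delta$. Hence $\rho(u0\rot u)=\rho(u)=-\sigma(u0)$ and $\rho(u\rot u)=\rho(u1)=-\sigma(u)$, which is exactly the displayed case distinction.

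The statement for $\rot{\overline u}$ then follows from the identical argument with the roles of $T^\sigma$ and $S^\rho$, and of $\delta$ and $\epsilon$, interchanged; equivalently, since $\rot{\overline u}=\rot u^{-1}$ it simply inverts the sign bijection just described. I expect the only real obstacle to be the bookkeeping in the second paragraph: one must verify that the roots of the subtrees $A,B,C$ are precisely the pivot vertices $u,u0$, so that the ``non-root'' qualifier in the sign-preservation hypothesis is exactly what removes the pivots from the sign-preserving conclusion, and that the $D$-blocks flanking the subtree at $u$ genuinely stay fixed. Once the infix alignment is set up correctly, the sign computation is routine.
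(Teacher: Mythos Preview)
Your proposal is correct and follows precisely the route the paper implicitly intends: the paper gives no separate proof of this proposition, treating it as an immediate consequence of the signed-rotation conditions spelled out in Section~\ref{ColRotSec} (sign preservation on $D$ and on the non-root vertices of $A,B,C$, together with $\epsilon=-\delta$ at the pivots) combined with Corollary~\ref{WhatRotDoesCor} for the vertex action. Your infix-order bookkeeping is exactly the verification the reader is expected to supply, and your identification of the subtree roots with the pivot set $\{u,u0\}$ correctly explains why the ``non-root'' qualifier isolates the two negated vertices.
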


We define the action of \(\rot u\) on \(T^\sigma\) to have the
action of \(\rot u\) on the vertices of \(T\) as given by Corollary
\ref{WhatRotDoesCor} and the action of \(\rot u\) taking \(\sigma\)
to \(\rho\) as given by Proposition \ref{RotOnColorProp}.  We can
thus say \(T^\sigma\rot u=T^\rho\).

Note that if \(T\) is a tree with sign assignment \(\sigma\) and the
pivot vertices of \(\rot u\) are internal vertices of \(T\), then
\(T^\sigma\rot u\) can be defined using the specifications given in
Proposition \ref{RotOnColorProp} even if \(\sigma\) does not give
the same values to the pivot vertices of \(\rot u\).  That is, the
action can be defined even if \(\rot u\) is not valid for
\(T^\sigma\).

\subsection{Edge paths in the associahedra}

If a pair \((D,R)\) of finite, binary trees each of which has
\(d+2\) leaves represents an element of \(F\), then it also gives a
pair of vertices in the associahedron \(A_d\).  Since each edge of
\(A_d\) is a rotation, an edge path in the 1-skeleton of \(A_d\)
gives a finite string of rotations.  If \(\rot{u_1}\rot{u_2}\cdots
\rot{u_n}\) is such a path from \(D\) to \(R\), then it follows from
Lemma \ref{FactsRightLem} and the convention given after Corollary
\ref{WhatRotDoesCor} that the product \(\rot{u_1}\rot{u_2}\cdots
\rot{u_n}\) in \(F\) equals \((D,R)\) as an element of \(F\).

\subsection{Presentations}

From the statement that the 1-skeleton of each associahedron is
connected, we get that the rotations generate \(F\).  We can discuss
relations.

We need some notation.  Let \(u\) and \(v\) be elements of
\(\{0,1\}^*\).  If \(u\) is a prefix of \(v\), we write \(u|v\).  If
neither of \(u\) nor \(v\) is a prefix of the other, we write
\(u\perp v\).

We can now write specifically the action of the rotation \(\rot{u}\) on
\(\{0,1\}^*\).  We use \(w\) to represent an arbitrary element of
\(\{0,1\}^*\).
\[
v\rot{u} = 
\begin{cases}
v, & u\perp v, \\
v, & v|u, \\
u0w, & v=u00w, \\
u10w, & v=u01w, \\
u11w, & v=u1w, \\
u, & v=u0, \\
u1, & v=u.
\end{cases}
\]

We leave it to the reader to verify certain relations.  The
more straightforward of the relations are conjugacy relations.
Because of the first two lines in the definition of \(v\rot{u}\), the
``support'' of \(\rot{u}\) is all \(v\) where \(u|v\).  Thus a nice
conjugacy relation exists between \(\rot{u}\) and \(\rot{v}\) if the support
of \(\rot{v}\) is treated nicely by \(\rot{u}\).  This leads to the
following which hold for all \(u\) and \(v\) as specified and are
easy to verify.  We use \(a^b\) for \(b^{-1}ab\).
{\begin{enumerate}
\renewcommand{\theenumi}{F\arabic{enumi}}

\item \(\rot{v}^{\rot{u}} = \rot{v}\) if \(v\perp u\).

\item \(\rot{v}^{\rot{u}} = \rot{v\rot{u}}\) if \(u|v\) and \(v\) is
neither \(u0\) nor \(u\).

\end{enumerate}
}

There are relations between \(\rot{u}\) and \(\rot{u0}\).  The last set
should be compared to the copy of \(A_2\) shown in
\tref{ThePentagon} in Section \ref{AssocExmplSec}.
{\begin{enumerate}
\renewcommand{\theenumi}{F\arabic{enumi}}
\setcounter{enumi}{2}
\item \(\rot{u0}\rot{u}\rot{u1} = \rot{u}\rot{u}\).
\end{enumerate}
}

It is shown in \cite{dehornoy:geom-presentations} that \(F\) is
presented\index{Thompson's group
\protect\(F\protect\)!presentations} with the set of all \(\rot{u}\)
for \(u\in \{0,1\}^*\) as generating set and with all relations in
(F1), (F2) and (F3) as the relation set.  The generators in \(\{\rot
u\mid u\in \{0,1\}^*\}\) are called the {\itshape symmetric
generators} in \cite{dehornoy:geom-presentations} and we shall do
the same.


The argument in \cite{dehornoy:geom-presentations} assumes little
about \(F\).  A different proof that uses more information about
\(F\) is to argue that the relations above imply that \(F\) is
generated by the set \(\rot{1^*}\).  In fact, one shows that \(F\)
is generated by \(\rot{\emptyset}\) and \(\rot{1}\).  The relations
in (F2) contain all relations needed to present \(F\) using either
\(\rot{1^*}\) as generators or \(\rot{\emptyset}\) and \(\rot{1}\).
See \cite{CFP}.  (The rotation \(\rot{1^i}\) is the inverse of the
generator denoted \(X_i\) in Figure 8 of \cite{CFP}.)  This makes
our representation of \(F\) a homomorphic image of the group
presented in \cite{CFP}.  It is a fact that \(F\) has no proper
non-abelian quotients (Theorem 4.3 of \cite{CFP}).  That is, any
non-trivial normal subgroup of \(F\) contains the commutator
subgroup of \(F\).  Since our representation is non-abelian, the
homomorphism from the group in \cite{CFP} to our representation is
an isomorphism.

\subsection{Sizes of trees in tree pairs}

The notion of size is slippery in \(F\).  Since an element of \(F\)
is an equivalence class of pairs of trees, an obvious candidate is
some measure of tree pairs that is minimized over all pairs that
represent an element.  A typical measure is the number of carets
(which equals the number of internal vertices) used in each tree of
the tree pair.  This will be minimized by the unique reduced pair
representing an element.

This measure is only moderately well behaved.  In general it is
complicated to predict the size of the result of a multiplication
without actually doing the multiplication.  In particular it is
complicated to predict the size of the result of multiplying an
element by a rotation (one of the generators used above) without
actually doing the multiplication.

We deal with this by partly ignoring it.  We will not be concerned
with minimal representatives of elements of \(F\) and only concerned
with the size of the representative that we have in hand.  A single
pair \((D,R)\) is a pair of vertices on a specific associahedron
\(A_d\).  Edges in the associahedron \(A_d\) are rotations that
apply to trees of the size of \(D\) and \(R\).  Only finitely many
of the generators in \(\{\rot u\mid u\in \{0,1\}^*\}\) are available
among the edges of \(A_d\).  If a generator outside this finite set
is to be used, then a different (larger) representative will need to
be chosen (a move to a larger associahedron takes place) and this
will be duly noted.  These considerations will appear below in
Section \ref{RotOnColPairSec}.

\subsection{The group \protect\(E\protect\)}

We now describe the other group we consider.  

The relations in (F1) and (F2) involve four generators each and will
be referred to as the ``square
relations,''\index{square!relation}\index{relation!square} while the
relations in (F3) involve five generators and will be referred to
as the ``pentagonal
relations.''\index{pentagonal!relation}\index{relation!pentagonal}
We will see that the pentagonal relations behave very differently
from the square relations when the generators are interpreted as
signed rotations.

We define the group \(E\)\index{\protect\(E\protect\)}\index{group
\protect\(E\protect\)} as the group presented by the same generating
set as \(F\) and with the relations given by (F1) and (F2).  The
group \(E\) is finitely presented, but that fact is neither
surprising nor important here.  The group \(E\) is an extension of
\(F\).  We use the letter \(E\) since it is the first letter of
``extension,'' since it is next to \(F\) in the alphabet, and since
the other neighbor of \(F\) in the alphabet has already been used
for groups closely related to \(F\).

The group \(E\) has been mentioned in conversations by several
people familiar with \(F\) but there has never been any compelling
reason to consider it.  Part of the charm of Thompson's groups or
its close relatives is that they are either simple or close to it.
Groups that are farther from simple are often not considered.

We will bring the group \(E\) into the discussion when we
investigate valid paths of signed rotations in Section
\ref{SignStructSec}.

\section{Colors and the group operations}\mylabel{GpOpSec}

We extract some information about colorings from the multiplicative
structure of \(F\).  We ultimately discover that most of the
difficulty boils down to problems that live in the associahedra.

\subsection{Coloring representatives of a single element of
\protect\(F\protect\)}

Representatives of elements of \(F\) are pairs of finite, binary
trees.  We say that such a pair has a coloring if there is a color
vector valid for the pair.  From Proposition \ref{TreePairEquivThm},
the 4CT implies that every representative of every element of \(F\)
has a coloring.  For now, we do not assume the 4CT.

If \((A,B)\rightarrow (A',B')\), then the maps \(M\) and \(M'\)
that, respectively,
correspond to the pairs differ in one respect shown below.  The
figure only shows where they differ.
\[
\xy
(0,-10.5); (0,10.5)**@{-}; (2,0)*{a};
(0,-13.5)*{M};
\endxy
\qquad\qquad
\xy
(0,-10.5); (0,-4.5)**@{-}; (-3,-1.5)**@{-}; (-3,1.5)**@{-};
(0,4.5)**@{-}; (0,10.5)**@{-};
(0,-4.5); (3,-1.5)**@{-}; (3,1.5)**@{-};
(0,4.5)**@{-}; (-5,0)*{b}; (5,0)*{c};
(2,7.5)*{a}; (2,-7.5)*{a};
(0,-13.5)*{M'};
\endxy
\]
The letters show the only possible arrangements of a proper, edge
3-coloring in the portion shown.  It follows immediately that \(M\)
has a proper, edge 3-coloring if and only if \(M'\) has a proper,
edge 3-coloring, independent of the 4CT.  Thus \((A,B)\) has a
coloring if and only if \((A',B')\) has a coloring, independent of
the 4CT.

From this it follows that (independent of the 4CT) there is a well
defined notion of an element of \(F\) having a
coloring.\index{coloring!of element of
\protect\(F\protect\)}\index{Thompson's group
\protect\(F\protect\)!coloring of element} We have the next variant
of Proposition \ref{TreePairEquivThm}.

\begin{prop}\mylabel{FEquivThm} The four color theorem is equivalent
to the statement that every element of \(F\) has a coloring.
\end{prop}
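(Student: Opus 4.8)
The plan is to derive this as an essentially immediate consequence of Proposition \ref{TreePairEquivThm} together with the invariance of colorability under the relation $\sim$ established in the paragraph preceding the statement. The one point that needs care is matching up the two quantifier ranges: Proposition \ref{TreePairEquivThm} quantifies over \emph{all} pairs of finite, binary trees with the same number of leaves, whereas the present statement quantifies over \emph{elements} of $F$, each of which is an entire $\sim$-class of such pairs.

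First I would observe that every pair $(D,R)$ of finite, binary trees with the same number of leaves is a representative of some element of $F$; indeed, $F$ was defined in Section \ref{FDefsSec} as the set of $\sim$-equivalence classes of precisely such pairs. Thus the family of tree pairs appearing in Proposition \ref{TreePairEquivThm} is exactly the family of representatives of elements of $F$, ranging over all elements.

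Next I would record that colorability is constant on $\sim$-classes. The discussion just before the statement shows that whenever $(A,B)\rightarrow(A',B')$, the pair $(A,B)$ has a coloring if and only if $(A',B')$ does, and this equivalence runs in both directions; since $\sim$ is generated by $\rightarrow$, any two representatives of the same element of $F$ share the same colorability status. Hence the notion ``the element $f$ has a coloring'' is unambiguous: it holds for some representative of $f$ if and only if it holds for every representative of $f$.

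Combining these two observations, the statement ``every element of $F$ has a coloring'' is equivalent to ``every representative pair has a coloring,'' which is in turn equivalent to ``every pair of finite, binary trees with the same number of leaves has a coloring.'' By Proposition \ref{TreePairEquivThm} this last statement is equivalent to the four color theorem, completing the argument in both directions. I do not expect any genuine obstacle: the entire mathematical content already resides in Proposition \ref{TreePairEquivThm} and in the already-verified $\sim$-invariance, so the only remaining work is the bookkeeping that matches a quantifier over pairs with a quantifier over equivalence classes.
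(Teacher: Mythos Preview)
Your proposal is correct and mirrors the paper's approach exactly: the paper establishes $\sim$-invariance of colorability in the paragraphs immediately preceding the statement and then presents Proposition \ref{FEquivThm} as an immediate variant of Proposition \ref{TreePairEquivThm}, with no separate proof given. Your bookkeeping matching the quantifier over pairs with the quantifier over $\sim$-classes is precisely the content the paper leaves implicit.
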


\subsection{Extending the set of colored elements}

In Corollary 2.6 of \cite{CFP} it is shown that \(\rot{\emptyset}\)
and \(\rot{1}\) generate \(F\).  We know that the set of elements of
\(F\) with colorings is non-empty.  The identity certainly has a
coloring.  If a non-identity element is sought, then Lemma
\ref{ColorX0Lem} says that \(\rot{\emptyset}\) has a coloring.  It
is trivial to show that \(\rot1\) has a coloring.  Since every
element of \(F\) is a product of copies of \(\rot\emptyset\),
\(\rot1\) and their inverses, and a coloring of an element is also a
coloring of the inverse of that element, we have the following.

\begin{thm}\mylabel{FGenEquivThm} The four color theorem is
equivalent to the statement that the set of elements of \(F\) with
colorings is closed under right multiplication by \(\rot\emptyset\),
\(\rot{\overline \emptyset}\), \(\rot1\) and \(\rot{\overline1}\).
\end{thm}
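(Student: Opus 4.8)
The plan is to prove the two implications separately, using Proposition \ref{FEquivThm} as the bridge between the 4CT and the colorability of all elements of $F$.

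The forward implication is immediate: assuming the 4CT, Proposition \ref{FEquivThm} tells us that every element of $F$ has a coloring, so the set of colorable elements is all of $F$ and is therefore trivially closed under right multiplication by anything, in particular by $\rot\emptyset$, $\rot{\overline\emptyset}$, $\rot1$ and $\rot{\overline1}$.

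For the reverse implication, write $S$ for the set of elements of $F$ that have a coloring, and assume $S$ is closed under right multiplication by the four listed generators. First I would observe that $S$ is nonempty: the identity is represented by a pair $(D,D)$, and any color vector valid for $D$ is valid for the pair, so the identity lies in $S$. Next I would invoke the two facts recalled just before the statement, namely that $\rot\emptyset$ and $\rot1$ generate $F$ (Corollary 2.6 of \cite{CFP}), so that $\{\rot\emptyset, \rot{\overline\emptyset}, \rot1, \rot{\overline1}\}$ is a symmetric generating set for $F$. Given an arbitrary $g\in F$, express it as a word $g=\rot{v_1}\rot{v_2}\cdots\rot{v_k}$ in these four generators. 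Starting from the identity and applying the closure hypothesis once per letter, every partial product $\rot{v_1}\cdots\rot{v_j}$ stays in $S$, so after $k$ steps $g\in S$. Hence $S=F$, every element of $F$ has a coloring, and Proposition \ref{FEquivThm} yields the 4CT.

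This is really just bookkeeping layered on Proposition \ref{FEquivThm} together with the two-generator generation of $F$, so I do not expect a genuine obstacle. The single point deserving care is the reachability step: closure under right multiplication propagates membership from the identity to every element precisely because the generating set is closed under inverses, which is what lets each element be built up one generator at a time from the left. Including $\rot{\overline\emptyset}$ and $\rot{\overline1}$ in the hypothesis is exactly what makes this work.
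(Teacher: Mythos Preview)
Your proof is correct and follows essentially the same approach as the paper: both directions rest on Proposition \ref{FEquivThm}, and the nontrivial implication uses that \(\{\rot\emptyset,\rot{\overline\emptyset},\rot1,\rot{\overline1}\}\) is a symmetric generating set for \(F\) together with the identity lying in \(S\) to propagate colorability to every element. The paper's discussion additionally notes that \(S\) is closed under inversion, but your reachability argument via a symmetric generating set accomplishes the same thing.
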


We will see later that this theorem has limited effect.

We note that the set of elements of \(F\) with colorings is clearly
closed under inversion (independent of the 4CT).  The dihedral group
of order \(2(d+3)\) acts on \(A_d\) and thus on pairs of its
vertices (elements of \(F\)).  The set of elements of \(F\) with
colorings is also closed under this action (independent of the 4CT).
We have not explored how much this extends the set of colored
elements of \(F\), but there are reasons for not expecting much from
such extension.  See the remarks after \tref{NoIncChain} below.

\subsection{The compatibility lemma}

Consider a product \((A,B)(C,D)\) of elements of \(F\).  Assume that
\(\mathbf a\) is a color vector valid for \((A,B)\) and that
\(\mathbf c\) is a color vector valid for \((C,D)\).  The vector
\(\mathbf a\) determines a proper, edge 3-coloring of \(B\), and
\(\mathbf c\) determines a proper, edge 3-coloring of \(C\).  We say
that \(\mathbf a\) and \(\mathbf c\) are compatible on \(B\cap C\)
if the colorings they determine on \(B\) and \(C\), respectively,
agree on \(B\cap C\).

\begin{lemma}[Compatibility]\mylabel{CompatLem} If the vector
\(\mathbf a\) is valid for the pair \((A,B)\), the vector \(\mathbf
c\) is valid for \((C,D)\), and the vectors are compatible on
\(B\cap C\), then there is a coloring of
\((A,B)(C,D)\).\index{Compatibility Lemma}\index{lemma!Compatibility}
\end{lemma}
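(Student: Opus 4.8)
The plan is to compute the product $(A,B)(C,D)$ with the middle trees chosen to be $B\cup C$, and then to build a single color vector that is simultaneously valid for both outer trees. Following Section \ref{UnionIntSec}, write $(A,B)(C,D)=(A',D')$ where $(A,B)\sim(A',B\cup C)$ and $(C,D)\sim(B\cup C,D')$. Recall from that section that $A'$ is obtained from $A$ by grafting, at the $i$-th leaf of $A$ for each leaf $v$ of $B\cap C$ that is not a leaf of $C$ (with $v$ the $i$-th leaf of $B$), a subtree isomorphic to the component $C_v$ of $C-B$; symmetrically $D'$ is obtained from $D$ by grafting copies of the components $B_v$ of $B-C$. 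Every leaf of $B\cup C$ is a leaf of $B$ or a leaf of $C$, so I can define a color vector $\mathbf b$ on the leaves of $B\cup C$ by letting it agree with $\mathbf c$ on leaves coming from $C$ and with $\mathbf a$ on leaves coming from $B$. On a leaf lying in both, i.e. a leaf of $B\cap C$, the two prescriptions agree precisely because $\mathbf a$ and $\mathbf c$ are compatible on $B\cap C$; hence $\mathbf b$ is well defined.

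Next I would show that $\mathbf b$ is valid for $A'$. Consider a grafted copy of $C_v$ sitting at the $i$-th leaf $v'$ of $A$. Its leaves are leaves of $C$, so $\mathbf b$ assigns them exactly the colors $\mathbf c$ gives to the leaves of $C_v$; by the uniqueness in Lemma \ref{ColorComputLem} the coloring that $\mathbf b$ induces on this grafted copy is identical to the (proper) coloring $\mathbf c$ induces on $C_v$. By Lemma \ref{RootColorLem} the color on its root edge is the sum of these leaf colors, which is the color $\mathbf c$ assigns at $v$; since $v$ is a leaf of $B\cap C$ and the colorings agree there, this equals the color $\mathbf a$ assigns at $v$, namely $a_i$. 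Collapsing every grafted copy back to its root therefore recovers $A$ with leaf edges colored exactly by $\mathbf a$, so the coloring $\mathbf b$ induces on the $A$-part of $A'$ is $\mathbf a$'s (proper) coloring of $A$. The grafted copies glue to $A$ along edges whose colors match on both sides (each is $a_i$), so the induced coloring of $A'$ is proper on every caret; that is, $\mathbf b$ is valid for $A'$.

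By the symmetric argument — using the components $B_v$ of $B-C$, whose leaves are leaves of $B$ and so carry $\mathbf a$'s colors, and whose root colors equal $\mathbf c$'s colors at the attaching leaves by compatibility — the vector $\mathbf b$ is valid for $D'$. Hence $\mathbf b$ is a color vector valid for both trees of the pair $(A',D')=(A,B)(C,D)$, which is exactly what it means for $(A,B)(C,D)$ to have a coloring. The one point demanding care, and the main obstacle, is the bookkeeping that makes the two sides line up: one must check that each grafted subtree presents, at its attaching edge, the very color the outer tree expects there. This is exactly where Lemma \ref{RootColorLem} (the root color depends only on the multiset of leaf colors, not on the shape of the tree) together with the compatibility hypothesis on $B\cap C$ does the work, guaranteeing that the refined coloring restricts to the original valid colorings of $A$ and of $D$.
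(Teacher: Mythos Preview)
Your proof is correct and follows essentially the same approach as the paper: both construct the product via $(A',B\cup C)(B\cup C,D')$, define a color vector on the leaves of $B\cup C$ by patching $\mathbf a$ and $\mathbf c$ (well defined by compatibility on $B\cap C$), and verify validity on $A'$ and $D'$ by checking that each grafted component $C_v$ (respectively $B_v$) inherits a proper coloring whose root color matches the attaching leaf color. Your invocation of Lemmas \ref{ColorComputLem} and \ref{RootColorLem} makes the matching step slightly more explicit than the paper's prose, but the argument is the same.
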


\begin{proof} We will refer to color vectors as indexed by leaves
rather than indexed by integers.  This suffices to specify the
vector.

We let the vector \(\mathbf d\) indexed over the leaves of \(B\cup
C\) have the color of \(\mathbf a\) on a leaf of \(B\cup C\) that is
a leaf of \(B\) but not a leaf of \(C\) and have the color of
\(\mathbf c\) on the remaining leaves.  Note that the second choice
agrees with the color from \(\mathbf a\) on the leaves of \(B\cup
C\) that are leaves of both \(B\) and \(C\) since such leaves are
also leaves of \(B\cap C\).

The discussion that follows is based on the discussion in Section
\ref{UnionIntSec}.  We view the product \((A,B)(C,D)\) as computed
by \((A',B\cup C)(B\cup C,D')\) as describe in that section.

For a component \(C_v\) of \(C-B\) that is attached to \(B\cap C\)
at leaf \(v\) of \(B\), the colors of \(\mathbf d\) on those leaves
are those of \(\mathbf c\).  Thus the root color of \(C_v\) from
\(\mathbf d\) is the root color of \(C_v\) from \(\mathbf c\).  But
the root edge of \(C_v\) is the leaf edge of \(B\cap C\) impinging
on \(v\).  By assumption, this edge gets the same color from
\(\mathbf a\) and \(\mathbf c\).  This common color is also the
color that \(\mathbf a\) assigns to the leaf edge in \(A\) for the
leaf \(v'\) that has the same left-right position in \(A\) as \(v\)
has in \(B\).  But \(A'\) has an isomorphic copy of \(C_v\) attached
to \(A\) at \(v'\) and the color derived from \(\mathbf d\) at the
leaf edge to \(v'\) will agree with the color assigned there by
\(\mathbf a\).  It follows that the edge coloring on \(A'\) derived
from \(\mathbf d\) will agree with the color on \(A\) derived from
\(\mathbf a\) and agree with the colors on components of \(A'-A\)
derived from \(\mathbf c\).  Thus the color vector \(\mathbf d\)
will be valid for \(A\).  Similar remarks apply to \(D'\).
\end{proof}

\subsection{Vines}\mylabel{VineSec}

The trees in this section are all finite, and binary.

A vine\index{vine} is a tree with exactly one exposed
caret.  Note that the adjacency graph of the carets in a vine will
have no branches.

If \(T\) is a tree and \(V\) is a vine, then \(T\cap V\) and \(V-T\)
are vines.  In particular, \(V-T\) is connected.

The right vine\index{vine!right}\index{right!vine} is a vine where
all the internal vertices are of the form \(1^*\).  The reader can
define the left vine\index{vine!left}\index{left!vine}, but we will
rarely refer to it.

If \(v\) is a vertex in \(\mathcal T\), then there is a unique vine
\(V_v\) in \(\mathcal T\) which has \(v\) as an internal vertex and
for which the children of \(v\) are the exposed leaves of \(V_v\).
Note that with this notation, we have that the rotation \(\rot u\)
is given by the pair \((V_{u0}, V_{u1})\).  Note also that \(V_{u0}
\cap V_{u1} = V_u\).

\subsection{Action of rotations on colored
pairs}\mylabel{RotOnColPairSec}

In the next discussion a rotation such as \(\rot u\) will be viewed
as both an element of \(F\) (and thus represented as an ordered pair
of trees) and an action on the vertices of \(\mathcal T\) and some
of its finite subtrees.\index{rotation!action of!on colored pairs}

Let a pair \((D,R)\) of finite, binary trees represent an element of
\(F\) and assume that \(\mathbf c\) is a valid color vector for the
pair.  We are interested in the influence of \(\mathbf c\) on
\((D,R)\rot u\) for some rotation \(\rot u = (V_{u0}, V_{u1})\).
The discussion of \(\rot{\overline u}\) is similar.

There are several cases depending on the relationship between
\(V_{u0}\) and \(R\).

If \(V_{u0}\) is a subtree of \(R\), then it is immediate that
\((D,R)\rot u = (D,R\rot u)\) in which both views of \(\rot u\) are
used.  From Section \ref{ColRotSec} we know that \(\mathbf c\) is
valid for the pair \((R,R\rot u)\) if and only if the signs of \(u\)
and \(u0\) agree in \(R\) under \(\mathbf c\).  Thus \(\mathbf c\)
is valid for \((D,R)\rot u = (D,R)(V_{u0}, V_{u1})\) if and only if
the signs of \(u\) and \(u0\) agree in \(R\) under \(\mathbf c\).

We turn to the case where \(V_{u0}\) is not a subtree of \(R\).

If \(V_{u0}\) is not a subtree of \(R\), then the behavior depends
on the number of carets of \(V_{u0}-R\).  We consider first the case
in which \(V_{u0}-R\) consists of only one caret.

If \(V_{u0}-R\) consists of one caret, then this caret is centered
at \(u0\), and \(u0\) is a leaf of \(R\).  If \(u0\) is the \(i\)-th
leaf of \(R\) in left-right order, then it is immediate that
\((D,R)\rot u = (D\caret^i, (R\caret^i)\rot u)\) and we become
interested in the pair \((R\caret^i, (R\caret^i)\rot u)\) shown
below in which \(\epsilon = -\delta\).
\[
(R\caret^i, (R\caret^i)\rot u) = 
\left(
\xy
(0,-8); (12,4)**@{-}; (18,-2)**@{-}; (6,-2); (12,-8)**@{-};
(12,6.8)*{X}*\cir<8pt>{};
(20,-4)*{Y}*\cir<8pt>{};
(-3,-8)*{\scs u00}; (4,-1.5)*{\scs u0};
(15,-8)*{\scs u01}; (9,3)*{\scs u};
(19,0)*{\scs u1};
(12,1)*{ \delta}; (6,-5)*{ \delta};
\endxy
\quad,\quad
\xy
(0,-8); (-12,4)**@{-}; (-18,-2)**@{-}; (-6,-2); (-12,-8)**@{-};
(-12,6.8)*{X}*\cir<8pt>{};
(2,-10)*{Y}*\cir<8pt>{};
(1,-5.5)*{\scs u11}; (-4,-1)*{\scs u1};
(-15,-8)*{\scs u10}; (-15,3)*{\scs u};
(-20,-2)*{\scs u0};
(-12,1)*{ \epsilon}; (-6,-5)*{ \epsilon};
\endxy
\right)
\]

Extending the color of \(\mathbf c\) to \(R\caret^i\) can be done in
either of two ways.  However, if the extension is to be valid for
\((R\caret^i, (R\caret^i)\rot u)\), then the signs at \(u\) and
\(u0\) must agree.  Since \(u\) is internal to \(R\), its sign is
fixed.  Thus only one of the two ways to extend \(\mathbf c\) to
\(R\caret^i\) is valid for the pair.  This extension will be valid
for \((D,R)\rot u\).  We thus have a process that gives a coloring
to \((D,R)\rot u\) from a coloring of \((D,R)\) under the assumption
that \(V_{u0}-R\) consists of one caret.

Still under the assumption that \(V_{u0}-R\) consists of one caret,
let us assume that  \((D,R)\rot u = (D\caret^i,
(R\caret^i)\rot u)\) has a coloring.  Since the caret in
\(D\caret^i\) attached at the \(i\)-th leaf of \(D\) is exposed, its
leaf edges must be assigned different colors.  It follows that the
leaf edges to \(u0\) and \(u10\) in \((R\caret^i)\rot u\) are
assigned different colors, and from that it follows that the signs
at \(u\) and \(u1\) in \((R\caret^i)\rot u\) are equal.  By
considerations similar to the paragraph above, we can build a
coloring valid for \((D,R)\).  

The processes of the previous two paragraphs are inverses to each
other and under the assumption that \(V_{u0}-R\) consists of one
caret, we have a one-to-one correspondence between the colorings
valid for \((D,R)\) and the colorings valid for \((D,R)\rot u\).
This recovers Proposition 11 of \Zeilberger.

We leave it to the reader to show that if \(V_{u0}-R\) consists of
more than one caret, then a coloring for \((D,R)\) can be converted,
non-uniquely, to a coloring of \((D,R)\rot u\).

We note that if \(V_{u0}-R\) is empty, then the number of carets
involved in \((D,R)\) is the same as the number of carets involved
in \((D,R)\rot u = (D,R\rot u)\).  If \(V_{u0}-R\) is not empty,
then the number of carets in \((D,R)\rot u\) is greater than the
number of carets in \((D,R)\).  Note that we are ignoring the fact
that different pairs representing the same element of \(F\) may use
different numbers of carets.  Here we are referring to pairs and not
elements of \(F\).

We say that multiplication \((D,R)\rot u\) is {\itshape
increasing}\index{increasing!multiplication}\index{multiplication!increasing}
if \(V_{u0}-R\) is not empty, and {\itshape minimally
increasing}\index{minimally
increasing!multiplication}\index{multiplication!minimally
increasing} if \(V_{u0}-R\) consists of exactly one caret.  We say a
chain \(\rot{u_1}\cdots \rot{u_n}\) of rotations is
increasing\index{increasing!chain of multiplications}\index{chain!of
increasing multiplications} if letting \(p_i=\rot{u_1}\cdots
\rot{u_i}\) makes \(p_i\rot{u_{i+1}}\) increasing for \(1\le i<n\).
We say the chain is minimally increasing\index{minimally
increasing!chain of multiplications}\index{chain!of minimally
increasing multiplications} if each multiplication is minimally
increasing.

In the above, similar analysis and terminology applies to
multiplication by \(\rot{\overline{u}}\) and also rotations (by
either \(\rot u\) or \(\rot{\overline u}\)) applied on the left.
(Earlier we said actions are written on the right, but \(\rot u\)
and \(\rot{\overline{u}}\) are also elements of \(F\) and can be
multiplied on the left of another element of \(F\).)

From Lemma \ref{ColorX0Lem}, we know that \(\rot \emptyset\) and
\(\rot{\overline\emptyset}\) each have a unique coloring.  All other
rotations have colorings which are not unique.  Recall from Section
\ref{PermColorSec} that we count colorings modulo permutations of
the colors.  From this and the discussions above, we get the
following.

\begin{thm}\mylabel{IncrChainThm} If an element of \(F\) is
represented as a minimally increasing chain starting with
\(\rot\emptyset\), then it has a unique coloring.  If an element of
\(F\) is represented as an increasing chain, then it has a coloring.
\end{thm}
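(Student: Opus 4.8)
The plan is to prove both statements by a straightforward forward induction on the length $n$ of the chain, feeding off the single-step analysis already carried out in Section \ref{RotOnColPairSec}. Throughout, write $p_i = \rot{u_1}\cdots\rot{u_i}$ for the partial products, so that $p_{i+1} = p_i\rot{u_{i+1}}$ and $p_n$ is the element whose colorings we wish to control; as always, colorings are counted modulo the $S_3$-action on colors, following the convention of Section \ref{PermColorSec}.

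For the first statement I would take as base case $p_1 = \rot\emptyset$, the pair of the two three-leaf trees treated in Lemma \ref{ColorX0Lem}; that lemma, together with the counting convention, tells us $p_1$ has exactly one coloring. The hypothesis that the chain is minimally increasing means precisely that, with $R$ the range tree of $p_i$, each difference $V_{u_{i+1}0} - R$ is a single caret. In that situation Section \ref{RotOnColPairSec} supplies a one-to-one correspondence between the colorings valid for $p_i$ and those valid for $p_{i+1}$. Hence if $p_i$ has a unique coloring, so does $p_{i+1}$, and the induction delivers a unique coloring of $p_n$.

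For the second statement the base case is a single rotation $p_1 = \rot{u_1}$, which has a coloring in every case (by Lemma \ref{ColorX0Lem} for $\rot\emptyset$ and $\rot{\overline\emptyset}$, and by the elementary observation recorded just before the theorem for all other rotations). Now the chain being increasing means each difference $V_{u_{i+1}0} - R$ is nonempty. Whether it is a single caret (minimally increasing) or several carets, the discussion of Section \ref{RotOnColPairSec} shows that any coloring of $p_i$ can be converted to a coloring of $p_{i+1}$ --- bijectively in the first case, non-uniquely in the second. Thus the existence of a coloring propagates from $p_i$ to $p_{i+1}$, and the induction gives a coloring of $p_n$.

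The genuine work is all upstream, in the case analysis of Section \ref{RotOnColPairSec} that tracks how a valid color vector transforms under one increasing rotation; the theorem itself is only a matter of chaining those single steps. The one point that needs care is aligning each base case with its conclusion: uniqueness in the first statement truly requires starting at $\rot\emptyset$, the sole rotation with a unique coloring, and the moment a step fails to be minimally increasing --- or the chain is allowed to begin at a generic rotation --- a genuine choice of coloring is introduced, which is exactly why the second statement can only assert existence.
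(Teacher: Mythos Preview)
Your proposal is correct and follows exactly the approach the paper intends: the paper's own proof is just the sentence ``From this and the discussions above, we get the following,'' and you have simply written out the induction that sentence abbreviates, using Lemma \ref{ColorX0Lem} for the base and the single-step bijection/existence analysis of Section \ref{RotOnColPairSec} for the inductive step.
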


The limitations of this result can be seen by noting that the result
of an increasing multiplication by a rotation must have an exposed
caret of one tree have its leaves match to leaves of adjacent carets
(carets that share an edge) in the other tree.  The following shows
that not every tree pair has this configuration.
\mymargin{NoIncChain}\begin{equation}\label{NoIncChain}
\left(\,
\xy
(0,1.5); (3,4.5)**@{-}; (6,1.5)**@{-}; (3,4.5); (3,6,5)**@{-};
(0,-4.5); (6,1.5)**@{-}; (12,-4.5)**@{-};
(3,-1.5); (5,-4.5)**@{-}; (7,-4.5); (9,-1.5)**@{-};
\endxy
\quad, \quad
\xy
(12,1.5); (9,4.5)**@{-}; (6,1.5)**@{-};  (9,4.5); (9,6,5)**@{-};
(0,-4.5); (6,1.5)**@{-}; (12,-4.5)**@{-};
(3,-1.5); (5,-4.5)**@{-}; (7,-4.5); (9,-1.5)**@{-};
\endxy\,
\right)
\end{equation}

The trees in \tref{NoIncChain} are those in \tref{SmallSymExmpl}.
It follows that the action of the dihedral group of order 12 by root
shifts and reflections can only take the element in
\tref{NoIncChain} to itself or its inverse.  In particular the
element in \tref{NoIncChain} is the image of no element under the
action other than itself or its inverse.

\subsection{Application to specific subsets of 
\protect\(F\protect\)}\mylabel{RotAppSec}

The set of elements of \(F\) of the form \((D,V)\) where \(V\) is a
right vine forms a monoid of \(F\) that is often called the positive
monoid\index{Thompson's group \protect\(F\protect\)!positive
monoid}\index{positive!monoid of \protect\(F\protect\)} of \(F\).
This is because any element \((D,R)\) of \(F\) is of the form
\((D,V)(V,R)\) and is thus of the form \(pn^{-1}\) where \(p\) and
\(n\) are from the positive monoid.  Elements of the positive
monoid are referred to as positive elements\index{positive!element
of \protect\(F\protect\)} of \(F\).

With prime as defined at the end of Section \ref{TreePairToMapSec},
we have that the prime, positive elements of \(F\) are of the form
\((D,V)\) where \(V\) is a right vine and the right subtree of \(D\)
is trivial.

\begin{thm}\mylabel{PrimPosUniqThm} The positive elements of \(F\)
have colorings and the prime positive elements of \(F\) have unique
colorings.  \end{thm}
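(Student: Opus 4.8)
The plan is to push both claims onto Theorem \ref{IncrChainThm} by representing each element by a chain of rotations of the required type. Write $W_k$ for the right vine with $k$ leaves, so the internal vertices of $W_k$ are $\emptyset,1,1^2,\dots,1^{k-2}$ and its $i$-th leaf is $1^{i-1}0$ for $i\le k-1$. A positive element is a pair $(D,W_n)$, and it is prime exactly when the vertex $1$ is a leaf of $D$, that is, when the right subtree of $D$ is trivial.

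The engine of both statements is a single peeling identity. Suppose $D$ has an exposed caret whose leaves occupy positions $i$ and $i+1$ with $i\le n-2$, and let $D^-$ be the tree obtained from $D$ by merging that caret to a single leaf, so that $D=D^-\caret^i$. I would first verify by a direct computation with the action formula for $\rot{1^{i-1}}$ that
\[
(W_{n-1}\caret^i)\,\rot{1^{i-1}} = W_n,
\]
the point being that attaching a caret at the $i$-th leaf $1^{i-1}0$ of $W_{n-1}$ and then rotating at $1^{i-1}$ folds the new left caret back into the right spine. Granting this, $(D,W_n)=(D^-,W_{n-1})\,\rot{1^{i-1}}$. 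Moreover $\rot{1^{i-1}}=(V_{1^{i-1}0},V_{1^i})$ has domain vine $V_{1^{i-1}0}$, and $V_{1^{i-1}0}-W_{n-1}$ is exactly the single caret centered at the $i$-th leaf of $W_{n-1}$; hence this step is minimally increasing in the sense of Section \ref{RotOnColPairSec}. This identity, and the verification that rotation really reproduces the full spine, is the main computation.

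Both halves now follow by induction on $n$. For an arbitrary positive $(D,W_n)$: if $D=W_n$ the element is the identity and is colored by any vector valid for $W_n$. Otherwise $D$ has an exposed caret in some position $i\le n-2$; indeed the only tree whose unique exposed caret lies in the last position $(n-1,n)$ is $W_n$ itself, since such a caret forces $\emptyset,1,\dots,1^{n-2}$ all to be caret centers, exhausting the $n-1$ internal vertices, whence the tree is $W_n$. So either $D$ has at least two exposed carets or its single exposed caret is not rightmost, and in either case one lies in a position $\le n-2$. Peeling it produces the smaller positive element $(D^-,W_{n-1})$, which by induction is represented by an increasing chain; appending the minimally increasing step $\rot{1^{i-1}}$ leaves all earlier partial products unchanged and so keeps the chain increasing, and Theorem \ref{IncrChainThm} supplies a coloring of $(D,W_n)$.

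For uniqueness in the prime case I run the same peeling, with one extra bookkeeping remark. When $1$ is a leaf of $D$ and $n\ge4$, the left subtree $D_0$ is nontrivial, so the caret at $\emptyset$ is not exposed and the lone leaf $1$ lies in no exposed caret; thus every exposed caret of $D$ sits inside $D_0$ and automatically occupies positions $\le n-2$. Peeling any such caret leaves the vertex $1$ untouched, so $(D^-,W_{n-1})$ is again prime positive, and iterating down to $n=3$ reaches the unique prime positive element with three leaves, namely $\rot\emptyset$. Hence $(D,W_n)$ is represented by a minimally increasing chain $\rot\emptyset\,\rot{v_1}\cdots\rot{v_{n-3}}$ starting with $\rot\emptyset$, and Theorem \ref{IncrChainThm} gives it a unique coloring; the tiny cases $n\le2$ are checked by hand. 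The main obstacle is the peeling identity above, together with the accompanying fact that $W_n$ is the only tree whose sole exposed caret is rightmost; once these are in hand the rest is routine induction, since the coloring conclusions are entirely delegated to Theorem \ref{IncrChainThm}.
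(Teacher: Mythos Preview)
Your proof is correct and follows exactly the strategy the paper uses: reduce both claims to Theorem \ref{IncrChainThm} by exhibiting the required (minimally) increasing chains, which the paper leaves as an exercise. Your peeling identity $(W_{n-1}\caret^i)\rot{1^{i-1}}=W_n$ and the verification that each such step is minimally increasing are precisely the content of that exercise, and your handling of the prime case correctly tracks that the leaf $1$ survives each peel so that the chain begins with $\rot\emptyset$.
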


\begin{proof} We leave it as an easy exercise to argue that positive
elements of \(F\) are represented as increasing chains and that the
prime positive elements are represented as minimally increasing
chains.  \end{proof}

Theorem \ref{PrimPosUniqThm} has a very straightforward proof using
the dual (triangulations) view.  See Section \ref{DualTriSec}.  We
leave this an exercise to the reader.

Computer calculations indicate that the fraction of the uniquely
colored elements of \(F\) represented by positive primes goes to
zero as the size of the trees in the pair increases.

We note that the cardinality portions of the conclusions of Theorems
4, 6 and Proposition 11 of \Zeilberger{} follow from Theorem
\ref{PrimPosUniqThm}.  The cardinality parts of the conclusions of
Theorems 5, 7, 8, 10 and 12 of \Zeilberger{} follow from Theorem
\ref{PrimPosUniqThm} and \tref{PrimeProdFormula} in Section
\ref{TwoRedSec}.

\subsection{Further considerations}

From Theorem \ref{IncrChainThm} we see that the difficulties in
attempting a proof of the 4CT by coloring pairs of trees are found
in deducing the existence of a coloring of a tree pair from a
coloring of a pair of the same size.  Proposition \ref{RotChainProp}
is our approach to this problem.  For a given tree pair, Proposition
\ref{RotChainProp} and its companion, Conjecture \ref{SPathConjOne},
live on the associahedron that has the trees of the pair as
vertices.

We thus concentrate on edge paths in an associahedron.  Rigid
colorings are beyond the edge path approach.  These were discussed
in Section \ref{BasicRigidSec} and will be discussed more in Section
\ref{RigidSec}.  Then Section \ref{ZeroSetSec} investigates edge
paths under the assumption that the colorings are not rigid.  The
study of edge paths is completed in Section \ref{SignStructSec}
which gives the argument, promised after the statement of
Proposition \ref{RotChainProp}, that the ``validity'' of an edge
path can be detected without bringing in a sign assignment of a
tree to test it on.

\section{Rigid colorings}\mylabel{RigidSec}

\subsection{Depth condition}

Let \(w\) be a vertex in \(\mathcal T\) other than the root \(*\)
regarded as a word in the alphabet \(\{0,1\}\).  We use \(|w|\) to
denote the length of \(w\) as a word.  We refer to \(|w|\) as the
level or depth of \(w\)\index{level!vertex in
tree}\index{tree!vertex!level}\index{vertex!of
tree!level}\index{depth!vertex in
tree}\index{tree!vertex!depth}\index{vertex!of tree!depth}.
If \((D,R)\) is a pair of binary trees of \(n\) leaves each, with
\((d_1,d_2, \ldots, d_n)\) and \((r_1,r_2, \ldots, r_n)\) the
leaves, respectively, of \(D\) and \(R\) in left-right order, then
we say that the pair satisfies the {\itshape parity
condition}\index{parity condition!on pair of trees}\index{pair!of
finite trees!parity condition} if for \(1\le i\le n\), the parity of
\(|d_i|\) and \(|r_i|\) agree.

\begin{prop}\mylabel{DepthCondProp} Let \((D,R)\) be a pair of
finite, binary trees.  Then \((D,R)\) has a rigid coloring if and
only if \((D,R)\) satisfies the parity condition.  \end{prop}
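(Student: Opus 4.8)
The plan is to pass to the dual picture of Sections \ref{DualColSec} and \ref{DualTriSec}, where the pair $(D,R)$ becomes a pair of triangulations of one and the same polygon $P$ sharing its boundary, a color vector $\mathbf c$ becomes a proper coloring of the boundary edges and hence (by telescoping the differences around $\partial P$, which closes up since $\sum c_i$ equals the root color by Lemma \ref{RootColorLem}) a proper vertex $3$-coloring of $P$ determined up to the $S_3$-action, and---by Lemma \ref{DualRigidLem}---a rigid coloring is exactly one using only three colors, i.e.\ one whose derived valuation is identically $1$, i.e.\ one in which triangles sharing a diagonal always carry opposite signs. Each of $D$ and $R$ separately has a rigid coloring (take the positive rigid sign assignment and apply Lemma \ref{SignAnd1ColorLem}), so the content of the proposition is precisely \emph{when} the rigid colorings of $D$ and of $R$ can be made to restrict to the same boundary data.

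The key step is a local lemma pinning down that boundary data. Orient $P$ counterclockwise, give the nonzero colors their cyclic order $1\to 2\to 3\to 1$, and for a boundary edge whose endpoints read $a$ then $b$ counterclockwise record the \emph{step} $+1$ if $b$ is the cyclic successor of $a$ and $-1$ otherwise. I claim that in any proper $3$-coloring the step across a boundary edge equals the sign of the unique triangle containing it: a triangle is rainbow, so reading its three colors counterclockwise they either increase or decrease by one at every edge, and a boundary edge read in the boundary's counterclockwise sense is read counterclockwise in its triangle. In a rigid coloring the triangle signs then alternate along every diagonal, so the sign of the triangle dual to a node at depth $k$ is $(-1)^k$ times the sign $\epsilon$ of the root triangle (dual to $\emptyset$). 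Since the edge carrying the $i$-th leaf lies in the triangle dual to that leaf's parent, a node at depth $|d_i|-1$, the step across it is $(-1)^{|d_i|-1}\epsilon$; the same holds verbatim for $R$ with $|r_i|$ in place of $|d_i|$.

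Finally I would assemble both directions, using that the step sequence is computed from the boundary coloring alone and is therefore intrinsic to $\mathbf c$ (a global rotation of colors leaves steps unchanged; an odd permutation negates them all uniformly). If $\mathbf c$ is a rigid coloring of the pair, then reading the lemma through $D$ and through $R$ gives $(-1)^{|d_i|-1}\epsilon=\text{step}_i=(-1)^{|r_i|-1}\epsilon'$, while the root edge gives $\epsilon=\text{step}_{\text{root}}=\epsilon'$ (the Trichotomy Proposition \ref{DisjColorsProp} confirms $\mathbf c$ has one chirality for both), whence $|d_i|\equiv|r_i|\pmod 2$ for all $i$. Conversely, if the parity condition holds, take the rigid coloring $\mathbf c$ of $D$ of chirality $\epsilon$: its step across leaf $i$ is $(-1)^{|d_i|-1}\epsilon=(-1)^{|r_i|-1}\epsilon$, so its whole step sequence coincides with that of the rigid coloring of $R$ of the same chirality; as a boundary coloring is determined by its steps up to a global rotation, $\mathbf c$ agrees after an $S_3$-permutation with the rigid coloring of $R$, hence is rigid for $R$ as well. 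The main obstacle is the bookkeeping in the local lemma---verifying that ``step $=$ triangle sign'' holds with a consistent orientation for every boundary edge, including the root edge, which is what forces the two chiralities to agree, and checking that the rigid condition genuinely makes the signs alternate with depth rather than merely constraining them.
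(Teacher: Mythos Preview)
Your proof is correct and takes a genuinely different route from the paper's. The paper's argument is external and brief: it invokes the classical Theorem~2-5 of \Saaty\ (a cubic planar map has a proper face $3$-coloring iff every face has even degree), identifies rigid colorings with face $3$-colorings of the map $D\cup R$ via Lemma~\ref{DualRigidLem}, and then reads off the parity condition by computing the degree of each face of $D\cup R$ in terms of the leaf depths (the two faces touching the root edge give $|d_1|\equiv|r_1|$ and $|d_n|\equiv|r_n|$ directly, and the remaining faces give $|d_i|+|d_{i+1}|\equiv|r_i|+|r_{i+1}|$, from which the rest follows inductively).

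Your approach stays entirely inside the paper's own apparatus: you use Lemma~\ref{DualRigidLem} only in its ``valuation $\equiv 1$'' form to conclude that in a rigid coloring the triangle signs alternate with depth, then introduce the boundary ``step'' and identify it with the sign of the incident triangle, yielding $\text{step}_i=(-1)^{|d_i|-1}\epsilon$ from $D$'s triangulation and $(-1)^{|r_i|-1}\epsilon'$ from $R$'s, with $\epsilon=\epsilon'$ forced by the root edge (or by Trichotomy). This is self-contained---no appeal to the Saaty theorem---and makes the mechanism more transparent; indeed it yields the slightly sharper observation that the rigid color vector of a tree is already determined by the parity sequence $(|d_i|\bmod 2)_i$ and the chirality $\epsilon$, which is exactly what makes the converse direction immediate. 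The paper's route is shorter once the classical theorem is granted and situates the result in the standard face-$3$-coloring story.

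One small slip in your setup: you write that a color vector induces ``a proper vertex $3$-coloring of $P$,'' but in general it is a $4$-coloring; the $3$ colors appear only in the rigid case. Since your entire argument is confined to rigid colorings this does no damage, but you should say ``$4$-coloring'' there and invoke Lemma~\ref{DualRigidLem} to cut down to three. Also, note that once your step sequences agree, the boundary \emph{edge} coloring (i.e.\ $\mathbf c$ itself) is determined exactly, not merely up to an $S_3$-permutation: translating all vertex colors by a constant in $\Z_2\times\Z_2$ leaves differences unchanged. So your converse is even cleaner than you state it.
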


\begin{proof} From Lemma \ref{DualRigidLem}, we know that \((D,R)\)
has a rigid coloring if and only if the map created from \(D\cup R\)
as described in Section \ref{TreePairToMapSec} has a proper, face
3-coloring.  From Theorem 2-5 of \Saaty, we know that this happens
if and only if every face of the map has an even number of edges.
If we apply this to the two faces containing the root edges of \(D\)
and \(R\), we see that the parity condition holds for \(i=1\) and
\(i=n\).  The rest follow inductively from \(i=1\) by consideration
of the other faces.  \end{proof}

It is not possible to have a pair of binary trees with notation as
in the definition of the parity condition where for every \(i\) with
\(1\le i\le n\), the parity of \(|d_i|\) and \(|r_i|\) disagree.

\subsection{The positive rigid pattern}

Let \((D,R)\) be a pair of trees with a rigid coloring.  Then by
Proposition \ref{DisjColorsProp} they possess a common positive
rigid coloring and a common negative rigid coloring.  One is just
the negative of the other and it is not worth discussing both.  So
we concentrate on the positive rigid coloring.

It is clear that the positive rigid coloring \(\sigma\) of a finite,
binary tree \(T\) is given by \(\sigma(w) = (-1)^{|w|}\).  If we
apply this to the infnite, binary tree \(\mathcal T\), then it is
clear that the positive rigid coloring of any finite, binary tree
\(T\) is just the restriction to \(T\) of the positive rigid
coloring of \(\mathcal T\).  Below we show the edge colors of the
positive rigid coloring, normalized to have root color equal to 1,
and drawn to the depth where addresses have length 6.  The colors
are shown at the lower ends of the edges for clarity in the lower
row and to make the program generating the figure easier to write.
The picture gives a pleasant superposition of a period three pattern
on a period two structure.
\[
\xy
(0.00, 0.00); (0.95, 4.76)**@{-}; (1.90, 0.00)**@{-}; 
(0.00, -2.00)*{\scriptstyle{1}}; 
(1.90, -2.00)*{\scriptstyle{3}}; 
(3.81, 0.00); (4.76, 4.76)**@{-}; (5.71, 0.00)**@{-}; 
(3.81, -2.00)*{\scriptstyle{2}}; 
(5.71, -2.00)*{\scriptstyle{1}}; 
(7.62, 0.00); (8.57, 4.76)**@{-}; (9.52, 0.00)**@{-}; 
(7.62, -2.00)*{\scriptstyle{3}}; 
(9.52, -2.00)*{\scriptstyle{2}}; 
(11.43, 0.00); (12.38, 4.76)**@{-}; (13.33, 0.00)**@{-}; 
(11.43, -2.00)*{\scriptstyle{1}}; 
(13.33, -2.00)*{\scriptstyle{3}}; 
(15.24, 0.00); (16.19, 4.76)**@{-}; (17.14, 0.00)**@{-}; 
(15.24, -2.00)*{\scriptstyle{2}}; 
(17.14, -2.00)*{\scriptstyle{1}}; 
(19.05, 0.00); (20.00, 4.76)**@{-}; (20.95, 0.00)**@{-}; 
(19.05, -2.00)*{\scriptstyle{3}}; 
(20.95, -2.00)*{\scriptstyle{2}}; 
(22.86, 0.00); (23.81, 4.76)**@{-}; (24.76, 0.00)**@{-}; 
(22.86, -2.00)*{\scriptstyle{1}}; 
(24.76, -2.00)*{\scriptstyle{3}}; 
(26.67, 0.00); (27.62, 4.76)**@{-}; (28.57, 0.00)**@{-}; 
(26.67, -2.00)*{\scriptstyle{2}}; 
(28.57, -2.00)*{\scriptstyle{1}}; 
(30.48, 0.00); (31.43, 4.76)**@{-}; (32.38, 0.00)**@{-}; 
(30.48, -2.00)*{\scriptstyle{3}}; 
(32.38, -2.00)*{\scriptstyle{2}}; 
(34.29, 0.00); (35.24, 4.76)**@{-}; (36.19, 0.00)**@{-}; 
(34.29, -2.00)*{\scriptstyle{1}}; 
(36.19, -2.00)*{\scriptstyle{3}}; 
(38.10, 0.00); (39.05, 4.76)**@{-}; (40.00, 0.00)**@{-}; 
(38.10, -2.00)*{\scriptstyle{2}}; 
(40.00, -2.00)*{\scriptstyle{1}}; 
(41.90, 0.00); (42.86, 4.76)**@{-}; (43.81, 0.00)**@{-}; 
(41.90, -2.00)*{\scriptstyle{3}}; 
(43.81, -2.00)*{\scriptstyle{2}}; 
(45.71, 0.00); (46.67, 4.76)**@{-}; (47.62, 0.00)**@{-}; 
(45.71, -2.00)*{\scriptstyle{1}}; 
(47.62, -2.00)*{\scriptstyle{3}}; 
(49.52, 0.00); (50.48, 4.76)**@{-}; (51.43, 0.00)**@{-}; 
(49.52, -2.00)*{\scriptstyle{2}}; 
(51.43, -2.00)*{\scriptstyle{1}}; 
(53.33, 0.00); (54.29, 4.76)**@{-}; (55.24, 0.00)**@{-}; 
(53.33, -2.00)*{\scriptstyle{3}}; 
(55.24, -2.00)*{\scriptstyle{2}}; 
(57.14, 0.00); (58.10, 4.76)**@{-}; (59.05, 0.00)**@{-}; 
(57.14, -2.00)*{\scriptstyle{1}}; 
(59.05, -2.00)*{\scriptstyle{3}}; 
(60.95, 0.00); (61.90, 4.76)**@{-}; (62.86, 0.00)**@{-}; 
(60.95, -2.00)*{\scriptstyle{2}}; 
(62.86, -2.00)*{\scriptstyle{1}}; 
(64.76, 0.00); (65.71, 4.76)**@{-}; (66.67, 0.00)**@{-}; 
(64.76, -2.00)*{\scriptstyle{3}}; 
(66.67, -2.00)*{\scriptstyle{2}}; 
(68.57, 0.00); (69.52, 4.76)**@{-}; (70.48, 0.00)**@{-}; 
(68.57, -2.00)*{\scriptstyle{1}}; 
(70.48, -2.00)*{\scriptstyle{3}}; 
(72.38, 0.00); (73.33, 4.76)**@{-}; (74.29, 0.00)**@{-}; 
(72.38, -2.00)*{\scriptstyle{2}}; 
(74.29, -2.00)*{\scriptstyle{1}}; 
(76.19, 0.00); (77.14, 4.76)**@{-}; (78.10, 0.00)**@{-}; 
(76.19, -2.00)*{\scriptstyle{3}}; 
(78.10, -2.00)*{\scriptstyle{2}}; 
(80.00, 0.00); (80.95, 4.76)**@{-}; (81.90, 0.00)**@{-}; 
(80.00, -2.00)*{\scriptstyle{1}}; 
(81.90, -2.00)*{\scriptstyle{3}}; 
(83.81, 0.00); (84.76, 4.76)**@{-}; (85.71, 0.00)**@{-}; 
(83.81, -2.00)*{\scriptstyle{2}}; 
(85.71, -2.00)*{\scriptstyle{1}}; 
(87.62, 0.00); (88.57, 4.76)**@{-}; (89.52, 0.00)**@{-}; 
(87.62, -2.00)*{\scriptstyle{3}}; 
(89.52, -2.00)*{\scriptstyle{2}}; 
(91.43, 0.00); (92.38, 4.76)**@{-}; (93.33, 0.00)**@{-}; 
(91.43, -2.00)*{\scriptstyle{1}}; 
(93.33, -2.00)*{\scriptstyle{3}}; 
(95.24, 0.00); (96.19, 4.76)**@{-}; (97.14, 0.00)**@{-}; 
(95.24, -2.00)*{\scriptstyle{2}}; 
(97.14, -2.00)*{\scriptstyle{1}}; 
(99.05, 0.00); (100.00, 4.76)**@{-}; (100.95, 0.00)**@{-}; 
(99.05, -2.00)*{\scriptstyle{3}}; 
(100.95, -2.00)*{\scriptstyle{2}}; 
(102.86, 0.00); (103.81, 4.76)**@{-}; (104.76, 0.00)**@{-}; 
(102.86, -2.00)*{\scriptstyle{1}}; 
(104.76, -2.00)*{\scriptstyle{3}}; 
(106.67, 0.00); (107.62, 4.76)**@{-}; (108.57, 0.00)**@{-}; 
(106.67, -2.00)*{\scriptstyle{2}}; 
(108.57, -2.00)*{\scriptstyle{1}}; 
(110.48, 0.00); (111.43, 4.76)**@{-}; (112.38, 0.00)**@{-}; 
(110.48, -2.00)*{\scriptstyle{3}}; 
(112.38, -2.00)*{\scriptstyle{2}}; 
(114.29, 0.00); (115.24, 4.76)**@{-}; (116.19, 0.00)**@{-}; 
(114.29, -2.00)*{\scriptstyle{1}}; 
(116.19, -2.00)*{\scriptstyle{3}}; 
(118.10, 0.00); (119.05, 4.76)**@{-}; (120.00, 0.00)**@{-}; 
(118.10, -2.00)*{\scriptstyle{2}}; 
(120.00, -2.00)*{\scriptstyle{1}}; 
(0.95, 4.76); (2.86, 10.48)**@{-}; (4.76, 4.76)**@{-}; 
(0.45, 6.76)*{\scriptstyle{2}}; 
(5.26, 6.76)*{\scriptstyle{3}}; 
(8.57, 4.76); (10.48, 10.48)**@{-}; (12.38, 4.76)**@{-}; 
(8.07, 6.76)*{\scriptstyle{1}}; 
(12.88, 6.76)*{\scriptstyle{2}}; 
(16.19, 4.76); (18.10, 10.48)**@{-}; (20.00, 4.76)**@{-}; 
(15.69, 6.76)*{\scriptstyle{3}}; 
(20.50, 6.76)*{\scriptstyle{1}}; 
(23.81, 4.76); (25.71, 10.48)**@{-}; (27.62, 4.76)**@{-}; 
(23.31, 6.76)*{\scriptstyle{2}}; 
(28.12, 6.76)*{\scriptstyle{3}}; 
(31.43, 4.76); (33.33, 10.48)**@{-}; (35.24, 4.76)**@{-}; 
(30.93, 6.76)*{\scriptstyle{1}}; 
(35.74, 6.76)*{\scriptstyle{2}}; 
(39.05, 4.76); (40.95, 10.48)**@{-}; (42.86, 4.76)**@{-}; 
(38.55, 6.76)*{\scriptstyle{3}}; 
(43.36, 6.76)*{\scriptstyle{1}}; 
(46.67, 4.76); (48.57, 10.48)**@{-}; (50.48, 4.76)**@{-}; 
(46.17, 6.76)*{\scriptstyle{2}}; 
(50.98, 6.76)*{\scriptstyle{3}}; 
(54.29, 4.76); (56.19, 10.48)**@{-}; (58.10, 4.76)**@{-}; 
(53.79, 6.76)*{\scriptstyle{1}}; 
(58.60, 6.76)*{\scriptstyle{2}}; 
(61.90, 4.76); (63.81, 10.48)**@{-}; (65.71, 4.76)**@{-}; 
(61.40, 6.76)*{\scriptstyle{3}}; 
(66.21, 6.76)*{\scriptstyle{1}}; 
(69.52, 4.76); (71.43, 10.48)**@{-}; (73.33, 4.76)**@{-}; 
(69.02, 6.76)*{\scriptstyle{2}}; 
(73.83, 6.76)*{\scriptstyle{3}}; 
(77.14, 4.76); (79.05, 10.48)**@{-}; (80.95, 4.76)**@{-}; 
(76.64, 6.76)*{\scriptstyle{1}}; 
(81.45, 6.76)*{\scriptstyle{2}}; 
(84.76, 4.76); (86.67, 10.48)**@{-}; (88.57, 4.76)**@{-}; 
(84.26, 6.76)*{\scriptstyle{3}}; 
(89.07, 6.76)*{\scriptstyle{1}}; 
(92.38, 4.76); (94.29, 10.48)**@{-}; (96.19, 4.76)**@{-}; 
(91.88, 6.76)*{\scriptstyle{2}}; 
(96.69, 6.76)*{\scriptstyle{3}}; 
(100.00, 4.76); (101.90, 10.48)**@{-}; (103.81, 4.76)**@{-}; 
(99.50, 6.76)*{\scriptstyle{1}}; 
(104.31, 6.76)*{\scriptstyle{2}}; 
(107.62, 4.76); (109.52, 10.48)**@{-}; (111.43, 4.76)**@{-}; 
(107.12, 6.76)*{\scriptstyle{3}}; 
(111.93, 6.76)*{\scriptstyle{1}}; 
(115.24, 4.76); (117.14, 10.48)**@{-}; (119.05, 4.76)**@{-}; 
(114.74, 6.76)*{\scriptstyle{2}}; 
(119.55, 6.76)*{\scriptstyle{3}}; 
(2.86, 10.48); (6.67, 17.33)**@{-}; (10.48, 10.48)**@{-}; 
(2.36, 12.48)*{\scriptstyle{1}}; 
(10.98, 12.48)*{\scriptstyle{3}}; 
(18.10, 10.48); (21.90, 17.33)**@{-}; (25.71, 10.48)**@{-}; 
(17.60, 12.48)*{\scriptstyle{2}}; 
(26.21, 12.48)*{\scriptstyle{1}}; 
(33.33, 10.48); (37.14, 17.33)**@{-}; (40.95, 10.48)**@{-}; 
(32.83, 12.48)*{\scriptstyle{3}}; 
(41.45, 12.48)*{\scriptstyle{2}}; 
(48.57, 10.48); (52.38, 17.33)**@{-}; (56.19, 10.48)**@{-}; 
(48.07, 12.48)*{\scriptstyle{1}}; 
(56.69, 12.48)*{\scriptstyle{3}}; 
(63.81, 10.48); (67.62, 17.33)**@{-}; (71.43, 10.48)**@{-}; 
(63.31, 12.48)*{\scriptstyle{2}}; 
(71.93, 12.48)*{\scriptstyle{1}}; 
(79.05, 10.48); (82.86, 17.33)**@{-}; (86.67, 10.48)**@{-}; 
(78.55, 12.48)*{\scriptstyle{3}}; 
(87.17, 12.48)*{\scriptstyle{2}}; 
(94.29, 10.48); (98.10, 17.33)**@{-}; (101.90, 10.48)**@{-}; 
(93.79, 12.48)*{\scriptstyle{1}}; 
(102.40, 12.48)*{\scriptstyle{3}}; 
(109.52, 10.48); (113.33, 17.33)**@{-}; (117.14, 10.48)**@{-}; 
(109.02, 12.48)*{\scriptstyle{2}}; 
(117.64, 12.48)*{\scriptstyle{1}}; 
(6.67, 17.33); (14.29, 25.56)**@{-}; (21.90, 17.33)**@{-}; 
(6.17, 19.33)*{\scriptstyle{2}}; 
(22.40, 19.33)*{\scriptstyle{3}}; 
(37.14, 17.33); (44.76, 25.56)**@{-}; (52.38, 17.33)**@{-}; 
(36.64, 19.33)*{\scriptstyle{1}}; 
(52.88, 19.33)*{\scriptstyle{2}}; 
(67.62, 17.33); (75.24, 25.56)**@{-}; (82.86, 17.33)**@{-}; 
(67.12, 19.33)*{\scriptstyle{3}}; 
(83.36, 19.33)*{\scriptstyle{1}}; 
(98.10, 17.33); (105.71, 25.56)**@{-}; (113.33, 17.33)**@{-}; 
(97.60, 19.33)*{\scriptstyle{2}}; 
(113.83, 19.33)*{\scriptstyle{3}}; 
(14.29, 25.56); (29.52, 35.44)**@{-}; (44.76, 25.56)**@{-}; 
(13.79, 27.56)*{\scriptstyle{1}}; 
(45.26, 27.56)*{\scriptstyle{3}}; 
(75.24, 25.56); (90.48, 35.44)**@{-}; (105.71, 25.56)**@{-}; 
(74.74, 27.56)*{\scriptstyle{2}}; 
(106.21, 27.56)*{\scriptstyle{1}}; 
(29.52, 35.44); (60.00, 47.29)**@{-}; (90.48, 35.44)**@{-}; 
(29.02, 37.44)*{\scriptstyle{2}}; 
(90.98, 37.44)*{\scriptstyle{3}}; 
(60.00, 49.29)*{\scriptstyle{1}}; 
\endxy
\]

\subsection{The group \protect\(F_4\protect\)}

Here we identify the subgroup of \(F\) that has a rigid coloring.
We start by pointing out the following.

\begin{prop}\mylabel{RigidIsSubProp} The set of elements of \(F\)
that have a rigid coloring forms a subgroup of \(F\).  \end{prop}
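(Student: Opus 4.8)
The plan is to convert the combinatorial notion of a rigid coloring into an intrinsic analytic condition on the piecewise-linear homeomorphism of $[0,1]$ that represents an element of $F$, at which point the subgroup property becomes transparent. By Proposition \ref{DepthCondProp}, a pair $(D,R)$ has a rigid coloring precisely when it satisfies the parity condition, so the first step is to reinterpret the parity condition in terms of the slopes of the homeomorphism $h(D,R)$ from Section \ref{FDefsSec}.

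First I would compute those slopes. If $(D,R)$ has leaves $d_1,\dots,d_n$ and $r_1,\dots,r_n$ in left-to-right order, then $h(D,R)$ carries the interval $i(d_i)$, of length $2^{-|d_i|}$, affinely and orientation-preservingly onto $i(r_i)$, of length $2^{-|r_i|}$. Hence on its $i$-th linear piece $h(D,R)$ has slope $2^{\,|d_i|-|r_i|}$. The parity condition — that $|d_i|$ and $|r_i|$ have the same parity for every $i$ — therefore holds if and only if every slope of $h(D,R)$ is an integer power of $4$. Since this last condition refers only to the homeomorphism and not to the chosen representative (adding a caret at a common leaf raises two matched depths by $1$ each, leaving every slope unchanged), combining it with Proposition \ref{DepthCondProp} yields the key characterization: an element $f\in F$ has a rigid coloring if and only if every slope of $f$ lies in $\{4^{k}:k\in\Z\}$.

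With this in hand the subgroup claim is immediate. Let $H$ be the set of elements of $F$ all of whose slopes are integer powers of $4$. The identity has constant slope $1=4^{0}$, so $H$ is nonempty. If $f\in H$, then at each point the slope of $f^{-1}$ is the reciprocal of a slope of $f$, hence again a power of $4$, so $f^{-1}\in H$. If $f,g\in H$, then on each maximal linear piece of the composite the slope is, by the chain rule, the product of a slope of $f$ with a slope of $g$, hence a product of two powers of $4$ and so a power of $4$; thus $fg\in H$. Therefore $H$ is a subgroup, and by the previous paragraph $H$ is exactly the set of elements of $F$ possessing a rigid coloring.

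The one place where care is genuinely needed is the translation from rigidity to the slope condition; everything after that is formal. In fact the main virtue of routing the argument through slopes is that it avoids the alternative approach — a direct check that the depth-parity data of $(A,B)$ and $(C,D)$ combine correctly under the product $(A,B)(C,D)$ — which would be the real obstacle, since that multiplication first expands both pairs to a common middle tree (Section \ref{UnionIntSec}) and the leaf depths change during the expansion. Passing to the homeomorphism sidesteps this bookkeeping entirely.
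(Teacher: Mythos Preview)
Your proof is correct, but it takes a different route from the paper. The paper argues directly at the level of tree pairs: given \((A,B)\) and \((C,D)\) each satisfying the parity condition, it forms the product via \((A',B\cup C)(B\cup C,D')\) and checks that the positive rigid pattern on \(\mathcal T\) extends consistently through the attached components of \(B-C\) and \(C-B\), invoking the Compatibility Lemma (Lemma~\ref{CompatLem}) to conclude. You instead translate the parity condition into the slope condition (all slopes in \(4^{\Z}\)) and then observe that this is manifestly closed under composition and inversion via the chain rule. In effect you prove Proposition~\ref{RigidIsF4Prop} first and read off Proposition~\ref{RigidIsSubProp} as an immediate corollary, whereas the paper proves them in the opposite order. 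Your approach is cleaner and, as you note, sidesteps the bookkeeping of how leaf depths shift during the expansion to \(B\cup C\); the paper's approach has the virtue of exercising the Compatibility Lemma and staying within the tree-pair formalism, which is the template it later generalizes in Section~\ref{PatternSec} to arbitrary patterns (where no slope interpretation is available).
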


\begin{proof} If \((A,B)\) and \((C,D)\) have rigid colorings, then
each pair satisfies the parity condition and we may assume that both
are positive rigid.  We can form the product from \((A',B\cup
C)(B\cup C,D')\) which is obtained by attaching components of
\(B-C\) to \(C\) and \(D\) to obtain \((B\cup C,D')\) and attaching
components of \(C-B\) to \(A\) and \(B\) to obtain \((A',B\cup C)\).
Each component of \(B-C\) has its root edge at a certain level which
is in turn the level of a leaf edge of a leaf of \(C\) which in turn
has the same parity as the level of the corresponding leaf of \(D\).
Thus the positive rigid sign pattern of \(\mathcal T\) will be
extended in the construction of \(D'\) from \(D\).  Similar remarks
apply to the construction of \(A'\) from \(A\).  Now the details of
the Compatibility lemma (Lemma \ref{CompatLem}) give that the
resulting color vectors on \(A'\) and \(D'\) will be identical.
\end{proof}

We will use \(F_4\)\index{\protect\(F_4\protect\)} to denote the
subgroup of \(F\) consisting of those elements with rigid colorings.
The reason for the notation comes from the next proposition.  The
notation is reasonably common.

\begin{prop}\mylabel{RigidIsF4Prop} The group \(F_4\) consists of
those elements of \(F\) which when viewed as self homeomorphisms of
\([0,1]\) use only slopes that are integral powers of 4.  \end{prop}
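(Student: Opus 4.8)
The plan is to compute the slopes of the homeomorphism $h(D,R)$ directly in terms of the depths of the leaves of $D$ and $R$, and then to recognize the resulting condition as exactly the parity condition of Proposition~\ref{DepthCondProp}. First I would recall from Section~\ref{FDefsSec} that for a leaf $v$ of a finite binary tree the interval $i(v)$ has length $2^{-|v|}$, since $i(\emptyset)=[0,1]$ and passing to either child halves the length. Hence, writing $(d_1,\dots,d_n)$ and $(r_1,\dots,r_n)$ for the left-to-right leaves of $D$ and $R$, the $i$-th interval of $P(D)$ has length $2^{-|d_i|}$ and the $i$-th interval of $P(R)$ has length $2^{-|r_i|}$.

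Since $h(D,R)$ carries the $i$-th interval of $P(D)$ affinely and orientation-preservingly onto the $i$-th interval of $P(R)$, its slope on that piece is the ratio $2^{-|r_i|}/2^{-|d_i|}=2^{|d_i|-|r_i|}$. Thus the set of slopes occurring in $h(D,R)$ is $\{2^{|d_i|-|r_i|}\mid 1\le i\le n\}$. Now $2^{|d_i|-|r_i|}$ is an integral power of $4$ if and only if the exponent $|d_i|-|r_i|$ is even, that is, if and only if $|d_i|$ and $|r_i|$ have the same parity. Therefore the element uses only slopes that are integral powers of $4$ if and only if $|d_i|\equiv|r_i|\pmod 2$ for every $i$, which is precisely the parity condition for the pair $(D,R)$.

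To finish, I would invoke Proposition~\ref{DepthCondProp}: the pair $(D,R)$ satisfies the parity condition if and only if it has a rigid coloring, i.e.\ if and only if the element it represents lies in $F_4$. Chaining these equivalences shows that an element of $F$ lies in $F_4$ exactly when all of its slopes are integral powers of $4$, as claimed.

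The only point needing care --- and what I would flag as the (minor) obstacle --- is representation-independence. The slope condition is manifestly a property of the homeomorphism, hence of the element of $F$; refining a representative by an elementary move $(A,B)\to(A\caret^i,B\caret^i)$ introduces new breakpoints only in the interior of an existing linear piece, so it does not alter the set of slopes. Symmetrically, the parity condition is visibly preserved by such a move, since the two leaves created from the $i$-th leaf at depth $\ell$ sit at depth $\ell+1$ on both sides; so ``has a rigid coloring'' is a well-defined property of the element, consistent with $F_4$ being a subgroup (Proposition~\ref{RigidIsSubProp}). Once this compatibility is recorded, the argument reduces to the one-line slope computation above fed into Proposition~\ref{DepthCondProp}.
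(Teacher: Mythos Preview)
Your proof is correct and follows exactly the approach the paper takes: the paper's proof is the single sentence ``This is an immediate consequence of Proposition~\ref{DepthCondProp},'' and your argument simply spells out why --- computing the slope on the $i$-th piece as $2^{|d_i|-|r_i|}$ and observing that this lies in $4^{\mathbf Z}$ precisely when the parity condition holds. Your discussion of representation-independence is a reasonable addition but not strictly needed, since the slope set is already a property of the homeomorphism and $F_4$ was defined as the set of elements with a rigid coloring (already shown to be well-defined on elements).
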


\begin{proof}  This is an immediate consequence of Proposition
\ref{DepthCondProp}.  \end{proof}

\subsection{Characterizing positive rigid color vectors}

We can now give a characterization of the positive rigid color
vectors that we will use in Section \ref{RigidCountSec} when we
count rigid color vectors.

\begin{prop}\mylabel{RigidCharLem} The positive rigid color vectors
(modulo the action of \(S_3\) on the colors) are the non-constant
vectors \(\mathbf c\) that sum to 1 and for which no prefix sums to
3.  \end{prop}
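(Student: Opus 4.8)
The plan is to prove both implications at once by induction on the number of leaves $n$, after reducing to a cleaner statement about a single tree. First I would normalize: since we count modulo $S_3$ and an even color permutation preserves signs while an odd one negates them (Section \ref{PermColorSec}), every positive rigid coloring has a representative whose root edge is colored $1$, and by Lemma \ref{RootColorLem} this is exactly the condition that $\mathbf c$ sums to $1$. A rigid sign assignment together with one nonzero edge color determines a proper edge $3$-coloring (Lemma \ref{SignAnd1ColorLem}) that never uses $0$, so $\mathbf c$ is positive rigid precisely when it is the left-to-right sequence of leaf colors of some finite binary tree $T$ carrying the sign assignment $\sigma(w)=(-1)^{|w|}$ with root color $1$. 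Thus the proposition amounts to characterizing which color vectors arise this way.

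To run the induction I would prove a symmetric strengthening. Writing $\mathrm{nx}$ and $\mathrm{pv}$ for the successor and predecessor in the cyclic order $1\to2\to3\to1$, say that $\mathbf c$ is of type $(+,r)$ (resp. $(-,r)$) if it is the leaf-color sequence of a tree whose root has sign $+$ (resp. $-$) and root color $r$. Unwinding the sign convention, a $+$ vertex with parent color $P$ has left and right edges colored $\mathrm{nx}(P)$ and $\mathrm{pv}(P)$, a $-$ vertex the reverse, and passing to the two subtrees flips the sign. Hence for $T=A\caret B$ a type-$(+,r)$ vector is either a single leaf $(r)$, or a concatenation $\mathbf c_A\Vert\mathbf c_B$ with $\mathbf c_A$ of type $(-,\mathrm{nx}(r))$ and $\mathbf c_B$ of type $(-,\mathrm{pv}(r))$ (and symmetrically for type $(-,r)$). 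I would then claim: $\mathbf c$ is of type $(+,r)$ iff it is non-constant (or a single leaf), has nonzero entries, sums to $r$, and no proper prefix sum equals $\mathrm{pv}(r)$; and of type $(-,r)$ iff the same with $\mathrm{pv}(r)$ replaced by $\mathrm{nx}(r)$. For $r=1$ this is the assertion, since $\mathrm{pv}(1)=3$.

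The forward direction and the inductive backbone rest on the identity $r+\mathrm{nx}(r)+\mathrm{pv}(r)=0$ in $\Z_2\times\Z_2$, so that $\mathrm{nx}(r)+\mathrm{pv}(r)=r$ and $\mathrm{nx}(\mathrm{nx}(r))=\mathrm{pv}(r)$. Given the recursion, the prefix sums of $\mathbf c_A$ avoid $\mathrm{nx}(\mathrm{nx}(r))=\mathrm{pv}(r)$ by the inductive hypothesis for type $(-,\mathrm{nx}(r))$; the splitting sum $\mathrm{nx}(r)\ne\mathrm{pv}(r)$; and each prefix sum inside the $\mathbf c_B$ block equals $\mathrm{nx}(r)$ plus a proper prefix sum of $\mathbf c_B$, which avoids $\mathrm{nx}(\mathrm{pv}(r))=r$, so it avoids $\mathrm{nx}(r)+r=\mathrm{pv}(r)$. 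This shows every type-$(+,r)$ vector sums to $r$ and avoids $\mathrm{pv}(r)$, giving necessity; non-constancy follows because the two leaves of an exposed caret get distinct colors, and nonzero entries from validity.

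The hard direction is sufficiency: given a vector satisfying the conditions I must exhibit a splitting index. Non-constancy forces the prefix-sum walk, which lives in $\{0,r,\mathrm{nx}(r)\}$, to actually hit $\mathrm{nx}(r)$ (otherwise it stays in $\{0,r\}$ and every entry is $r$), so an occurrence $i$ with $s_i=\mathrm{nx}(r)$ exists as a proper prefix. Taking $i$ to be the largest such occurrence puts all later prefix sums in $\{0,r\}$, whence the entries of $\mathbf c_B$ past the first are all $r$ and $\mathbf c_B$ is non-constant or a single leaf, so the inherited conditions make it type $(-,\mathrm{pv}(r))$. The genuine obstacle is guaranteeing $\mathbf c_A$ is also nondegenerate: the remaining bad case is $c_1=\cdots=c_i=\mathrm{nx}(r)$, which I would dispatch by a short analysis of the initial run, showing that such a vector either admits a smaller good occurrence or has the explicit form $(\mathrm{nx}(r),\mathrm{pv}(r)^{\,n-1})$, where the largest occurrence already leaves a length-one $\mathbf c_B$ and a non-constant $\mathbf c_A$. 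Proving that a good split always exists — i.e. that nondegeneracy propagates through the recursion — is the step I expect to demand the most care; once it is in hand the induction closes, and specializing to $(+,1)$ proves the proposition.
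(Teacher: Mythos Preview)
Your approach is correct but takes a genuinely different route from the paper's. The paper argues via the dual picture of triangulated polygons and face colorings: a rigid edge coloring corresponds (Lemma \ref{DualRigidLem}) to a proper vertex $3$-coloring of the polygon, and once the leftmost face is pinned to color $3$ (forced by the positive sign at $\emptyset$ and root color $1$), the $k$-th face color is $3$ plus the $k$-th prefix sum of $\mathbf c$. A face colored $0$ occurs precisely when some prefix sums to $3$, and the whole argument fits in a paragraph with no induction.

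Your argument instead stays entirely on the tree side and runs a structural induction on $T=A\caret B$, strengthened to all types $(\pm,r)$. This is self-contained---it does not invoke the face-coloring correspondence or Lemma \ref{DualRigidLem}---and the forward direction is clean. The cost is the sufficiency direction, where you must manufacture a split index; your analysis there is right (largest occurrence handles $\mathbf c_B$; if $\mathbf c_A$ degenerates to $\mathrm{nx}(r)^i$ then $s_1=\mathrm{nx}(r)$ and the split at $i'=1$ works, since $c_2=\mathrm{nx}(r)$ rules out the only obstruction $\mathbf c_B'=\mathrm{pv}(r)^{n-1}$), but the exposition around the ``explicit form $(\mathrm{nx}(r),\mathrm{pv}(r)^{n-1})$'' is tangled---that form never arises from your bad case, so the parenthetical about it is superfluous and obscures the actual logic. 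If you clean that up, the inductive proof goes through; it is longer than the paper's but has the merit of not leaving the tree viewpoint.
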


\begin{proof}  With the convention that 1 is the root edge color, we
have that the sum of any vector that we discuss is 1.  The assumption
that the top internal vertex is positive takes care of the rest
of the action by \(S_3\).

For a pair \((D,R)\) with a common positive, rigid coloring, we
consider the proper, face 3-coloring of the map \(D\cup R\)
compatible with the edge coloring.  Each edge has the non-zero color
that is not the color of either face impinging on the edge.  It
follows that all the edges that impinge on vertices in the boundary
of a face \(F\) but are not edges in that boundary are given the
color of \(F\).

The face that impinges on the root edges of \(D\cup R\) to the left
of the trees (as drawn in the plane) must have the color of the
right descending edge from the top internal vertex.  In the positive
rigid tree, this color is 3.  The remaining faces use colors 1, 2,
and 3.  All faces touch the leaves of the trees and the leaf
colors give the differences between the colors of two consecutive
faces.  If any prefix sums to 3, then with the left face being
colored 3, there will be a face colored zero and conversely.
\end{proof}

\section{Color graphs, zero sets, shadow patterns, and long
paths}\mylabel{ZeroSetSec}

\subsection{Zero sets}

We return to color graphs but from a complementary view.  The set of
vertices in a colored associahedron \((A_d, \mathbf c)\) for which
\(\mathbf c\) is not valid will be called the {\itshape zero
set}\index{zero set!of color vector}\index{color!vector!zero set
of}\index{vector!color!zero set of} of \((A_d, \mathbf c)\).  The
name is chosen since a vertex (tree) in the zero set will have an
edge colored zero by \(\mathbf c\).  As with the color graph it
makes sense to refer to the zero set of \((A_d, \mathbf c)\) as the
zero set of \(\mathbf c\).  Theorem \ref{GravPayanThm} implies the
following.

\begin{prop}\mylabel{NonSepProp} For each \(d>1\), no zero set of
any \((A_d, \mathbf c)\) with \(\mathbf c\) acceptable and flexible
separates the 1-skeleton of \(A_d\).  \end{prop}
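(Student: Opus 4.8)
The plan is to reduce the statement directly to the connectivity dichotomy of Theorem \ref{GraphAlternateThm}. First I would unwind what ``separates'' means here: a set \(Z\) of vertices separates the (connected) 1-skeleton of \(A_d\) exactly when the induced subgraph of the 1-skeleton on the complementary vertex set \(V(A_d)\setminus Z\) is disconnected. When \(Z\) is the zero set of \((A_d,\mathbf c)\), its complement \(V(A_d)\setminus Z\) is by definition the set of colored vertices, and the induced subgraph on that set is precisely the color graph of \(\mathbf c\) (recall from Lemma \ref{ColoredEdgeLem} that its edges are exactly the valid signed rotations between validly colored trees). So the whole proposition amounts to the single assertion that \emph{the color graph of an acceptable, flexible \(\mathbf c\) is connected}.

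By Theorem \ref{GraphAlternateThm} the color graph is either connected or has no edges, so the only thing left to do is rule out the edgeless alternative. Since \(\mathbf c\) is acceptable, I would choose a tree \(T\) for which \(\mathbf c\) is valid and equip it with the sign assignment derived from \(\mathbf c\). Because \(\mathbf c\) is flexible, the Trichotomy (Proposition \ref{DisjColorsProp}) forces this sign assignment to be flexible, i.e.\ not rigid, so some internal edge of \(T\) joins two internal vertices carrying the same sign. As recorded in Section \ref{ColRotSec}, such a parent--child pair of equal sign is exactly the pivot configuration of a valid signed rotation \(\rot v\); carrying it out yields a tree \(S=T\rot v\), and by Lemma \ref{ColoredEdgeLem} the vector \(\mathbf c\) is again valid for \(S\). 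Thus \(T\) and \(S\) are two colored vertices joined by an edge of \(A_d\), and this edge lies in the color graph. Hence the color graph has an edge, so by Theorem \ref{GraphAlternateThm} it is connected, and \(Z\) does not separate the 1-skeleton.

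The argument is short because the real content is already packaged into Theorem \ref{GraphAlternateThm} (built in turn from the Gravier--Payan result Theorem \ref{GravPayanThm} and the Trichotomy). The one step deserving genuine care is the exclusion of the edgeless case: one must confirm that flexibility produces an honest edge rather than an isolated valid vertex, which is exactly the passage from ``an internal edge with matching signs'' to ``a valid rotation.'' This is where tree geometry is used—a vertex tree of \(A_d\) has \(d+1\) internal vertices and \(d\) internal edges, so for any \(d\ge 1\) a flexible assignment necessarily exhibits an internal edge with equal-sign endpoints and hence a rotation; the hypothesis \(d>1\) merely marks the range in which ``separates'' is a non-degenerate condition, since \(A_1\) is a single edge and nothing is lost there.
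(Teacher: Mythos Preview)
Your argument is correct and follows essentially the same route as the paper, which simply records the proposition as a consequence of Theorem~\ref{GravPayanThm}; you have merely unpacked that one-line reference via Theorem~\ref{GraphAlternateThm} and the Trichotomy, and explicitly ruled out the edgeless alternative by producing a valid rotation from flexibility.
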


We now discuss zero sets of acceptable color vectors.

Let \(\mathbf c\) be an acceptable color vector with \(d+2\)
entries, and let \(T\) be a vertex in \(A_d\).  If \(\mathbf c\) is
not valid for \(T\), then let \(e\) be an edge that evaluates to
zero when \(\mathbf c\) is applied to the leaves of \(T\).  If \(v\)
is the lower endpoint of \(e\), then \(T_v\) has edge \(e\) as its
root edge and the leaves of \(T_v\) form an interval in the totally
ordered set consisting of the leaves of \(T\).  That is, for some
interval \([i,j]\) in \([1,d+2]\), the leaves of \(T_v\) are the
\(i\)-th through the \(j\)-th in the left-right order.  The color
zero assigned to \(e\) is a result of the fact that the sum of the
colors in \(\mathbf c\) from the \(i\)-th through the \(j\)-th is
zero.

This leads to the next definition.  If \(T\) is a finite, binary
tree with \(d+2\) leaves, then for each internal vertex \(v\) of
\(T\), the leaves of \(T_v\) form the {\itshape
shadow}\index{shadow!of vertex in tree}\index{tree!vertex!shadow
of}\index{vertex!of tree!shadow}\index{tree!shadow!of vertex} of
\(v\), and the interval in \([1,d+2]\) that contains the numbers in
left-right order of the leaves of \(T_v\) is the {\itshape shadow
interval}\index{shadow!interval!of vertex in
tree}\index{tree!vertex!shadow interval of}\index{vertex!of
tree!shadow interval}\index{tree!shadow interval!of vertex} of
\(v\).  Note that different internal vertices of \(T\) yield
different shadow intervals.

The set of shadow intervals of all internal vertices of \(T\) except
for the vertex \(\emptyset\) forms the {\itshape shadow
pattern}\index{pattern!shadow!of tree}\index{shadow!pattern of
tree}\index{tree!shadow!pattern} of \(T\).  The shadow interval of
\(\emptyset\) includes all of \([1,d+2]\) and gives no information.
It is an exercise that \(T\) is determined by its shadow pattern.
We will see examples shortly.

Applying this to zero sets, we see that if \(\mathbf c\) is not
valid for \(T\), then there is a shadow interval \([m,n]\) for \(T\)
for which the sum of the \(c_i\) with \(i\in [m,n]\) is zero.  Thus
if \(Z_\mathbf c\)\index{\protect\(Z_\mathbf c\protect\)} is the set
of those intervals \(J\) for which the sum of the \(c_i\) with
\(i\in J\) is zero, then a tree \(T\) is in the zero set of
\(\mathbf c\) if and only if a shadow interval for \(T\) is in
\(Z_\mathbf c\).

Given any interval \(J\) of length \(k\) in \([1,d+2]\), the set of
trees with \(J\) in their shadow patterns is a codimension-1 face of
\(A_d\) of the form \(A_{k-2}\times A_{d-k+1}\).  From this it
follows that the zero set of a given color vector is a union of
codimension-1 faces.  Not every union of codimension-1 faces is a
zero set.  In particular not every collection of intervals is some
\(Z_\mathbf c\) for some color vector \(\mathbf c\).  If \([a,b]\)
is in \(Z_\mathbf c\), then \([a,b+1]\) cannot be since then the
color of leaf \(b+1\) would have to be zero.  Further, if \([a,b]\)
and \([b+1,d]\) are in \(Z_\mathbf c\), then so must \([a,d]\) be in
\(Z_\mathbf c\).  Lastly, we show a union of codimension-1 faces
that separates, in contrast to Proposition \ref{NonSepProp}.

\subsection{A separating example}

We will give a union of four codimension-1 faces of \(A_4\) that
separates the 1-skeleton of \(A_4\).

Since a codimension-1 face can be specified by an interval of leaf
numbers, we will give our union of codimension-1 faces as a set of
intervals in \([1,6]\).  The intervals are 
\[
[1,5], \quad [2,4],\quad [3,6],\quad [4,6].
\]

These cannot correspond to a zero set of an acceptable color vector
because of the presence of  intervals \([3,6]\) and \([4,6]\).

It will help to have the following picture of these intervals.
\mymargin{CodOneExmpl}\begin{equation}\label{CodOneExmpl}
\begin{split}
\xy
(0,0)*{1}; (5,0)*{2}; (10,0)*{3}; (15,0)*{4}; (20,0)*{5};
(25,0)*{6};
(4,2.5); (16,2.5)**@{-};
(14,4); (26,4)**@{-};
(9,5.5); (26,5.5)**@{-};
(-1,7); (21,7)**@{-};
\endxy
\end{split}
\end{equation}

We now show six trees that are not in the corresponding
codimension-1 faces.  That these are all the trees not in the faces
specified in \tref{CodOneExmpl} is not relevant.  The shadow
patterns of the trees are shown, and it is trivial to verify that
none of the six is in any of the four faces specified.
\mymargin{CodOneCompl}\begin{equation}\label{CodOneCompl}
\xy
(0,0)*{\xy
(0,0)*{
\xy
(0,-4); (2,-2)**@{-}; (3.5,-4)**@{-};
(2,-2); (4,0)**@{-}; (6,-2)**@{-}; 
(4.5,-4); (6,-2)**@{-}; (8,-4)**@{-}; 
(4,0); (6,2)**@{-}; (8,0)**@{-};
(6,2); (4,4)**@{-}; (2,2)**@{-};
\endxy};
(20,0)*{
\xy
(0,-2)*{1}; (5,-2)*{2}; (10,-2)*{3}; (15,-2)*{4}; (20,-2)*{5};
(25,-2)*{6};
(4,0.5); (11,0.5)**@{-};
(14,0.5); (21,0.5)**@{-};
(4,2); (21,2)**@{-};
(4,3.5); (26,3.5)**@{-};
\endxy};
(0,-15)*{
\xy
(2,-2); (4,0)**@{-}; (6,-2)**@{-}; 
(4,-4); (6,-2)**@{-}; (8,-4)**@{-}; 
(6,-6); (8,-4)**@{-}; (10,-6)**@{-};
(4,0); (6,2)**@{-}; (8,0)**@{-};
(6,2); (4,4)**@{-}; (2,2)**@{-};
\endxy};
(20,-15)*{
\xy
(0,-2)*{1}; (5,-2)*{2}; (10,-2)*{3}; (15,-2)*{4}; (20,-2)*{5};
(25,-2)*{6};
(9,2); (21,2)**@{-};
(14,0.5); (21,0.5)**@{-};
(4,3.5); (21,3.5)**@{-};
(4,5); (26,5)**@{-};
\endxy};
(0,-30)*{
\xy
(2,-2); (4,0)**@{-}; (6,-2)**@{-}; 
(4,-4); (6,-2)**@{-}; (8,-4)**@{-}; 
(2,-6); (4,-4)**@{-}; (6,-6)**@{-};
(4,0); (6,2)**@{-}; (8,0)**@{-};
(6,2); (4,4)**@{-}; (2,2)**@{-};
\endxy};
(20,-30)*{
\xy
(0,-2)*{1}; (5,-2)*{2}; (10,-2)*{3}; (15,-2)*{4}; (20,-2)*{5};
(25,-2)*{6};
(9,2); (21,2)**@{-};
(9,0.5); (16,0.5)**@{-};
(4,3.5); (21,3.5)**@{-};
(4,5); (26,5)**@{-};
\endxy};
\endxy};
(60,0)*{\xy
(0,0)*{
\xy
(0,-4); (2,-2)**@{-}; (3.5,-4)**@{-};
(2,-2); (4,0)**@{-}; (6,-2)**@{-}; 
(4.5,-4); (6,-2)**@{-}; (8,-4)**@{-}; 
(4,0); (7,2)**@{-}; (10,0)**@{-};
(8,-2); (10,0)**@{-}; (12,-2)**@{-};
\endxy};
(24,0)*{
\xy
(0,-2)*{1}; (5,-2)*{2}; (10,-2)*{3}; (15,-2)*{4}; (20,-2)*{5};
(25,-2)*{6};
(-1,0.5); (6,0.5)**@{-};
(9,0.5); (16,0.5)**@{-};
(19,2); (26,2)**@{-};
(-1,2); (16,2)**@{-};
\endxy};
(0,-15)*{
\xy
(-2,0); (0,2)**@{-}; (2,0)**@{-}; 
(-4,-2); (-2,0)**@{-}; (0,-2)**@{-}; 
(-6,-4); (-4,-2)**@{-}; (-2,-4)**@{-};
(4,0); (6,2)**@{-}; (8,0)**@{-};
(6,2); (3,4)**@{-}; (0,2)**@{-};
\endxy};
(24,-15)*{
\xy
(0,-2)*{1}; (5,-2)*{2}; (10,-2)*{3}; (15,-2)*{4}; (20,-2)*{5};
(25,-2)*{6};
(-1,0.5); (6,0.5)**@{-};
(-1,2); (11,2)**@{-};
(-1,3.5); (16,3.5)**@{-};
(19,3.5); (26,3.5)**@{-};
\endxy};
(0,-30)*{
\xy
(-2,0); (0,2)**@{-}; (2,0)**@{-}; 
(-4,-2); (-2,0)**@{-}; (0,-2)**@{-}; 
(-2,-4); (0,-2)**@{-}; (2,-4)**@{-};
(4,0); (6,2)**@{-}; (8,0)**@{-};
(6,2); (3,4)**@{-}; (0,2)**@{-};
\endxy};
(24,-30)*{
\xy
(0,-2)*{1}; (5,-2)*{2}; (10,-2)*{3}; (15,-2)*{4}; (20,-2)*{5};
(25,-2)*{6};
(4,0.5); (11,0.5)**@{-};
(-1,2); (11,2)**@{-};
(-1,3.5); (16,3.5)**@{-};
(19,3.5); (26,3.5)**@{-};
\endxy};
\endxy};
\endxy
\end{equation}
Recall that the shadow interval \([1,6]\) of the vertex
\(\emptyset\) is not part of any pattern and is not shown.

What must be done now is to verify that any rotation of any of the
six trees above either results in another one of the six or a tree
in the one of the faces specified by \tref{CodOneExmpl}.  To do
this, we look at the effect of rotation on the shadow intervals.
Rotations involving small parts of the tree have few possible
arrangements and we show three typical patterns below.
\[
\begin{split}
\xy
(0,-2)*{1}; (5,-2)*{2}; (10,-2)*{3};
(-1,0.5); (6,0.5)**@{-}; (-1,2); (11,2)**@{-};
(0,-6)*{\,};
\endxy
\,\,\,&\rightarrow\,\,\,
\xy
(0,-2)*{1}; (5,-2)*{2}; (10,-2)*{3};
(4,0.5); (11,0.5)**@{-}; (-1,2); (11,2)**@{-};
(0,-6)*{\,};
\endxy
\\
\xy
(0,-2)*{1}; (5,-2)*{2}; (10,-2)*{3}; (15,-2)*{4};
(-1,0.5); (6,0.5)**@{-}; (9,0.5); (16,0.5)**@{-}; 
(-1,2); (16,2)**@{-};
(0,-6)*{\,};
\endxy
\,\,\,&\rightarrow\,\,\,
\xy
(0,-2)*{1}; (5,-2)*{2}; (10,-2)*{3}; (15,-2)*{4};
(-1,0.5); (6,0.5)**@{-}; 
(-1,2); (11,2)**@{-}; 
(-1,3.5); (16,3.5)**@{-};
\endxy
\\
\xy
(0,-2)*{1}; (5,-2)*{2}; (10,-2)*{3}; (15,-2)*{4};
(-1,0.5); (6,0.5)**@{-}; (9,0.5); (16,0.5)**@{-}; 
(-1,2); (16,2)**@{-};
\endxy
\,\,\,&\rightarrow\,\,\,
\xy
(0,-2)*{1}; (5,-2)*{2}; (10,-2)*{3}; (15,-2)*{4};
(9,0.5); (16,0.5)**@{-}; 
(4,2); (16,2)**@{-}; 
(-1,3.5); (16,3.5)**@{-};
\endxy
\end{split}
\]
Longer intervals have to take into account the tree structure and
the effects can be worked out by the reader.

What is discovered by checking the four possible rotations of each
of the three trees in the left column of \tref{CodOneCompl} is that
a rotation either produces another tree in the left column or a tree
in a face specified by \tref{CodOneExmpl}.  Similarly a rotation of
a tree in the right column of \tref{CodOneCompl} either produces
another tree in the right column or a tree in a face specified by
\tref{CodOneExmpl}.  For example in any of the trees in the left
column of \tref{CodOneCompl}, a rotation using top vertex results in
the shadow interval \([1,5]\) which is in \tref{CodOneExmpl}.
Similarly in the right column one rotation using the top vertex in
each tree produces \([1,5]\) while the other rotation using the top
vertex produces \(([3,6]\) for the first tree and \([4,6]\) for the
second and third trees.  Other rotations are left to the reader.

It follows that the trees in the left column of \tref{CodOneCompl}
are in a different component of complement of the union of the faces
given by \tref{CodOneExmpl} from the trees in the right column of
\tref{CodOneCompl}, and thus the union of faces specified by
\tref{CodOneExmpl} separates the 1-skeleton of \(A_4\).

\subsection{Long paths}\mylabel{LongPathSec}

Consider the color vector (written without commas) \(\mathbf
c=1^m21^n\) with \(m\) and \(n\) at least zero.  This vector colors
vertices of \(A_d\) with \(d=m+n-1\).  We know that the diameter of
the 1-skeleton of \(A_d\) is no more than \(2d-4\) or \(2m+2n-5\).
Thus there is a path (that ignores signs) in the 1-skeleton of
length no more than \(2m+2n-5\) between any two vertices of \(A_d\).
However, we will show that there are vertices for which \(\mathbf
c\) is valid for which the shortest path between them that is sign
consistent (lies in the color graph of \(\mathbf c\)) has length
\(mn\).

We first note that any tree \(T\) for which \(\mathbf c\) is valid
must be a vine.  Any exposed caret in \(T\) must use the unique 2 in
\(\mathbf c\) as a color of a leaf edge, and this leaf edge can only
be in one caret.

All carets in \(T\) other than the exposed caret will have one
descending edge as an internal edge and the other descending edge
will be a leaf edge and, under \(\mathbf c\), this leaf edge will be
colored 1.  Thus the other (internal) descending edge of the caret
will be colored 2 or 3.  Note that even the exposed caret of \(T\)
has exactly one descending edge colored 1.  If a caret in \(T\) has
its left descending edge colored 1, we will label that caret \(l\)
and if its right descending edge is colored 1, we will label that
caret \(r\).  Since \(T\) is a vine, there is a well defined
top-to-bottom ordering of the carets, and we can read the labels
\(r\) or \(l\) in order from top to bottom.  This gives is a finite
word in the alphabet \(\{l,r\}\).  For example, in the trees below,
\[
\xy
(0,8); (4,12)**@{-}; (8,8)**@{-}; (10,6)*{\txst1};
(-4,4); (0,8)**@{-}; (4,4)**@{-}; (6,2)*{\txst1};
(-8,0); (-4,4)**@{-}; (0,0)**@{-}; (-10,-2)*{\txst1};
(-4,-4); (0,0)**@{-}; (4,-4)**@{-}; (-6,-6)*{\txst1};
(0,-8); (4,-4)**@{-}; (8,-8)**@{-};(10,-10)*{\txst1};
(-4,-12); (0,-8)**@{-}; (4,-12)**@{-}; (-6,-14)*{\txst1}; 
(6,-14)*{\txst2};
\endxy
\qquad\qquad
\xy
(0,8); (4,12)**@{-}; (8,8)**@{-}; (10,6)*{\txst1};
(-4,4); (0,8)**@{-}; (4,4)**@{-}; (6,2)*{\txst1};
(-8,0); (-4,4)**@{-}; (0,0)**@{-}; (-10,-2)*{\txst1};
(-4,-4); (0,0)**@{-}; (4,-4)**@{-}; (-6,-6)*{\txst1};
(0,-8); (4,-4)**@{-}; (8,-8)**@{-};(10,-10)*{\txst1};
(-4,-12); (0,-8)**@{-}; (4,-12)**@{-}; (-6,-14)*{\txst2}; 
(6,-14)*{\txst1};
\endxy
\]
the left tree gives the word \(rrllrl\) and the right tree gives the
word \(rrllrr\).  Note that the two trees are identical, but the
words and the color vectors are different.  The labeling convention
we use disagrees with that in Section 3 of \Zeilberger.

The following can easily be verified by the reader.

\begin{enumerate}

\item The colors of the internal edges alternate between 3 and 2
starting with 3 at the lowest internal edge.

\item Two adjacent carets have the same sign if and only if they
have opposite labels.

\item Any sign consistent rotation changes a single appearance of
\(rl\) to \(lr\) in the word for the tree (or the reverse) and
leaves the rest of the labels the same.

\item If a word \(w\) in \(\{l,r\}\) is given by a vine \(T\)
colored by \(\mathbf c=1^m21^n\), then the number of appearances of
\(l\) in \(w\) is \(m\) and the number of appearances of \(r\) in
\(w\) is \(n\).

\item The vines that can be colored by \(\mathbf c=1^m21^n\) are in
one-to-one correspondence with the words in \(\{l,r\}\) that use
\(m\) copies of \(l\) and \(n\) copies of \(r\).

\item If \(w_1\) and \(w_2\) are two words in \(\{l,r\}\) given by
vines \(V_1\) and \(V_2\), respectively, colored by \(\mathbf
c=1^n21^n\), then the shortest sign consistent path (path in the
color graph of \(\mathbf c\)) from \(V_1\) to \(V_2\) is the number
of \(lr\) to \(rl\) or reverse moves needed to take \(w_1\) to
\(w_2\).

\item If \(V_1\) gives the word \(w_1=l^mr^n\) and \(V_2\) gives the
word \(w_2=r^nl^m\) when colored by \(\mathbf c=1^m21^n\), then the
shortest sign consistent path (path in the color graph of \(\mathbf
c\)) from \(V_1\) to \(V_2\) as colored by \(\mathbf c\) is of
length \(mn\).  Further there is such a path of that length.  (The
vines \(V_1\) and \(V_2\) in are among those covered by Proposition
9 of \Zeilberger.)

\end{enumerate}

These combine to give the following.

\begin{prop}\mylabel{LongPathsLem} The diameter of the color graph
of \(\mathbf c=1^m21^n\) is \(mn\).  \end{prop}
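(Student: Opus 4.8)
The plan is to reduce the statement to a purely combinatorial claim about words in the two-letter alphabet $\{l,r\}$, using the dictionary already assembled in the enumerated list preceding the proposition. By the fifth item above, the vines validly colored by $\mathbf c=1^m21^n$ correspond bijectively to words $w$ in $\{l,r\}$ using exactly $m$ copies of $l$ and $n$ copies of $r$; by the sixth item, the distance in the color graph between two such vines $V_1$ and $V_2$ equals the minimum number of adjacent $lr\leftrightarrow rl$ transpositions needed to transform the word $w_1$ of $V_1$ into the word $w_2$ of $V_2$. (When $m=0$ or $n=0$ there is a single valid vine and the graph is a point, so the diameter is $0=mn$; otherwise adjacent transpositions exist, the graph has edges, and by Theorem \ref{GraphAlternateThm} it is connected, so all distances are finite.) Thus the diameter of the color graph is exactly the maximum, over all ordered pairs of such words, of this minimal transposition count.

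First I would establish the upper bound. For two words $w_1,w_2$ of the prescribed type, call an \emph{inversion} any pair consisting of one matched $l$ and one matched $r$ whose relative order differs in $w_1$ and $w_2$ (fix once and for all a matching of the $m$ copies of $l$ and of the $n$ copies of $r$ between the two words, and count crossing pairs). A single $lr\leftrightarrow rl$ move swaps two adjacent unequal letters and so changes the number of inversions by exactly one, and the inversion count is zero precisely when $w_1=w_2$. Hence the minimal number of moves equals the inversion count. Since each inversion involves one $l$ and one $r$, there are at most $mn$ of them, so the distance between any two validly colored vines is at most $mn$, and the diameter is at most $mn$.

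For the matching lower bound I would invoke the seventh item above directly: taking $V_1$ with word $l^mr^n$ and $V_2$ with word $r^nl^m$, that item asserts that the shortest sign-consistent path between them has length exactly $mn$. Consequently the diameter is at least $mn$, and combining the two bounds gives diameter $mn$.

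The only genuinely new work is the upper bound, so the main obstacle is to pin down the claim that the minimal number of $lr\leftrightarrow rl$ moves between two words equals the inversion count and is bounded by $mn$. This is a standard bubble-sort estimate, but it requires checking that the permitted moves are exactly the adjacent transpositions of distinct letters, so that each move alters the inversion count by one and some sequence of such moves actually realizes the inversion count; this matching of the move set to adjacent transpositions follows from the description of sign-consistent rotations as $rl\leftrightarrow lr$ swaps in the third item above.
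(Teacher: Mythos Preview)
Your proposal is correct and follows essentially the same approach as the paper, which simply states ``These combine to give the following'' after the seven enumerated items and offers no further argument. You have supplied exactly the missing detail: item (7) gives the lower bound $mn$ via the explicit pair $l^mr^n$, $r^nl^m$, and your inversion (bubble-sort) argument gives the matching upper bound for arbitrary pairs, which the paper leaves implicit. One small precision worth noting: the matching you ``fix once and for all'' should be the canonical order-preserving one (the $i$-th $l$ of $w_1$ with the $i$-th $l$ of $w_2$, and likewise for $r$'s), since the allowed moves never transpose equal letters and hence preserve the relative order within each letter class; with that choice your claim that each move changes the inversion count by exactly one is immediate.
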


Note that for \(m=n\), we get a coloring of \(A_{2n}\) and a color
graph of diameter \(n^2\) while the (uncolored) diameter of the
1-skeleton of \(A_{2n}\) is no more than \(4n-4\).

\section{Sign structures}\mylabel{SignStructSec}

In this section we build a signed graph \(\Sigma(w)\) for each edge
path \(w\) in an associahedron.  We will show that the path \(w\) is
``valid'' in that there is a finite, binary tree \(T\) and a sign
assignment for \(T\) so that \(w\) is valid for \(T\) in the sense
of Proposition \ref{RotChainProp} if and only if \(\Sigma(w)\) is
balanced.  This will verify the claim made after Proposition
\ref{RotChainProp}.

The vertex set of our graph \(\Sigma(w)\) will be the internal
vertices of the tree \(T\) above.  In \Carpentier, Carpentier
develops a similar but not identical criterion for the validity of a
path.  In particular, the vertex set of the two structures is
different with the vertices used in \Carpentier{} being the elements
of the path \(w\) and not the vertices of the tree \(T\).

\subsection{The signed graph}

Let \(w\) be an edge path in an associahedron that starts at a
vertex \(D\) and ends at a vertex \(R\).  We can think of \(w\) as a
word in the symmetric generators.  Let this word be \(w =
\rot{v_1}\rot{v_2}\cdots \rot{v_k}\).  Thinking of each
\(\rot{v_i}\) and \((D,R)\) as elements of \(F\), lets us write \[ w
= \rot{v_1}\rot{v_2}\cdots \rot{v_k} = (D,R).  \] However, there are
other paths that represent the same element of \(F\).  These paths
either have the same endpoints but get from one to the other by a
different route, or the paths read as the same string of symmetric
generators but connect different pairs of vertices, or both.  For
example, in the drawing \tref{TheAssoc} of \(A_3\) there are four
different paths that read as \(\rot u\rot{u1}\).  Our definition of
\(\Sigma(w)\) will depend on the word in the symmetric generators
and not on the particular start and end vertices.

The graph \(\Sigma(w)\) will be an undirected, signed graph with one
edge for each edge in the path \(w\).  The graph \(\Sigma(w)\) may
have parallel edges but will have no loops.  The vertex set will
start out as the set of vertices in \(\mathcal T\) so we will simply
use \(\mathcal T\) to denote the vertex set.  This is an infinite
set and is more than needed.  After proving a key lemma, we will cut
down the set of vertices.

We define \(\Sigma(w)\) inductively and if \(w=
\rot{v_1}\rot{v_2}\cdots \rot{v_k}\), then we will need \(p_i =
\rot{v_1}\rot{v_2}\cdots \rot{v_i}\) where \(1\le i\le k\) and we
will need \(p_0\) to be the identity in \(F\).  We start with
\(\Sigma_0 = \Sigma(p_0)\) as the graph on \(\mathcal T\) with no
edges.

If \(\Sigma_{i-1} = \Sigma(p_{i-1})\) is defined, then we form
\(\Sigma_i\) by adding an edge \(e_i\) to \(\Sigma_{i-1}\).  We need
to pick the endpoints of \(e_i\) and its sign.  Let \(x\) and \(y\)
be the pivot vertices of \(\rot {v_i}\).  The endpoints of \(e_i\)
will be \(s=(x)p_{i-1}^{-1}\) and \(t=(y)p_{i-1}^{-1}\).  Let
\(d_s\) and \(d_t\) be the degrees of \(s\) and \(t\), respectively,
in \(\Sigma_{i-1}\).  Then \(e_i\) will be given a positive sign if
and only if \(d_s+d_t\) is even.  The graph
\(\Sigma(w)\)\index{\protect\(\Sigma(w)\protect\)} that we seek is
\(\Sigma_k\).  Note that the starting vertex of \(w\) is not
relevant to the definition.

Rather than explain this definition, we will state and prove the
following.  A signed graph is {\itshape
balanced}\index{balanced!signed graph} if every closed walk
traverses an even number of negative edges.

\begin{thm}\mylabel{SignConsistencyThm} Let \(w\) be an edge path in
an associahedron that starts at a vertex \(D\).  Then 

\begin{enumerate} 

\item all of the endpoints of edges in \(\Sigma(w)\) are internal
vertices of \(D\), and 

\item  there is a sign assignment for \(D\) for which \(w\) is valid
if and only if \(\Sigma(w)\) is balanced.

\end{enumerate}

\end{thm}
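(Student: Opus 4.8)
The plan is to read off from $\Sigma(w)$ the exact system of constraints that a sign assignment on $D$ must satisfy for $w$ to be valid, and then to recognize that system as the defining condition for balance; throughout I treat a sign as an element of the multiplicative group $\{\pm1\}$. Claim (1) is quick. The vertices $x$ and $y$ are the pivot vertices of $\rot{v_i}$ regarded as an edge leaving the associahedron vertex $Dp_{i-1}$, so they are internal vertices of the tree $Dp_{i-1}$. As noted in Section~\ref{GroupSec}, the initial segment of the path represents the element $p_{i-1}=(D,Dp_{i-1})$ of $F$, so its inverse $p_{i-1}^{-1}=(Dp_{i-1},D)$ carries internal vertices of $Dp_{i-1}$ to internal vertices of $D$ by Lemma~\ref{FactsRightLem}. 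Hence $s=(x)p_{i-1}^{-1}$ and $t=(y)p_{i-1}^{-1}$ are internal vertices of $D$; they are distinct, so no loops arise, because $p_{i-1}^{-1}$ is injective and $x\ne y$.

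The core of (2) is a sign-tracking lemma. Fix any sign assignment $\sigma$ on $D$ and propagate it forward along $w$ by the action of rotations on signed trees from Proposition~\ref{RotOnColorProp} — an action defined step by step regardless of validity — producing a sign assignment $\sigma_{i-1}$ on $Dp_{i-1}$. I will show by induction on $i$ that for every internal vertex $a$ of $D$,
\[
\sigma_{i-1}\big((a)p_{i-1}\big)=\sigma(a)\,(-1)^{d_a},
\]
where $d_a$ is the degree of $a$ in $\Sigma_{i-1}$. The mechanism is that, by construction, $a$ is an endpoint of $e_j$ precisely when $(a)p_{j-1}$ is a pivot vertex of $\rot{v_j}$, which by Proposition~\ref{RotOnColorProp} is exactly when the sign at the moving image of $a$ is negated in passing from step $j-1$ to step $j$. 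Thus $d_a$ records, with the correct parity, the sign reversals that $a$ undergoes through step $i-1$; crucially both the degrees and the positions depend only on the word $w$ and not on $\sigma$.

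With the lemma in hand the translation is mechanical. By the validity criterion of Section~\ref{ColRotSec} (extracted from Lemma~\ref{ColorX0Lem}), the $i$-th rotation is valid exactly when its two pivot vertices carry equal $\sigma_{i-1}$-signs, that is
\[
\sigma(s_i)\,\sigma(t_i)=(-1)^{d_{s_i}+d_{t_i}},
\]
where $s_i,t_i$ are the endpoints of $e_i$ and the degrees are computed in $\Sigma_{i-1}$. Since $e_i$ is positive iff $d_{s_i}+d_{t_i}$ is even, this reads $\sigma(s_i)=\sigma(t_i)$ when $e_i$ is positive and $\sigma(s_i)\ne\sigma(t_i)$ when $e_i$ is negative. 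Consequently $w$ is valid for some sign assignment of $D$ if and only if the internal vertices of $D$ admit a $\{\pm1\}$-labelling under which every positive edge of $\Sigma(w)$ joins equally labelled vertices and every negative edge joins oppositely labelled ones, the internal vertices of $D$ touched by no edge being labelled freely. By Harary's balance theorem such a labelling exists if and only if $\Sigma(w)$ is balanced, which is (2).

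I expect the main obstacle to be the bookkeeping in the sign-tracking lemma: one must check that the degree of $a$ in the partial graph $\Sigma_{i-1}$ equals the parity of the number of times the moving image of $a$ has been a pivot, and that the $(-1)^{d_s+d_t}$ rule used to sign $e_i$ is precisely what converts the multiplicative sign-equality of the two pivots into the additive balance condition. The remaining ingredient is standard: a labelling of the stated kind exists if and only if every closed walk traverses an even number of negative edges, which is the given definition of balance.
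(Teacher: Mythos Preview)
Your proof is correct and follows essentially the same route as the paper's: both track how the degree of a vertex $a$ in the partial graph $\Sigma_{i-1}$ counts (mod 2) the number of times the moving image of $a$ has been a pivot, so that the sign rule on $e_i$ encodes precisely the equality-of-pivot-signs condition for validity. The only cosmetic differences are that you package the tracking as a separate lemma and invoke Harary's balance theorem by name, whereas the paper does both inline.
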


\begin{proof} If the \(p_i\) are as defined above, then \(Dp_i\)
is the \(i\)-th vertex visited by \(w\) with \(D\) as the \(0\)-th.
Lemma \ref{FactsRightLem} makes \(p_i\) a bijection from the
internal vertices of \(D\) to the internal vertices of \(Dp_i\).
Item (1) follows immediately.

If there is a sign assignment for \(D\) for which \(w\) is valid,
then to show that \(\Sigma(w)\) is balanced it is sufficient to show
that each edge in \(\Sigma(w)\) is positive if and only if it
connects two vertices in \(D\) with the same sign.

The edge \(e_i\) in the construction above connects \(x\) and \(y\)
where \(xp_{i-1}\) and \(yp_{i-1}\) are the pivot vertices of
\(\rot{v_i}\).  But the sign of \(xp_{i-1}\) in the sign assignment
of \(Dp_{i-1}\) is the sign of \(x\) in \(D\) as modified by
\(p_{i-1} = \rot{v_1}\rot{v_2} \cdots \rot{v_{i-1}}\).  The
modification consists one negation for each \(xp_j\) with \(1\le
j<i\) for which \(xp_j\) is a pivot vertex of \(\rot{v_{j+1}}\).  But
this is exactly the degree of the vertex \(x\) in \(\Sigma_{i-1}\).
Similarly the sign of \(yp_{i-1}\) in \(Dp_{i-1}\) is the sign of
\(y\) negated a number of times which is the degree of the vertex
\(y\) in \(\Sigma_{i-1}\).  Thus the sign of \(e_i\) is positive if
and only if the signs \(xp_{i-1}\) and \(yp_{i-1}\) in \(Dp_{i-1}\)
both agree or both disagree, respectively, with the signs of \(x\)
and \(y\) in \(D\).  The validity of \(w\) says that the signs of
\(xp_{i-1}\) and \(yp_{i-1}\) must be equal.  Thus the sign of
\(e_i\) is positive if and only if the signs of \(x\) and \(y\) in
\(D\) are equal.  This proves the desired condition needed to show
that \(\Sigma(w)\) is balanced.

If it is known that \(\Sigma(w)\) is balanced, then there is a sign
assignment of the internal vertices of \(D\) with the property that
each edge in \(\Sigma(w)\) is positive if and only if it connects
two vertices in \(D\) with the same sign.  Considerations almost
identical to the previous paragraph show that \(w\) is valid for
this sign assignment on \(D\).  \end{proof}

The details in the above proof give us the following.  If
\(\Sigma(w)\) is balanced (continuing the notation of this section),
then a sign assignment of the internal vertices of \(D\) is
{\itshape compatible}\index{sign assignment!compatible with sign
structure}\index{sign structure!compatible with sign assignment}
with \(\Sigma(w)\) if an edge in \(\Sigma(w)\) is positive if and
only if it connects edges with the same sign.  We say that a
coloring\index{coloring!compatible with sign structure}\index{sign
structure!compatible with coloring} of the pair \((D,Dw)\) is
compatible with \(\Sigma(w)\) if the associated sign assignment on
the internal vertices of \(D\) is compatible with \(\Sigma(w)\).

\begin{thm}\mylabel{SignConsThmII} Let \(w\) be an edge path in an
associahedron that starts at a vertex \(D\).  Then the following
hold.

\begin{enumerate}

\item The path \(w\) is valid with a sign assignment for \(D\) if
and only if the sign assignment is compatible with \(\Sigma(w)\).

\item The number of sign assignments for which \(w\) is valid is
\(2^p\) where \(p\) is the number of components of \(\Sigma(w)\)
regarded as a graph on the internal vertices of \(D\).

\item The number of valid colorings of the pair \((D,Dw)\) that are
compatible with \(\Sigma(w)\) modulo permutations of the colors is
\(2^{p-1}\) with \(p\) as in (2).

\end{enumerate}

\end{thm}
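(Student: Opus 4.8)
The plan is to handle the three parts in sequence, drawing heavily on the proof of Theorem \ref{SignConsistencyThm} for part (1) and then reducing parts (2) and (3) to the enumeration of solutions of a linear system over \(\Z_2\). Throughout I would assume \(\Sigma(w)\) is balanced; otherwise Theorem \ref{SignConsistencyThm} guarantees no valid sign assignment exists and all the counts below are zero.

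For part (1) I would simply unpack the definition of compatibility and observe that both implications are already contained in the proof of Theorem \ref{SignConsistencyThm}. That proof shows that whenever a sign assignment on \(D\) makes \(w\) valid, each edge \(e_i\) of \(\Sigma(w)\) is positive exactly when its two endpoints receive equal signs in \(D\) --- which is precisely the statement that the sign assignment is compatible with \(\Sigma(w)\). The closing paragraph of that same proof gives the converse: any sign assignment with the property ``\(e_i\) positive iff its endpoints agree'' makes \(w\) valid. So part (1) is a restatement once the word \emph{compatible} is decoded.

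For part (2) I would identify \(\{+,-\}\) with \(\Z_2\) and encode compatibility as a linear system. Assigning \(\epsilon(e)=0\) to each positive edge and \(\epsilon(e)=1\) to each negative edge, a sign assignment \(\sigma\), viewed as a function from the internal vertices of \(D\) to \(\Z_2\), is compatible with \(\Sigma(w)\) exactly when \(\sigma(a)+\sigma(b)=\epsilon(e)\) for every edge \(e=\{a,b\}\). The key step is the standard fact that on a single connected component, once \(\sigma\) is chosen at one vertex its values everywhere else are forced by walking along edges, and that balance of \(\Sigma(w)\) --- every closed walk crossing an even number of negative edges --- is exactly the condition making this propagation well defined and hence the system consistent on that component. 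Thus each of the \(p\) components, isolated internal vertices of \(D\) included, contributes one independent binary choice, giving \(2^p\) compatible sign assignments; by part (1) this is the number of sign assignments making \(w\) valid.

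For part (3) I would pass from sign assignments to colorings via Section \ref{PermColorSec}. First I would note that a valid signed rotation leaves the leaf color vector unchanged --- this is the matching of the leaves of \(A,B,C\) with those of \(A',B',C'\) in \tref{CRot} --- so by induction along \(w\), a color vector validly coloring \(D\) under a compatible sign assignment is automatically valid for \(Dw\); together with Proposition \ref{RotChainProp} this shows each compatible sign assignment genuinely yields a valid coloring of the pair \((D,Dw)\). Next, counting colorings modulo \(S_3\) means counting those in which the coloring of \(D\) is positive normal, and by Lemma \ref{SignAnd1ColorLem} the positive-normal colorings of \(D\) correspond bijectively to sign assignments \(\sigma\) of \(D\) with \(\sigma(\emptyset)=+\) (root color pinned to \(1\)). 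Hence the colorings to be counted biject with the compatible sign assignments satisfying \(\sigma(\emptyset)=+\); since the component of \(\Sigma(w)\) containing \(\emptyset\) accounts for precisely the binary choice of \(\sigma(\emptyset)\), fixing it removes one factor of two from the count in part (2) and leaves \(2^{p-1}\). The hard part will be this last step: one must verify carefully that every compatible sign assignment extends to a color vector valid for the whole pair and not merely for \(D\) (where the invariance of the color vector under valid signed rotations does the work), and that the positive-normal normalization corresponds exactly to pinning the sign of \(\emptyset\), so that the factor of two is halved with neither over- nor under-counting.
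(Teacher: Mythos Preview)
Your proposal is correct and follows essentially the same approach as the paper: part (1) is extracted from the proof of Theorem \ref{SignConsistencyThm}, part (2) is the standard count of balanced two-colorings component by component, and part (3) uses the positive-normal convention of Section \ref{PermColorSec} to pin \(\sigma(\emptyset)=+\) and halve the count. The paper's own proof is a three-clause sentence saying exactly this in telegraphic form; you have simply written out the details it leaves implicit.
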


\begin{proof} We have (1) from the details of the proof above, (2)
is immediate, and (3) from the fact that there is one representative
in each permutation class that has root color 1 and the sign of the
child of the root of \(D\) positive.  \end{proof}

Note that a single vertex can be a component of \(\Sigma(w)\).

For an edge path \(w\) in an associahedron, we call \(\Sigma(w)\)
the {\itshape sign structure}\index{sign structure!of
path}\index{path!sign structure of} of \(w\).

We say that an edge path \(w\) in an associahedron is {\itshape sign
consistent}\index{sign consistent path}\index{path!sign consistent}
if \(\Sigma(w)\) is balanced.

We have one observation that can be made immediately about sign
consistent paths.

\begin{lemma}\mylabel{SubConsistLem} A subpath of a sign consistent
path is sign consistent.  \end{lemma}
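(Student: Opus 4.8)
The plan is to avoid manipulating the graph $\Sigma(\cdot)$ directly and instead route everything through the validity characterization of Theorem \ref{SignConsistencyThm}. That theorem says that a path $w$ is sign consistent (i.e.\ $\Sigma(w)$ is balanced) precisely when there is a finite, binary tree with a sign assignment for which $w$ is valid. So the burden becomes purely one about validity of paths of signed rotations, where the inductive definition from Section \ref{ColRotSec} does the work for us.

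First I would write the sign consistent path as $w = \rot{v_1}\rot{v_2}\cdots\rot{v_k}$ and a subpath as the contiguous segment $w' = \rot{v_a}\cdots\rot{v_b}$ with $1\le a\le b\le k$. Since $\Sigma(w)$ is balanced, Theorem \ref{SignConsistencyThm} supplies a tree $D$ and a sign assignment $\sigma$ for which $w$ is valid for $D^\sigma$. By the inductive definition of validity, $Dw$ is built by applying the rotations one at a time, so each intermediate tree $D_j = D\rot{v_1}\cdots\rot{v_j}$ is defined and carries an induced sign assignment $\sigma_j$, and each $\rot{v_{j+1}}$ is valid for $D_j^{\sigma_j}$. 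In particular, setting $D' = D_{a-1}$ with its induced sign assignment $\sigma_{a-1}$, the rotations $\rot{v_a},\rot{v_{a+1}},\ldots,\rot{v_b}$ are successively valid starting from $D'$, which is exactly the statement that $w'$ is valid for $(D')^{\sigma_{a-1}}$.

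Finally I would invoke the validity-implies-balanced direction of Theorem \ref{SignConsistencyThm} in reverse: since $w'$ is valid for some tree with a sign assignment, $\Sigma(w')$ is balanced, so $w'$ is sign consistent. This completes the argument. The prefix case $a=1$ is the special instance already recorded in Section \ref{ColRotSec}, namely that every prefix of a valid path is valid; the general tail segment only requires naming the intermediate tree at which the segment begins.

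The one point that needs care --- and the reason I would \emph{not} attempt a direct comparison of $\Sigma(w')$ with a subgraph of $\Sigma(w)$ --- is that the construction of $\Sigma(w')$ uses the partial products $\rot{v_a}\cdots\rot{v_{a+j-1}}$ beginning fresh at $v_a$, whereas the edges $e_a,\ldots,e_b$ inside $\Sigma(w)$ were placed using the products $\rot{v_1}\cdots\rot{v_{i-1}}$ that begin at $v_1$. The two vertex labelings differ by the translation $p_{a-1}$, and the edge signs are computed from vertex degrees accumulated over different initial ranges of the word, so there is no literal identification of $\Sigma(w')$ with a piece of $\Sigma(w)$. Passing through validity sidesteps this difficulty entirely, since validity of a tail segment depends only on the intermediate tree at which that segment begins, and that tree is furnished for free by the inductive construction of $Dw$.
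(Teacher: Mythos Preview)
Your argument is correct and is essentially the paper's own proof spelled out in more detail: both route through Theorem~\ref{SignConsistencyThm} to convert sign consistency into validity, then observe that the subpath is valid for the intermediate signed tree at which it begins. The paper simply writes $w=pvs$ and notes that $v$ is valid starting at $(D^\sigma)p$, which is exactly your $D_{a-1}^{\sigma_{a-1}}$.
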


\begin{proof} If a sign consistent path \(w=pvs\) starts at tree
\(D\) with a given sign assignment \(\sigma\), then the subpath
\(v\) starts at \((D^\sigma)p\) and must be consistent with the sign
assignment there.  \end{proof}

\subsection{The second signed path
conjecture}\mylabel{SecondSPathConjSec}

We can apply Theorem \ref{SignConsistencyThm} to give a statement
equivalent to Conjecture \ref{SPathConjOne}.

\begin{conj}\mylabel{SPathConjTwo}\index{conjecture!signed
path!second} For every pair of finite, binary trees \((D, R)\) with
the same number of leaves, there is a sign consistent path from
\(D\) to \(R\).
\end{conj}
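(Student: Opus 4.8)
The plan is to establish the existence of a sign consistent path by routing through the Four Color Theorem (4CT), which the introduction records as an established theorem, together with the Gravier--Payan result (Theorem \ref{GravPayanThm}) and the sign-structure characterization of validity (Theorem \ref{SignConsistencyThm}). I should stress at the outset that this is really a \emph{re-derivation} from the 4CT: the statement is of interest precisely because one wants a proof that does \emph{not} pass through the 4CT, and any such proof would reprove the theorem. As a bare existence statement, however, it can be argued as follows.

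First I would dispose of the degenerate case: if $D=R$ as trees, the empty path suffices, its sign structure being the edgeless graph, which is vacuously balanced. Otherwise $D$ and $R$ each have at least three leaves, since the binary trees with one and with two leaves are each unique. In this case I would produce a \emph{flexible} coloring of $(D,R)$. By the 4CT and Proposition \ref{TreePairEquivThm}, the pair has a coloring $\mathbf c$. If $\mathbf c$ is flexible I proceed; if it is rigid, then by Lemma \ref{DualRigidLem} the reconstructed map $M$ of $(D,R)$ carries a proper face $3$-coloring. Since $D$ has at least three leaves, $M$ has at least four faces (one per degree-one vertex of $D$), so the fact recalled in Section \ref{IntroMonoidSec} upgrades this to a proper face $4$-coloring of $M$ using all four colors. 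By Lemma \ref{DualRigidLem} again, a face $4$-coloring using all four colors corresponds to a vector that is not rigid, i.e.\ a flexible color vector for $(D,R)$; replacing $\mathbf c$ by it, I may assume $\mathbf c$ is flexible on $D$.

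With a flexible coloring in hand, I would convert it into a sign consistent path. Since the sign assignment on $D$ derived from $\mathbf c$ is flexible, Theorem \ref{GravPayanThm} produces a path $w$ of signed rotations valid for $D$ with $Dw=R$. As $w$ is then valid for \emph{some} sign assignment of $D$, Theorem \ref{SignConsistencyThm}(2) forces $\Sigma(w)$ to be balanced; by definition $w$ is therefore sign consistent, and it is a path from $D$ to $R$, as required.

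The main obstacle is conceptual rather than computational: every step after the invocation of the 4CT is bookkeeping inside machinery already in place, and the sole substantive input — colorability of the arbitrary pair $(D,R)$ — is exactly the 4CT. The only place the argument uses more than the bare existence of a coloring is the rigid-to-flexible upgrade. A genuine, 4CT-free proof would instead have to build the balanced sign structure $\Sigma(w)$ directly from the combinatorics of $(D,R)$, supplying along the way a self-contained source of flexible colorings; furnishing these without assuming colorability in advance is the real difficulty, and is precisely what the statement leaves open.
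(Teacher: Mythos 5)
Your argument is correct and is essentially the paper's own justification of this statement: the paper records exactly this chain — the 4CT gives a coloring of every pair (Proposition \ref{TreePairEquivThm}), a rigid coloring is upgraded to a flexible one via the remark in Section \ref{IntroMonoidSec} that a map with more than three faces and a proper face 3-coloring has a proper face 4-coloring using all four colors, Theorem \ref{GravPayanThm} then produces a valid path of signed rotations from \(D\) to \(R\), and Theorem \ref{SignConsistencyThm} converts validity for a sign assignment into balance of \(\Sigma(w)\), i.e.\ sign consistency — which is precisely why the statement is labeled a conjecture rather than a theorem, the open problem being a proof that does not route through the 4CT. You correctly flag this caveat yourself, and your handling of the degenerate case \(D=R\) and of the rigid-to-flexible upgrade is sound.
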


\subsection{The vertex set of a sign structure}

Giving the set of vertices of \(\mathcal T\) to \(\Sigma(w)\) is
clearly excessive, and also unrevealing.  Item (1) of Theorem
\ref{SignConsistencyThm} hints that if \(w\) is a path starting at a
tree \(D\), then the internal vertices of \(D\) might make a good
vertex set for \(\Sigma(w)\).  However, we have pointed out that
different paths can read as the same string of rotation symbols, and
the edges of \(\Sigma(w)\) really only depend on the string of
symbols.  Another obvious choice would be the set of endpoints of
the edges of \(\Sigma(w)\).  For reasons that we hope will be made
clear, we will adopt a compromise.

Let \(w\) be a path in some \(A_d\), and let \(S\) be the set of
endpoints of \(\Sigma(w)\).  There is a smallest tree \(T\)
containing all of \(S\) among the internal vertices of \(T\).  We
let the internal vertices of \(T\) be the set of vertices of
\(\Sigma(w)\).\index{vertex set!of sign structure}\index{sign
structure!vertex set}

The reason for our choice of vertex set is the next result.  

\begin{thm}\mylabel{PrimeConnThm} Let \(w\) be an edge path in an
associahedron, and let \(T\) be the tree whose internal vertices are
the vertices of \(\Sigma(w)\).  If the pair \((T,Tw)\) is prime,
then \(\Sigma(w)\) is connected.  \end{thm}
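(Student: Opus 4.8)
The plan is to prove the contrapositive in purely combinatorial terms: I will show that if $\Sigma(w)$ is disconnected then $T$ and $Tw$ share a common \emph{proper} subtree interval, and that this is exactly what it means for $(T,Tw)$ to fail to be prime. The first task is to recast primeness via the shadow patterns of Section \ref{ZeroSetSec}. Reconstructing the map from $(T,Tw)$ as in Section \ref{TreePairToMapSec}, the internal edges of $T$ are the internal edges of the northern polygon and those of $Tw$ the internal edges of the southern polygon; an internal edge whose pendant subtree spans the leaf-interval $[i,j]$ separates the face just left of leaf $i$ from the face just right of leaf $j$. Two edges are parallel in the dual precisely when one comes from each hemisphere and they separate the same pair of faces, which forces their spanning leaf-intervals to coincide. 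Hence $(T,Tw)$ is prime if and only if the shadow patterns of $T$ and of $Tw$ contain no common proper interval, and it suffices to show: $\Sigma(w)$ disconnected $\Rightarrow$ $T$ and $Tw$ have a common proper shadow interval.

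Next I set up the bookkeeping. By Lemma \ref{FactsRightLem} the internal vertices of every tree on the path are, in infix order, the unique vertices lying between consecutive leaves, so they are canonically indexed by the ``gaps'' $1,\dots,L-1$ (with $L$ the number of leaves), and each $p_i$ preserves this indexing; thus the vertex set of $\Sigma(w)$ is identified with the gaps, and by Corollary \ref{WhatRotDoesCor} the edge $e_i$ joins the two gaps carrying the pivots of $\rot{v_i}$. The crucial local fact is that a single rotation alters the shadow interval of \emph{only} its two pivot gaps: in the notation of \tref{ARot} it destroys the interval carried by $u0$ and creates the interval $B\cup C$ at $u1$, leaving every other gap's shadow fixed. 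Consequently the shadow carried by a gap $g$ can change only when a rotation is performed at an edge of $\Sigma(w)$ incident to $g$, that is, at an edge inside the component of $g$.

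From this I extract the key invariant: for any connected component $K$, the extreme leaves $p=\min_{g\in K}\ell(g)$ and $q=\max_{g\in K}r(g)$ reached by the shadows of $K$ are constant along the whole path. A direct check shows a rotation with both pivots in $K$ preserves both the least left endpoint and the greatest right endpoint of the two pivot shadows, while rotations outside $K$ do not touch $K$ at all. The improper interval $[1,L]$ is carried throughout by gaps forming one single component $K_0$; for any \emph{other} component $K$ the span $[p,q]$ is therefore a proper interval. The remaining step is to show that this span is realized as a genuine shadow of both $T$ and $Tw$ — i.e. that leaves $p,\dots,q$ actually form a subtree at both ends of the path — which produces the required common proper shadow interval and finishes the contrapositive.

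I expect this last step to be the main obstacle. The span $[p,q]$ is a priori only the convex hull of $K$'s shadows, and these could in principle sit in several disjoint maximal blocks rather than in one subtree. The structure to exploit is that, read at the instant it is traversed, an edge of $\Sigma(w)$ joins two gaps whose shadows are \emph{nested} and share a boundary (the pivots $u0,u$ satisfy $\sigma(u0)\subset\sigma(u)$). Processing the edges of a spanning tree of $K$ in their order of occurrence along $w$, and maintaining that the shadows seen so far form a nested family, should show that the shadows of $K$ do coalesce into the single interval $[p,q]$, realized at every tree $S_j$ on the path and in particular at $S_0=T$ and at the final $S_k=Tw$. Carrying out this nesting argument cleanly, together with the verification that $K_0$ is the only component able to reach the improper interval $[1,L]$ (so that every other component's span is proper), is where the genuine work lies.
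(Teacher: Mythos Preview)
Your recasting of primeness via common shadow intervals and your span invariant are both correct, and the contrapositive strategy is the paper's as well. The gap is exactly where you flag it: step 5 carries essentially all of the content, and your nesting sketch does not close it.

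The difficulty is that you fix the components of the \emph{final} graph $\Sigma(w)$ and maintain only their spans along the path. The span is indeed invariant, but this is too weak: nothing in that invariant prevents the gaps of a final component $K$ from sitting, at the starting tree $T$, in several disjoint maximal blocks whose convex hull $[p,q]$ is not a shadow interval of $T$. Your proposed remedy---that each edge of $\Sigma(w)$ joins two gaps whose shadows are nested---holds only \emph{at the instant that edge is traversed}, not at $T$ or at $Tw$; transporting this nesting back to the fixed tree $T$ is precisely the hard direction, and ``processing the edges of a spanning tree of $K$ in order of occurrence'' does not produce any statement about shadows in $T$, since the shadows you see are read off different intermediate trees.

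The paper sidesteps this by strengthening the invariant and refining what is tracked. Instead of fixing the final components, it follows the components of the \emph{partial} graph $\Sigma(p_i)$ as $i$ grows, and proves two inductive statements $\alpha_i,\beta_i$: for every $j\le i$, each component $C$ of $\Sigma(p_i)$ is carried by $p_j$ onto the set of internal vertices of an actual subtree $T_{C(j)}$ of $Tp_j$, and the ``adjacent'' and ``above'' relations among these subtrees are independent of $j$. The inductive step is then clean: the $(i{+}1)$-st edge has both pivots either inside one current subtree (Case I, nothing structural changes) or in two adjacent subtrees (Case II), which simply merge into a single subtree at every stage. At $i=n$ one picks a component of $\Sigma(w)$ that is minimal for ``above''; its subtree in $T$ then has all its leaves among the leaves of $T$, and the same holds in $Tw$, yielding the common proper shadow interval. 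The ingredient your outline lacks is exactly this stronger statement that \emph{intermediate} components are subtrees at every stage; once you try to prove that your span is actually realized, you are forced back to it.
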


\begin{proof} The proof will be inductive, and we will have to prove
more than the statement of the theorem.

Let \(w\) have length \(n\), and let \(p_i\) be the prefix of length
\(i\) of \(w\), \(0\le i<n\).  Obviously, \(p_0\) is the empty word.
Recall that we can think of \(w\) and all the \(p_i\) as functions,
where the special case of \(i=0\) has \(p_0\) as the identity.

We need a preliminary discussion before giving the statements that
we will prove.  If \(S\) is a tree and \(S'\) and \(S''\) are
subtrees of a tree \(S\) having disjoint non-empty sets of internal
vertices and there is an edge from an internal vertex of \(S'\) to
an internal vertex of \(S''\), then either a leaf of \(S'\) is an
internal vertex of \(S''\) or the reverse.  The cases are mutually
exclusive.  In both cases, we say that \(S'\) and \(S''\) are
adjacent\index{adjacent!subtrees}\index{subtree!adjacent to a
subtree}.  In the case that a leaf of \(S'\) is an internal vertex
of \(S''\), we say that \(S'\) is above\index{above!for
subtrees}\index{subtree!above another subtree} \(S''\).  Otherwise
we say that \(S''\) is above \(S'\).

Consider the following statements where \(1\le i \le n\).

\(\alpha_i\): If \(C\) is a component of \(\Sigma(p_i)\) and
\(V(C)\) is its vertex set, then for each \(j\) with \(0\le j\le
i\), there is a subtree \(T_{C(j)}\) of \(Tp_j\) whose internal
vertices are exactly the vertices \((V(C))p_j\).

\(\beta_i\): Let \(C\) and \(D\) be components of \(\Sigma(p_i)\).
If \(T_{C(0)}\) and \(T_{D(0)}\) are adjacent with \(T_{C(0)}\) above
\(T_{D(0)}\), then for each \(j\) with \(0\le j\le i\), the subtrees
\(T_{C(j)}\) and \(T_{D(j)}\) are adjacent with \(T_{C(j)}\) above
\(T_{D(j)}\).  If \(T_{C(0)}\) and \(T_{D(0)}\) are not adjacent, then for
each \(j\) with \(0\le j\le i\), the subtrees \(T_{C(j)}\) and
\(T_{D(j)}\) are not adjacent.

We have that \(\alpha_1\) is trivial, and we have that \(\beta_1\)
follows from the nature of a single rotation.

We assume \(\alpha_k\) and \(\beta_k\) hold for some \(k<n\).  Let
\(\rot u=w_{k+1}\).  There are two cases to consider.

Case I: Both pivot vertices of \(\rot u\) are internal vertices of a
single \(T_{C(k)}\).  In this case, the edge added to
\(\Sigma(p_k)\) to create \(\Sigma(p_{k+1})\) has both of endpoints
in the vertices of a single component \(C\) of \(\Sigma(p_k)\).
Thus the components of \(\Sigma(p_k)\) and \(\Sigma(p_{k+1})\) have
identical vertex sets and \(\alpha_{k+1}\) holds for \(1\le j\le
k\).  To get \(\alpha_{k+1}\) for \(j=k+1\), we note that the vertex
set \((V(C))p_{k+1} = ((V(C))p_k)\rot u\) is the image under \(\rot
u\) of the set of internal vertices of the subtree \(T_{C(k)}\).
From the illustration in \tref{ARot}, we see that this forms the set
of internal vertices of a subtree of \(Tp_k\).  All other subtrees
corresponding to components of the sign structure are carried to
\(Tp_{k+1}\) isomorphically.  This proves \(\alpha_{k+1}\).

The statement \(\beta_{k+1}\) clearly holds for \(1\le j\le k\).
The \(\beta\) family of statements says that the relation ``adjacent
to'' and the relation ``over'' among the relevant subtrees are
preserved by the prefixes of \(w\).  The illustration in \tref{ARot}
also shows that these relations are preserved by a single rotation.
Thus \(\beta_{k+1}\) also holds for \(j=k+1\).

Case II: The pivot vertices of \(\rot u\) are internal vertices of
two different \(T_{C(k)}\) and \(T_{D(k)}\).  Since the pivot
vertices are endpoints of an edge, we have that \(T_{C(k)}\) and
\(T_{D(k)}\) are adjacent, and we can assume that \(T_{C(k)}\) is
over \(T_{D(k)}\).  Since \(\beta_k\) says that the relation
``adjacent to'' and the relation ``over'' are preserved by the
\(p_j\) with \(1\le j\le k\), we know that \(T_C\) and \(T_D\) are
adjacent with \(T_C\) over \(T_D\).  Similar statements apply to the
\(T_{C(j)}\) and \(T_{D(j)}\) for \(1\le j< k\), while for \(j=k\)
it is our hypothesis.  It is now seen that for \(0\le j\le k\), the
union of \(T_{C(j)}\) with \(T_{D(j)}\) is a subtree \(T_{A(j)}\) of
\(Tp_j\) whose set of internal vertices is the vertex set of a
component of \((\Sigma(p_{k+1}))p_j\).  Thus if we replace
\(T_{C(0)}\) and \(T_{D(0)}\) by \(T_{A(0)}\) in \(T\) and keep all
the other subtrees the same, then we now have the desired one-to-one
correspondence between subtree and component of \(\Sigma(p_{k+1})\).
This is carried in the correct way to each \(Tp_j\) by prefixes
\(p_j\) for \(1\le j\le k\), and the argument that this works for
\(j=k+1\) follows because we are now in the situation of Case I and
the argument for \(j=k+1\) for both \(\alpha\) and \(\beta\) applies
here.

We now assume that \(\Sigma(w)=\Sigma(p_n)\) is not connected.
Let \(C\) be a component of \(\Sigma(w)\) so that \(T_C\) is minimal
with respect to the ``over'' relation.  This will make every leaf of
\(T_C\) a leaf of \(T\).  Thus every leaf of \(T_{C(n)}\) is a leaf
of \(Tw\).  Since rotations, and thus chains of rotations, preserve
prefix order and thus the left-right order of the leaves, the leaves
of the subtrees \(T_C\) and \(T_{C(n)}\) define exactly the same
intervals in the leaf numberings of \(T\) and \(Tw\).  Since
\(\Sigma(w)\) is not connected, this interval is not the entire
interval of leaf numbers.  Thus \((T,Tw)\) is not prime.
\end{proof}

The following corollary combines primality and consistency with
Theorem \ref{PrimeConnThm}.

\begin{cor}\mylabel{OneColorPerSignCor} If \((T,Tw)\) is a prime
pair with \(w\) a consistent edge path starting at \(T\), then there
is a unique normal coloring for \(T\) compatible with \(\Sigma(w)\)
with the sign of \(\emptyset\) positive.  \end{cor}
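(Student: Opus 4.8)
The plan is to read this off as an immediate consequence of the two main theorems of the section, Theorem \ref{PrimeConnThm} and Theorem \ref{SignConsThmII}, once the hypotheses are matched up. First I would record that because \(w\) is sign consistent, its sign structure \(\Sigma(w)\) is balanced, so Theorem \ref{SignConsThmII} applies to \(w\) with starting tree \(T\), where \(T\) is (as in the statement) the tree whose internal vertices are the vertices of \(\Sigma(w)\). Let \(p\) denote the number of components of \(\Sigma(w)\) regarded as a graph on the internal vertices of \(T\).

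The crucial input is primality. Since the pair \((T,Tw)\) is prime, Theorem \ref{PrimeConnThm} gives that \(\Sigma(w)\) is connected, which is exactly the statement that \(p=1\). I would then invoke Theorem \ref{SignConsThmII}(3): the number of valid colorings of the pair \((T,Tw)\) compatible with \(\Sigma(w)\), counted modulo permutations of the colors, is \(2^{p-1}=2^{0}=1\). It remains only to identify this single counted coloring with the positive normal one. In the proof of Theorem \ref{SignConsThmII}(3), the distinguished representative chosen in each permutation class is precisely the coloring whose root color is \(1\) and for which the sign of the child of the root is positive; since the child of \(*\) is \(\emptyset\), this is by the definitions of Section \ref{PermColorSec} exactly the positive normal coloring. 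With only one permutation class, there is therefore exactly one positive normal coloring of \(T\) compatible with \(\Sigma(w)\), which gives uniqueness. Existence is contained in the same count, or can be seen directly: by Theorem \ref{SignConsThmII}(2) there are \(2^{p}=2\) sign assignments of the internal vertices of \(T\) for which \(w\) is valid, and since \(\Sigma(w)\) is connected these two assignments are negatives of one another, so exactly one has \(\emptyset\) positive; assigning root color \(1\) to that assignment and applying Lemma \ref{SignAnd1ColorLem} produces the desired coloring.

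I do not expect a genuine obstacle here, as the corollary is a packaging of results already proved. The only point requiring care is the bookkeeping identification: that ``normal coloring of \(T\) compatible with \(\Sigma(w)\) with \(\emptyset\) positive'' coincides with the single representative counted by the \(2^{p-1}\) formula, rather than, say, with one of the \(2^{p}\) raw sign assignments of Theorem \ref{SignConsThmII}(2). Keeping straight the passage from sign assignments (of which there are \(2^{p}\)) to colorings modulo permutation (of which there are \(2^{p-1}\), each with a unique positive normal representative) is the one place where I would be explicit, and it is where the normalization conventions of Section \ref{PermColorSec} and the connectedness of \(\Sigma(w)\) do the real work.
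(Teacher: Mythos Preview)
Your proposal is correct and follows exactly the approach the paper indicates: the paper gives no explicit proof, only the remark that the corollary ``combines primality and consistency with Theorem \ref{PrimeConnThm},'' and you have unpacked this by applying Theorem \ref{PrimeConnThm} to get \(p=1\) and then Theorem \ref{SignConsThmII}(3) to get the count \(2^{p-1}=1\), identifying the unique representative with the positive normal coloring. Your additional remarks on existence via part (2) and on the bookkeeping between sign assignments and normalized colorings are correct elaborations beyond what the paper spells out.
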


\subsection{Relations among the paths}\mylabel{RelPathsSec}

This section contains a set of related observations about the
effects on sign structures of changing the paths.  We give them to
explain the wording of some questions that we raise about colorings
that arise from sign structure considerations.

\subsubsection{Moving paths across faces}

If \(w\) and \(w'\) are two edge paths in some \(A_d\) with the same
endpoints, then as words in the symmetric generators, they represent
the same element of \(F\).  It follows from the presentation
discussed in Section \ref{AltRepSec} that as words, we can alter
\(w\) using a sequence of ``square'' or ``pentagonal'' relations so
that the end result is the word \(w'\).  We will refer to the
corresponding alterations on paths as ``square'' or ``pentagonal''
moves.\index{square!move}\index{pentagonal!move}
\index{move!square}\index{move!pentagonal}  From the nature
of the square and pentagonal moves, it is clear that these moves can
be realized as moves across 2-dimensional faces in \(A_d\).  We look
at the two kinds of alterations.

\subsubsection{Moving paths across pentagons}

Let \(D\) and \(R\) be two non-adjacent vertices in \(A_2\).  See
\tref{ThePentagon}.  There are two simple paths between them---one
of length three and one of length two.  The path of length three is
always inconsistent and the path of length two is always consistent.
This is easy enough to check by hand.  For example, \(\rot{u0} \rot
u \rot {u1}\) is inconsistent, but \(\rot {u0} \rot u\) is
consistent.  Because of the action of the dihedral group of order
ten on \(A_2\), it is only necessary to check one of each length.
Thus a pentagonal move is capable of changing a path from
consistent to inconsistent or vice versa.

\subsubsection{Adding or removing canceling pairs}

It is also clear that a consistent path can be ``ruined'' by
inserting consecutive canceling pairs.  The consistent path
\(\rot{u0}\rot u\) represents the same element of \(F\) as the
inconsistent path \(\rot {u0}\rot u \rot {u1} \rot{\overline{u1}}\).

\subsubsection{Moving paths across
squares}\mylabel{SquareMoveSubSec}

There are two cases to consider.

In the simpler case, a two-edge subpath along part of a square is
replaced by the other two-edge subpath along the other part of the
square.  If the various square relations are investigated, it is
found that the first two-edge subpath introduces two edges in
\(\Sigma\) of the path that have disjoint endpoints and that the
other two-edge subpath introduces the same two edges in \(\Sigma\)
in the other order.  This is in spite of the fact that the four
edges around the square correspond to three different symmetric
generators.  When two edges with disjoint end points are introduced
in succession into \(\Sigma\), reversing the order does not change
any of the sign assignments in \(\Sigma\).  Thus a two-edge to
two-edge move across a square preserves sign consistency.

In the more complicated case, a three-edge subpath is replaced by a
one-edge subpath.  The three edge path introduces three edges into
\(\Sigma\) in which two are parallel and one has endpoints disjoint
from the parallel edges.  The two parallel edges acquire the same
signs as each other.  The shorter path leaves out the two parallel
edges.  Removing a pair of parallel edges does not alter the
parities of any vertex.  Thus it is seen that a three-edge to
one-edge move across a square does not destroy sign consistency, but
it might convert a sign inconsistent path into a sign consistent
path by removing a damaging term.  For example \(w_1=\rot {u0}\rot
u\rot{u1} \rot{u111} \rot{\overline{u1}}\) is sign inconsistent.  But
this represents the same element of \(F\) and of \(E\) as \(w_2=\rot
{u0}\rot u\rot{u11}\) which is sign consistent.

Since the passage from \(w_2\) to \(w_1\) takes a shorter consistent
path to a longer inconsistent path, we see again that increasing the
length of a path can cause problems with consistency.

\subsubsection{Shortest paths are not always the best}

We give an example here to show that making paths shorter is not
always the best.  The path
\[
\rot{u} \rot{u1}^3 \rot{u}^{-1}
\]
is easily shown to be sign inconsistent.  However this represents
the same element of \(F\) as 
\[
\left(
\rot{u0}^{-1}\rot{u}
\right)^3
\]
which is longer and sign consistent.  Computer search verifies that
no word with less than six symbols is sign consistent and equivalent
to the words above.  For the interest of the reader we mention that
there are 5 different paths of length 6 equivalent to the words
above with 5 different sign structures among them.

\subsubsection{Parallel edges in the sign structure}

The discussion in Section \ref{SquareMoveSubSec} shows that parallel
edges can come from canceling pairs and conjugations.  However,
they do not have to arise that way.

The word \(\rot u\rot u\rot{\overline{u1}}\) has a sign structure
with two parallel edges that have opposite signs and is thus clearly
not sign consistent.  It is also one of the forbidden three-edge
paths in \(A_2\).  See \tref{ThePentagon}.

The word \(\rot u\rot u \rot {u1} \rot{\overline{u11}}\), has
parallel edges with the same sign and is sign consistent.  It cannot
be made shorter.  With \(u=\emptyset\), the reduced tree pair for
this element of \(F\) is 
\mymargin{ParalleExmpl}\begin{equation}\label{ParalleExmpl}
\left(
{\xy
(4,2);(6,4)**@{-};(8,2)**@{-};
(2,0);(4,2)**@{-};(6,0)**@{-};
(4,-2);(2,0)**@{-};(0,-2)**@{-};
(2,-4);(4,-2)**@{-};(6,-4)**@{-};
\endxy}
\quad , \quad
{\xy
(0,2);(2,4)**@{-};(4,2)**@{-};
(2,0);(4,2)**@{-};(6,0)**@{-};
(4,-2);(6,0)**@{-};(8,-2)**@{-};
(2,-4);(4,-2)**@{-};(6,-4)**@{-};
\endxy}
\right).
\end{equation}
These can be located in the drawing \tref{TheAssoc} of \(A_3\) where
it is seen that the shortest edge path between them is of length 4.
It is an exercise (which we leave to the reader) to show that if two
vertices lie in a face of an associahedron, then the shortest edge
path between them lies in that face.  The exercise consists of
looking at the set of subtrees that define that face and noting that
all rotations outside that set of subtrees will add excessively to
the length of the path.  It follows, that no shorter word in the
symmetric generators represents the element of \(F\) pictured in
\tref{ParalleExmpl}.

It should be noted that the element in \tref{ParalleExmpl} is one of
the smallest elements that has a rigid coloring.  It is easy to
check that positive normal coloring consistent with the word \(\rot
u\rot u \rot {u1} \rot{\overline{u11}}\) comes from the color vector
\((2,2,3,1,3)\).  It is just as easy to check that the positive
rigid coloring for the pair comes from the color vector
\((2,2,1,3,3)\).

\subsubsection{Paths with
identical sign structures}\mylabel{DiffESameSignsSec}

Consider the following sequence of ten rotations.

\newcommand{\NP}{\xy(-3,-3); (0,0)**@{-}; (3,-3)**@{-};
(0,-2)*{+};\endxy} 

\newcommand{\NM}{\xy(-3,-3); (0,0)**@{-}; (3,-3)**@{-};
(0,-2)*{-};\endxy} 

\newcommand{\WP}{\xy(-4.5,-4.5); (0,0)**@{-}; (4.5,-4.5)**@{-};
(0,-2)*{+};\endxy} 

\newcommand{\WM}{\xy(-4.5,-4.5); (0,0)**@{-}; (4.5,-4.5)**@{-};
(0,-2)*{-};\endxy} 

\newcommand{\SWP}{\xy(-7.5,-4.5); (0,0)**@{-}; (7.5,-4.5)**@{-};
(0,-2)*{+};\endxy} 

\newcommand{\SWM}{\xy(-7.5,-4.5); (0,0)**@{-}; (7.5,-4.5)**@{-};
(0,-2)*{-};\endxy} 

\begin{gather*}
\xy 
(0,8)*{\NM}; (3,5)*{\NP}; (6,1.3)*{\WP}; (1.5,-2.5)*{\NP}; (-1.5,-5.5)*{\NM};
(10.5,-2.5)*{\NP}; (7.5,-5.5)*{\NM};
\endxy
\rightarrow
\xy 
(0,8)*{\NM}; (3,5)*{\NP}; (6,2)*{\NM}; (3,-1.7)*{\WM}; (-1.5,-5.5)*{\NP};
(-4.5,-8.5)*{\NM}; (7.5,-5.5)*{\NM};
\endxy
\rightarrow
\xy 
(0,8)*{\NM}; (3,5)*{\NP}; (6,2)*{\NM}; (3,-1)*{\NP}; (0,-4)*{\NP};
(-3,-7)*{\NP}; (-6,-10)*{\NM};
\endxy
\rightarrow
\xy 
(0,8)*{\NM}; (3,5)*{\NP}; (6,2)*{\NM}; (3,-1)*{\NP}; (0,-4.7)*{\WM};
(-4.5,-8.5)*{\NM}; (4.5,-8.5)*{\NM};
\endxy
\rightarrow
\xy 
(0,8)*{\NM}; (3,5)*{\NP}; (6,2)*{\NM}; (3,-1)*{\NP}; (0,-4)*{\NP};
(3,-7)*{\NP}; (6,-10)*{\NM};
\endxy 
\rightarrow
\xy 
(0,8)*{\NM}; (3,5)*{\NP}; (6,2)*{\NM}; (3,-1)*{\NM}; (6,-4)*{\NM};
(3,-7)*{\NP}; (6,-10)*{\NM};
\endxy \\
\rightarrow
\xy 
(0,8)*{\NM}; (3,5)*{\NP}; (6,2)*{\NP}; (9,-1)*{\NP}; (6,-4)*{\NM};
(3,-7)*{\NP}; (6,-10)*{\NM};
\endxy
\rightarrow
\xy 
(0,8)*{\NM}; (3,4.3)*{\WM}; (-1.5, 0.5)*{\NM}; (7.5,0.5)*{\NP};
(10.5,-2.5)*{\NM}; (7.5,-5.5)*{\NP}; 
(10.5,-8.5)*{\NM};
\endxy
\rightarrow
\xy 
(0,7.3)*{\SWP}; (-7.5,3.5)*{\NP}; (-4.5, 0.5)*{\NM}; (7.5,3.5)*{\NP};
(4.5,0.5)*{\NM}; (1.5,-2.5)*{\NP}; 
(4.5,-5.5)*{\NM};
\endxy
\rightarrow
\xy 
(3,8)*{\NM}; (0,4.3)*{\SWM}; (-7.5,0.5)*{\NP}; (-4.5, -2.5)*{\NM};
(7.5,0.5)*{\NM}; 
(4.5,-2.5)*{\NP}; (7.5,-5.5)*{\NM}; 
\endxy
\rightarrow
\xy 
(0,8)*{\NM}; (-3,5)*{\NP}; (-6,1.3)*{\WP}; (-1.5,-2.5)*{\NP}; (1.5,-5.5)*{\NM};
(-10.5,-2.5)*{\NP}; (-7.5,-5.5)*{\NM};
\endxy
\end{gather*}

Let us refer to the first (upper left) tree in the sequence by \(D\)
and the last (lower right) tree by \(R\).  The reader can check that
the color vector \(1132133\) is consistent with the signs shown.
The reader can also check that the pair \((D,R)\) is prime.

Except for \(D\) and \(R\), all of the trees show a triple of
adjacent vertices with equal signs.  Thus these trees have exactly
two locations at which rotations can occur, one given by the arrow
leaving the tree and one given by the arrow arriving at the tree.
Thus (in the forward direction), the squence is completely
determined by the first arrow, and (in the reverse direction) by the
last arrow.

The trees \(D\) and \(R\) are unlike the other nine trees in that
there is a cluster of four adjacent vertices with equal signs.  Thus
there are three locations in each of \(D\) and \(R\) at which
rotations can occur and the figure above shows exactly one of them.
The reader can check that if either of the other two locations is
used to give a first rotation from \(D\) (in the forward direction)
or from \(R\) (in the reverse direction), then the same phenomenon
is observed.  One obtains a path of ten rotations from \(D\) to
\(R\) with no choices in how to go from the second rotation on.  One
can also check that the three paths have no vertices in common
except for \(D\) and \(R\).

It follows that the color graph of the vector \(1132133\) consists
exactly of three paths of ten edges each from \(D\) to \(R\) that
are disjoint except at their endpoints.  In particular, the color
graph contains no squares.

It is reasonable to guess (from the absence of squares in the color
graph) that the three paths give three different elements of \(E\)
that correspond to the same element \((D,R)\) of \(F\).  Further the
three paths give the same sign structure since \((D,R)\) is prime
and the colorings (all the same) determine and are determined by the
sign structure.  While the guess that the paths give different
elements of \(E\) is reasonable, we do not have a proof of this.

The above example lives in the 1-skeleton of \(A_6\).  The smallest
examples live in \(A_5\), but they are not as clean, having more
squares to complicate the graphs.

\subsection{On the relevance of the group \protect\(E\protect\)}

We have partial, set valued function from paths to colorings.  The
function is partial since there are paths (unsuccessful) that lead
to no colorings, and set valued since some paths (with non-connected
sign structures) that lead to many colorings (Theorem
\ref{SignConsThmII}).  It would be interesting to know if this
function is well defined on \(E\).  

A given path represents an element of \(E\) as well as an element of
\(F\).  Two paths with the same endpoints represent the same element
of \(F\) and they are equivalent modulo ``moves across squares and
pentagons.''  The two paths represent the same element of \(E\) if
they are {\itshape square equivalent}\index{square!equivalence of
paths}\index{path!square equivalence} in that one can be carried to
the other (keeping the endpoints fixed) by only moving across
squares.  Since moves across squares seem to be less drastic than
moves across pentagons, there is hope that some sort of well
definedness is obtainable.

The observations in Section \ref{SquareMoveSubSec} show that square
equivalent paths do not have to all be successful if one if them is.
So the well definedness question has to be worded to ask if two
successful paths are square equivalent, then are their sign
structures the same.

If there is a well defined function from \(E\) to sets of colorings,
then the observations in Section \ref{DiffESameSignsSec} hints
strongly that the function will not be one-to-one.

\subsection{On a converse to Theorem \ref{PrimeConnThm}}

Consider the following sequence of nine rotations.

\begin{gather*}
\xy
(0,6)*{\NM}; (3,3)*{\NM};
(0,0)*{\NP}; (-3,-3)*{\NM};
(-6,-6)*{\NP}; (-3,-9)*{\NM};
\endxy
\rightarrow
\xy
(0,6)*{\NP}; (-3,3)*{\NP};
(0,0)*{\NP}; (-3,-3)*{\NM};
(-6,-6)*{\NP}; (-3,-9)*{\NM};
\endxy
\rightarrow
\xy
(0,6)*{\NP}; (-3,3)*{\NM};
(-6,0)*{\NM}; (-3,-3)*{\NM};
(-6,-6)*{\NP}; (-3,-9)*{\NM};
\endxy
\rightarrow
\xy
(0,6)*{\NP}; (-3,3)*{\NM};
(-6,0)*{\NP}; (-9,-3)*{\NP};
(-6,-6)*{\NP}; (-3,-9)*{\NM};
\endxy
\rightarrow
\xy
(0,6)*{\NP}; (-3,3)*{\NM};
(-6,0)*{\NP}; (-9,-3.7)*{\WM};
(-13.5,-7.5)*{\NM}; (-4.5,-7.5)*{\NM};
\endxy
\rightarrow
\xy
(0,6)*{\NP}; (-3,3)*{\NM};
(-6,0)*{\NP}; (-9,-3)*{\NP};
(-12,-6)*{\NP}; (-15,-9)*{\NM};
\endxy
\\
\rightarrow
\xy
(3,9)*{};
(0,6)*{\NP}; (-3,3)*{\NM};
(-6,-0.7)*{\WM}; (-10.5,-4.5)*{\NP};
(-13.5,-7.5)*{\NM}; (-1.5,-4.5)*{\NM};
\endxy
\rightarrow
\xy
(3,9)*{};
(0,6)*{\NP}; (-3,2.3)*{\WP};
(1.5,-1.5)*{\NP}; (-7.5,-1.5)*{\NP};
(-10.5,-4.5)*{\NM}; (-1.5,-4.5)*{\NM};
\endxy
\rightarrow
\xy
(3,9)*{};
(0,6)*{\NP}; (-3,2.3)*{\WM};
(4.5,-4.5)*{\NP}; (1.5,-1.5)*{\NM};
(-7.5,-1.5)*{\NM}; (1.5,-7.5)*{\NM};
\endxy
\rightarrow
\xy
(3,9)*{};
(0,6)*{\NP}; (-3,3)*{\NP};
(0,0)*{\NP}; (3,-3)*{\NM};
(6,-6)*{\NP}; (3,-9)*{\NM};
\endxy
\end{gather*}

The reader can verify that the color vector 1332111 produces the
signs shown.  The reader will also note the the pair consisting of
the first and last trees in the sequence is not prime and that the
``prime factors'' are a pair of trees with four carets each together
with the pair of 2-caret trees used in \(\rot \emptyset\).  The four
caret pair is rigidly colored (it is among the smallest non-identity
examples and one of the standard generators of \(F_4\)), and the two
caret pair is flexibly colored.

The sign structure for the path shown above is connected.  It is
also (computer verified and easily verified by an exhaustive search
by hand) the shortest path between the trees at the two ends.  Among
the features of the sign structure are two pairs of parallel edges,
two independent cycles (treating the parallel edges as one) and only
one vertex not part of a cycle.  The example is a good illustration
of the ``luck'' demanded by the Signed Path Conjectures.  It also
shows that there is no converse to Theorem \ref{PrimeConnThm}.

The flexible coloring of the top two carets seems to ``corrupt'' the
rigidity of the bottom four.  The process that does this is rather
complicated.  To see the extent of the ability of a small amount of
flexibility to corrupt rigidity, the reader can supply the pleasant
inductive step for the following.  The induction is on the number of
carets.  A 4-tree\index{4-tree} is a finite, binary tree in which
every leaf is of even level.  The reason for the terminology is
obvious once the reader tries to draw one.  It is an exercise that
every element of \(F_4\) can be represented by a pair of 4-trees.
Another exercise is that a positive, rigid color vector for a 4-tree
always has the form \((132)^j1\).

\begin{lemma} Let \(T\) be a 4-tree and let \(A\) be the trivial
tree.  In \(T\caret A\), give \(\emptyset\) the positive sign and
let the signs of the rest of the vertices agree with the positive,
rigid coloring on \(T\) (so that the vertex 0 has positive sign in
\(T\caret A\)).  Then there is a sign consistent path from \(T\caret
A\) to the right vine in which all internal vertices are signed
positively.  \end{lemma}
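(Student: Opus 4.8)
The plan is to avoid constructing the path by hand and instead to realize it as an instance of the Gravier--Payan theorem (Theorem \ref{GravPayanThm}). Write $D=T\caret A$ carrying the prescribed sign assignment $\sigma$, and let $R$ be the right vine on the same number of leaves carrying its all-positive sign assignment. Since $\sigma$ gives the equal sign $+$ to the adjacent internal vertices $\emptyset$ and $0$ of $D$, the assignment $\sigma$ is flexible. It therefore suffices to produce a \emph{single} color vector $\mathbf c$ that is valid for both $D$ and $R$, induces $\sigma$ on $D$, and induces the all-positive assignment on $R$; then Theorem \ref{GravPayanThm} yields a valid signed path $w$ with $Dw=R$, and because each rotation along a valid signed path keeps $\mathbf c$ valid (Lemma \ref{ColoredEdgeLem}), the terminal sign assignment on $R$ is exactly the one $\mathbf c$ induces there, namely all positive.

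The heart of the argument is thus the computation of $\mathbf c$. I would normalize the root color to $1$ and read $\mathbf c$ off $D^\sigma$ using Lemma \ref{SignAnd1ColorLem}. At $\emptyset$, which is positive with parent color $1$, the edge to $0$ receives color $2$ and the right leaf $1$ receives color $3$. The copy of $T$ hanging at $0$ carries its positive rigid signs, now with root-edge color $2$ rather than $1$; applying the even color permutation $(1\,2\,3)$ to the known positive rigid color vector $(132)^j1$ of $T$ (where $T$ has $3j+1$ leaves) turns it into $(213)^j2$, and these are the first $3j+1$ entries of $\mathbf c$. Appending the color $3$ of leaf $1$ gives
\[
\mathbf c=(213)^j23 .
\]
All entries are nonzero, so $\mathbf c$ is valid for $D$ and, by the Trichotomy (Proposition \ref{DisjColorsProp}), is flexible for every tree.

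It remains to check that $\mathbf c$ induces the all-positive assignment on the right vine $R$ on $3j+2$ leaves. Coloring $R$ with all signs positive and root color $1$ makes the top-edge colors of its internal vertices $\emptyset,1,11,\dots$ run through the period-three pattern $1,3,2,1,3,2,\dots$, so the left-child leaves receive $(2,1,3)$ repeated, i.e. $(213)^j2$, and the final right-child leaf receives $3$; hence the all-positive right vine is colored by exactly $(213)^j23=\mathbf c$. This is the matching needed, and the lemma then follows from Theorem \ref{GravPayanThm} as above. The base case $T$ trivial is immediate: then $D=T\caret A$ is a single caret, which already is the all-positive right vine, and the empty path works.

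The only genuine obstacle is the bookkeeping in these two color computations, together with the small but essential point that the Gravier--Payan path must \emph{terminate} at the all-positive assignment and not merely at the tree $R$; this is forced because $\mathbf c$ is transported unchanged along the whole valid path. I would also record the elementary alternative hinted at in the text: an induction on the number of carets of $T$ that exhibits an explicit path, peeling one block of the $4$-tree at a time by spending the flexibility injected at the top by $A$. That route sidesteps the heavy Theorem \ref{GravPayanThm}, but its difficulty is the opposite one, namely keeping track of which rotations remain sign consistent and verifying that after each peeling the configuration is again of the form ``$4$-tree with one attached flexible caret,'' so that the inductive hypothesis applies.
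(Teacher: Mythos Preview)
Your argument is correct. The color computations check out: with root color $1$ and $\emptyset$ positive in $D=T\caret A$, the edge to $0$ carries color $2$, and the even permutation $(1\,2\,3)$ indeed transforms the positive rigid leaf vector $(132)^j1$ of $T$ into $(213)^j2$, so $\mathbf c=(213)^j23$. The verification that the all-positive right vine on $3j+2$ leaves has exactly this leaf vector is also right, since the parent-edge colors down the right spine cycle $1,3,2,\dots$ and hence the left leaves read $2,1,3,\dots$. With $\mathbf c$ valid for both endpoints and the assignment on $D$ flexible, Theorem \ref{GravPayanThm} produces the required path, and your observation that $\mathbf c$ is transported along the path forces the terminal sign assignment on $R$ to be the all-positive one.

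This is, however, not the argument the paper has in mind. The text explicitly invites an induction on the number of carets of $T$, constructing the path by hand without invoking the Gravier--Payan theorem at all. The intended proof is elementary and constructive: one strips a bottom ``block'' of four carets from the $4$-tree using a short sequence of signed rotations enabled by the single flexible caret at the top, and checks that after this sequence one is again in the configuration ``$4$-tree with a trivial tree attached,'' ready for the inductive hypothesis. Your route is shorter to write and conceptually clean, but it is non-constructive and leans on the main result of \Gravier, which is precisely the heavy external input the lemma was meant to illustrate one can do without. You correctly flag this alternative in your final paragraph; that is the proof the paper is asking for.
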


\subsection{Non-prime maps}

The relation between the prime factors of a map and the components
of a sign structure are not clear.  From the above example, it is
seen that there is no one-to-one correspondence.  From the
discussion leading to \tref{PrimeProdFormula} in Sectlion
\ref{TwoRedSec}, we can derive colorings of a non-prime map from the
colorings of the prime factors and compute the number of colorings
of the non-prime map from the number of colorings of the prime
factors.  However, we cannot always compute the components of a sign
structure by simple knowledge of the prime factors of a tree pair.

\begin{question} What is the relationship between the prime factors
of a map and the components of the sign structure?  \end{question}

\section{Acceptable color vectors}\mylabel{AcceptColSec}

This section and the sections that follow gather observations that
either fill earlier promises, or give extra facts, some of which are
related to each other, and that are somewhat off the main narrative.
Section \ref{EnumSec} counts many of the objects that have been
encountered.

After Lemma \ref{NoFiveColLem}, we announced that a color vector
is acceptable (valid for some tree) if and only if the vector is not
a constant and does not sum to zero.  Here we prove that claim.

A color vector that is constant produces a zero wherever an exposed
caret exists, and a vector that sums to zero produces a zero at the
root edge.  Thus the conditions are necessary.  We give the result
after a sequence of lemmas.  

We will use \(\mathbf v\) to denote a color vector that is not a
constant and does not sum to zero.  We will represent the contents
of a vector as strings and combine vectors by concatenation.
Letters \(x\), \(y\) and \(z\) will represent unspecified colors
that are assumed to be different if the letters are different.

\begin{lemma} A constant vector \(x^i\) sums to zero if \(i\) is
even and \(x\) if \(i\) is odd.  \end{lemma}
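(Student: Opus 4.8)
The plan is to reduce the statement to the single arithmetic fact about the color group recorded in Section~\ref{ColorSec}, namely that $n+n=0$ for every element $n\in\Z_2\times\Z_2$. The sum of the constant color vector $x^i$ is, by the convention fixed after Lemma~\ref{RootColorLem}, the $i$-fold sum $x+x+\cdots+x$ taken in $\Z_2\times\Z_2$, so the entire claim amounts to adding one group element to itself $i$ times in a group in which every element has additive order dividing $2$.

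First I would dispose of the even case. If $i$ is even, write $i=2j$ and group the summands into $j$ pairs; each pair contributes $x+x=0$, so the total is $0$. For the odd case, write $i=2j+1$, pair off the first $2j$ summands (again contributing $0$), and note that the single remaining summand leaves the value $x$. Equivalently, one argues by induction on $i$: the base cases $i=0$ (empty sum, equal to $0$) and $i=1$ (sum equal to $x$) are immediate, and the inductive step adds two copies of $x$ at once, changing the running total by $x+x=0$ and hence preserving it while advancing $i$ by two.

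There is no genuine obstacle here: the result is purely a consequence of $\Z_2\times\Z_2$ being an elementary abelian $2$-group, so that each element is its own inverse. The only thing to keep straight is that the ``sum'' of a color vector is computed additively in $\Z_2\times\Z_2$ and not, say, as an integer count; once that is in hand, both cases follow at once from the pairing argument above.
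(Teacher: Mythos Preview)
Your argument is correct and is exactly the intended one; the paper in fact states this lemma without proof, treating it as immediate from the identity $x+x=0$ in $\Z_2\times\Z_2$ recorded in Section~\ref{ColorSec}. Your pairing argument (or the equivalent two-step induction) is the natural justification and there is nothing to add.
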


\begin{lemma}\mylabel{VineClrLem} The vectors \(x^ny\) and \(xy^n\)
are always acceptable.  \end{lemma}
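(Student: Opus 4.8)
The plan is to exhibit, for each of the two vectors, a single tree on which it is valid; acceptability then follows by definition. Throughout I take $x$ and $y$ to be distinct \emph{nonzero} colors. This is forced rather than assumed: the leaf edges of any validly colored tree carry precisely the entries of the color vector, so a zero entry could never occur in an acceptable vector, and the hypothesis that the letters differ gives $x \neq y$.

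First I would recall from Lemma \ref{ColorComputLem}, together with Lemma \ref{RootColorLem}, the mechanism for computing the determined coloring: at every trivalent vertex the parent edge must receive the sum in $\Z_2 \times \Z_2$ of its two child edges, so by induction every internal edge and the root edge receives the sum of the colors of the leaves lying below it. Thus the whole computation reduces to reading off subtree sums, and the only thing to verify is that none of these sums is $0$, after which Lemma \ref{WhenVecValidLem} certifies validity.

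For $x^n y$ I would use the right vine $V$ on $n+1$ leaves, whose internal vertices are $\emptyset, 1, 11, \ldots, 1^{n-1}$, with leaf $i$ at $1^{i-1}0$ for $1 \le i \le n$ and leaf $n+1$ at $1^n$. Under $x^n y$ the leaves $1, \ldots, n$ carry $x$ and leaf $n+1$ carries $y$. The internal edge descending from $1^{k-1}$ to $1^k$ lies above the leaves $k+1, \ldots, n+1$, so it receives $(n-k)x + y$ for $1 \le k \le n-1$, while the root edge receives $nx + y$. Here I invoke the parity arithmetic of $\Z_2 \times \Z_2$: for any integer $m$ one has $mx = 0$ when $m$ is even and $mx = x$ when $m$ is odd, so $mx + y \in \{\, y,\ x+y \,\}$. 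Both of these are nonzero, since $y \neq 0$ and $x + y \neq 0$ (the latter because $x \neq y$), and the leaf edges are $x$ or $y$, also nonzero. Hence no edge of $V$ is colored $0$, so $x^n y$ is valid for $V$ and is acceptable.

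For $x y^n$ I would avoid recomputation by appealing to the reflection operation of Section \ref{RootShiftSec}: reversing the left-right order of the leaf colors corresponds to reflecting the tree, and a vector is valid for a tree exactly when its reverse is valid for the reflected tree. The reverse of $x y^n$ is $y^n x$, which is of the form already handled (with the roles of $x$ and $y$ interchanged) and hence valid for the right vine; therefore $x y^n$ is valid for the reflection of the right vine, namely the left vine. One can alternatively check directly that on the left vine the internal edges of $x y^n$ receive the prefix sums $x + my$ for $1 \le m \le n-1$, again lying in $\{\, x,\ x+y \,\}$. The computation is elementary, and the single point requiring care is precisely the pairing of each vector with the correct vine: placing $x y^n$ on the \emph{right} vine would force the bottom internal edge, sitting above the two $y$-leaves, to receive $2y = 0$. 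Routing $x y^n$ to the left vine (equivalently, using reflection) is what dodges this, and it is the only subtlety in the argument.
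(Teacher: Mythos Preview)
Your argument is correct and matches the paper's approach exactly: the paper's proof is the single sentence ``The first is valid for a right vine, and the second is valid for a left vine,'' and you have simply filled in the edge-by-edge verification that the paper leaves implicit. The reflection argument you add for $xy^n$ is a pleasant shortcut but not a different route.
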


\begin{proof} The first is valid for a right vine (see Section
\ref{VineSec}), and the second is valid for a left vine.
\end{proof}

\begin{lemma}\mylabel{ValValIsValLem} If \(\mathbf v = \mathbf{ps}\)
and both \(\mathbf{p}\) and \(\mathbf{s}\) are acceptable, then
\(\mathbf v\) is acceptable.  \end{lemma}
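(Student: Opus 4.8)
The plan is to realize $\mathbf{v}$ on the tree obtained by grafting witnessing trees for $\mathbf{p}$ and $\mathbf{s}$ under one new caret. Since $\mathbf{p}$ is acceptable, fix a finite binary tree $P$ for which $\mathbf{p}$ is valid, and likewise fix $S$ for which $\mathbf{s}$ is valid. Form $T = P\caret S$. By the caret construction, the leaves of $T$ in left-right order are exactly the leaves of $P$ followed by those of $S$, so the vector $\mathbf{v}=\mathbf{ps}$ assigns $\mathbf{p}$ to the leaves of the left subtree and $\mathbf{s}$ to the leaves of the right subtree. I would first check that the coloring of $T$ determined by $\mathbf{v}$ (Lemma \ref{ColorComputLem}) restricts on the left subtree to the coloring of $P$ determined by $\mathbf{p}$, and on the right subtree to the coloring of $S$ determined by $\mathbf{s}$. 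This is because the restriction satisfies the zero-sum condition at every interior degree-$3$ vertex of the subtree and agrees with $\mathbf{p}$ (resp. $\mathbf{s}$) on the leaves, so the uniqueness clause of Lemma \ref{ColorComputLem} forces the agreement. In particular, since $\mathbf{p}$ and $\mathbf{s}$ are valid, no edge lying strictly inside the left or right subtree is colored $0$.

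The only vertex of $T$ not already interior to a copy of $P$ or $S$ is the center $\emptyset$ of the new top caret, so the argument reduces to checking that the three edges at $\emptyset$ avoid $0$. Its two descending edges are the root edges of the copies of $P$ and $S$, whose colors are the sums of the entries of $\mathbf{p}$ and of $\mathbf{s}$ respectively, by Lemma \ref{RootColorLem}; both are non-zero, because a valid coloring is a proper edge $3$-coloring and the identity is never the color of an edge. By the zero-sum rule at $\emptyset$, the remaining (parent) edge carries the sum of these two values, which is the sum of the entries of $\mathbf{v}$. This is precisely where the standing assumption on $\mathbf{v}$ enters: since $\mathbf{v}$ does not sum to zero, the parent edge is non-zero as well. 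Three non-zero elements of $\Z_2\times\Z_2$ whose sum is zero are automatically the three distinct non-zero colors, so $\emptyset$ is properly colored. Hence no edge of $T$ is colored $0$, and since $T$ has at least two leaves, Lemma \ref{WhenVecValidLem} gives that $\mathbf{v}$ is valid for $T$; therefore $\mathbf{v}$ is acceptable.

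The step I expect to be the crux — and the reason the hypothesis that $\mathbf{v}$ does not sum to zero cannot be dropped — is the coloring of the new vertex $\emptyset$. Without control of the sum of $\mathbf{v}$ one could have the sums of $\mathbf{p}$ and $\mathbf{s}$ equal, forcing the parent edge to $0$ and defeating the construction; in that case $\mathbf{v}$ genuinely fails to be acceptable, consistent with the necessity direction noted earlier. Everything else is the routine verification, via the locality and uniqueness of Lemma \ref{ColorComputLem}, that grafting under a single caret does not disturb the already-valid colorings of $P$ and $S$.
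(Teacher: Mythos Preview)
Your proposal is correct and follows essentially the same route as the paper: take witnessing trees for \(\mathbf p\) and \(\mathbf s\), form their \(\caret\)-join, and use the standing assumption that \(\mathbf v\) has non-zero sum to handle the one new internal vertex. The paper's proof is just a terse two-line version of exactly this argument.
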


\begin{proof} Since the sum of \(\mathbf v\) is not zero, the sums
of \(\mathbf p\) and \(\mathbf s\) are not equal.  If \(\mathbf p\)
is valid for \(A\) and \(\mathbf s\) is valid for \(B\), then
\(\mathbf{ps}\) is valid for \(A\caret B\) (see Section
\ref{TreeIndSec}).  \end{proof}

\begin{lemma}\mylabel{ConstAndValidLem} If \(\mathbf v=x^i\mathbf
s\) or \(\mathbf v=\mathbf s x^i\), \(i\ge1\) and \(\mathbf s\) is
acceptable with sum different from \(x\), then \(\mathbf v\) is
acceptable.  \end{lemma}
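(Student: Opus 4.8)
The plan is to realize $\mathbf v$ as the color vector of an explicit tree built by grafting a comb of trivial trees onto a tree that already carries $\mathbf s$. By the reflection symmetry of Section \ref{RootShiftSec} (which reverses the left-right order of leaves and preserves validity), it suffices to treat $\mathbf v = x^i \mathbf s$; the case $\mathbf v = \mathbf s x^i$ follows by applying the argument to the reversed vector $\mathbf s^{\text{rev}}$, which is again acceptable with the same sum. So fix a finite, binary tree $B$ for which $\mathbf s$ is valid, and let $s$ be the sum of $\mathbf s$. Before starting I would record the three facts I will use: $x\neq 0$ (otherwise $\mathbf v$ has a zero entry and is valid for no tree), $s\neq x$ (the hypothesis), and $s\neq 0$ (already noted: a vector summing to zero puts a zero on its root edge, so acceptable vectors have nonzero sum). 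By Lemma \ref{RootColorLem} the root color of $B$ is exactly $s$.

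Next I would induct on $i$, building trees $T_0=B$ and $T_{k}=C\caret T_{k-1}$, where $C$ is the trivial tree colored $x$. Since $C$ contributes a single leftmost leaf colored $x$, the tree $T_k$ has color vector $x^k\mathbf s$, as in the concatenation construction of Lemma \ref{ValValIsValLem}. Passing from $T_{k-1}$ to $T_k$ creates exactly one new internal vertex, the root vertex joining $C$ to $T_{k-1}$; all vertices inside $B$ remain properly colored, and every leaf edge stays nonzero. So validity of $x^k\mathbf s$ reduces to checking that this single new degree-three vertex is properly colored. Its two descending edges carry the root colors of $C$ and of $T_{k-1}$, namely $x$ and (by Lemma \ref{RootColorLem}) the sum $r_{k-1}$ of $x^{k-1}\mathbf s$; if these two colors are distinct and nonzero then the $\Z_2\times\Z_2$ arithmetic forces the third (root) edge to receive the remaining nonzero color, and the vertex is good.

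The one point needing care --- and really the entire content of the lemma --- is therefore the claim that $r_{k-1}\notin\{0,x\}$ for every $k$. Here $r_j$ is the sum of $x^{j}\mathbf s$, which equals $s$ when $j$ is even and $s+x$ when $j$ is odd, using $a+a=0$ in $\Z_2\times\Z_2$. In the even case $r_j=s\notin\{0,x\}$ by the recorded facts; in the odd case $s+x=0$ would force $s=x$ and $s+x=x$ would force $s=0$, both excluded, so $r_j\notin\{0,x\}$ as well. This gives $x\neq r_{k-1}$ and $r_{k-1}\neq 0$ at every step, completing the induction and showing $\mathbf v=x^i\mathbf s$ is valid for $T_i$. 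I expect no genuine obstacle beyond this parity bookkeeping; the construction is just an iterated application of the $A\caret B$ idea already used for Lemmas \ref{VineClrLem} and \ref{ValValIsValLem}, specialized to the case where one factor is a single repeated color.
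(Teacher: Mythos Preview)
Your proof is correct and is essentially the paper's argument written out in more detail: the iterated construction $T_k=C\caret T_{k-1}$ produces exactly the tree the paper describes as ``identifying the root edge of $A$ with the rightmost leaf edge of the right vine $V_i$,'' and your parity computation of $r_{k-1}\in\{s,\,s+x\}$ is the explicit verification that the paper leaves to the reader. The only cosmetic difference is that the paper handles $\mathbf s x^i$ directly with a left vine, whereas you reduce it to the first case by reflection; both are fine.
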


\begin{proof} If \(\mathbf s\) is valid for \(A\), then \(
x^i\mathbf s\) is valid for the tree formed by identifying the root
edge of \(A\) with the rightmost leaf edge of the right vine \(V_i\)
(right vine with \(i\) internal vertices).  A similar construction
with a left vine handles \(\mathbf s x^i\).  \end{proof}

The next lemma is not used, but is too cute to omit.

\begin{lemma} If \(\mathbf v=x^iy^j\), \(1\ge1\), \(j\ge1\), then
\(\mathbf v\) is acceptable.  \end{lemma}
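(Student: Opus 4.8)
The plan is to reduce $x^iy^j$ to the already-established acceptable vectors by peeling off a single homogeneous block, rather than trying to split it into two acceptable factors directly. First I would record the arithmetic that governs the hypotheses of the earlier lemmas: since $x$ and $y$ are distinct (so $\mathbf v$ is non-constant) and $\mathbf v$ does not sum to zero, the sum $ix+jy=(i\bmod 2)\,x+(j\bmod 2)\,y$ in $\Z_2\times\Z_2$ is nonzero, which by the lemma on sums of constant vectors forces that $i$ and $j$ are not both even; hence at least one of $i,j$ is odd. This single observation is what makes the rest go through.

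Next I would dispose of the boundary cases $i=1$ and $j=1$ at once: then $\mathbf v$ is $xy^j$ or $x^iy$, each of which is acceptable by Lemma \ref{VineClrLem}. So assume $i,j\ge 2$. If $j$ is odd, write $\mathbf v=x^{i-1}\mathbf s$ with $\mathbf s=xy^j$; by Lemma \ref{VineClrLem} the suffix $\mathbf s$ is acceptable, and its sum is $x+jy=x+y\ne x$, so since $i-1\ge 1$ Lemma \ref{ConstAndValidLem} applies and $\mathbf v$ is acceptable. If instead $i$ is odd, I would peel from the other end, writing $\mathbf v=\mathbf s\,y^{j-1}$ with $\mathbf s=x^iy$ acceptable by Lemma \ref{VineClrLem}, sum $ix+y=x+y\ne y$, and $j-1\ge 1$, so Lemma \ref{ConstAndValidLem} again applies. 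Since at least one of $i,j$ is odd, one of these two symmetric moves always works.

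The only genuine obstacle is recognizing why Lemma \ref{ValValIsValLem} cannot be used directly: every way of cutting $x^iy^j$ into a prefix and a suffix leaves at least one homogeneous (constant) piece, and constant vectors are never acceptable, so the product lemma never applies on its own. The resolution is exactly the combination above---peel one constant block off with Lemma \ref{ConstAndValidLem} while keeping a genuinely two-colored, hence acceptable, remainder---and the verification that the remainder's sum differs from the peeled color is where the parity condition (equivalently, the nonvanishing of the sum of $\mathbf v$) is used. No case requires both $i$ and $j$ even, precisely because that case is excluded by the standing hypothesis that $\mathbf v$ does not sum to zero.
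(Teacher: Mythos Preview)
Your proof is correct and follows essentially the same approach as the paper: use the nonzero-sum hypothesis to force one of $i,j$ odd, then peel a homogeneous block via Lemma~\ref{ConstAndValidLem} against an acceptable vine-colorable remainder $x^iy$ or $xy^j$ whose sum is $x+y$. The only differences are cosmetic---you treat the $i=1$ and $j=1$ cases up front rather than inline, and you add a helpful remark on why Lemma~\ref{ValValIsValLem} alone does not suffice.
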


\begin{proof} The sum of \(\mathbf v\) is not zero, so at least one
of \(i\) or \(j\) is odd.  Assume \(i\) is odd.  From Lemma
\ref{VineClrLem}, we can assume \(j>1\).  The sum of \(x^iy\) is
\(z\), and by Lemma \ref{ConstAndValidLem}, \(\mathbf v =
(x^iy)y^{j-1}\) is acceptable.  If \(j\) is odd, we assume \(i>1\)
and a similar proof works.  \end{proof}

Recall that \([xyz]\) denotes an unspecified choice of one of \(x\),
\(y\) or \(z\).  The expression \([xyz]^n\) denotes a string of
\(n\) such choices rather than one choice repeated \(n\) times.

\begin{prop}\mylabel{ValidCharProp} A color vector of length at
least 2 is acceptable if and only if it is non-constant and has a
non-zero sum.\index{acceptable!color vector!characterization}
\end{prop}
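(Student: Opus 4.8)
The plan is to prove sufficiency by strong induction on the length $n$ of the vector, the necessity having already been recorded. Throughout I use that a vector is never valid once some leaf edge is forced to carry the color $0$, so I may assume every entry is nonzero; thus the induction hypothesis reads ``a shorter non-constant vector with nonzero sum is acceptable.'' The base case $n=2$ is immediate: a non-constant vector $(x,y)$ with $x\ne y$ is the vine vector $xy$, acceptable by Lemma \ref{VineClrLem} (equivalently, it is valid for the single caret).

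For the inductive step take $\mathbf v=(c_1,\dots,c_n)$ non-constant with sum $\Sigma\ne0$ and $n\ge3$, and set $x=c_1$. I would first try to peel a single symbol from the front using Lemma \ref{ConstAndValidLem} with $i=1$. The sum hypothesis of that lemma, namely that the tail $\mathbf s=(c_2,\dots,c_n)$ has sum different from $x$, is automatic: $\mathrm{sum}(\mathbf s)=\Sigma+x$, which differs from $x$ precisely because $\Sigma\ne0$. So the front peel succeeds as soon as $\mathbf s$ is itself acceptable, i.e.\ (by induction) as soon as $\mathbf s$ is non-constant with nonzero sum. If instead $\mathbf s$ is constant, then $\mathbf v=x w^{\,n-1}$ with $w\ne x$, a vine vector handled directly by Lemma \ref{VineClrLem}; the symmetric back peel handles the case that $(c_1,\dots,c_{n-1})$ is constant. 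The only way both the front and back peels can fail, with neither tail constant, is that $\mathrm{sum}(\mathbf s)=\Sigma+c_1=0$ and $\mathrm{sum}(\mathbf p)=\Sigma+c_n=0$, that is, $c_1=c_n=\Sigma$.

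This residual case $c_1=c_n=\Sigma$ is the heart of the argument and the step I expect to be the main obstacle, since no end can be stripped off. I would resolve it by an interior split feeding Lemma \ref{ValValIsValLem}. Consider the partial sums $S_j=c_1+\cdots+c_j$; here $S_1=x=\Sigma=S_n\in\{0,x\}$. Were every $S_j$ with $1\le j\le n$ to lie in $\{0,x\}$, then each entry $c_j=S_j+S_{j-1}$ would lie in $\{0,x\}$, and since all entries are nonzero this would force $\mathbf v=x^n$, contradicting non-constancy. Hence some $S_j\notin\{0,x\}$; choosing the smallest such $j$ gives $2\le j\le n-1$. Splitting $\mathbf v=\mathbf p'\mathbf s'$ at $j$, the prefix $\mathbf p'=(c_1,\dots,c_j)$ has sum $S_j\ne0$ and is non-constant, as a constant prefix would have sum in $\{0,x\}$, while the suffix $\mathbf s'=(c_{j+1},\dots,c_n)$ has sum $\Sigma+S_j\notin\{0,x\}$, in particular nonzero, and is likewise non-constant, since its last entry is $c_n=x$ and a constant suffix would again sum into $\{0,x\}$. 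By the induction hypothesis both pieces are acceptable, a length-one piece being a single nonzero color valid for the trivial tree, so Lemma \ref{ValValIsValLem} gives that $\mathbf v$ is acceptable, completing the induction.
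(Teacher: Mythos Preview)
Your proof is correct and follows essentially the same strategy as the paper: peel off an end via Lemma~\ref{ConstAndValidLem} when the tail (or head) is acceptable, fall back on Lemma~\ref{VineClrLem} when it is constant, and in the residual case $c_1=c_n=\Sigma$ perform an interior split and invoke Lemma~\ref{ValValIsValLem}. Your partial-sum criterion for choosing the split point in fact selects the same index the paper uses---the position just after the first entry differing from $c_1$---so the two arguments coincide; your handling of the constant-tail case is actually a bit more explicit than the paper's.
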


\begin{proof} We only need to argue one direction and we assume
\(\mathbf v\) is not constant and does not sum to zero.  The claim
is true if \(\mathbf v\) has length 2.

We know \(\mathbf v=x^iy[xyz]^{n-i}\) with \(i\ge1\).  If the sum of
\(\mathbf v\) is \(y\) or \(z\), then \(\mathbf v=x\mathbf s\) where
the sum of \(\mathbf s\) cannot be zero and cannot be \(x\).  Now
\(\mathbf v\) is acceptable by Lemma \ref{ConstAndValidLem}.

If the sum of \(\mathbf v\) is \(x\) and the last color in \(\mathbf
v\) is not \(x\), then \(\mathbf v=\mathbf p[yz]\) and where the sum
of \(\mathbf p\) is \(x\) minus the last color in \(\mathbf
v\).  Again \(\mathbf v\) is acceptable by Lemma
\ref{ConstAndValidLem}.

If the sum of \(\mathbf v\) is \(x\) and the last color in \(\mathbf
v\) is \(x\), then \(\mathbf v=x(x^{i-1}y[xyz]^{n-i-1})x\), and the
sum of \(\mathbf m=(x^{i-1}y[xyz]^{n-i-1})\) must be \(x\).  Now
\(\mathbf m=\mathbf {ps}\) where \(\mathbf p=x^{i-1}y\) and
\(\mathbf s\) is the rest of \(\mathbf m\).  The sum of \(\mathbf
p\) is \(y\) or \(z\), so the sum of \(\mathbf s\) is \(z\) or
\(y\).  Now \(\mathbf v=(x\mathbf p)(\mathbf sx)\) has been
decomposed into two pieces with lengths at least two whose sums are
each different from \(x\).  Since each includes \(x\), neither is
constant.  By induction on length, each of \(x\mathbf p\) and
\(\mathbf sx\) are acceptable and \(\mathbf v\) is acceptable by
Lemma \ref{ValValIsValLem}.  \end{proof}

Proposition \ref{ValidCharProp} supplies a converse to Proposition 2
of \Zeilberger.  To state the converse, we write that if \(w\) is a
word in the non-identity elements of \(\Z_2\times \Z_2\) and
\((x,y,z)\) represent these three elements in some order, then
\(|w|_x\) is the number of occurrences of \(x\) in \(w\), and so
forth.

\begin{lemma}  If \(w\) is a non-constant finite word in \((x,y,z)\)
and
\[
|w|_x \equiv |w|_y \not\equiv |w|_z\equiv |w| \mod 2,
\]
then \(w\) is valid for some tree \(T\).  \end{lemma}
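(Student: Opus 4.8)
The plan is to reduce the lemma directly to Proposition \ref{ValidCharProp} by interpreting the hypothesis on the parities of the letter counts as a statement about the sum of the color vector $w$ in $\Z_2\times \Z_2$. Recall that being ``valid for some tree'' is precisely the definition of \emph{acceptable}, so by Proposition \ref{ValidCharProp} it suffices to show that $w$ is non-constant with non-zero sum. Non-constancy is given as a hypothesis (and forces length at least two), so the entire content of the proof will be the computation that the sum is non-zero.

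First I would compute the sum of the colors appearing in $w$. Since $(x,y,z)$ range over the three non-identity elements of $\Z_2\times \Z_2$ and every such element has order two, the sum of $w$ depends only on the parities of $|w|_x$, $|w|_y$, $|w|_z$, and equals $(|w|_x \bmod 2)\,x + (|w|_y \bmod 2)\,y + (|w|_z \bmod 2)\,z$. The chain $|w|_x \equiv |w|_y \not\equiv |w|_z$ says that $|w|_x$ and $|w|_y$ share a common parity opposite to that of $|w|_z$; the final congruence $|w|_z \equiv |w|$ is then automatic, because $|w| = |w|_x + |w|_y + |w|_z \equiv 2|w|_x + |w|_z \equiv |w|_z \pmod 2$, so it need not be treated separately.

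There are then exactly two cases. If $|w|_x \equiv |w|_y \equiv 1$ and $|w|_z \equiv 0$, the sum reduces to $x+y$, which is the third non-identity element $z$ by the arithmetic recorded in Section \ref{ColorSec} (namely $1+2=3$, $1+3=2$, $2+3=1$, so the sum of any two distinct non-zero elements is the third). If instead $|w|_x \equiv |w|_y \equiv 0$ and $|w|_z \equiv 1$, the sum is simply $z$. In either case the sum equals the non-zero element $z$.

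Finally I would invoke Proposition \ref{ValidCharProp}: since $w$ is non-constant (hence of length at least two) and has non-zero sum $z$, it is acceptable, i.e. valid for some finite, binary tree $T$, which is exactly the assertion of the lemma. There is no real obstacle here; the only step requiring a moment's care is the bookkeeping that the sum of two distinct non-identity elements of $\Z_2\times \Z_2$ is the third, together with the observation that the third displayed congruence is redundant, so that the hypothesis is genuinely just a recasting of ``non-zero sum'' in terms of the parities of the individual color counts.
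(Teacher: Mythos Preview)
Your proof is correct and follows essentially the same approach as the paper: both reduce to Proposition \ref{ValidCharProp} by showing the sum of the word is $z$, splitting into the two parity cases (both $|w|_x,|w|_y$ even or both odd). Your additional observation that the congruence $|w|_z\equiv |w|$ is redundant is a nice bonus the paper does not mention, but otherwise the arguments are the same.
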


\begin{proof} All that is needed is that the sum not be zero.  The
brief proof that the sum is \(z\) is given in \Zeilberger{} from its
point of view.  From our view the argument is as follows.  If
\(|w|_x\) and \(|w|_y\) are both even, then the letters \(x\) and
\(y\) contribute zero to the sum and \(|w|_z\) being odd makes the
letter \(z\) contribute \(z\).  If \(|w|_x\) and \(|w|_y\) are both
odd, then letters \(x\) and \(y\) contribute \(x+y=z\) and the
letter \(z\) contributes zero.  \end{proof}

\section{Patterns}\mylabel{PatternSec}

\subsection{Patterns and multiplication}

This section generalizes some of the observations in the proof of
Proposition \ref{RigidIsSubProp} which found a group arising from a
certain sign assignment on \(\mathcal T\).  However, we have not
explored the generalization that we are about to present and it is
not clear that it opens up much in the way of examples.  There is
one example that is rather trivial to establish and that will be
given later in this section.  Even though it is trivial to establish,
it is not trivial in structure.  However, the structure has not yet
been explored.

A {\itshape pattern}\index{pattern} \(P\) is a sign assignment on
\(\mathcal T\).  Any finite, binary tree inherits a sign assignment
from \(P\), and the convention that the root edge be colored 1 gives
each finite binary tree an edge coloring derived from \(P\).  If the
vector of the colors of the leaf edges of two trees turn out to be
identical under the coloring derived from \(P\), then the common
color vector is valid for the pair.  In such cases, we will say that
the pair is \(P\)-compatible\index{P-compatible!tree
pair}\index{pair!of finite trees!P-compatible}.

An example of a pattern is the positive rigid
pattern\index{positive!rigid!pattern}
\index{rigid!pattern!positive}\index{pattern!positive rigid}that we
denote \(P^r\)\index{\protect\(P^r\protect\)}.  It was defined as
\(P^r(v) = (-1)^{|v|}\).  The negative rigid 
pattern\index{negative!rigid!pattern}
\index{rigid!pattern!negative}\index{pattern!negative rigid}\(-P^r\) was
also defined.  Proposition \ref{RigidIsSubProp} shows that the
\(P^r\)-compatible pairs form a subgroup of \(F\).  We discuss
conditions under which an arbitrary pattern \(P\) leads to a
subgroup.

To help with the discussion, we look at
subpatterns\index{subpattern}.  If \(P\) is a pattern and \(u\) is a
vertex in \(\mathcal T\), then
\(P_u\)\index{\protect\(P_u\protect\)} will be defined by \(P_u(v) =
P(uv)\).  If we recall that \(\mathcal T_u\) is defined by
\(\mathcal T_u = \{uv\mid v\in \mathcal T\}\), then we see that
\(P_u\) is the composition of the obvious isomorphism \(v\mapsto
uv\) from \(\mathcal T\) to \(\mathcal T_u\) with the restriction of
\(P\) to \(\mathcal T_u\).

If we now have a pattern \(P\) and a pair of \(P\)-compatible pairs
\((A,B)\) and \((C,D)\), then \((A,B)(C,D)\) has a coloring since
for trivial reasons the coloring of \((A,B)\) and \((C,D)\) induce
the same coloring on \(B\cap C\) and the Compatibility Lemma (Lemma
\ref{CompatLem}) applies.  However, the coloring on the product
might not be derived from \(P\).

We say that the pair of pairs \(((A,B),(C,D))\) is
\(P\)-compatible\index{P-compatible!pair of pairs}
if both of the following hold.  

\begin{enumerate}

\item  For every leaf \(v\) of 
\(B\cap C\) that is a leaf of \(C\) and not a leaf of \(B\) and leaf
\(w\) of \(D\) that occupies the same position in the left-right
order of the leaves of \(D\) as \(v\) does in the leaves of \(C\),
then  \(P_v=P_w\).

\item  For every leaf \(v\) of 
\(B\cap C\) that is a leaf of \(B\) and not a leaf of \(C\) and leaf
\(w\) of \(A\) that occupies the same position in the left-right
order of the leaves of \(A\) as \(v\) does in the leaves of \(B\),
then  \(P_v=P_w\).

\end{enumerate}

Our overuse of the term \(P\)-compatible leads to following which
has to be read carefully.

\begin{lemma}\mylabel{PCompatLem} If \(P\) is a pattern and
\(((A,B),(C,D))\) is a \(P\)-compatible pair of \(P\)-compatible
pairs of binary trees (sic), then the product \((A,B)(C,D)\) is
\(P\)-compatible.  \end{lemma}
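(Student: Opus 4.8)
The plan is to use the explicit model of the multiplication from Section \ref{UnionIntSec}. Writing the product as $(A,B)(C,D)=(A',D')$, where $(A',B\cup C)\sim(A,B)$ and $(B\cup C,D')\sim(C,D)$, the tree $A'$ is obtained from $A$ by grafting, at the leaf $v'$ of $A$ matching a leaf $v$ of $B$ that is internal in $C$, a copy of the component $C_v$ of $C-B$; symmetrically $D'$ is obtained from $D$ by grafting copies of the components $B_w$ of $B-C$ at the leaves of $D$ matching leaves $w$ of $C$ that are internal in $B$. Since $(A',D')$ represents the product, the leaves of $A'$ and of $D'$ share a left-right correspondence running through the leaves of $B\cup C$, so to prove $P$-compatibility of the product it suffices to show that the leaf-colour vectors of the $P$-derived colourings of $A'$ and of $D'$, both indexed by the leaves of $B\cup C$, coincide.

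First I would record two facts about $P$-derived colourings. The colourings of $B$ and of $C$ agree on all of $B\cap C$: restricted to the subtree $B\cap C$, both are the unique proper edge $3$-colouring with root colour $1$ and signs read off $P$, so the claim is Lemma \ref{SignAnd1ColorLem} applied to $B\cap C$. Second, a grafting principle: if a subtree is attached with signs equal to those of $C_v$ inside $C$ and with the same colour on its root edge as $C_v$ receives inside $C$, then, again by Lemma \ref{SignAnd1ColorLem}, it receives exactly the leaf colours that $C_v$ has inside $C$. The sign hypothesis for $A'$ is supplied by condition (2), which gives $P_{v'}=P_v$, so that the grafted copy $A'_{v'}$ carries the subpattern $P_v$ used on $C_v$ in $C$; condition (1) plays the identical role for the copies of $B_w$ grafted into $D'$.

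The root-colour hypothesis is where $P$-compatibility of the two factors enters. The root edge of $A'_{v'}$ carries the colour that the colouring of $A$ assigns to the leaf edge at $v'$; by $P$-compatibility of $(A,B)$ this equals the colour at the leaf $v$ of $B$, which is the colour on the leaf edge of $B\cap C$ at $v$. But that edge is the root edge of $C_v$ inside $C$, and by the first fact $B$ and $C$ agree there, so the two root colours match and the grafting principle gives that the leaves of $A'$ lying in a grafted $C_v$ carry exactly the $C$-colours of the leaves of $C_v$. The mirror argument, using $P$-compatibility of $(C,D)$ together with agreement on $B\cap C$, shows that the leaves of $D'$ lying in a grafted $B_w$ carry the $B$-colours of the leaves of $B_w$.

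Finally I would partition the leaves of $B\cup C$ into the three types of Section \ref{UnionIntSec}---leaves common to $B$ and $C$, leaves lying in some grafted $C_v$, and leaves lying in some grafted $B_w$---and compare $A'$ and $D'$ type by type. On a common leaf, $A'$ shows the $B$-colour (a retained leaf of $A$) and $D'$ shows the $C$-colour (a retained leaf of $D$), and these agree because $B$ and $C$ agree on $B\cap C$. On a $C_v$-leaf, $A'$ shows the $C$-colour by the previous paragraph, while $D'$ retains the colour of $D$, which by $P$-compatibility of $(C,D)$ is the $C$-colour; on a $B_w$-leaf the roles reverse, $D'$ showing the $B$-colour and $A'$ retaining it from $B$ by $P$-compatibility of $(A,B)$. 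In every case the colours agree, whence $(A',D')$ is $P$-compatible. The main obstacle is the bookkeeping of the three leaf types together with the verification that each grafted subtree inherits \emph{both} the correct signs (from conditions (1) and (2)) and the correct root-edge colour (from compatibility of the factors and agreement on $B\cap C$); once these are aligned, the type-by-type comparison is routine.
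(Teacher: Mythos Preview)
Your proof is correct and follows essentially the same route as the paper. The paper's own proof is a single sentence, ``The proof is identical to the proof of Proposition \ref{RigidIsSubProp},'' and that earlier proof does exactly what you do: compute the product as $(A',B\cup C)(B\cup C,D')$, use the compatibility hypotheses to see that the grafted subtrees inherit the correct $P$-signs (in the rigid case this was the parity observation; here it is conditions (1) and (2)), and then invoke ``the details of the Compatibility lemma'' to match the leaf colours on $A'$ and $D'$. You have simply written those details out in full rather than citing Lemma \ref{CompatLem}, including the key observation that the $P$-colourings of $B$ and $C$ automatically agree on $B\cap C$.
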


\begin{proof} The proof is identical to the proof of Proposition
\ref{RigidIsSubProp}.  \end{proof}

\subsection{The positive pattern and group}\mylabel{PosPatSec}

We apply Lemma \ref{PCompatLem} to a trivial setting.  We define the
pattern \(P^+\)\index{\protect\(P^+\protect\)} by having \(P^+(v)\)
positive for all \(v\).  For every vertex \(u\), we have
\(P^+_u=P^+\).

\begin{lemma} The set of \(P^+\)-compatible pairs of trees forms a
group under multiplication.  \end{lemma}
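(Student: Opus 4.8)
The plan is to verify the three subgroup axioms for the set of $P^+$-compatible pairs, with closure under multiplication being the only point requiring the machinery just developed. First I would dispose of the easy axioms. The identity of $F$ is represented by any pair $(T,T)$, and such a pair is trivially $P^+$-compatible, since both coordinates receive the identical $P^+$-derived leaf coloring. Likewise, $P^+$-compatibility of a pair is manifestly symmetric in its two trees, so if $(D,R)$ is $P^+$-compatible then so is its inverse $(R,D)$; thus the set is closed under inversion.

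The heart of the argument is closure under the multiplication of $F$. Suppose $(A,B)$ and $(C,D)$ are $P^+$-compatible. I would invoke Lemma \ref{PCompatLem}: to conclude that the product $(A,B)(C,D)$ is again $P^+$-compatible, it suffices to check that the pair of pairs $((A,B),(C,D))$ is $P^+$-compatible in the sense defined above. But conditions (1) and (2) of that definition only ask that $P^+_v = P^+_w$ for certain vertices $v$ and $w$, and the defining feature of $P^+$ recorded just before this lemma is that $P^+_u = P^+$ for every vertex $u$ of $\mathcal T$. Hence $P^+_v = P^+ = P^+_w$ holds automatically, both conditions are vacuously satisfied, and Lemma \ref{PCompatLem} yields that the product is $P^+$-compatible.

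One subtlety should be addressed for the word ``group'' to be meaningful: $P^+$-compatibility must be a well-defined property of elements of $F$, and not merely of representing pairs. I would verify this by showing the property is preserved under the elementary move $(A,B) \rightarrow (A\caret^i, B\caret^i)$. The $P^+$-derived coloring of a tree is the unique proper edge $3$-coloring with root color $1$ and every internal sign positive (Lemma \ref{SignAnd1ColorLem}); adding a caret at the $i$-th leaf replaces the single leaf color $c_i$ by the two children colors that the positive sign forces from $c_i$, a local substitution depending only on $c_i$. Since $P^+$-compatibility of $(A,B)$ gives matching $i$-th leaf colors, the expanded pair again has matching leaf vectors, and the reverse implication follows by recovering $c_i$ as the sum of the two child colors. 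With well-definedness in hand, the three axioms above exhibit the $P^+$-compatible elements as a subgroup of $F$.

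I expect this lemma to present essentially no obstacle: everything reduces to the observation $P^+_u = P^+$, which trivializes the pair-of-pairs hypothesis of Lemma \ref{PCompatLem} in exactly the way the proof of Proposition \ref{RigidIsSubProp} (via the Compatibility Lemma \ref{CompatLem}) was engineered to exploit. The only step demanding any care is the well-definedness check, and even there the uniform positivity of every sign makes the caret substitution completely local and hence transparent.
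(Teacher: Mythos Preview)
Your proposal is correct and follows the paper's approach exactly: the paper's entire proof is the one sentence ``There is no way for the hypotheses of Lemma \ref{PCompatLem} to fail,'' which is precisely your observation that $P^+_u = P^+$ for every $u$ trivializes conditions (1) and (2). You are simply more thorough, supplying the identity, inverse, and well-definedness checks that the paper leaves implicit.
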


\begin{proof}  There is no way for the hypotheses of Lemma
\ref{PCompatLem} to fail.  \end{proof}

Here is what the all positive pattern\index{pattern!all
positive}\index{positive!pattern} \(P^+\) looks like to level 6.
\[
\xy
(0.00, 0.00); (0.99, 4.96)**@{-}; (1.98, 0.00)**@{-}; 
(0.00, -2.00)*{\scriptstyle{1}}; 
(1.98, -2.00)*{\scriptstyle{2}}; 
(3.97, 0.00); (4.96, 4.96)**@{-}; (5.95, 0.00)**@{-}; 
(3.97, -2.00)*{\scriptstyle{2}}; 
(5.95, -2.00)*{\scriptstyle{3}}; 
(7.94, 0.00); (8.93, 4.96)**@{-}; (9.92, 0.00)**@{-}; 
(7.94, -2.00)*{\scriptstyle{2}}; 
(9.92, -2.00)*{\scriptstyle{3}}; 
(11.90, 0.00); (12.90, 4.96)**@{-}; (13.89, 0.00)**@{-}; 
(11.90, -2.00)*{\scriptstyle{3}}; 
(13.89, -2.00)*{\scriptstyle{1}}; 
(15.87, 0.00); (16.87, 4.96)**@{-}; (17.86, 0.00)**@{-}; 
(15.87, -2.00)*{\scriptstyle{2}}; 
(17.86, -2.00)*{\scriptstyle{3}}; 
(19.84, 0.00); (20.83, 4.96)**@{-}; (21.83, 0.00)**@{-}; 
(19.84, -2.00)*{\scriptstyle{3}}; 
(21.83, -2.00)*{\scriptstyle{1}}; 
(23.81, 0.00); (24.80, 4.96)**@{-}; (25.79, 0.00)**@{-}; 
(23.81, -2.00)*{\scriptstyle{3}}; 
(25.79, -2.00)*{\scriptstyle{1}}; 
(27.78, 0.00); (28.77, 4.96)**@{-}; (29.76, 0.00)**@{-}; 
(27.78, -2.00)*{\scriptstyle{1}}; 
(29.76, -2.00)*{\scriptstyle{2}}; 
(31.75, 0.00); (32.74, 4.96)**@{-}; (33.73, 0.00)**@{-}; 
(31.75, -2.00)*{\scriptstyle{2}}; 
(33.73, -2.00)*{\scriptstyle{3}}; 
(35.71, 0.00); (36.71, 4.96)**@{-}; (37.70, 0.00)**@{-}; 
(35.71, -2.00)*{\scriptstyle{3}}; 
(37.70, -2.00)*{\scriptstyle{1}}; 
(39.68, 0.00); (40.67, 4.96)**@{-}; (41.67, 0.00)**@{-}; 
(39.68, -2.00)*{\scriptstyle{3}}; 
(41.67, -2.00)*{\scriptstyle{1}}; 
(43.65, 0.00); (44.64, 4.96)**@{-}; (45.63, 0.00)**@{-}; 
(43.65, -2.00)*{\scriptstyle{1}}; 
(45.63, -2.00)*{\scriptstyle{2}}; 
(47.62, 0.00); (48.61, 4.96)**@{-}; (49.60, 0.00)**@{-}; 
(47.62, -2.00)*{\scriptstyle{3}}; 
(49.60, -2.00)*{\scriptstyle{1}}; 
(51.59, 0.00); (52.58, 4.96)**@{-}; (53.57, 0.00)**@{-}; 
(51.59, -2.00)*{\scriptstyle{1}}; 
(53.57, -2.00)*{\scriptstyle{2}}; 
(55.56, 0.00); (56.55, 4.96)**@{-}; (57.54, 0.00)**@{-}; 
(55.56, -2.00)*{\scriptstyle{1}}; 
(57.54, -2.00)*{\scriptstyle{2}}; 
(59.52, 0.00); (60.52, 4.96)**@{-}; (61.51, 0.00)**@{-}; 
(59.52, -2.00)*{\scriptstyle{2}}; 
(61.51, -2.00)*{\scriptstyle{3}}; 
(63.49, 0.00); (64.48, 4.96)**@{-}; (65.48, 0.00)**@{-}; 
(63.49, -2.00)*{\scriptstyle{2}}; 
(65.48, -2.00)*{\scriptstyle{3}}; 
(67.46, 0.00); (68.45, 4.96)**@{-}; (69.44, 0.00)**@{-}; 
(67.46, -2.00)*{\scriptstyle{3}}; 
(69.44, -2.00)*{\scriptstyle{1}}; 
(71.43, 0.00); (72.42, 4.96)**@{-}; (73.41, 0.00)**@{-}; 
(71.43, -2.00)*{\scriptstyle{3}}; 
(73.41, -2.00)*{\scriptstyle{1}}; 
(75.40, 0.00); (76.39, 4.96)**@{-}; (77.38, 0.00)**@{-}; 
(75.40, -2.00)*{\scriptstyle{1}}; 
(77.38, -2.00)*{\scriptstyle{2}}; 
(79.37, 0.00); (80.36, 4.96)**@{-}; (81.35, 0.00)**@{-}; 
(79.37, -2.00)*{\scriptstyle{3}}; 
(81.35, -2.00)*{\scriptstyle{1}}; 
(83.33, 0.00); (84.33, 4.96)**@{-}; (85.32, 0.00)**@{-}; 
(83.33, -2.00)*{\scriptstyle{1}}; 
(85.32, -2.00)*{\scriptstyle{2}}; 
(87.30, 0.00); (88.29, 4.96)**@{-}; (89.29, 0.00)**@{-}; 
(87.30, -2.00)*{\scriptstyle{1}}; 
(89.29, -2.00)*{\scriptstyle{2}}; 
(91.27, 0.00); (92.26, 4.96)**@{-}; (93.25, 0.00)**@{-}; 
(91.27, -2.00)*{\scriptstyle{2}}; 
(93.25, -2.00)*{\scriptstyle{3}}; 
(95.24, 0.00); (96.23, 4.96)**@{-}; (97.22, 0.00)**@{-}; 
(95.24, -2.00)*{\scriptstyle{3}}; 
(97.22, -2.00)*{\scriptstyle{1}}; 
(99.21, 0.00); (100.20, 4.96)**@{-}; (101.19, 0.00)**@{-}; 
(99.21, -2.00)*{\scriptstyle{1}}; 
(101.19, -2.00)*{\scriptstyle{2}}; 
(103.17, 0.00); (104.17, 4.96)**@{-}; (105.16, 0.00)**@{-}; 
(103.17, -2.00)*{\scriptstyle{1}}; 
(105.16, -2.00)*{\scriptstyle{2}}; 
(107.14, 0.00); (108.13, 4.96)**@{-}; (109.13, 0.00)**@{-}; 
(107.14, -2.00)*{\scriptstyle{2}}; 
(109.13, -2.00)*{\scriptstyle{3}}; 
(111.11, 0.00); (112.10, 4.96)**@{-}; (113.10, 0.00)**@{-}; 
(111.11, -2.00)*{\scriptstyle{1}}; 
(113.10, -2.00)*{\scriptstyle{2}}; 
(115.08, 0.00); (116.07, 4.96)**@{-}; (117.06, 0.00)**@{-}; 
(115.08, -2.00)*{\scriptstyle{2}}; 
(117.06, -2.00)*{\scriptstyle{3}}; 
(119.05, 0.00); (120.04, 4.96)**@{-}; (121.03, 0.00)**@{-}; 
(119.05, -2.00)*{\scriptstyle{2}}; 
(121.03, -2.00)*{\scriptstyle{3}}; 
(123.02, 0.00); (124.01, 4.96)**@{-}; (125.00, 0.00)**@{-}; 
(123.02, -2.00)*{\scriptstyle{3}}; 
(125.00, -2.00)*{\scriptstyle{1}}; 
(0.99, 4.96); (2.98, 10.91)**@{-}; (4.96, 4.96)**@{-}; 
(0.49, 6.96)*{\scriptstyle{3}}; 
(5.46, 6.96)*{\scriptstyle{1}}; 
(8.93, 4.96); (10.91, 10.91)**@{-}; (12.90, 4.96)**@{-}; 
(8.43, 6.96)*{\scriptstyle{1}}; 
(13.40, 6.96)*{\scriptstyle{2}}; 
(16.87, 4.96); (18.85, 10.91)**@{-}; (20.83, 4.96)**@{-}; 
(16.37, 6.96)*{\scriptstyle{1}}; 
(21.33, 6.96)*{\scriptstyle{2}}; 
(24.80, 4.96); (26.79, 10.91)**@{-}; (28.77, 4.96)**@{-}; 
(24.30, 6.96)*{\scriptstyle{2}}; 
(29.27, 6.96)*{\scriptstyle{3}}; 
(32.74, 4.96); (34.72, 10.91)**@{-}; (36.71, 4.96)**@{-}; 
(32.24, 6.96)*{\scriptstyle{1}}; 
(37.21, 6.96)*{\scriptstyle{2}}; 
(40.67, 4.96); (42.66, 10.91)**@{-}; (44.64, 4.96)**@{-}; 
(40.17, 6.96)*{\scriptstyle{2}}; 
(45.14, 6.96)*{\scriptstyle{3}}; 
(48.61, 4.96); (50.60, 10.91)**@{-}; (52.58, 4.96)**@{-}; 
(48.11, 6.96)*{\scriptstyle{2}}; 
(53.08, 6.96)*{\scriptstyle{3}}; 
(56.55, 4.96); (58.53, 10.91)**@{-}; (60.52, 4.96)**@{-}; 
(56.05, 6.96)*{\scriptstyle{3}}; 
(61.02, 6.96)*{\scriptstyle{1}}; 
(64.48, 4.96); (66.47, 10.91)**@{-}; (68.45, 4.96)**@{-}; 
(63.98, 6.96)*{\scriptstyle{1}}; 
(68.95, 6.96)*{\scriptstyle{2}}; 
(72.42, 4.96); (74.40, 10.91)**@{-}; (76.39, 4.96)**@{-}; 
(71.92, 6.96)*{\scriptstyle{2}}; 
(76.89, 6.96)*{\scriptstyle{3}}; 
(80.36, 4.96); (82.34, 10.91)**@{-}; (84.33, 4.96)**@{-}; 
(79.86, 6.96)*{\scriptstyle{2}}; 
(84.83, 6.96)*{\scriptstyle{3}}; 
(88.29, 4.96); (90.28, 10.91)**@{-}; (92.26, 4.96)**@{-}; 
(87.79, 6.96)*{\scriptstyle{3}}; 
(92.76, 6.96)*{\scriptstyle{1}}; 
(96.23, 4.96); (98.21, 10.91)**@{-}; (100.20, 4.96)**@{-}; 
(95.73, 6.96)*{\scriptstyle{2}}; 
(100.70, 6.96)*{\scriptstyle{3}}; 
(104.17, 4.96); (106.15, 10.91)**@{-}; (108.13, 4.96)**@{-}; 
(103.67, 6.96)*{\scriptstyle{3}}; 
(108.63, 6.96)*{\scriptstyle{1}}; 
(112.10, 4.96); (114.09, 10.91)**@{-}; (116.07, 4.96)**@{-}; 
(111.60, 6.96)*{\scriptstyle{3}}; 
(116.57, 6.96)*{\scriptstyle{1}}; 
(120.04, 4.96); (122.02, 10.91)**@{-}; (124.01, 4.96)**@{-}; 
(119.54, 6.96)*{\scriptstyle{1}}; 
(124.51, 6.96)*{\scriptstyle{2}}; 
(2.98, 10.91); (6.94, 18.06)**@{-}; (10.91, 10.91)**@{-}; 
(2.48, 12.91)*{\scriptstyle{2}}; 
(11.41, 12.91)*{\scriptstyle{3}}; 
(18.85, 10.91); (22.82, 18.06)**@{-}; (26.79, 10.91)**@{-}; 
(18.35, 12.91)*{\scriptstyle{3}}; 
(27.29, 12.91)*{\scriptstyle{1}}; 
(34.72, 10.91); (38.69, 18.06)**@{-}; (42.66, 10.91)**@{-}; 
(34.22, 12.91)*{\scriptstyle{3}}; 
(43.16, 12.91)*{\scriptstyle{1}}; 
(50.60, 10.91); (54.56, 18.06)**@{-}; (58.53, 10.91)**@{-}; 
(50.10, 12.91)*{\scriptstyle{1}}; 
(59.03, 12.91)*{\scriptstyle{2}}; 
(66.47, 10.91); (70.44, 18.06)**@{-}; (74.40, 10.91)**@{-}; 
(65.97, 12.91)*{\scriptstyle{3}}; 
(74.90, 12.91)*{\scriptstyle{1}}; 
(82.34, 10.91); (86.31, 18.06)**@{-}; (90.28, 10.91)**@{-}; 
(81.84, 12.91)*{\scriptstyle{1}}; 
(90.78, 12.91)*{\scriptstyle{2}}; 
(98.21, 10.91); (102.18, 18.06)**@{-}; (106.15, 10.91)**@{-}; 
(97.71, 12.91)*{\scriptstyle{1}}; 
(106.65, 12.91)*{\scriptstyle{2}}; 
(114.09, 10.91); (118.06, 18.06)**@{-}; (122.02, 10.91)**@{-}; 
(113.59, 12.91)*{\scriptstyle{2}}; 
(122.52, 12.91)*{\scriptstyle{3}}; 
(6.94, 18.06); (14.88, 26.63)**@{-}; (22.82, 18.06)**@{-}; 
(6.44, 20.06)*{\scriptstyle{1}}; 
(23.32, 20.06)*{\scriptstyle{2}}; 
(38.69, 18.06); (46.63, 26.63)**@{-}; (54.56, 18.06)**@{-}; 
(38.19, 20.06)*{\scriptstyle{2}}; 
(55.06, 20.06)*{\scriptstyle{3}}; 
(70.44, 18.06); (78.37, 26.63)**@{-}; (86.31, 18.06)**@{-}; 
(69.94, 20.06)*{\scriptstyle{2}}; 
(86.81, 20.06)*{\scriptstyle{3}}; 
(102.18, 18.06); (110.12, 26.63)**@{-}; (118.06, 18.06)**@{-}; 
(101.68, 20.06)*{\scriptstyle{3}}; 
(118.56, 20.06)*{\scriptstyle{1}}; 
(14.88, 26.63); (30.75, 36.91)**@{-}; (46.63, 26.63)**@{-}; 
(14.38, 28.63)*{\scriptstyle{3}}; 
(47.13, 28.63)*{\scriptstyle{1}}; 
(78.37, 26.63); (94.25, 36.91)**@{-}; (110.12, 26.63)**@{-}; 
(77.87, 28.63)*{\scriptstyle{1}}; 
(110.62, 28.63)*{\scriptstyle{2}}; 
(30.75, 36.91); (62.50, 49.26)**@{-}; (94.25, 36.91)**@{-}; 
(30.25, 38.91)*{\scriptstyle{2}}; 
(94.75, 38.91)*{\scriptstyle{3}}; 
(62.50, 51.26)*{\scriptstyle{1}}; 
\endxy
\]

Let us use \(F^+\)\index{\protect\(F^+\protect\)} to denote the
subgroup of \(F\) consisting of the \(P^+\)-compatible pairs.  While
it is clear that the identity element is in \(F^+\), it takes work
to find non-trivial elements.  Below is what seems to be the
simplest (measured by the size of the trees involved).
\[
\left(
\xy
(0,0); (2,2)**@{-}; (4,0)**@{-};
(2,2); (5,4)**@{-}; (8,2)**@{-}; (5,4); (5,6)**@{-};
(6,0); (8,2)**@{-}; (10,0)**@{-};
(4,-2); (6,0)**@{-}; (8,-2)**@{-};
(6,-4); (8,-2)**@{-}; (10,-4)**@{-};
(8,-6); (10,-4)**@{-}; (12,-6)**@{-};
\endxy
\quad, \quad
\xy
(0,0); (-2,2)**@{-}; (-4,0)**@{-};
(-2,2); (-5,4)**@{-}; (-8,2)**@{-}; (-5,4); (-5,6)**@{-};
(-6,0); (-8,2)**@{-}; (-10,0)**@{-};
(-4,-2); (-6,0)**@{-}; (-8,-2)**@{-};
(-6,-4); (-8,-2)**@{-}; (-10,-4)**@{-};
(-8,-6); (-10,-4)**@{-}; (-12,-6)**@{-};
\endxy
\right)
\]
It has a nice symmetry that other examples do not.  Powers of this
element follow a nice pattern.  A less symmetric example is shown
below.
\[
\left(
\xy
(0,-2); (2,0)**@{-}; (4,-2)**@{-};
(2,0); (5,2)**@{-}; (8,0)**@{-};
(5,2); (3,4)**@{-}; (1,2)**@{-}; (3,4)**@{-}; (3,6)**@{-};
(6,-2); (8,0)**@{-}; (10,-2)**@{-};
(2,-4); (4,-2)**@{-}; (6,-4)**@{-}; 
(8,-4); (10,-2)**@{-}; (12,-4)**@{-};
(10,-6); (12,-4)**@{-}; (14,-6)**@{-};
\endxy
\quad, \quad
\xy
(0,0); (2,2)**@{-}; (4,0)**@{-};
(2,2); (4,4)**@{-}; (6,2)**@{-};
(4,4); (2,6)**@{-}; (0,4)**@{-}; (2,6); (2,8)**@{-};
(2,-2); (4,0)**@{-}; (6,-2)**@{-};
(4,-4); (6,-2)**@{-}; (8,-4)**@{-};
(2,-6); (4,-4)**@{-}; (6,-6)**@{-};
(0,-8); (2,-6)**@{-}; (4,-8)**@{-};
\endxy
\right)
\]

The group \(F^+\) has a property shared by \(F\). If \(T\) is a
finite binary tree, if \((A,B)\) is a pair in \(F^+\), and \(v\) is
a leaf of \(T\), then we can form a new element of \(F^+\).  We do
this by attaching \(A\) to \(T\) at \(v\) to form \(A'\) and \(B\)
to \(T\) at \(v\) to form \(B'\).  This means that \(A'\) is
obtained by identifying the root edge of \(A\) with the leaf edge of
\(T\) that impinges on \(v\).  A more technical definition is
\(A'=T\cup vA\) where the vertices of \(vA\) are those in \(\{vu\mid
u\in A\}\).  The new element of \(F^+\) is \((A',B')\).

Note that the element of \(F^+\) that results depends only on
\((A,B)\) and \(v\) and not on the particular \(T\) that has \(v\)
as a leaf.  Following \cite{mck+thomp}, we refer to \((A',B')\) as
the {\itshape deferment}\index{deferment!of element of
\protect\(F\protect\)}\index{Thompson's group
\protect\(F\protect\)!deferment of element} of \((A,B)\) to \(v\).
We thus have that, like \(F\), the group \(F^+\) is closed under
deferment.

\subsection{Neighborhoods of positive colorings}\mylabel{PosNbhdSec}

Each vertex in \(A_d\) has \(d\) edges impinging on it.  For a given
color vector \(\mathbf c\), there might be fewer than \(d\) valid
edges impinging on a given vertex.  If the sign assignment on a
given vertex is all positive (or all negative), then all edges
leaving from that vertex are valid.  It might be guessed that
starting with a coloring of a vertex that gives an all positive sign
assignment would give a large color graph.  To give words to the
concept, let us use the word {\itshape positive
neighborhood}\index{positive!neighborhood!of
vertex}\index{vertex!positive
neighborhood}\index{neighborhood!positive!of vertex} of a vertex
\(T\) of \(A_d\) to refer to the color graph of that vector
\(\mathbf c\) (normalized to give 1 as the root edge color) that
gives the all positive sign assignment on \(T\).

In \(A_2\), the positive neighborhoods are all the paths of length
2.  Since every pair of vertices in \(A_2\) is in a path of length
2, we have that there is a coloring for every pair of vertices in
\(A_2\).

If for every \(d\), it is true that each pair of vertices in \(A_d\)
lies in the positive neighborhood of some third vertex, then the 4CT
would follow.  In fact this fails for \(d=3\).  

The two trees shown below
\[
{\xy
(3,9);(6,12)**@{-};(9,9)**@{-}; 
(6,6);(9,9)**@{-};(12,6)**@{-};
(3,3);(6,6)**@{-};(9,3)**@{-};
(0,0);(3,3)**@{-};(6,0)**@{-};
\endxy}
\qquad
{\xy
(6,6);(10.5,10.5)**@{-};(15,6)**@{-};
(3,3);(6,6)**@{-};(9,3)**@{-};
(9,0);(12,3)**@{-};(15,0)**@{-};
(12,3);(15,6)**@{-};(18,3)**@{-};
\endxy}
\]
have only one normalized coloring and it is given by the color
vector \(21211\).  With the top vertex of the left tree assigned
\(+\), there are only 8 possible sign assignments to check.  These
produce 8 color vectors for the left tree, of which the only one
valid for the right tree is \(21211\).  The full color graph for the
vector \(21211\) is shown below where the signs of the vertices are
shown rather than the colors.
\[
\xy
(0,0)*{\xy
(3,9);(6,12)**@{-};(9,9)**@{-}; (6,10)*{+};
(6,6);(9,9)**@{-};(12,6)**@{-}; (9,7)*{-};
(3,3);(6,6)**@{-};(9,3)**@{-};  (6,4)*{+};
(0,0);(3,3)**@{-};(6,0)**@{-};  (3,1)*{+};
\endxy};
(11,0)*{\leftrightarrow};
(22,0)*{\xy
(3,9);(6,12)**@{-};(9,9)**@{-}; (6,10)*{+};
(6,6);(9,9)**@{-};(12,6)**@{-}; (9,7)*{-};
(3,3);(6,6)**@{-};(9,3)**@{-};  (6,4)*{-};
(6,0);(9,3)**@{-};(12,0)**@{-};  (9,1)*{-};
\endxy};
(34,0)*{\leftrightarrow};
(45,0)*{\xy
(3,9);(6,12)**@{-};(9,9)**@{-}; (6,10)*{+};
(6,6);(9,9)**@{-};(12,6)**@{-}; (9,7)*{+};
(9,3);(12,6)**@{-};(15,3)**@{-};  (12,4)*{+};
(6,0);(9,3)**@{-};(12,0)**@{-};  (9,1)*{-};
\endxy};
(57,0)*{\leftrightarrow};
(68,0)*{\xy
(6,6);(10.5,10.5)**@{-};(15,6)**@{-}; (10.5,8.5)*{-};
(3,3);(6,6)**@{-};(9,3)**@{-}; (6,4)*{-};
(12,3);(15,6)**@{-};(18,3)**@{-}; (15,4)*{+};
(9,0);(12,3)**@{-};(15,0)**@{-}; (12,1)*{-};
\endxy};
\endxy
\]

None of the sign assignments is all positive (or all negative).

Note that the fact that there are only four vertices in the color
graph of \(21211\) can be argued by zero sets.  The zero set is the
union of two pentagons.  In \tref{TheAssoc} they are the disjoint
pentagons on the lower right and upper left.  The four vertices not
in the zero set are the vertices shown above.

\section{Higher genera and Thompson's group
\protect\(V\protect\)}\mylabel{HGV}

This section relates to the discussion in the last four paragraphs of
Section 2 of \EliaOne.

\subsection{Tree pairs on the torus}\mylabel{TreeTorusSec}

The following is Figure 2-16 on Page 35 of \Saaty{} with the addition
of labels on the vertices.
\mymargin{SevenColEx}\begin{equation}\label{SevenColEx}
\begin{split}
\xy
(0,0); (10,0)**@{--}; (40,0)**@{-}; (50,0)**@{--}; 
(50,10)**@{--}; (50,25)**@{-};
(50,35)**@{--}; (40,35)**@{--};  (10,35)**@{-};
(0,35)**@{--}; (0,25)**@{--}; (0,10)**@{-}; (0,0)**@{--};
(10,0); (10,10)**@{-}; (0,10)**@{-};
(18,0); (18,17)**@{-}; (0,17)**@{-};
(18,10); (50,10)**@{-}; (40,10); (40,0)**@{-};
(24,10); (24,25)**@{-}; (0,25); (33,25)**@{-};
(33,35); (33,13)**@{-}; (50,13)**@{-};
(10,25); (10,35)**@{-}; 
(40,35); (40,25)**@{-}; (50,25)**@{-};
(5,5)*{1}; (45,5)*{1}; (45,30)*{1}; (5,30)*{1};
(29,5)*{7}; (14, 13.5)*{6}; (21.5, 30)*{2};
(12,21)*{3}; (28.5, 17.5)*{4}; (41.5, 19)*{5};
(10,-2)*{a}; (10,37)*{a}; (18,37)*{b}; (33,37)*{c};
(10,0)*{\bullet}; (10,35)*{\bullet}; (18,35)*{\bullet};
(33,35)*{\bullet}; (40,35)*{\bullet}; (40,37)*{d};
(0,25)*{\bullet}; (-2,25)*{h}; (10,25)*{\bullet}; (12,27)*{e};
(24,25)*{\bullet}; (26,27)*{f}; (33,25)*{\bullet}; (35,25)*{g};
(50,25)*{\bullet}; (52,25)*{h}; (0,17)*{\bullet}; (-2,17)*{n};
(0,13)*{\bullet}; (-2,13)*{m}; (50,17)*{\bullet}; (52,17)*{n};
(50,13)*{\bullet}; (52,13)*{m}; (0,10)*{\bullet}; (-2,9)*{l};
(50,10)*{\bullet}; (52,9)*{l}; (18,10)*{\bullet}; (16,8)*{i};
(24,10)*{\bullet}; (26,12)*{j}; (40,10)*{\bullet}; (38,8)*{k};
(18,0)*{\bullet}; (18,-2)*{b}; (33,0)*{\bullet}; (33,-2)*{c};
(40,0)*{\bullet}; (40,-2)*{d};
\endxy
\end{split}
\end{equation}

The figure shows a map on a torus\index{torus!map needing seven
colors}.  The figure is to be interpreted as having the top edge of
the rectangle identified with the bottom edge and the left edge of
the rectangle identified with the right edge.  Some parts of the
top, bottom and sides are drawn as dashed lines since they are not
part of the graph of the map and are interior to face 1.  The
point of the figure is that the graph cuts the torus into seven
faces and each pair of faces shares an edge.  Thus there is no
coloring of the faces with less than seven colors.

We can cut the graph into two pieces.  If the cuts are made at the
numbered points indicated by \(\times\) below, then the two pieces
are trees of eight leaves each.
\[
\xy
(0,0); (10,0)**@{--}; (40,0)**@{-}; (50,0)**@{--}; 
(50,10)**@{--}; (50,25)**@{-};
(50,35)**@{--}; (40,35)**@{--};  (10,35)**@{-};
(0,35)**@{--}; (0,25)**@{--}; (0,10)**@{-}; (0,0)**@{--};
(10,0); (10,10)**@{-}; (0,10)**@{-};
(18,0); (18,17)**@{-}; (0,17)**@{-};
(18,10); (50,10)**@{-}; (40,10); (40,0)**@{-};
(24,10); (24,25)**@{-}; (0,25); (33,25)**@{-};
(33,35); (33,13)**@{-}; (50,13)**@{-};
(10,25); (10,35)**@{-}; 
(40,35); (40,25)**@{-}; (50,25)**@{-};
(5,5)*{1}; (45,5)*{1}; (45,30)*{1}; (5,30)*{1};
(29,5)*{7}; (14, 13.5)*{6}; (21.5, 30)*{2};
(12,21)*{3}; (28.5, 17.5)*{4}; (41.5, 19)*{5};
(10,-2)*{a}; (10,37)*{a}; (18,37)*{b}; (33,37)*{c};
(10,0)*{\bullet}; (10,35)*{\bullet}; (18,35)*{\bullet};
(33,35)*{\bullet}; (40,35)*{\bullet}; (40,37)*{d};
(0,25)*{\bullet}; (-2,25)*{h}; (10,25)*{\bullet}; (12,27)*{e};
(24,25)*{\bullet}; (26,27)*{f}; (33,25)*{\bullet}; (35,25)*{g};
(50,25)*{\bullet}; (52,25)*{h}; (0,17)*{\bullet}; (-2,17)*{n};
(0,13)*{\bullet}; (-2,13)*{m}; (50,17)*{\bullet}; (52,17)*{n};
(50,13)*{\bullet}; (52,13)*{m}; (0,10)*{\bullet}; (-2,9)*{l};
(50,10)*{\bullet}; (52,9)*{l}; (18,10)*{\bullet}; (16,8)*{i};
(24,10)*{\bullet}; (26,12)*{j}; (40,10)*{\bullet}; (38,8)*{k};
(18,0)*{\bullet}; (18,-2)*{b}; (33,0)*{\bullet}; (33,-2)*{c};
(40,0)*{\bullet}; (40,-2)*{d}; 
(36.5,35)*{\times}; (36.5,33)*{\scriptstyle{6}};
(36.5,0)*{\times};  (36.5,2)*{\scriptstyle{6}};
(0,21)*{\times}; (2,21)*{\scriptstyle{2}};
(50,21)*{\times}; (48,21)*{\scriptstyle{2}}; 
(7,17)*{\times}; (7,19)*{\scriptstyle{7}};
(7,10)*{\times}; (7,12)*{\scriptstyle{0}};
(21,10)*{\times}; (21,12)*{\scriptstyle{8}};
(24,19)*{\times}; (22,19)*{\scriptstyle{4}};
(28.5, 25)*{\times}; (28.5,23)*{\scriptstyle{3}};
(33,30)*{\times}; (31,30)*{\scriptstyle{5}};
(44,25)*{\times}; (44,23)*{\scriptstyle{1}};
\endxy
\]

The two trees are pictured below where the numbers on the external
vertices (leaves and root) correspond to the numbers next to the
points labeled \(\times\) above.
\mymargin{TwoTreesOnTorus}\begin{equation}\label{TwoTreesOnTorus}
\begin{split}
\xy
(0,0); (0,-4)**@{-}; (-8,-8)**@{-}; (-14,-12)**@{-};
(0,-4); (8,-8)**@{-}; (14,-12)**@{-};
(-8,-8); (-4,-12)**@{-}; (8,-8); (4,-12)**@{-};
(-16,-16); (-14,-12)**@{-}; (-12,-16)**@{-};
(16,-16); (14,-12)**@{-}; (12,-16)**@{-};
(-6,-16); (-4,-12)**@{-}; (-2,-16)**@{-};
(6,-16); (4,-12)**@{-}; (2,-16)**@{-};
(0,2)*{\scs 0}; (2,-3)*{a};
(-9,-7)*{e}; (9,-7)*{b}; (-15,-11)*{h};
(-3,-11)*{f}; (3,-11)*{c}; (15,-11)*{i};
(-16,-18)*{\scs 1}; (-12,-18)*{\scs 2};
(-6,-18)*{\scs 3}; (-2,-18)*{\scs 4}; 
(2,-18)*{\scs 5}; (6,-18)*{\scs 6};
(12,-18)*{\scs 7}; (16,-18)*{\scs 8};
\endxy
\qquad\qquad
\xy
(0,0); (0,-4)**@{-}; (-8,-8)**@{-}; (-14,-12)**@{-};
(0,-4); (8,-8)**@{-}; (14,-12)**@{-};
(-8,-8); (-4,-12)**@{-}; (8,-8); (4,-12)**@{-};
(-16,-16); (-14,-12)**@{-}; (-12,-16)**@{-};
(16,-16); (14,-12)**@{-}; (12,-16)**@{-};
(-6,-16); (-4,-12)**@{-}; (-2,-16)**@{-};
(6,-16); (4,-12)**@{-}; (2,-16)**@{-};
(0,2)*{\scs 0}; (2,-3)*{l};
(-9,-7)*{m}; (9,-7)*{k}; (-15,-11)*{g};
(-3,-11)*{n}; (3,-11)*{d}; (15,-11)*{j};
(-16,-18)*{\scs 3}; (-12,-18)*{\scs 5};
(-6,-18)*{\scs 2}; (-2,-18)*{\scs 7}; 
(2,-18)*{\scs 1}; (6,-18)*{\scs 6};
(12,-18)*{\scs 4}; (16,-18)*{\scs 8};
\endxy
\end{split}
\end{equation}

The numbering and the embeddings of the trees in the plane have been
chosen so that the leaves on the left tree are numbered from 1
through 8 in order in the left-right ordering of the leaves.  On the
right tree this is not possible because the graph in
\tref{SevenColEx} is not planar.

It is simple to check that the color vector \(\mathbf c =
(1,3,1,2,2,3,1,3)\) is valid for the left tree.  If the colors are
assigned on the right tree by using the numbering of the leaves as
shown (and not the left-right ordering) so that leaf \(i\) gets
color \(\mathbf c_i\), then it is seen that the coloring is valid
for the right tree as well.  It follows that the graph in
\tref{SevenColEx} has a proper, edge 3-coloring in spite of the fact
that as a map it does not have a proper, face 4-coloring.

\subsection{Thompson's group \protect\(V\protect\)}\mylabel{VSec}

The tree pair shown in \tref{TwoTreesOnTorus} represents an element
of another of Thompson's groups known as \(V\)\index{Thompson's
group \protect\(V\protect\)}\index{\protect\(V\protect\)}.  A
typical representative is a triple \((D, \sigma, R)\) where \(D\)
and \(R\) are a pair of finite, binary trees with the same number of
leaves and \(\sigma\) is a bijection from the leaves of \(D\) to the
leaves of \(R\).  See \cite{CFP} for descriptions of the equivalence
relation on the triples used to define the elements of \(V\) and the
multiplication on the triples.  These are easy enough to guess
correctly.

We give an example to show that not every element of \(V\) has a
valid edge 3-coloring.  That is, there is a triple \((D,\sigma,
R)\) where \(D\) and \(R\) have \(n\) leaves each so that no color
vector \(\mathbf c = (\mathbf c_1, \ldots, \mathbf c_n)\) of length
\(n\) exists that is valid for both \(D\) and \(R\) in the following
sense.  If \(v_1\), \dots, \(v_n\) are the leaves of \(D\) in
left-right order then for each \(i\) with \(1\le i\le n\) the color
\(\mathbf c_i\) is applied to leaf \(v_i\) of \(D\) and leaf
\(\sigma(v_i)\) of \(R\).

The smallest examples (by computer search) have 6 leaves for each
tree, and our example is of this size.  Examples are not rare.  Of
13,800 triples involving trees with 6 leaves, there are 3,584
triples with no valid edge 3-coloring.

Below are two trees where the leaves of each tree are numbered to
indicate the bijection between the two sets of leaves.
\mymargin{NoColorV}\begin{equation}\label{NoColorV}
\begin{split}
\left(\,\,
\xy
(-18,-8); (-10,0)**@{-}; (-4,4)**@{-}; (2,0)**@{-}; (10,-8)**@{-};
(-4,4); (-4,8)**@{-};
(6,-4); (2,-8)**@{-}; (-14,-4); (-10,-8)**@{-}; 
(-10,0); (-6,-4)**@{-}; (2,0); (-2,-4)**@{-};
(-18,-10)*{\scriptstyle{1}};
(-10,-10)*{\scriptstyle{2}};
(-6,-6)*{\scriptstyle{3}};
(-2,-6)*{\scriptstyle{4}};
(2,-10)*{\scriptstyle{5}};
(10,-10)*{\scriptstyle{6}};
(-4,10)*{\scriptstyle{0}};
\endxy
\quad,\quad
\xy
(0,-4); (3,0)**@{-}; (6,-4)**@{-}; 
(3,0); (-3,4)**@{-}; (-9,0)**@{-}; (-3,4); (-3,8)**@{-};
(-14,-4); (-9,0)**@{-}; (-4,-4)**@{-};
(-17,-8); (-14,-4)**@{-}; (-11,-8)**@{-};
(-7,-8); (-4,-4)**@{-}; (-1,-8)**@{-};
(-3,10)*{\scriptstyle{0}};
(-17,-10)*{\scriptstyle{1}};
(-11,-10)*{\scriptstyle{4}};
(-7,-10)*{\scriptstyle{3}};
(-1,-10)*{\scriptstyle{6}};
(0,-6)*{\scriptstyle{2}};
(6,-6)*{\scriptstyle{5}};
\endxy
\right)
\end{split}
\end{equation}

To discuss the possible colorings of the two trees we redraw the
trees by labeling the leaves and vertices with letters.  We use
letters \(a\) through \(d\) to represent colors not yet assigned.
We use \(x\), \(y\) and \(z\) for assigned colors where the colors
\(x\), \(y\) and \(z\) represent different colors.  We can assign
the two colors on leaves 1 and 2 arbitrarily (as long as they are
different) and we assign them colors \(x\) and \(y\), respectively.
We get the following picture.
\[
\begin{split}
\left(\,\,
\xy
(-18,-8); (-10,0)**@{-}; (-4,4)**@{-}; (2,0)**@{-}; (10,-8)**@{-};
(-4,4); (-4,8)**@{-};
(6,-4); (2,-8)**@{-}; (-14,-4); (-10,-8)**@{-}; 
(-10,0); (-6,-4)**@{-}; (2,0); (-2,-4)**@{-};
(-18,-10)*{\scriptstyle{x}};
(-10,-10)*{\scriptstyle{y}};
(-6,-6)*{\scriptstyle{a}};
(-2,-6)*{\scriptstyle{b}};
(2,-10)*{\scriptstyle{c}};
(10,-10)*{\scriptstyle{d}};
(-4,10)*{\scriptstyle{0}};
\endxy
\quad,\quad
\xy
(0,-4); (3,0)**@{-}; (6,-4)**@{-}; 
(3,0); (-3,4)**@{-}; (-9,0)**@{-}; (-3,4); (-3,8)**@{-};
(-14,-4); (-9,0)**@{-}; (-4,-4)**@{-};
(-17,-8); (-14,-4)**@{-}; (-11,-8)**@{-};
(-7,-8); (-4,-4)**@{-}; (-1,-8)**@{-};
(-3,10)*{\scriptstyle{0}};
(-17,-10)*{\scriptstyle{x}};
(-11,-10)*{\scriptstyle{b}};
(-7,-10)*{\scriptstyle{a}};
(-1,-10)*{\scriptstyle{d}};
(0,-6)*{\scriptstyle{y}};
(6,-6)*{\scriptstyle{c}};
\endxy
\right)
\end{split}
\]

We have \(c\ne d\), and the color \(b\) cannot equal \(c+d\), so
either \(b=c\) or \(b=d\).  We will consider the two cases
separately, but first we note facts that apply to either case.  We
have \(a\ne x+y=z\), \(c\ne d\), \(b\ne x\), \(a\ne d\), and \(c\ne
y\).

{\itshape Case 1:} \(b=c\).  Now \(c\ne y\) and \(c=b\ne x\) has
\(b=c=z\).  With \(b=c\), we get \(b+c+d=d\) which cannot equal
\(x+y+a=z+a\).  But \(a\ne d\) so \(d=z\).  But this makes \(c=d\)
which cannot happen.

{\itshape Case 2:} \(b=d\).  Now \(x+b\ne a+d = a+b\) so \(a\ne x\).
We also have \(a\ne x+y=z\), so \(a=y\).  But now \(b=d\ne a=y\) and
\(d=b\ne x\) so \(b=d=z\).  Now \(b+c+d=c\) which cannot equal
\(x+y+a=z+a = z+y=x\).  But \(c\ne y\) so \(c=z\).  But now
\(c=d=z\) which cannot happen.

Thus \tref{NoColorV} gives an example of an element of \(V\) with no
valid coloring.  We now show that this is a well known fact.

\subsection{Tree pairs on the projective plane}\mylabel{RP2Sec}

Below on the left is the standard cubic ``map'' on
\(RP^2\)\index{projective plane!map needing six colors} which has
six pentagonal faces each of which shares an edge with all the
others.  Each point on the outer decagon is to be identified with
its antipode.  The graph is the Petersen graph\index{Petersen graph}
(Figure 4-2 on Page 102 of \Saaty).
\mymargin{ProjPlane}\begin{equation}\label{ProjPlane}
\begin{split}
\xy
(0,-20); (11.8,-16.2)**@{-}; (19,-6.2)**@{-};
(19,6.2)**@{-};  (11.8,16.2)**@{-}; (0,20)**@{-};
(-11.8,16.2)**@{-}; (-19,6.2)**@{-}; 
(-19,-6.2)**@{-}; (-11.8,-16.2)**@{-}; (0,-20)**@{-};;
(5.9,-8.1); (9.5,3.1)**@{-}; (0,10)**@{-};
(-9.5,3.1)**@{-}; (-5.9,-8.1)**@{-}; (5.9,-8.1)**@{-};
(5.9,-8.1); (11.8,-16.2)**@{-};
(9.5,3.1); (19,6.2)**@{-};
(0,10); (0,20)**@{-};
(-9.5,3.1); (-19,6.2)**@{-}; 
(-5.9,-8.1); (-11.8,-16.2)**@{-};
\endxy
\qquad
\qquad
\xy
(0,-20); (11.8,-16.2)**@{-}; (19,-6.2)**@{-};
(19,0)**@{-}; (19,6.2)**@{.};  (15.4,11.2)**@{.};
(11.8,16.2)**@{-}; (0,20)**@{-};
(-11.8,16.2)**@{-}; (-19,6.2)**@{-}; (-19,0)**@{-};
(-19,-6.2)**@{.};   (-15.4,-11.2)**@{.};
(-11.8,-16.2)**@{-}; (0,-20)**@{-};;
(5.9,-8.1); (7.7,-2.5)**@{-}; (9.5,3.1)**@{.}; (0,10)**@{.};
(-9.5,3.1)**@{.}; (-5.9,-8.1)**@{.}; (0,-8.1)**@{.};
(5.9,-8.1)**@{-}; 
(5.9,-8.1); (11.8,-16.2)**@{-};
(9.5,3.1); (19,6.2)**@{.};
(0,10); (0,15)**@{.}; (0,20)**@{-};
(-9.5,3.1); (-14.25,4.65)**@{.}; (-19,6.2)**@{-}; 
(-5.9,-8.1); (-8.85,-12.15)**@{.}; (-11.8,-16.2)**@{-};
(0,15)*{\times}; (2,15)*{\scriptstyle{0}};
(-14.25,4.65)*{\times}; (-14.1,6.5)*{\scriptstyle{3}};
(-8.85,-12.15)*{+}; (-7.5,-13.5)*{\scriptstyle{2}};
(-19,0)*{\times}; (-17,0)*{\scriptstyle{6}};
(19,0)*{\times}; (17,0)*{\scriptstyle{6}};
(15.4,11.2)*{+}; (14,9.8)*{\scriptstyle{5}};
(-15.4,-11.2)*{+}; (-14,-9.8)*{\scriptstyle{5}};
(0,-8.1)*{\times}; (0,-10.1)*{\scriptstyle{1}};
(7.7,-2.5)*{\times}; (9.4,-2.7)*{\scriptstyle{4}};
\endxy
\end{split}
\end{equation}

On the right, the map has been cut in 7 places, numbered 0 through
6, to break the map into two trees that meet in their leaves.  It is
easy to check that the two trees and the numbering of the leaves is
an embedding of the pair of trees in \tref{NoColorV} so that the
left tree in \tref{NoColorV} maps to the tree in \tref{ProjPlane}
drawn in dotted lines and the right tree in \tref{NoColorV} maps to
the tree in \tref{ProjPlane} drawn in solid lines.  From the facts
shown about \tref{NoColorV}, the map on the left in \tref{ProjPlane}
has no proper, edge 3-coloring.  This fact is standard.  See the
discussion on Page 102 of \Saaty.

The map in \tref{ProjPlane} is not ``Whitney'' in several senses.
While the map does break into a pair of trees, the break is not
induced by a Hamiltonian circuit in the dual graph.  The dual graph
\(K_6\) has plenty of Hamiltonian circuits, but they are all of
length 6 and cannot break the graph in \tref{ProjPlane} into a pair
of trees since it would only make 6 cuts.  This would result in a
loop in one of the two pieces.  The dual graph also does not satisfy
the hypotheses of Whitney's theorem.  The graph \(K_6\) has 20
triangles, but in the embedding into \(RP^2\), only 10 of them bound
faces.  Similar comments apply to the map \tref{SevenColEx} on the
torus.

\section{Enumeration}\mylabel{EnumSec}

In this section we record various results and questions about
counts.  Trees are counted by the Catalan numbers and we will
mention those first.  While the Catalan numbers have exponential
growth, there is no exact formula for them specifically in terms of
powers.  The other counts we come across have such formulas.  While
it is not necessary to do so, it is often pleasant to arrive at
these other formulas recursively.  Thus we will also discuss well
known generalizations of the Fibonacci numbers that we will
encounter.

\subsection{Trees}\mylabel{TreeCountSec}

As mentioned in Section \ref{AssocSec}, the number of trees with
\(n\) internal vertices is 
\[
C(n) = \frac{1}{n+1}\binom{2n}{n} = \frac{(2n)!}{n!(n+1)!}
\]
the \(n\)-th Catalan number\index{sequence!Catalan
numbers}\index{Catalan numbers}.  We have
\[
\begin{split}
\frac{C(n+1)}{C(n)}
&=
\frac{(2n+2)!}{(n+1)!(n+2)!}
\frac{n!(n+1)!}{(2n)!} \\
&=
\frac{(2n+2)(2n+1)}{(n+2)(n+1)} \\
&= 4-\frac{6}{n+2}.
\end{split}
\]
Thus the Catalan numbers grow about as fast as \(4^n\) after a
somewhat slower start.  Thus we can take it that the Catalan numbers
will overtake any function growing no faster than \(k^n\) for \(k<4\).

\subsection{Colorings of a single tree}

If \(T\) is a tree with \(n\) internal vertices, then it has \(2^n\)
sign assignments.  We count colorings modulo the action of \(S_3\)
on the colors, so we assume that the root color is 1 and that the
top internal vertex is positive.  Thus we get that the number of
colorings of \(T\) modulo the action of \(S_3\) on the colors is
\(2^{n-1}\).  This is equivalent to the formula for \(\delta_n\) at
the top of Page 212 in \cite{MR1550643} where the action of \(S_3\)
is not taken into account.

Note that \(2^{n-1}\) is smaller than the total number vectors of
length \(n+1\) with values in \(\{1,2,3\}\) ignoring validity.  This
latter number is \(3^{n+1}\) if the action of \(S_3\) is ignored, or
approximately \(\frac12 (3^n)\) modulo the action of \(S_3\).  We say
``approximately'' since not every vector uses all three colors.

Note also that these numbers are smaller than \(4^n\).

\subsection{Recursive sequences}\mylabel{RecurSeqSec}

Given a quintuple of numbers \((p,q,k,a,b)\) we can define a
sequence recursively by \(t(0)=a\), \(t(1)=b\), and
\(t(n+1)=pt(n)+qt(n-1)+k\) for \(n\ge 2\).  When the quintuple is
\((1,1,0,0,1)\), we have the Fibonacci
sequence\index{sequence!Fibonacci numbers}\index{Fibonacci numbers}.
For any quintuple with \(k=0\), it can be shown that the sequence is
a linear combination of two special sequences, known as Lucas
sequences, with quintuples \((p,q,0,0,1)\) and \((p,q,0,2,p)\).  The
numbers \((1,2,0,0,1)\) define the Jacobsthal numbers \(J(n)\) which
we will encounter below.  The Jacobsthal
numbers\index{sequence!Jacobsthal numbers}\index{Jacobsthal numbers}
are sequence A001045 of the Online Encyclopedia of Integer Sequences
(OEIS).  We will also encounter the sequence given by
\((2,3,0,0,1)\) which is sequence A015518 of the OEIS.

We include the possibility of non-zero values for \(k\) because of
the following which is easily proven by induction.

\begin{lemma}\mylabel{LucasSumLem} Let \(t(n)\) be determined by
\((p,q,0,0,b)\), and let 
\[
s(n) = \sum_{i=0}^n t(i)
\]
be the sequence of partial sums of the \(t(i)\).  Then the sequence
\(s(n)\) is determined by the quintuple \((p,q,b,0,b)\).
\end{lemma}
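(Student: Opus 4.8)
The plan is to argue by a direct computation that the partial sums $s(n)$ satisfy the same homogeneous part of the recurrence as $t(n)$, but with the additive constant promoted from $k=0$ to $k=b$, and then to check the two initial values. Since a sequence determined by a quintuple $(p,q,k,a,b)$ is pinned down by its first two values together with the recurrence $t(n+1)=pt(n)+qt(n-1)+k$, it suffices to verify that $s(0)=0$, that $s(1)=b$, and that $s(n+1)=ps(n)+qs(n-1)+b$ for all $n\ge 1$; these are exactly the data encoded by the target quintuple $(p,q,b,0,b)$.

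First I would dispose of the initial conditions. By definition of the quintuple $(p,q,0,0,b)$ we have $t(0)=0$ and $t(1)=b$, so $s(0)=t(0)=0$ and $s(1)=t(0)+t(1)=b$, matching the starting data $(a,b)=(0,b)$ of the target. The main step is the recurrence. Here I would substitute the defining relation $t(i)=pt(i-1)+qt(i-2)$, valid for $i\ge 2$, into the tail of the sum $s(n+1)=\sum_{i=0}^{n+1}t(i)$, isolating the first two terms:
\[
s(n+1)=t(0)+t(1)+\sum_{i=2}^{n+1}\bigl(pt(i-1)+qt(i-2)\bigr)
= t(0)+t(1)+p\bigl(s(n)-t(0)\bigr)+qs(n-1),
\]
where the two reindexed sums $p\sum_{j=1}^{n}t(j)$ and $q\sum_{j=0}^{n-1}t(j)$ were recognized as $p(s(n)-t(0))$ and $qs(n-1)$ respectively. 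Using $t(0)=0$ and $t(1)=b$ collapses the boundary terms to $b$ and eliminates the $pt(0)$ correction, giving $s(n+1)=ps(n)+qs(n-1)+b$ exactly.

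Since the argument is elementary reindexing of a finite sum, I do not expect any genuine obstacle. The only point that needs a little care is the bookkeeping at the endpoints of the summation and the vanishing of the $pt(0)$ term: it is precisely the hypothesis $a=t(0)=0$ that forces the new additive constant to be $b$ (rather than $b-pt(0)$ for a general first term), which is why the target quintuple has the clean form $(p,q,b,0,b)$. I would close by noting that the recurrence derivation requires $n+1\ge 2$, i.e.\ $n\ge 1$, which is exactly the range asserted.
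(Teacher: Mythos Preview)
Your proof is correct and is exactly the elementary induction the paper has in mind; the paper merely asserts the lemma is ``easily proven by induction'' without writing out the details, and your reindexing argument is the natural way to fill them in.
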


In particular the partial sums \(g(n)\) of the Fibonacci numbers
satisfy \(g(0)=0\), \(g(1)=1\), and \(g(n+1) = g(n)+g(n-1)+1\) for
\(n\ge 2\).  Writing out the first few of these numbers shows that
they are closely related to the Fibonacci numbers themselves.  This
is not surprising since this fact is true of exponential sequences
and the Fibonacci numbers are linear combination of two exponential
sequences.

In general the behavior of the sequence corresponding to
\((p,q,0,a,b)\) is determined by the eigenvalues \(\lambda_1\) and
\(\lambda_2\) of the matrix 
\(
\begin{bmatrix} p&q \\ 1&0 \end{bmatrix}
\).  
The terms of the sequence are then linear combinations of like
powers of \(\lambda_1\) and \(\lambda_2\).

The eigenvalues are roots of \(\lambda(\lambda-p)-q=
\lambda^2-p\lambda-q\).  So if \(q\) is already in the form
\(q=m(m-p)\), then the eigenvalues are \(\lambda_1=m\) and
\(\lambda_2=p-m\).  Thus we get particularly nice behavior from the
sequences if \(m=\pm1\).  For \(m=-1\), we get \(q=p+1\) and for
\(m=1\), we get \(q=1-p\).  

In the cases we said we would encounter, we have \(q=p+1\) and the
following is easy to prove by induction.

\begin{lemma}\mylabel{LucasSpecialLem} Let \(t(n)\) be determined by
\((p,p+1,0,0,1)\) and let \(s(n)\) be the sequence of partial sums
of the \(t(i)\).  Then 
\[
\begin{split}
(p+2)t(n) & = (p+1)^n-(-1)^n, \\
ps(n) &= t(n+1) - \frac12(1+(-1)^n).
\end{split}
\]
for all \(n\ge0\).  \end{lemma}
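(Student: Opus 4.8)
The plan is to solve the homogeneous recurrence in closed form, which makes both identities fall out by elementary algebra. The quintuple $(p,p+1,0,0,1)$ specifies $t(0)=0$, $t(1)=1$, and $t(n+1)=p\,t(n)+(p+1)\,t(n-1)$. Its characteristic polynomial $\lambda^2-p\lambda-(p+1)$ factors as $(\lambda-(p+1))(\lambda+1)$, which is precisely the factorization singled out in the preceding discussion (the case $m=-1$, $q=p+1$). Hence the eigenvalues are $\lambda_1=p+1$ and $\lambda_2=-1$, and by the general principle stated there, $t(n)=A(p+1)^n+B(-1)^n$ for constants $A,B$. First I would pin down $A$ and $B$ from the initial conditions: $t(0)=0$ forces $A+B=0$, and $t(1)=1$ then gives $A(p+1)-B=A(p+2)=1$, so $A=-B=1/(p+2)$. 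This yields $t(n)=\frac{1}{p+2}\bigl((p+1)^n-(-1)^n\bigr)$, i.e.\ the first displayed identity, valid for all $n\ge0$.

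For the partial sums I would sum the closed form term by term, using $\sum_{i=0}^n(p+1)^i=\frac{(p+1)^{n+1}-1}{p}$ and $\sum_{i=0}^n(-1)^i=\frac{1}{2}(1+(-1)^n)$. Substituting into $(p+2)s(n)=\sum_{i=0}^n\bigl((p+1)^i-(-1)^i\bigr)$ and multiplying by $p$ reduces the goal to showing that $p\,s(n)$ agrees with $t(n+1)-\frac{1}{2}(1+(-1)^n)$, where $t(n+1)=\frac{1}{p+2}\bigl((p+1)^{n+1}+(-1)^n\bigr)$ comes from the first part. I would clear denominators by putting both expressions over $2(p+2)$; after collecting the geometric term and the parity term, both numerators reduce to $2(p+1)^{n+1}-(p+2)-p(-1)^n$, establishing the second identity. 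Alternatively, one could avoid the direct summation by invoking Lemma \ref{LucasSumLem}, which identifies $s(n)$ as the sequence with quintuple $(p,p+1,1,0,1)$, and then solve that inhomogeneous recurrence; the geometric-series route seems more transparent.

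A fully self-contained alternative, matching the remark that the lemma is ``easy to prove by induction,'' is to verify the two base cases $n=0,1$ and then carry out a two-step induction: assuming $(p+2)t(n)=(p+1)^n-(-1)^n$ and the analogous statement for $n-1$, one computes $(p+2)t(n+1)=p\bigl((p+1)^n-(-1)^n\bigr)+(p+1)\bigl((p+1)^{n-1}-(-1)^{n-1}\bigr)$ and simplifies, the key cancellation being $-p(-1)^n+(p+1)(-1)^n=(-1)^n$, to obtain $(p+1)^{n+1}-(-1)^{n+1}$. The second identity then follows from $s(n)=s(n-1)+t(n)$ together with the first. I expect no genuine obstacle here; the only point requiring care is the consistent bookkeeping of the parity term $\frac{1}{2}(1+(-1)^n)$ and the even/odd behavior of the alternating geometric sum, which is exactly where a careless sign error would creep in.
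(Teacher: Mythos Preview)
Your proposal is correct. The paper does not supply a proof of this lemma at all beyond the remark that it ``is easy to prove by induction,'' so your inductive alternative is exactly what the paper had in mind, and your primary closed-form approach via the factored characteristic polynomial $(\lambda-(p+1))(\lambda+1)$ is a more explicit route that the paper's surrounding discussion already sets up but does not carry out.
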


Thus we see that the values of \(t(n)\) and the partial sums of the
\(t(i)\) are dominated by powers of \(p+1\).

For other values of \(p\) and \(q\), the linear combinations of
powers of the eigenvalues are more complicated.

It is because of Lemma \ref{LucasSpecialLem} that we say that the
recursive formulas are not necessary.  However, the recursive
formulas are a pleasant way to discover some of the enumeration
formulas.

\subsection{Acceptable colorings}

Proposition \ref{ValidCharProp} characterizes acceptable colorings
of length at least two as those that are not constant and do not sum
to zero.  Every non-constant color vector of length \(n+1\) is of
the form \(1^i2[123]^{n-i}\) if we mod out by the action of \(S^3\)
on the colors.  We compute \(c(n)\)\index{\protect\(c(n)\protect\)},
the number of these that do not sum to zero.

We have \(c(0)=0\) and \(c(1)=1\).

We calculate \(c(n+1)\) by breaking the vectors of length \(n+2\)
into three classes.

({\bfseries I}) There is one vector of the form \(1^{i+1}2\).

({\bfseries II}) There are \(3c(n)\) vectors of the form
\((1^i2[123]^{n-i})x\), \(x\in \{1,2,3\}\), where the sum of
\((1^i2[123]^{n-i})\) is not zero.  But only two values of \(x\)
give the whole vector a non-zero sum, so this contributes \(2c(n)\)
to \(c(n+1)\).

({\bfseries III}) This leaves vectors of the form
\((1^i2[123]^{n-i})x\) where the sum of \((1^i2[123]^{n-i})\) is
zero.  But \((1^i2[123]^{n-i})\) has the form \(\mathbf py\) where
\(\mathbf p\) must have sum \(y\).  Now \((1^i2[123]^{n-i})\) cannot
have the form \(1^n2\) since \(1^n2\) does not have sum zero.  Thus
\(\mathbf p\) has the form \(\mathbf p = (1^i2[123]^{n-i-1})\) and
there are \(c(n-1)\) of these.  Since we assume \(\mathbf py\) has
sum zero, there is only one choice for \(y\) given \(\mathbf p\).
Since the sum of \(\mathbf py\) is zero, all three values of \(x\)
will give \(\mathbf pyx\) a non-zero sum.  So this contributes
\(3c(n-1)\) to \(c(n+1)\).

We have shown the following.

\begin{prop}\mylabel{ColorCountProp} Modulo the action of \(S_3\) on
the colors, there are \(c(n)\) acceptable color vectors for the set of
trees with \(n\) carets, where \(c(0)=0\), \(c(1)=1\), and \(c(n+1)
= 2c(n)+3c(n-1)+1\).  \end{prop}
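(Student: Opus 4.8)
The plan is to verify the two base cases by inspection and then establish the recurrence $c(n+1)=2c(n)+3c(n-1)+1$ by a direct enumeration of canonical representatives, using Proposition \ref{ValidCharProp} (acceptable $\iff$ non-constant and non-zero sum) together with Lemma \ref{RootColorLem} (the sum of a color vector is its root color). First I would fix canonical representatives for the $S_3$-orbits. Since no non-identity permutation fixes a vector that uses two or more colors, $S_3$ acts freely on non-constant vectors, so each orbit of a non-constant length-$(n+1)$ vector has a unique representative obtained by relabeling the first entry to $1$ and the first differing entry to $2$; these are exactly the strings $1^i2[123]^{n-i}$ with at least one leading $1$. Thus $c(n)$ counts those representatives whose sum in $\Z_2\times\Z_2$ is nonzero. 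The base cases are then immediate: for $n=0$ every length-$1$ vector is constant, so $c(0)=0$, and for $n=1$ the only representative is $12$, with nonzero sum, so $c(1)=1$.

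For the recurrence I would partition the acceptable length-$(n+2)$ representatives $R$ according to their last entry and the sum of the preceding prefix. The one arithmetic fact driving everything is that appending a color $x$ to a prefix changes the sum by $x$, and that in $\Z_2\times\Z_2$ a nonzero sum $s$ is cancelled to $0$ by exactly one $x\in\{1,2,3\}$ (namely $x=s$), whereas $0+x\ne 0$ for all $x\in\{1,2,3\}$. This produces three classes. Class (I) is the single representative $1^{n+1}2$ with empty tail, whose sum is always nonzero, contributing $1$. Class (II) consists of $R=R'x$ where $R'$ is a nonzero-sum representative of length $n+1$; there are $c(n)$ such $R'$, and exactly two of the three choices of $x$ keep the total sum nonzero, contributing $2c(n)$. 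Class (III) consists of $R=R'x$ where $R'$ is a non-constant zero-sum representative of length $n+1$; here all three choices of $x$ give nonzero total sum, so each such $R'$ contributes $3$.

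The main step is to count the Class (III) prefixes and show there are exactly $c(n-1)$ of them. I would do this by stripping the last entry $y$ of $R'=\mathbf p y$. Because $1^n2$ is the unique empty-tail representative and has nonzero sum, any zero-sum $R'$ has its distinguished $2$ strictly inside $\mathbf p$, so $\mathbf p=1^i2[123]^{n-i-1}$ is again a non-constant canonical representative, now of length $n$; and since $R'$ sums to $0$, the sum of $\mathbf p$ equals $y$, which is nonzero, so $\mathbf p$ is acceptable. Conversely, each acceptable length-$n$ representative $\mathbf p$ extends uniquely, by setting $y:=\mathrm{sum}(\mathbf p)$, to a zero-sum non-constant length-$(n+1)$ representative. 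This bijection identifies the Class (III) prefixes with the $c(n-1)$ acceptable length-$n$ vectors, giving a contribution of $3c(n-1)$. Summing the three classes yields $c(n+1)=1+2c(n)+3c(n-1)$.

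I expect the bijection in Class (III) to be the only delicate point: one must check that the stripped prefix remains genuinely non-constant and correctly normalized (so that it is a bona fide orbit representative and no orbit is counted twice), and that the excluded case $1^n2$ is exactly the reason the correspondence is clean. Everything else reduces to the two-line arithmetic of $\Z_2\times\Z_2$ and the partition of acceptable length-$(n+2)$ representatives by their last coordinate.
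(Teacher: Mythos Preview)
Your proposal is correct and follows essentially the same approach as the paper: the same canonical representatives \(1^i2[123]^{n-i}\), the same three-class decomposition by the last entry and the sum of the prefix, and the same bijection in Class~(III) via stripping the last entry of a zero-sum prefix. Your write-up is slightly more explicit about why \(S_3\) acts freely on non-constant vectors and why the stripped prefix remains a bona fide representative, but the argument is otherwise identical to the paper's.
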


From Lemma \ref{LucasSumLem}, this is the sequence of partial sums
of the sequence determined by \((2,3,0,0,1)\).

\subsection{Rigid colorings}\mylabel{RigidCountSec}

To count rigid colorings, we count the color vectors that are
described in Proposition \ref{RigidCharLem}.  These are (modulo
permutations of the colors) the non-constant vectors that sum to 1
with no prefix that sums to 3.

\begin{prop}\mylabel{RigidCountLem} Let
\(r(n)\)\index{\protect\(r(n)\protect\)} be the number (modulo the
action of \(S_3\) on the colors) of positive rigid color vectors
valid for trees with \(n\) carets.  Then \(r(1)=1\), \(r(2)=2\) and
\(r(n+1) = r(n)+2r(n-1)+1\).  \end{prop}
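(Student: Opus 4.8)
The plan is to translate Proposition~\ref{RigidCharLem} into a walk-counting problem on three states and then read the recurrence off a transfer-matrix argument, with a parity correction accounting for the non-constant hypothesis.

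First I would set up the objects. By Proposition~\ref{RigidCharLem}, $r(n)$ counts the positive rigid color vectors $\mathbf c=(c_1,\dots,c_{n+1})$ of length $n+1$: these are non-constant, sum to $1$, have no prefix summing to $3$, and, being genuine colorings, have all entries among the nonzero colors $1,2,3$. I would encode such a vector by its sequence of partial sums $s_0=0$ and $s_k=c_1+\cdots+c_k$. Because every $c_k\ne 0$ we have $s_k\ne s_{k-1}$, and the prefix condition together with $s_0=0$ and $s_{n+1}=1$ forces each $s_k\in\{0,1,2\}$. Conversely, any sequence $0=s_0,s_1,\dots,s_{n+1}=1$ in $\{0,1,2\}$ with $s_k\ne s_{k-1}$ recovers a unique such $\mathbf c$ via $c_k=s_k+s_{k-1}$: each $c_k$ is nonzero, and $c_k=3$ occurs exactly when $\{s_{k-1},s_k\}=\{1,2\}$, which the rules permit. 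Thus these vectors are exactly the walks of length $n+1$ from $0$ to $1$ in the complete graph $K_3$ on vertices $\{0,1,2\}$. The only constant vector among them is $1^{\,n+1}$, whose partial sums alternate $0,1,0,\dots$; this is a legal walk ending at $1$ precisely when $n+1$ is odd.

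Next I would count the walks. Let $B_m$ be the number of length-$m$ walks in $K_3$ from $0$ to $1$ (equal to the number from $0$ to $2$, via the automorphism of $\Z_2\times\Z_2$ interchanging $1$ and $2$), and let $A_m$ be the number of closed length-$m$ walks at $0$. A one-step analysis gives $A_m=2B_{m-1}$ (the step into $0$ comes from $1$ or from $2$) and $B_{m+1}=A_m+B_m$ (the step into $1$ comes from $0$ or from $2$), hence $B_{m+1}=B_m+2B_{m-1}$. Writing $R_m=r(m-1)$ for the number of length-$m$ vectors, I have $R_m=B_m-\varepsilon_m$, where $\varepsilon_m=1$ for $m$ odd and $\varepsilon_m=0$ for $m$ even, since we delete the single constant vector $1^m$ exactly when $m$ is odd. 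Substituting $B_j=R_j+\varepsilon_j$ into the recurrence for $B$, and using $\varepsilon_{m+1}=\varepsilon_{m-1}$ (same parity) and $\varepsilon_m+\varepsilon_{m-1}=1$ (consecutive parities), gives $R_{m+1}=R_m+2R_{m-1}+1$. Reindexing by $m=n+1$ yields $r(n+1)=r(n)+2r(n-1)+1$. The base values follow from small walk counts: $B_2=1$ (only $0\to 2\to 1$) with $\varepsilon_2=0$ gives $r(1)=R_2=1$, and $B_3=3$ with $\varepsilon_3=1$ gives $r(2)=R_3=3-1=2$.

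I expect the only real care — and the origin of the additive constant $1$ — to lie in the bookkeeping for the non-constant hypothesis: verifying that $1^m$ is the sole constant vector passing the sum and prefix tests (the constant $2^m$ never sums to $1$, and the constant $3^m$ has a length-one prefix equal to $3$), and that it counts as a walk exactly for odd $m$, so that the correction $\varepsilon_m$ behaves as claimed. Once that point is pinned down, the rest is the routine two-term recurrence for walks on $K_3$.
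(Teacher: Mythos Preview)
Your proof is correct but takes a genuinely different route from the paper's. The paper argues directly on the color vectors: it checks $r(1)$ and $r(2)$ by hand and, for the recurrence, splits the length-$(n+2)$ vectors according to whether they begin with $2$, with $13$, or with $11$ (the prefix condition rules out a leading $3$). The constant-suffix case contributes exactly $1$ regardless of parity, and the three non-constant cases contribute $r(n)$, $r(n-1)$, and $r(n-1)$ respectively, after invoking the color symmetry that swaps $1$ and $3$ in the characterization of Proposition~\ref{RigidCharLem}. Your approach instead passes to partial sums to obtain a clean bijection with walks on $K_3$, derives the homogeneous recurrence $B_{m+1}=B_m+2B_{m-1}$ by a transfer-matrix argument, and recovers the $+1$ from the parity correction for the excluded constant vector. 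Your method is more systematic and makes the connection to the Jacobsthal numbers (Lemma~\ref{LucasSumLem}) transparent from the outset; the paper's case split is more hands-on but avoids introducing the auxiliary walk model and makes the role of the color-swap symmetry explicit.
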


\begin{proof}  The cases \(n=1\) and \(n=2\) are done by direct
checking.

Consider the set \(S\) of vectors \(\mathbf c\) that are color
vectors that each positively and rigidly color some tree with
\(n+1\) carets with \(n\ge2\).

We break our set into three disjoint sets.  The first consists of
all vectors in \(S\) of the form \(2s\), the second consists of all
vectors in \(S\) of the form \(13s\) and the third consists of all
vectors in \(S\) of the form \(11s\).  In these, \(s\) is the
remainder of the color vector.  In the first two cases, the sum of
\(s\) is 3, and in the third case the sum of \(s\) is 1.

We first assume that \(s\) is a constant sequence.  If we have the
form \(11s\), the sum of the full vector is 0 if \(s\) has even
length, so it must have odd length.  But then the sum is 1 and \(s\)
is the constant vector 1 making the full vector constant.  So the
form \(11s\) cannot have \(s\) constant.

In the other two cases, the sum of \(s\) is 3 and must be the
constant sequence 3 of odd length.  We get one valid, positive,
rigid vector with \(s\) constant from the form \(2s\) if the length
of the full vector is even, and we get one valid, positive rigid
vector with \(s\) constant from the form \(13s\) if the length of
the full vector is odd.  This contributes 1 to the number of valid
vectors no matter what the parity of the length of the full vector
is.

From now on we assume that \(s\) is not constant.

In form \(2s\), we know that \(s\) is non-constant, sums to \(3\)
and cannot have a prefix summing to 1.  This is the criterion for
Proposition \ref{RigidCharLem} if we interchange the roles of 1 and
3.  Thus the number of such \(s\) is equal to \(r(n)\).

In form \(13s\), we know that \(s\) is non-constant, sums to \(3\)
and cannot have a prefix summing to 1.  Once again, we refer to
Proposition \ref{RigidCharLem} with the roles of 1 and 3
interchanged and we have that the number of such \(s\) is
\(r(n-1)\).

In form \(11s\), we know that \(s\) is non-constant, sums to \(1\)
and cannot have a prefix summing to 3.  From Proposition
\ref{RigidCharLem}, the number of such \(s\) is \(r(n-1)\).

Thus we have \(r(n+1)=r(n)+2r(n-1)+1\).
\end{proof}

From Lemma \ref{LucasSumLem}, this is the sequence of partial sums
of the Jacobsthal numbers.  Note that in this case, the value of
\(p\) in Lemma \ref{LucasSpecialLem} is 1.  So the values of the
\(r(n)\) are never far from the values of the Jacobsthal numbers.

\subsection{Flexible colorings}

Modulo the action of \(S_3\) on the colors, the number of acceptable
flexible colorings for trees of \(n\) internal vertices
\(f(n)\)\index{\protect\(f(n)\protect\)} satisfies \(c(n) =
f(n)+r(n)\).  We have the following.

\begin{lemma}  The count \(f(n)\) satisfies \(f(1)=0\), \(f(2)=1\)
and \(f(n+1) = 3f(n)+r(n)\).  \end{lemma}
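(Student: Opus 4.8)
The plan is to reduce the desired recursion to the two recursions already established for $c$ and $r$, exploiting the defining relation $f(n)=c(n)-r(n)$ (that is, $c(n)=f(n)+r(n)$, coming from the Trichotomy). First I would dispose of the base cases directly: $f(1)=c(1)-r(1)=1-1=0$, and since Proposition \ref{ColorCountProp} gives $c(2)=2c(1)+3c(0)+1=3$ while $r(2)=2$ by Proposition \ref{RigidCountLem}, we obtain $f(2)=3-2=1$. The case $n=1$ of the recursion is then merely the check $3f(1)+r(1)=0+1=1=f(2)$.

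For $n\ge 1$ the remaining task is purely algebraic. Substituting $f=c-r$, the assertion $f(n+1)=3f(n)+r(n)$ is equivalent to the vanishing of
\[
E(n):=f(n+1)-3f(n)-r(n)=\bigl(c(n+1)-3c(n)\bigr)-\bigl(r(n+1)-2r(n)\bigr).
\]
I would show $E(n)=0$ by a one-step induction. The initial value is $E(1)=c(2)-3c(1)-r(2)+2r(1)=3-3-2+2=0$. For the inductive step I would feed $c(n+2)=2c(n+1)+3c(n)+1$ from Proposition \ref{ColorCountProp} and $r(n+2)=r(n+1)+2r(n)+1$ from Proposition \ref{RigidCountLem} into $E(n+1)$; the additive constants $+1$ cancel, and a short rearrangement collapses the result to $E(n+1)=-E(n)$. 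Since $E(1)=0$, this forces $E\equiv 0$, establishing the recursion for all $n\ge 1$.

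The step worth flagging is that the ``obvious'' combinatorial route is deceptively less direct. If instead one classifies the flexible acceptable vectors of length $n+2$ by their last letter (as was done for $c$ and for $r$), one is forced to split prefixes into the flexible ones, the rigid ones, and those of vanishing sum, and to observe that the vanishing-sum prefixes of length $n+1$ biject with the acceptable vectors of length $n$ having the same set of partial sums. This bookkeeping yields the \emph{second order} recurrence $f(n+1)=2f(n)+r(n)+3f(n-1)+r(n-1)$, not the stated first order one; the stated form emerges only after substituting the inductive hypothesis $f(n)=3f(n-1)+r(n-1)$ to replace $3f(n-1)+r(n-1)$ by $f(n)$. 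The self-cancellation $E(n+1)=-E(n)$ is simply the cleaner packaging of exactly this bootstrap, so that is the version I would write up, keeping the combinatorial derivation in reserve as a consistency check.
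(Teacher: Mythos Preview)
Your proposal is correct and is essentially the paper's argument in a different wrapper. The paper does precisely the computation you sketch in your final paragraph: it expands $f(n+1)=c(n+1)-r(n+1)$ via the recursions for $c$ and $r$ to obtain $f(n+1)=2f(n)+r(n)+3f(n-1)+r(n-1)$, and then substitutes the inductive hypothesis $f(n)=3f(n-1)+r(n-1)$ to collapse this to $3f(n)+r(n)$. Your $E(n+1)=-E(n)$ device is just a repackaging of that same inductive bootstrap (indeed you say as much), so there is no substantive difference between the two write-ups.
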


\begin{proof} The known values of \(c(n)\) and \(r(n)\) verify the
base cases and the formula for \(f(n+1)\) for \(n=1\) and \(n=2\).

From Propositions \ref{ColorCountProp} and
\ref{RigidCountLem}, we have 
\[
\begin{split}
f(n+1)
&=
c(n+1)-r(n+1) \\
&=
2c(n)+3c(n-1)-(r(n)+2r(n-1)) \\
&=
2(c(n)-r(n))+r(n)+3(c(n-1)-r(n-1))+r(n-1) \\
&=
2f(n)+r(n)+3f(n-1)+r(n-1) \\
&=
2f(n)+r(n)+f(n) \\
&=
3f(n)+r(n).
\end{split}
\]

\end{proof}

\subsection{Highly colorable maps}

These last sections have many questions and few results.

We found much about map colorings by writing computer programs to
look for colorings.  It was natural to count the colorings as they
were found.  We discovered that certain maps had many more colorings
than all other maps.

Let \(n\) be at least 2.  We define \(m_1(n)\), \(m_2(n)\),
\(m_3(n)\) and \(m_4(n)\)\index{\protect\(m_i(n)\protect\)} as those
positive integers that are the largest possible with
\(m_4(n)<m_3(n)<m_2(n)< m_1(n)\) so that for each \(i\in
\{1,2,3,4\}\) there is a tree pair \((D_i,R_i)\) with \(n\) internal
vertices each, corresponding to a map in \(\mathfrak W\), and having
\(m_i(n)\) different valid color vectors (modulo the action of
\(S_3\) on the colors).  Stated differently, \(m_1(n)\) is the
largest number of colorings of maps in \(\mathfrak W\) of that size
(\(n+2\) faces), \(m_2(n)\) is the second largest, and so forth.

We can ask what the \(m_i(n)\) are.  We are unable to say what they
are, but we can say what we suspect they are.  We can also ask what
maps achieve the numbers that we think the \(m_i(n)\) are.  We are
also unable to say what these maps are, but we can give some
candidates.  (The value \(m_1(n)\) and corresponding map are now
known.  See the note after Question \ref{BiwheelQuestion}.)

\subsubsection{The estimates}\mylabel{EstimatePara}

We give our observations.  Counts depend heavily on the parity of
\(n\).  For this reason we use the notation \(f(n)+(e,o)\) to denote
\(f(n)+e\) when \(n\) is even and \(f(n)+o\) when \(n\) is odd.  The
Jacobsthal numbers \(J(n)\) are as introduced in Section
\ref{RecurSeqSec}.  From Lemma \ref{LucasSpecialLem}, we have
\[
J(n) = \frac13(2^n+(-1)^{n+1}).
\]

Calculations support the following.

\begin{conj}[Color count]\mylabel{ColCountConj} The following
hold.\index{conjecture!Color count}
\mymargin{TheEstimatesI-IV}\begin{alignat}{3}
\label{TheEstimatesI}
m_1(n) &= J(n-3) &+(1,0) &, &\quad n&\ge5, \\
\label{TheEstimatesII}
m_2(n) &= J(n-4) &+(7,5) &=m_1(n-1)+(7,4), &\quad n&\ge7, \\
\label{TheEstimatesIII}
m_3(n) &= J(n-4) &+(1,0) &=m_1(n-1), &\quad n&\ge7, \\
\label{TheEstimatesIV}
m_4(n) &= J(n-4) &-(1,2) &=m_1(n-1)-(1,3), &\quad n&\ge7.
\end{alignat}
\end{conj}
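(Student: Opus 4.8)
The plan is to prove each of \tref{TheEstimatesI}--\tref{TheEstimatesIV} by matching a \emph{lower bound} (an explicit family of tree pairs achieving the stated count) against an \emph{upper bound} (no tree pair with \(n\) carets does better, and the four listed values are exactly the four largest). Throughout I would use the zero-set description of validity from Section \ref{ZeroSetSec}: a color vector \(\mathbf c\in\{1,2,3\}^{n+1}\), taken modulo the action of \(S_3\), is valid for a tree \(T\) precisely when \(\sum_{k\in I}c_k\ne 0\) for every shadow interval \(I\) of \(T\), together with the full interval (this last being the non-constant, non-zero-sum condition of Proposition \ref{ValidCharProp}). Hence the number of colorings of a pair \((D,R)\) is the number of such \(\mathbf c\) whose partial sums avoid \(0\) on every interval in the laminar family \(\mathcal L(D)\cup\mathcal L(R)\) obtained by superimposing the two shadow patterns. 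Since a single tree has exactly \(2^{n-1}\) valid vectors, maximizing the count of a pair amounts to choosing the two shadow patterns so that their combined interval constraints forbid as few vectors as possible.

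For the lower bounds I would first write the conjectured extremal maps as explicit tree pairs. For \(m_1\) this is the biwheel; presented as a pair \((D,R)\) it is a highly symmetric configuration whose two shadow patterns interleave minimally. I would then count its colorings by induction on \(n\), peeling off one caret at a time and tracking the effect on the open interval sums; the resulting one-step recurrence is the Jacobsthal recurrence \(x(n+1)=x(n)+2x(n-1)\), coming from the quintuple \((1,2,0,0,1)\) of Section \ref{RecurSeqSec}, and Lemma \ref{LucasSpecialLem} converts it to the closed form \(J(n-3)\) with the parity correction \((1,0)\) supplied by the \((-1)^n\) term. The candidate families for \(m_2,m_3,m_4\) are small perturbations of the biwheel (changing one caret's attachment), and the same caret-by-caret bookkeeping gives their counts; the identities \(m_2(n)=m_1(n-1)+(7,4)\), \(m_3(n)=m_1(n-1)\), and \(m_4(n)=m_1(n-1)-(1,3)\) should fall out of comparing these recurrences against the \(m_1\) recurrence shifted by one.

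For the upper bound I would set up a transfer matrix scanning the leaf positions \(1,\dots,n+1\) from left to right, the state at position \(j\) recording the residues in \(\Z_2\times\Z_2\) of the partial sums of all intervals of \(\mathcal L(D)\cup\mathcal L(R)\) that are currently \emph{open} (started but not yet closed). The coloring count is then a product of these matrices, and its magnitude is controlled by how many intervals are simultaneously open, which the nesting of each shadow pattern bounds. Maximizing this product over all pairs reduces to an optimization over the ways two laminar families can interleave. A complication one must confront here is that non-prime maps contribute \emph{multiplicatively}: by \tref{PrimeProdFormula} a map cut into \(k\) prime pieces has \(2^{k-1}\prod n_i\) colorings, so the extremal analysis has to weigh ``one large prime factor'' against ``several glued small ones,'' and show that the biwheel's single-factor count dominates.

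The hard part is this upper bound, together with pinning down the exact second-, third-, and fourth-place values. The union of two laminar families is not itself laminar, so the transfer-matrix state space need not collapse to anything as clean as the single-tree case, and proving that the symmetric biwheel is the unique optimum---and that the next three values are exactly the listed perturbations, with their discrete parity gaps and the strict ordering \(m_4(n)<m_3(n)<m_2(n)<m_1(n)\)---is an extremal problem over exponentially many configurations. That even the first formula \tref{TheEstimatesI} was only settled afterward by Seymour \cite{pdseymour:biwheel}, presumably by a dedicated argument rather than this scan, is a strong signal that a uniform self-contained proof of all four lines is the genuine obstacle; the lower bounds and the recursive closed forms are comparatively routine once the extremal maps are written down.
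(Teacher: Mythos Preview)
This statement is a \emph{conjecture} in the paper, not a theorem: the paper offers no proof.  What the paper does provide is (i) explicit candidate triangulations \(W_n,\Xi_n,Y_n,\nabla_n\) whose coloring counts it computes via chromatic polynomials (formulas \tref{TheCountsI}--\tref{TheCountsV}), matching the conjectured \(m_i(n)\), and (ii) computer verification of the conjecture through \(n=12\) (and by others through \(n=15\)).  The upper bound---that no other map does better---is explicitly left open as Question \ref{MiQuestion}, with only the \(i=1\) case since settled by Seymour \cite{pdseymour:biwheel}.

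Your proposal is therefore not comparable to a ``paper's proof'' because there is none.  On the lower-bound side, your shadow-interval transfer-matrix idea is a different route from the paper's direct chromatic-polynomial calculation on the dual triangulations; both should recover the same counts, though the paper's method is shorter since \(P(\Sigma(C_k),4)\) and its variants factor cleanly.  On the upper-bound side you correctly diagnose the obstacle: optimizing over all pairs of laminar families (and over prime decompositions via \tref{PrimeProdFormula}) is the whole difficulty, and your sketch does not close it.  So your write-up is an honest research plan, not a proof; the conjecture remains open for \(i=2,3,4\).
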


Given the rapid growth of \(J(n)\), it is seen that there is a huge
gap in the number of colorings between the largest and second
largest number of colorings.

\subsubsection{The maps} It is easier to discuss and calculate
numbers of vertex colorings of maps than face colorings because of
the availability of the chromatic polynomial.  The duals to the maps
in \(\mathfrak W\) are triangulations of the 2-sphere.  We are thus
interested in ways to describe and refer to triangulations of the
2-sphere.  We describe some terminology and notation.

We let \(C_n\) be the polygon of \(n\) vertices and edges.  If
\(\Gamma\) and \(G\) are graphs, then \(\Gamma G\) is the join of
\(\Gamma\) and \(G\).   This is the graph whose vertex set is
\(V(\Gamma)\cup V(G)\) and whose edge set is \(E(\Gamma)\cup
E(G)\cup T\) where \(T\) has an edge joining \(v\) to \(w\) for
every \((v,w)\) in \(V(\Gamma)\times V(G)\).  Then \(C(\Gamma)\) is
the ``cone on \(\Gamma\)'' and is the join of \(\Gamma\) and the one
point graph with no edges.  We then get \(\Sigma(\Gamma)\), the
``suspension of \(\Gamma\),'' which is the join of \(\Gamma\) and
the two point graph with no edges.

The graph we are most interested in is \(W_{n+2} = \Sigma(C_n)\) or
the biwheel\index{biwheel} of \(n+2\) vertices.  This graph can be
embedded in the 2-sphere to give a triangulation of the 2-sphere.
The dual of \(W_{10}\) is shown below.
\[
\xy
(0,0)*{\cir<15pt>{}};
(0,0)*{\cir<40pt>{}};
(3.55,3.55); (7.8, 7.8)**@{-};
(-3.55,3.55); (-7.8, 7.8)**@{-};
(3.55,-3.55); (7.8, -7.8)**@{-};
(-3.55,-3.55); (-7.8, -7.8)**@{-};
(0,5); (0,11.3)**@{-};
(5,0); (11.3,0)**@{-};
(0,-5); (0,-11.3)**@{-};
(-5,0); (-11.3,0)**@{-};
\endxy
\]

We next describe four variations of \(W_n\).  The first is the
one-bar variation \(\Theta_{n+1}\) of \(W_n\).  Its name derives from the
appearance of its dual.  The dual of \(\Theta_{11}\) is below.
\[
\xy
(0,0)*{\cir<15pt>{}};
(0,0)*{\cir<40pt>{}};
(0,0)*{\cir<25pt>{ul^l}};
(3.55,3.55); (7.8, 7.8)**@{-};
(-3.55,3.55); (-7.8, 7.8)**@{-};
(3.55,-3.55); (7.8, -7.8)**@{-};
(-3.55,-3.55); (-7.8, -7.8)**@{-};
(0,5); (0,11.3)**@{-};
(5,0); (11.3,0)**@{-};
(0,-5); (0,-11.3)**@{-};
(-5,0); (-11.3,0)**@{-};
\endxy
\]

The difference between the graph \(\Theta_{n+1}\) and \(W_n\) is
explained by the transition pictured below.
\[
\xymatrix{
{\xy
(0,0); (20,0)**@{-}; (10,17.32)**@{-}; (0,0)**@{-};
(10,-17.32)**@{-}; (20,0)**@{-};
(10,-17.32); (10,17.32)**@{-};
(10,19.32)*{a};
(10,-19.32)*{b};
(-2,0)*{c}; (22,0)*{e};
(12,2)*{d};
\endxy}
}
\qquad\longrightarrow\qquad
\xymatrix{
{\xy
(20,0); (10,17.32)**@{-}; (0,0)**@{-};
(10,5.77)**@{-}; (20,0)**@{-};
(10,5.77); (10,17.32)**@{-};
(20,0); (10,-17.32)**@{-}; (0,0)**@{-};
(10,-5.77)**@{-}; (20,0)**@{-};
(10,-5.77); (10,-17.32)**@{-};
(10,-5.77); (10,5.77)**@{-};
(10,19.32)*{a};
(10,-19.32)*{b};
(-2,0)*{c}; (22,0)*{e};
(12,7.5)*{d_1};
(12,-7.5)*{d_2};
\endxy}
}
\]

The left figure shows four triangles in in \(W_n\).  The points
\(a\) and \(b\) are the ``suspension points'' and the points \(c\),
\(d\) and \(e\) are three points in the polygon \(C_{n-2}\).  The
graph \(\Theta_{n+1}\) is created by replacing the left figure in \(W_n\)
by the right figure and leaving the rest of \(W_n\) the same.  Thus
the transition raises the number of vertices by one and the number
of triangles by two.

The second is the two-bar variation \(\Xi_{n+2}\) of \(W_n\).  The
dual of \(\Xi_{12}\) is shown below.
\[
\xy
(0,0)*{\cir<15pt>{}};
(0,0)*{\cir<40pt>{}};
(0,0)*{\cir<23pt>{ul^l}};
(0,0)*{\cir<27pt>{ul^l}};
(3.55,3.55); (7.8, 7.8)**@{-};
(-3.55,3.55); (-7.8, 7.8)**@{-};
(3.55,-3.55); (7.8, -7.8)**@{-};
(-3.55,-3.55); (-7.8, -7.8)**@{-};
(0,5); (0,11.3)**@{-};
(5,0); (11.3,0)**@{-};
(0,-5); (0,-11.3)**@{-};
(-5,0); (-11.3,0)**@{-};
\endxy
\]

The graph \(\Xi_{n+2}\) is obtained by a replacement similar to that
used to obtain \(\Theta_{n+1}\).  The figure below left in \(W_n\) is
replaced by the figure below right.
\[
\xymatrix{
{\xy
(0,0); (20,0)**@{-}; (10,17.32)**@{-}; (0,0)**@{-};
(10,-17.32)**@{-}; (20,0)**@{-};
(10,-17.32); (10,17.32)**@{-};
(10,19.32)*{a};
(10,-19.32)*{b};
(-2,0)*{c}; (22,0)*{e};
(12,2)*{d};
\endxy}
}
\qquad\longrightarrow\qquad
\xymatrix{
{\xy
(20,0); (10,17.32)**@{-}; (0,0)**@{-};
(10,3.85)**@{-}; (20,0)**@{-}; (10, 8.06)**@{-};
(10,17.32)**@{-}; (10,3.85);
(10, 8.06)**@{-}; (0,0)**@{-};
(20,0); (10,-17.32)**@{-}; (0,0)**@{-};
(10,-5.77)**@{-}; (20,0)**@{-};
(10,-5.77); (10,-17.32)**@{-};
(10,-5.77); (10,5.77)**@{-};
(10,19.32)*{a};
(10,-19.32)*{b};
(-2,0)*{c}; (22,0)*{e};
(12,10)*{d_1};
(12,1)*{d_2};
(12,-7.5)*{d_3};
\endxy}
}
\]

The next two variations on \(W_n\) are best described without
reference to the original dual map.  Each is obtained by adding new
edges to a single triangular face in \(W_n\).  The graph
\(Y_{n+1}\) is obtained by the following modification.
\[
\xymatrix{
{\xy
(0,0); (20,0)**@{-}; (10,17.32)**@{-}; (0,0)**@{-};
\endxy}
}
\qquad\longrightarrow\qquad
\xymatrix{
{\xy
(0,0); (20,0)**@{-}; (10,17.32)**@{-}; (0,0)**@{-};
(10,5.77)**@{-}; (20,0)**@{-};
(10,5.77); (10,17.32)**@{-};
\endxy}
}
\]

The graph \(\nabla_{n+3}\) is obtained by the following
modification.
\[
\xymatrix{
{\xy
(0,0); (20,0)**@{-}; (10,17.32)**@{-}; (0,0)**@{-};
\endxy}
}
\qquad\longrightarrow\qquad
\xymatrix{
{\xy
(0,0); (20,0)**@{-}; (10,17.32)**@{-}; (0,0)**@{-};
(10,3.85)**@{-}; (20,0)**@{-}; (11.66, 6.72)**@{-};
(10,17.32)**@{-}; (8.34,6.72)**@{-}; (0,0)**@{-};
(10,3.85); (11.66, 6.72)**@{-}; (8.34,6.72)**@{-};
(10,3.85)**@{-};
\endxy}
}
\]

\subsubsection{The counts}

We are interested in the number of proper, vertex 4-colorings of
\(W_n\), \(\Theta_n\), \(\Xi_n\), \(Y_n\) and \(\nabla_n\).
Available to us is the chromatic polynomial \(P(\Gamma,
t)\)\index{chromatic polynomial} of a finite graph \(\Gamma\) and
the standard techniques for calculating the polynomial.  We are thus
interested in \(P(\Gamma,4)\) for \(\Gamma\) one of the five graphs
listed.  Note that we have been counting colorings modulo the
permutations of the colors while \(P(\Gamma, t)\) considers
colorings different even if one is obtained from the other by
permuting the colors.  This is not a problem since none of our
graphs has a proper, 2-coloring.  It follows from this that the
number we want is 1/24 of the number that the chromatic polynomial
gives.

We state certain results without giving details.  The calculations
using chromatic polynomials are straightforward.  They are a bit
longer in the case of \(\nabla_n\) and \(\Xi_n\), but there are no
surprises.  We have
\mymargin{TheCountsIandV}\begin{align}
\label{TheCountsI}\frac1{24}P(W_n,4) 
&= 
\frac13(2^{n-3}+(-1)^n) + \frac12(1+(-1)^n), \\
\frac1{24}P(\Theta_n,4) 
&= 
\frac13(2^{n-5}+(-1)^n) + \frac12(4+4(-1)^{n-1}), \\
\frac1{24}P(\Xi_n,4) 
&= 
\frac13(2^{n-4}+(-1)^{n-1}) + \frac12(5+9(-1)^n), \\
\frac1{24}P(Y_n,4) 
&= 
\frac1{24}P(W_{n-1},4), \\
\label{TheCountsV}\frac1{24}P(\nabla_n,4) 
&= 
\frac13(2^{n-3}+(-1)^{n-1}) + \frac12(4+6(-1)^{n-1}).
\end{align}

The fourth equality is a triviality given the nature of the relation
between \(Y_n\) and \(W_{n-1}\).

It is a bit more revealing to compare each quantity on the left with
a corresponding quantity of some \(W_j\) as is done with \(Y_n\).
We do this below where, as in Paragraph \ref{EstimatePara}, we
replace expressions such as \(\frac12(a+b(-1)^n)\) with a pair of
numbers \((e,o)\) where \(e\) is the value of the expression when
\(n\) is even and \(o\) is the value when \(n\) is odd.  Certain
symmetries wreak havoc with comparisons for low values of \(n\), and
so restricting to \(n\ge7\) gives
\[
\begin{split}
\frac1{24}P(\Theta_n,4)
&=
\frac1{24}P(W_{n-2},4) + (-1,4), \\
\frac1{24}P(\Xi_n,4)
&=
\frac1{24}P(W_{n-1},4) + (7,-3), \\
\frac1{24}P(Y_n,4)
&=
\frac1{24}P(W_{n-1},4) + (0,0), \\
\frac1{24}P(\nabla_n,4)
&=
\frac1{24}P(W_{n-1},4) + (-1,4).
\end{split}
\]

Given the exponential growth of \(P(W_n,4)\), it is seen that the
number of colorings of the \(\Theta_n\) are much smaller than the
others.  Comparing the above with
\tref{TheEstimatesI}--\tref{TheEstimatesIV}, we see that \(\Xi_n\)
gives what we guess to be the second largest value when \(n\) is
even and the fourth largest value when \(n\) is odd.  The same role
with the parity of \(n\) reversed is played by \(\nabla_n\).  The
graph \(Y_n\) always gives our guess at the third largest value.

The exponential growth of \(P(W_n,4)\) also shows that if our
guesses at the \(m_i(n)\) are correct, then there is a huge gap
between largest number \(m_1(n)\) of colorings of an \(n\)-vertex
triangulation of the 2-sphere and the next number of colorings
encountered which cluster around approximately \(\frac12 m_1(n)\).
Computer calculations hint that there might be other gaps among the
number of colorings that grow with \(n\), but they are no where near
as large.  For \(n=12\), the difference of \(m_1(12)\) and
\(m_2(12)\) is 80 while the next largest gap is 12.

\subsubsection{Questions}

The formulas for the \(m_i(n)\) in
\tref{TheEstimatesI}--\tref{TheEstimatesIV} are guesses and have
been verified by us for values of \(n\) through \(n=12\) and by
others through \(n=15\).  The counts of the number of colorings of
the graphs \(W_n\), \(\Theta_n\), \(\Xi_n\), \(Y_n\) and \(\nabla_n\) in
\tref{TheCountsI}--\tref{TheCountsV} have been formally derived.

\begin{question}\mylabel{MiQuestion}
Are the values for the \(m_i(n)\), \(1\le i\le 4\), given in 
\tref{TheEstimatesI}--\tref{TheEstimatesIV} correct?
\end{question}

Computational evidence suggests that the biwheel has the most
proper, vertex 4-colorings of any \(n\)-vertex triangulation of the
2-sphere.  Through \(n=12\), no other triangulations have as many
colorings.

\begin{question}\mylabel{BiwheelQuestion}
Is \(W_n\) the only triangulation of the 2-sphere with
\(J(n-3)+\frac12(1+(-1)^n)\) colorings?
\end{question}

{\itshape Note:} Question \ref{MiQuestion} for \(i=1\) and Question
\ref{BiwheelQuestion} have been answered in the affirmative by Paul
Seymour \cite{pdseymour:biwheel}.

We have no evidence one way or another that the triangulations
\(\Xi_n\), \(Y_n\) and \(\nabla_n\) are the only triangulations
having the numbers of proper, vertex 4-colorings that they do.  But
we can still ask the question.

\begin{question}
Are there \(n\)-vertex triangulations of the 2-sphere not isomorphic
to \(\Xi_n\), \(Y_n\) or \(\nabla_n\) with \(m_i(n)\) colorings for
\(2\le i\le 4\)?
\end{question}

\newcommand{\eye}
{\xy(-1.7,3.5);(1.7,2.5)**@{-};(0,1)*{\textstyle{\mathrm{mu}}};\endxy
}

\subsection{Zero sets and color graphs}

This section also raises more questions than it answers.

If \(\mathbf c\) is a color vector of length \(n+1\), then it has a
zero set and complementary color graph on \(A_d\) with \(d=n-1\).
Thus small zero sets should go with large color graphs and vice
versa.  We can measure zero sets directly by the number of vertices
of \(A_d\) that are in it and indirectly by the number of
codimension-1 faces that are in it.  We have few examples from
calculations, but we have observed that diameter as well as size
enters into this discussion.  The color graphs of highest diameter
we have seen are the color graphs of Section \ref{LongPathSec}.  It
turns out that that the complementary zero sets of these color
graphs are the smallest we have seen measured in terms of
codimension-1 faces.  On the other hand, color
graphs of smallest diameter have complementary zero sets that have
the largest number of codimension-1 faces.  We now describe some
specifics.

We get two numbers to consider from Section \ref{ZeroSetSec}.  Let
\(n=d+1\).  If \(\mathbf c\) is a color vector of length \(n+1\),
then it has a zero set which is determined by certain set
\(Z_\mathbf c\) of intervals in \([0,n]\).  An interval \(J\) is in
\(Z_\mathbf c\) if the sum of the \(\mathbf c_i\) for \(i\in J\) is
zero.  Each such \(J\in Z_\mathbf c\) determines a codimension-1
face of \(A_d\) all of whose vertices are trees for which \(\mathbf
c\) is not valid.  We define \(u(n)\) to be the smallest integer and
\(l(n)\) to be the largest integer so that for any acceptable color
vector \(\mathbf c\) of length \(n+1\), we have \(l(n)\le |Z_\mathbf
c|\le u(n)\).

We claim that \(u(n) = \lfloor \frac{n^2}{4}\rfloor\), and we think
that \(l(n) = \lfloor \frac{n^2}{8}\rfloor\).  We indicate the
argument for \(u(n)\) and give support for \(l(n)\).

\begin{lemma}\mylabel{IKConstLem} If \(\mathbf k\) is a constant vector of
length \(n\), then \(|Z_\mathbf k| = \lfloor \frac{n^2}{4}
\rfloor\).  \end{lemma}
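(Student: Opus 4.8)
The plan is to reduce $|Z_\mathbf{k}|$ to a purely combinatorial count of subintervals of fixed parity. Write $\mathbf{k}=(k,k,\dots,k)$ with $k$ the constant value, which I take to be a genuine color, i.e.\ a nonzero element of $\Z_2\times\Z_2$; recall from Section \ref{ZeroSetSec} that $Z_\mathbf{k}$ is the set of intervals $J$ with $\sum_{i\in J}k=0$. The first step is to note that the sum over an interval $J$ of length $\ell$ is just $\ell$ copies of $k$, and since every element of $\Z_2\times\Z_2$ satisfies $k+k=0$, this sum equals $0$ exactly when $\ell$ is even. The hypothesis $k\neq0$ is what makes the odd case fail: for odd $\ell$ the sum is $k\neq0$, so no odd-length interval lies in $Z_\mathbf{k}$. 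Hence $Z_\mathbf{k}$ is precisely the collection of even-length subintervals of the index set $[1,n]$.

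The second step is to count those intervals. There are $n-\ell+1$ subintervals of $[1,n]$ of each length $\ell$, so
\[
|Z_\mathbf{k}|=\sum_{\substack{\ell\ \mathrm{even}\\ 2\le \ell\le n}}(n-\ell+1).
\]
I would finish by evaluating this sum in the two parity cases for $n$: when $n$ is even the admissible lengths are $2,4,\dots,n$ and the sum collapses to $n^2/4$, while when $n$ is odd they are $2,4,\dots,n-1$ and the sum is $(n^2-1)/4$; in either case the answer is $\lfloor n^2/4\rfloor$, as claimed.

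There is no genuine obstacle; the only care needed is bookkeeping at the two ends of the range. When $n$ is even one must remember to include the full interval $[1,n]$, whose length is then even and whose sum therefore vanishes --- omitting it would undercount by one and give $n^2/4-1$. At the other end, length-one intervals are harmlessly excluded precisely because $k\neq0$. With those two points settled, the summation in the second step is routine and completes the proof.
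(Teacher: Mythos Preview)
Your proof is correct and follows exactly the paper's approach: the paper remarks only that the result ``follows from direct calculation and the fact that the intervals in $Z_\mathbf k$ are just the intervals of even length,'' and you have supplied precisely that direct calculation. Your explicit note that $k\neq 0$ is needed to exclude odd-length (in particular length-one) intervals, and your care about including the full interval $[1,n]$ when $n$ is even, are the right bookkeeping points.
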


This follows from direct calculation and the fact that the intervals
in \(Z_\mathbf k\) are just the intervals of even length.  Of course
\(\mathbf k\) is not acceptable, but the fact is useful.

\begin{prop} With definitions as above, we have \(u(n)=\lfloor
\frac{n^2}{4}\rfloor\).  \end{prop}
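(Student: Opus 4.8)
The plan is to translate the condition ``$J\in Z_{\mathbf c}$'' into a statement about partial sums and then solve a small extremal problem. Write $\mathbf c=(c_1,\dots,c_{n+1})$, set $s_0=0$, and put $s_k=c_1+\cdots+c_k$ for $1\le k\le n+1$, giving $n+2$ partial sums in $\Z_2\times\Z_2$. Since the color sum over the interval $[i+1,j]$ is $s_j+s_i$, an interval is a zero interval exactly when its two bounding partial sums agree. Hence, letting $m_g$ be the number of indices $k\in\{0,\dots,n+1\}$ with $s_k=g$, we have the exact identity $|Z_{\mathbf c}|=\sum_{g\in\Z_2\times\Z_2}\binom{m_g}{2}$. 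By Proposition \ref{ValidCharProp} the hypotheses on an acceptable $\mathbf c$ become four constraints on $(s_k)$: we have $s_0=0$; consecutive partial sums differ (every $c_i\ne0$, an entry $0$ never being valid); $s_{n+1}\ne0$ (nonzero total sum); and $(s_k)$ is not the strict alternation $0,x,0,x,\dots$, this being the only sequence producing a constant $\mathbf c$.

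First I would prove $u(n)\ge\lfloor n^2/4\rfloor$ by exhibiting a vector realizing it. Take $\mathbf c=x^ny$ with $x\ne y$ nonzero, which is acceptable by Lemma \ref{VineClrLem}. Any interval meeting the final coordinate has color sum $y$ or $x+y$, both nonzero, so every zero interval lies inside the constant block $x^n$; by Lemma \ref{IKConstLem} there are exactly $\lfloor n^2/4\rfloor$ of these. Thus $|Z_{x^ny}|=\lfloor n^2/4\rfloor$.

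The substance is the reverse inequality $|Z_{\mathbf c}|\le\lfloor n^2/4\rfloor$ for every acceptable $\mathbf c$. Using $\sum_g\binom{m_g}{2}=\frac12\bigl(\sum_g m_g^2-(n+2)\bigr)$, it suffices to maximize $\sum_g m_g^2$ subject to the four constraints. The no-consecutive-repeat condition caps each multiplicity by $\lceil(n+2)/2\rceil$ (a profile is arrangeable with no two equal neighbours precisely when its largest part is at most half the length). The endpoint conditions sharpen this for the identity: the positions carrying the value $0$ form a set of pairwise non-adjacent indices containing $0$ and omitting $n+1$, whence $1\le m_0\le\lceil(n+1)/2\rceil$. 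Since $x\mapsto\binom{x}{2}$ is convex, the maximum over this finite constraint set is attained at a concentrated profile, and a short check, split on the parity of $n$, shows every feasible non-constant profile yields at most $\lfloor n^2/4\rfloor$, with equality attained by the profile of $x^ny$ from the previous step.

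The hard part is isolating why the local cap alone fails. The unique profile beating $\lfloor n^2/4\rfloor$ is the balanced two-value split $m_0=\lceil(n+2)/2\rceil$, $m_a=\lfloor(n+2)/2\rfloor$, which gives $\frac{n^2+2n}{4}$; this is exactly where the difficulty concentrates. That profile forces the partial sums to alternate $0,a,0,a,\dots$, hence forces $\mathbf c$ to be constant, which is excluded by acceptability (and, when $n$ is odd, already excluded because it then sums to zero). So the crux is not the convexity bookkeeping but the observation that the realizability cap is saturated only by the constant vector: the two endpoint constraints $s_0=0$ and $s_{n+1}\ne0$, together with non-constancy, remove precisely one unit of room for the value $0$ and pull the extremal non-constant count down exactly to $\lfloor n^2/4\rfloor$.
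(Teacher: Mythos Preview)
Your partial-sum reformulation is clean and genuinely different from the paper's inductive argument (which splits off the maximal constant prefix and bounds the cross terms). The identity \(|Z_{\mathbf c}|=\sum_g\binom{m_g}{2}\) is correct and makes the problem a small extremal question, which is a real gain in clarity over the paper's case analysis on parities of \(j\) and \(k\).

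However, your optimization step has a gap when \(n\) is odd, say \(n=2m+1\). You assert that the \emph{unique} profile exceeding \(\lfloor n^2/4\rfloor=m^2+m\) is the two-value split \(m_0=\lceil(n+2)/2\rceil=m+2\), \(m_a=m+1\). But under the constraints you actually list (each \(m_g\le m+2\), and \(1\le m_0\le m+1\)), that profile is already forbidden by your own bound on \(m_0\), while the three-value profile \(m_a=m+2\), \(m_0=m\), \(m_b=1\) satisfies all your stated constraints and gives \(\binom{m+2}{2}+\binom{m}{2}=m^2+m+1>\lfloor n^2/4\rfloor\). So the ``short check'' does not close as written, and the non-constancy argument you give at the end does not touch this profile.

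The fix is one more line. When \(n\) is odd, \(n+2=2m+3\) is odd, so a value hitting the cap \(m+2\) must occupy the unique maximum independent set \(\{0,2,\dots,2m+2\}\) in the path of length \(n+2\); in particular it sits at position \(0\). Since \(s_0=0\), this forces that value to be \(0\), and then \(s_{n+1}=0\), contradicting acceptability. Hence for \(n\) odd one has \(m_g\le m+1\) for \emph{every} \(g\), not just \(g=0\). With this uniform cap the convexity argument gives the extremal profile \((m+1,m+1,1)\) with value exactly \(m^2+m\), and your example \(x^ny\) realizes it. For \(n\) even your argument is already correct: the cap is \(m+1\) for all \(g\), and the unique profile at \((m+1,m+1)\) forces alternation and hence the constant vector.
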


\begin{proof} We only sketch the proof.  We take an acceptable
vector \(\mathbf c\) of length \(n+1\) and let \(\mathbf p\) and
\(\mathbf s\) be prefix and suffix of \(\mathbf c\) such that
\(\mathbf c=\mathbf p\mathbf s\) and so that \(\mathbf p\) is the
longest constant prefix.  We can assume that \(\mathbf p=1^j\) and
that \(\mathbf s\) starts with 2 and has length \(k+1\) with
\(j+k=n\).

Since \(\mathbf p\) and \(\mathbf s\) are adjacent, we are
interested in terminal intervals for \(\mathbf p\), those intervals
in \(\mathbf p\) that include the last position in \(\mathbf p\),
and initial intervals for \(\mathbf s\), those intervals in
\(\mathbf s\) that include the first position of \(\mathbf s\).

By Lemma \ref{IKConstLem} and an inductive hypothesis, we have
\[
|Z_\mathbf c| \le \left\lfloor\frac{j^2}{4}\right\rfloor 
+ \left\lfloor \frac{k^2}{4}\right\rfloor +
AB + CD
\]
where \(A\) is the number of terminal intervals in \(\mathbf p\)
summing to zero, \(B\) is the number of initial intervals in
\(\mathbf s\) summing to zero, \(C\) is the number of terminal
intervals in \(\mathbf p\) summing to one, and \(D\) is the number
of initial intervals \(\mathbf s\) summing to one.  By taking into
account the parities of \(j\) and \(k\), and the acceptability of
\(\mathbf c\), one shows on a case by case basis that \(AB+CD\) is
no more than \(\frac{2jk}{4}\).  \end{proof}

That \(l(n)=\lfloor \frac{n^2}{8}\rfloor\) is supported by computer
calculation for small values of \(n\) and by the following fact.  If
we use \(a\), \(b\) and \(c\) to represent the colors 1, 2, 3 in
some ordering, then in any interval of length 4 one of the patterns
\(aa\), \(abc\) or \(abab\) will occur.  Thus in any interval of
length 4 there is a subinterval of length at least two that sums to
zero.  

Lemma \ref{IKConstLem} immediately gives us examples of extremes.
Let \(\mathbf c=1^n2\), \(\mathbf d=1^k21^k\) and \(\mathbf
e=1^k21^{k+1}\).  Let \(n=2k\) for \(\mathbf d\) and let \(n=2k+1\)
for \(\mathbf e\).  It follows directly from Lemma \ref{IKConstLem}
that the following are true.
\[
|Z_\mathbf c| = \left\lfloor \frac{n^2}{4} \right\rfloor, \qquad 
|Z_\mathbf d| = \left\lfloor \frac{n^2}{8} \right\rfloor, \qquad
|Z_\mathbf e| = \left\lfloor \frac{n^2}{8} \right\rfloor.
\]

The relevance of these examples to this section is that \(\mathbf
c\) is valid for only one tree (the right vine), and \(\mathbf d\)
and \(\mathbf e\) are examples from Section \ref{LongPathSec} of
color vectors whose color graphs have very high diameter.

Other vectors that are valid for only one tree are of the form
\(1^j231^{n-j-1}\) whose trees are root shifts of the right vine,
and \(12^n\) whose tree is the left vine and also a root shift of
the right vine.  Thus all these have zero sets with \(u(n)\)
codimension-1 faces.

Another color vector that seems to realize \(l(n)\) is \(\mathbf f =
(12)^k\).  With \(n=2k-1\), preliminary calculations hint that
\(|Z_\mathbf f| = \lfloor \frac{n^2}{8}\rfloor\).  We have not
investigated the color graph of \(\mathbf f\) in detail, but (for
various \(k\)) they are intriguing.

We are left with the following questions.

\begin{question}  With \(n=d+1\), is \(\lfloor\frac{n^2}{2}\rfloor\)
the highest possible diameter of a color set in \(A_d\)?
\end{question}

\begin{question} Is \(l(n)=\lfloor \frac{n^2}{8}\rfloor\)?
\end{question}

\begin{question} Are there other color vectors realizing either
\(u(n)\) or \(l(n)\)?  \end{question}

We end with even vaguer questions.

\begin{question} What is the nature of the color graph of
\((12)^k\)?  \end{question}

\begin{question} What is the relationship between the shape of the
color graph and the number and arrangement of the codimension-1
faces of the zero set of a color vector?  \end{question}

For rigid coloring vectors (such as \(1^n2\)), we can ask the
following.

\begin{question} What is the relationship between the number of
vertices in the (totally disconnected) color graph of a rigid color
vector and the number of codimension-1 faces of its zero set?
\end{question}

We end this section with one minor comment.  The only vertices
completely interior to the color graph of a color vector \(\mathbf
c\) are those trees that are either completely positively signed by
\(\mathbf c\) or are completely negatively signed by \(\mathbf c\).
If we accept that such signings of a tree by a given color vector
are rare, then we have that most vertices in a color graph are on
its boundary.

\section{The end}\mylabel{EndSec}

This section represents the ill defined frontier of this paper.  It
gathers questions that fit nowhere else.

\begin{question} Given vertices \(u\) and \(v\) in a color graph
\(\Gamma\) in \(A_d\), is there a pair of codimension-1 faces in
\(A_d\) that contains a path in \(\Gamma\) between \(u\) and \(v\)?
Can the path and faces be chosen so that there is an intermediate
vertex \(w\) in the path with the path from \(u\) to \(w\) in one
face and the path from \(w\) to \(v\) in the other?  \end{question}

Recall from Section \ref{ZeroSetSec} that for a color vector
\(\mathbf c\) with \(n+1\) entries, the set \(Z_\mathbf c\) is the
set of intervals \([m,n]\) in \([0,n]\) for which the sum of the
\(\mathbf c_i\) with \(i\in [m,n]\) is zero.

\begin{question} Is there a way to characterize those sets of
intervals in \([0,n]\) that are of the form \(Z_\mathbf c\) for some
acceptable color vector \(\mathbf c\) with \(n+1\) entries?
\end{question}

\begin{question} Is there any significance to the inconsistent
elements of \(E\)?  \end{question}

It is not clear what is meant by this last question.  Perhaps there
are imaginary colorings of some tree pairs.

Another question is motivated by our construction of \(\Sigma(w)\).
Its relevance to the paper is highly questionable.  Let \(\Gamma\)
be a finite graph.  If the edges of \(\Gamma\) are numbered \(e_1\),
\(e_2\), \dots, \(e_n\) with \(n=|E(\Gamma)|\), then we turn
\(\Gamma\) into a signed graph much as we do for \(\Sigma(w)\).  We
let \(\Gamma_i\) be the subgraph of \(\Gamma\) having exactly the
edges \(\{e_k\mid 1\le k\le i\}\).  We say that edge \(e_{i+1}\)
with endpoints \(a_i\) and \(b_i\) is positive if the parities in
\(\Gamma_i\) of \(a_i\) and \(b_i\) agree, and is negative
otherwise.  Obviously, the signing of the edges depends on the
numbering of the edges.  We are interested in whether the signed
graph \(\Gamma\) is balanced.  The question we raise is as follows.

\begin{question} Which connected, finite graphs become balanced for
every numbering of their edges, which connected, finite graphs
become balanced for no numbering of their edges, and which
connected, finite graphs are not in the two classes just described?
\end{question}

Of course trees, having no interesting closed walks, are in the
first class.  Eliminating trees, a minor amount of experimentation
hints that in the first class are the boundaries of polygons with an
even number of edges, in the second class are the boundaries of
polygons with an odd number of edges, and in the third class is
everything else.


\providecommand{\bysame}{\leavevmode\hbox to3em{\hrulefill}\thinspace}
\providecommand{\MR}{\relax\ifhmode\unskip\space\fi MR }
\providecommand{\MRhref}[2]{%
  \href{http://www.ams.org/mathscinet-getitem?mr=#1}{#2}
}
\providecommand{\href}[2]{#2}

\noindent Department of Mathematics

\noindent State University of New York at Oneonta

\noindent Oneonta, NY 

\noindent USA

\noindent email: gsbowlin@gmail.com

\vspace{20pt}

\noindent Department of Mathematical Sciences

\noindent State University of New York at Binghamton

\noindent Binghamton, NY 13902-6000

\noindent USA

\noindent email: matt@math.binghamton.edu


\begin{theindex}

  \item \protect\(A_d\protect\), 17
  \item \protect\(E\protect\), 34
  \item \protect\(F\protect\), 27
  \item \protect\(F^+\protect\), 55
  \item \protect\(F_4\protect\), 40
  \item \protect\(P^+\protect\), 54
  \item \protect\(P^r\protect\), 54
  \item \protect\(P_u\protect\), 54
  \item \protect\(S\caret T\protect\), 14
  \item \protect\(T^\sigma\protect\), 22
  \item \protect\(T_v\protect\), 14
  \item \protect\(V\protect\), 57
  \item \protect\(Z_\mathbf c\protect\), 41
  \item \protect\(\Sigma(w)\protect\), 44
  \item \protect\(\overline{u}\protect\), 18
  \item \protect\(\protect\mathcal T\protect\), 13
  \item \protect\(\protect\mathfrak W\protect\), 3
  \item \protect\(\rot{u}\protect\), 18, 25
  \item \protect\(c(n)\protect\), 60
  \item \protect\(f(n)\protect\), 62
  \item \protect\(m_i(n)\protect\), 62
  \item \protect\(r(n)\protect\), 61
  \item 4-tree, 51

  \indexspace

  \item above
    \subitem for subtrees, 46
  \item acceptable
    \subitem color vector, 21
      \subsubitem characterization, 53
  \item address
    \subitem of vertex in tree, 14
  \item adjacent
    \subitem subtrees, 46
  \item ancestor
    \subitem in tree, 12
  \item associahedron, 16
    \subitem colored, 27
    \subitem diameter, 7
    \subitem dihedral group action, 20
    \subitem dimension, 16
    \subitem edge, 17
    \subitem face, 17

  \indexspace

  \item balanced
    \subitem signed graph, 44
  \item binary
    \subitem tree, 12
      \subsubitem as union of carets, 29
      \subsubitem standard model, 13
  \item binary trees
    \subitem union, intersection, difference, 30
  \item biwheel, 63

  \indexspace

  \item caret, 29
    \subitem center vertex, 29
    \subitem exposed, 29
  \item carets
    \subitem tree union of, 29
  \item Catalan number, 17
  \item Catalan numbers, 59
  \item center vertex
    \subitem caret, 29
  \item chain
    \subitem of increasing multiplications, 37
    \subitem of minimally increasing multiplications, 37
  \item children
    \subitem in tree, 12
  \item chromatic polynomial, 65
  \item code
    \subitem for colors, 20
  \item color
    \subitem code for, 20
    \subitem element of group, 20
    \subitem of leaf in tree, 20
    \subitem of vertex in tree, 20
    \subitem permutation, 23
      \subsubitem effect on sign assignment, 23
    \subitem root
      \subsubitem of tree, 20
    \subitem vector, 20
      \subsubitem acceptable, 21
      \subsubitem color graph of, 27
      \subsubitem valid for pair, 21
      \subsubitem valid for tree, 21
      \subsubitem zero set of, 41
  \item color graph
    \subitem of color vector, 27
  \item colored
    \subitem associahedron, 27
    \subitem vertices, 27
  \item coloring
    \subitem compatible with sign structure, 45
    \subitem normal, 23
    \subitem of element of \protect\(F\protect\), 35
    \subitem of tree from vector, 21
  \item Compatibility Lemma, 35
  \item conjecture
    \subitem Color count, 63
    \subitem signed path
      \subsubitem first, 26
      \subsubitem second, 46

  \indexspace

  \item deferment
    \subitem of element of \protect\(F\protect\), 55
  \item depth
    \subitem vertex in tree, 39
  \item descendant
    \subitem in tree, 12
  \item diameter
    \subitem of assciahedron, 7
  \item dihedral group
    \subitem action on associahedron, 20
  \item dimension
    \subitem associahedron, 16
      \subsubitem face, 17
  \item drawing
    \subitem of tree, 13
  \item dual
    \subitem polygon to tree, 15
    \subitem tree to polygon, 15
  \item dyadic
    \subitem rational, 28

  \indexspace

  \item edge
    \subitem associahedron, 17
    \subitem external in tree, 12
    \subitem internal in tree, 12
    \subitem leaf in tree, 12
    \subitem root in tree, 12
    \subitem rotation along, 18
  \item edge disjoint
    \subitem subtrees, 15
  \item edge-face
    \subitem coloring correspondence, 20
  \item exposed caret, 29

  \indexspace

  \item face
    \subitem of associahedron, 17
      \subsubitem dimension, 17
      \subsubitem structure, 17
  \item face-edge
    \subitem coloring correspondence, 20
  \item Fibonacci numbers, 59
  \item flexible
    \subitem sign assignment, 26

  \indexspace

  \item graph
    \subitem color
      \subsubitem of color vector, 27
  \item group
    \subitem of colors, 20
  \item group \protect\(E\protect\), 34

  \indexspace

  \item increasing
    \subitem chain of multiplications, 37
    \subitem multiplication, 37
  \item induction
    \subitem Noetherian, 14
    \subitem well founded, 14
  \item infix order
    \subitem on tree, 13

  \indexspace

  \item Jacobsthal numbers, 59

  \indexspace

  \item leaves
    \subitem left-right order, 13
    \subitem tree, 12
      \subsubitem color, 20
  \item left
    \subitem subtree, 14
    \subitem vine, 36
  \item left-right order
    \subitem on leaves, 13
  \item lemma
    \subitem Compatibility, 35
  \item level
    \subitem vertex in tree, 39
  \item local order
    \subitem from planar embedding, 12
    \subitem in tree, 12

  \indexspace

  \item map
    \subitem prime, 10
  \item minimally increasing
    \subitem chain of multiplications, 37
    \subitem multiplication, 37
  \item move
    \subitem pentagonal, 48
    \subitem square, 48
  \item multiplication
    \subitem increasing, 37
    \subitem minimally increasing, 37

  \indexspace

  \item negative
    \subitem normal coloring, 23
    \subitem rigid
      \subsubitem pattern, 54
  \item neighborhood
    \subitem positive
      \subsubitem of vertex, 55
  \item Noetherian
    \subitem induction, 14
  \item normal
    \subitem coloring, 23
      \subsubitem negative, 23
      \subsubitem positive, 23

  \indexspace

  \item order
    \subitem local in tree, 12
    \subitem well founded, 14

  \indexspace

  \item P-compatible
    \subitem pair of pairs, 54
    \subitem tree pair, 53
  \item pair
    \subitem of finite trees, 11
      \subsubitem P-compatible, 53
      \subsubitem parity condition, 39
      \subsubitem reduced, 29
      \subsubitem reduction, 29
    \subitem triangulated
      \subsubitem polygon, 11
  \item parent
    \subitem in tree, 12
  \item parity condition
    \subitem on pair of trees, 39
  \item path
    \subitem sign consistent, 46
    \subitem sign structure of, 46
    \subitem square equivalence, 50
    \subitem valid
      \subsubitem of rotations, 25
  \item pattern, 53
    \subitem all positive, 54
    \subitem negative rigid, 54
    \subitem positive rigid, 54
    \subitem shadow
      \subsubitem of tree, 41
  \item pentagonal
    \subitem move, 48
    \subitem relation, 34
  \item permuting
    \subitem colors, 23
  \item Petersen graph, 58
  \item pivot vertex
    \subitem of rotation, 18
  \item polygon
    \subitem dual to tree, 15
    \subitem triangulated
      \subsubitem pair, 11
  \item positive
    \subitem element of \protect\(F\protect\), 38
    \subitem monoid of \protect\(F\protect\), 38
    \subitem neighborhood
      \subsubitem of vertex, 55
    \subitem normal coloring, 23
    \subitem pattern, 54
    \subitem rigid
      \subsubitem pattern, 54
  \item prefix order
    \subitem on tree, 13
  \item prime
    \subitem finite tree pair, 11
    \subitem map, 10
  \item projection
    \subitem of tree, 15
  \item projective plane
    \subitem map needing six colors, 58

  \indexspace

  \item rational
    \subitem dyadic, 28
  \item reduced
    \subitem tree pair, 29
  \item reduction
    \subitem of tree pair, 29
  \item reflection
    \subitem tree, 16
  \item regular expressions, 14
  \item relation
    \subitem pentagonal, 34
    \subitem square, 34
  \item right
    \subitem subtree, 14
    \subitem vine, 36
  \item rigid
    \subitem pattern
      \subsubitem negative, 54
      \subsubitem positive, 54
    \subitem sign assignment, 26
  \item root
    \subitem color, 20
    \subitem of tree, 12
  \item root shift
    \subitem of tree, 16
  \item rooted
    \subitem tree, 12
  \item rotation
    \subitem action of, 32
      \subsubitem on colored pairs, 36
    \subitem along edge, 18
    \subitem pivot vertex, 18
    \subitem signed, 25
    \subitem tree, 18
    \subitem valid, 25
    \subitem valid path, 25

  \indexspace

  \item sequence
    \subitem Catalan numbers, 59
    \subitem Fibonacci numbers, 59
    \subitem Jacobsthal numbers, 59
  \item shadow
    \subitem interval
      \subsubitem of vertex in tree, 41
    \subitem of vertex in tree, 41
    \subitem pattern of tree, 41
  \item sign
    \subitem at vertex, 22
  \item sign assignment
    \subitem compatible with sign structure, 45
    \subitem effect of permutation, 23
    \subitem flexible, 26
    \subitem of tree, 22
    \subitem rigid, 26
  \item sign consistent path, 46
  \item sign structure
    \subitem compatible with coloring, 45
    \subitem compatible with sign assignment, 45
    \subitem of path, 46
    \subitem vertex set, 46
  \item signed
    \subitem rotation, 25
  \item square
    \subitem equivalence of paths, 50
    \subitem move, 48
    \subitem relation, 34
  \item standard model
    \subitem binary tree, 13
  \item subpattern, 54
  \item subtree, 13
    \subitem above another subtree, 46
    \subitem adjacent to a subtree, 46
    \subitem left, 14
    \subitem right, 14

  \indexspace

  \item theorem
    \subitem Trichotomy, 26
  \item Thompson's group \protect\(F\protect\), 27
    \subitem action on trees, 31
    \subitem coloring of element, 35
    \subitem deferment of element, 55
    \subitem multiplication, 29
    \subitem positive monoid, 38
    \subitem presentations, 33
  \item Thompson's group \protect\(V\protect\), 57
  \item torus
    \subitem map needing seven colors, 56
  \item tree
    \subitem ancestor, 12
    \subitem binary, 12
      \subsubitem as union of carets, 29
    \subitem children, 12
    \subitem coloring from vector, 21
    \subitem definition, 12
    \subitem descendant, 12
    \subitem drawing, 13
    \subitem dual to polygon, 15
    \subitem external edge, 12
    \subitem finite, 12
    \subitem finite pair, 11
      \subsubitem prime, 11
      \subsubitem valid color vector, 21
    \subitem infix order, 13
    \subitem internal edge, 12
    \subitem internal vertex, 12
    \subitem leaf
      \subsubitem color, 20
      \subsubitem edge, 12
    \subitem leaves, 12
    \subitem locally finite, 12
    \subitem locally ordered, 12
    \subitem parent, 12
    \subitem prefix order, 13
    \subitem projection, 15
    \subitem reflection, 16
    \subitem root, 12
    \subitem root edge, 12
    \subitem root shift, 16
    \subitem rooted, 12
    \subitem rotation, 18
    \subitem shadow
      \subsubitem of vertex, 41
      \subsubitem pattern, 41
    \subitem shadow interval
      \subsubitem of vertex, 41
    \subitem sign assignment, 22
    \subitem trivial, 12
    \subitem valid color vector, 21
    \subitem vertex
      \subsubitem color, 20
      \subsubitem depth, 39
      \subsubitem level, 39
      \subsubitem shadow interval of, 41
      \subsubitem shadow of, 41
      \subsubitem sign, 22
  \item trees
    \subitem binary
      \subsubitem union, intersection, difference, 30
  \item triangulated
    \subitem polygon
      \subsubitem pair, 11
  \item Trichotomy Theorem, 26
  \item trivial
    \subitem tree, 12

  \indexspace

  \item valid
    \subitem path of rotations, 25
    \subitem rotation, 25
    \subitem vector
      \subsubitem for pair, 21
      \subsubitem for tree, 21
  \item vector
    \subitem color, 20
      \subsubitem acceptable, 21
      \subsubitem color graph of, 27
      \subsubitem zero set of, 41
    \subitem valid for pair, 21
    \subitem valid for tree, 21
  \item vertex
    \subitem center of caret, 29
    \subitem of tree
      \subsubitem depth, 39
      \subsubitem internal, 12
      \subsubitem level, 39
      \subsubitem shadow, 41
      \subsubitem shadow interval, 41
      \subsubitem sign, 22
    \subitem positive neighborhood, 55
    \subitem tree
      \subsubitem color, 20
  \item vertex set
    \subitem of sign structure, 46
  \item vertices
    \subitem colored, 27
  \item vine, 36
    \subitem left, 36
    \subitem right, 36

  \indexspace

  \item well founded
    \subitem induction, 14
    \subitem order, 14
  \item Whitney
    \subitem theorem, 3

  \indexspace

  \item zero set
    \subitem of color vector, 41

\end{theindex}

\end{document}